\numberwithin{equation}{section}
\theoremstyle{definition}
\newtheorem{dfn}{Definition}[section]
\newtheorem{rmk}[dfn]{Remark}}
\newtheorem{thm}[dfn]{Theorem}
\newtheorem{prp}[dfn]{Proposition}
\newtheorem{lem}[dfn]{Lemma}
\newtheorem{cor}[dfn]{Corollary}
\newtheorem{con}[dfn]{Conjecture}
\newcommand{\bbC}{\mathbb{C}}
\newcommand{\bbZ}{\mathbb{Z}}
\def\vector#1{\mbox{\boldmath $#1$}}
\begin{document}
\allowdisplaybreaks

\newcommand{\arXivNumber}{1801.09939}

\renewcommand{\thefootnote}{}

\renewcommand{\PaperNumber}{101}

\FirstPageHeading

\ShortArticleName{Macdonald Polynomials of Type $C_n$ with One-Column Diagrams}

\ArticleName{Macdonald Polynomials of Type $\boldsymbol{C_n}$ with One-Column\\ Diagrams and Deformed Catalan
Numbers\footnote{This paper is a~contribution to the Special Issue on Elliptic Hypergeometric Functions and Their Applications. The full collection is available at \href{https://www.emis.de/journals/SIGMA/EHF2017.html}{https://www.emis.de/journals/SIGMA/EHF2017.html}}}

\Author{Ayumu HOSHINO~$^\dag$ and Jun'ichi SHIRAISHI~$^\ddag$}

\AuthorNameForHeading{A.~Hoshino and J.~Shiraishi}

\Address{$^\dag$~Hiroshima Institute of Technology, 2-1-1 Miyake, Hiroshima 731-5193, Japan}
\EmailD{\href{mailto:a.hoshino.c3@it-hiroshima.ac.jp}{a.hoshino.c3@it-hiroshima.ac.jp}}

\Address{$^\ddag$~Graduate School of Mathematical Sciences, University of Tokyo,\\
\hphantom{$^\ddag$}~Komaba, Tokyo 153-8914, Japan}
\EmailD{\href{mailto:shiraish@ms.u-tokyo.ac.jp}{shiraish@ms.u-tokyo.ac.jp}}

\ArticleDates{Received January 31, 2018, in final form September 11, 2018; Published online September 20, 2018}

\Abstract{We present an explicit formula for the transition matrix $\mathcal{C}$ from the type~$C_n$ degeneration of the Koornwinder polynomials
$P_{(1^r)}(x\,|\,a,-a,c,-c\,|\,q,t)$ with one column diagrams, to the type $C_n$ monomial symmetric polynomials $m_{(1^{r})}(x)$. The entries of the matrix $\mathcal{C}$ enjoy a~set of three term recursion relations, which can be regarded as a~$(a,c,t)$-deformation of the one for the Catalan triangle or ballot numbers. Some transition matrices are studied associated with the type $(C_n,C_n)$ Macdonald polynomials $P^{(C_n,C_n)}_{(1^r)}(x\,|\,b;q,t)= P_{(1^r)}\big(x\,|\,b^{1/2},-b^{1/2},q^{1/2}b^{1/2},-q^{1/2}b^{1/2}\,|\,q,t\big)$. It is also shown that the $q$-ballot numbers appear as the Kostka polynomials, namely in the transition matrix from the Schur polynomials $P^{(C_n,C_n)}_{(1^r)}(x\,|\,q;q,q)$ to the Hall--Littlewood polynomials $P^{(C_n,C_n)}_{(1^r)}(x\,|\,t;0,t)$.}

\Keywords{Koornwinder polynomial; Catalan number}

\Classification{33D52; 33D45}

\renewcommand{\thefootnote}{\arabic{footnote}}
\setcounter{footnote}{0}

\section{Introduction}
The aim of this article is to investigate the transition matrix $\mathcal{C}$, which describes the expansion of the type~$C_n$ degeneration of the Koornwinder polynomials~\cite{K} $P_{(1^r)}(x\,|\,a,-a,c,-c\,|\,q,t)$ with one column diagrams, in terms of the type~$C_n$ monomial symmetric polynomials~$m_{(1^r)}(x)$. As for our convention of notation, see Section~\ref{Koornwinder}. On this course, we found that certain deformations appear, associated with the Catalan triangle or ballot numbers, and binomial coefficients. We refer the readers to~\cite{S} concerning the Catalan triangle numbers, and~\cite{A, FH} for the $q$-Catalan and $q$-ballot numbers. For simplicity, write $P^{(C_n)}_{(1^r)}=P_{(1^r)}(x\,|\,a,-a,c,-c\,|\,q,t)$.

\begin{thm}\label{MAIN} Let $n\in \mathbb{Z}_{>0}$. Let ${\bf P}^{(n)}$ and ${\bf m}^{(n)}$ be the infinite column vectors
\begin{gather*}
{\bf P}^{(n)}={}^t\big(P^{(C_n)}_{(1^{n})},\ldots,P^{(C_n)}_{(1)},P^{(C_n)}_{\varnothing},0,0,0,\ldots \big), \\
{\bf m}^{(n)}={}^t\big(m_{(1^{n})},\ldots,m_{(1)},m_{\varnothing},0,0,0,\ldots \big).
\end{gather*}
There exist a unique infinite transition matrix $\mathcal{C} =(\mathcal{C} )_{i,j\in \mathbb{Z}_{\geq 0}}$ satisfying the conditions
\begin{subequations}
\begin{gather}
\mathcal{C} \text{ is upper triangular, namely } i>j \text{ implies } \mathcal{C}_{ij}=0,\\
\mathcal{C} \text{ is even, namely } i+j \text{ is odd implies } \mathcal{C}_{ij}=0,\\
\mathcal{C}_{ij} \text{ are rational functions in $a$, $c$ and~$t$ which do not depend on~$n$} \nonumber\\
\mbox{and we have } {\bf P}^{(n)} = \mathcal{C} {\bf m}^{(n)} \text{ for all } n\geq 1 \ \text{$($stability$)$}. \label{STAB}
\end{gather}
\end{subequations}
This transition matrix $\mathcal{C}$ is uniquely characterized by the $(a,c,t)$-deformed Catalan triangle type three term recursion relations
\begin{subequations}
\begin{gather}
\mathcal{C}_{0,0}=1, \qquad \mathcal{C}_{i-1,i-1}=\mathcal{C}_{i,i}, \qquad i=1,2,3,\ldots, \label{Catalan-1}\\
f(t)\mathcal{C}_{1,j-1}=\mathcal{C}_{0,j}, \qquad j=2,4,6,\ldots,\label{Catalan-2}\\
\mathcal{C}_{i-1,j-1}+f\big( t^{i+1} \big)\mathcal{C}_{i+1,j-1}=\mathcal{C}_{i,j}, \qquad i+j \ {\rm even}, \ 0<i< j,\label{Catalan-3}
\end{gather}
\end{subequations}
where we have used the notation
\begin{gather}
f(s)=\dfrac{(1-1/s )\big(1-t^{2}/s a^2c^2 \big)\big(1+t/s a^2\big)\big(1+t/sc^2\big)} {\big(1-t/s^{2}a^2c^2\big)\big(1-t^{3}/s^{2}a^2c^2\big)}. \label{f[s]}
\end{gather}
\end{thm}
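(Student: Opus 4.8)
The plan is to separate the statement into two independent tasks: (i) to verify that there is a \emph{unique} matrix $\mathcal{C}$ having the four listed properties (upper-triangular, even, entries rational in $a,c,t$ and $n$-independent, and satisfying the stability (\ref{STAB})); and (ii) to show that this $\mathcal{C}$ is governed by the recursions (\ref{Catalan-1})--(\ref{Catalan-3}). For task (i) I would draw on the standard structure theory of the Koornwinder polynomials. Triangularity with respect to the dominance order, $P^{(C_n)}_{(1^r)}=m_{(1^r)}+\sum_{s<r}(\cdots)\,m_{(1^s)}$ with leading coefficient $1$, is part of their definition; this yields upper-triangularity together with the unit diagonal, which is exactly (\ref{Catalan-1}) once the $n$-independence is in hand. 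The evenness is forced by the symmetry of the parameter multiset $\{a,-a,c,-c\}$ under sign reversal: the Koornwinder $q$-difference operator then commutes with every reflection $x_i\mapsto -x_i$, so $P^{(C_n)}_{(1^r)}$ has definite parity $r\bmod 2$ and can involve only those $m_{(1^s)}$ with $s\equiv r\pmod 2$, i.e.\ $\mathcal{C}_{ij}=0$ when $i+j$ is odd.

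The $n$-independence and stability (\ref{STAB}) are where the inverse-limit (stability) property of the Koornwinder polynomials must be invoked. I would use the fact that the one-column coefficients stabilize as functions of the \emph{co-degrees} $n-r$ and $n-s$ --- equivalently of the indices $i$ and $j$ --- so that a single array of rational functions $\mathcal{C}_{ij}$ reproduces $\mathbf P^{(n)}=\mathcal{C}\,\mathbf m^{(n)}$ simultaneously for every $n\geq 1$. This gives the existence of a matrix with the required properties; uniqueness is then immediate, since triangularity together with the unit diagonal (the normalization $P^{(C_n)}_{(1^r)}=m_{(1^r)}+\cdots$) determines every entry from the data of the polynomials.

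The core of the argument is task (ii), the derivation of (\ref{Catalan-2})--(\ref{Catalan-3}). A first observation guides the search for the right mechanism: these relations shift the monomial index $j$ by a single unit while shifting the partition index $i$ by $\pm1$. Because the Koornwinder operator preserves parity it can never produce a one-step relation in $j$, so the eigenvalue equation alone cannot be the source; the recursions must instead originate from a multiplication/branching identity that links $P^{(C_n)}_{(1^{r-1})}$, $P^{(C_n)}_{(1^r)}$ and $P^{(C_n)}_{(1^{r+1})}$. Concretely, I would pair the $(q,t,a,c)$-deformed Pieri rule for $m_{(1)}\cdot P^{(C_n)}_{(1^r)}$, whose one-column outputs $P^{(C_n)}_{(1^{r\pm1})}$ carry explicit factorized coefficients depending on $t^{\,n-r}$, against the elementary monomial identity (classically $m_{(1)}m_{(1^s)}=(s+1)m_{(1^{s+1})}+2(n-s+1)m_{(1^{s-1})}+(\text{two-column terms})$, suitably $q,t$-deformed), and then equate the two expansions of $m_{(1)}P^{(C_n)}_{(1^r)}$ coefficient by coefficient. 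Rewriting the resulting relation in the co-degree indices $i=n-r$, $j=n-s$ and invoking stability collapses all explicit $n$-dependence into powers of $t$, which is what produces the argument $t^{i+1}$; the boundary value $i=0$ becomes (\ref{Catalan-2}), and the prefactor assembled from the ratio of Pieri coefficients is to be identified with $f$ of (\ref{f[s]}).

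I expect two obstacles here, and they are the genuine content of the proof. First, the two-column intermediate terms do not simply vanish: since $(2,1^{k})\triangleright(1^{k+2})$, the polynomials $P^{(C_n)}_{(2,1^{k})}$ contribute back into the one-column monomials, so isolating a clean three-term relation among the one-column coefficients requires controlling this contamination (for instance via a filtration argument, or by choosing a degree-one multiplier tailored to the one-column sector). Second, and most delicate, is showing that the accumulated Koornwinder $c$-function factors reorganize \emph{exactly} into the six-factor rational function (\ref{f[s]}) with the stated powers of $t$, $a$ and $c$; this is the step I would budget the most care for. A cleaner alternative, which sidesteps guessing the precise Pieri coefficients, is to prove (\ref{Catalan-1})--(\ref{Catalan-3}) by induction on $n$: granting stability one verifies a rank-one (or rank-two) base case directly and then propagates through the restriction homomorphism $x_{n+1}\mapsto$ (a parameter-adapted value), matching the two-term branchings of the $P$'s and of the $m$'s. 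Either way, the proof is completed by the remark that (\ref{Catalan-1})--(\ref{Catalan-3}) determine $\mathcal{C}$ column by column --- column $j$ is fixed from column $j-1$ through (\ref{Catalan-3}) for $0<i<j$, through (\ref{Catalan-2}) for $i=0$, and through the unit diagonal for $i=j$ --- so the recursions have a unique solution; as the transition matrix built from the Koornwinder polynomials satisfies them, it is that solution, and the recursions are a genuine characterization.
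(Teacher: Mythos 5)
Your proposal correctly identifies the two halves of the statement and gets the soft parts right (upper triangularity with unit diagonal from the dominance-order normalization of the Koornwinder polynomials; evenness from parity --- though note that only the \emph{simultaneous} sign change $x_i\mapsto -x_i$ for all $i$ preserves ${\mathcal D}_x(a,-a,c,-c\,|\,q,t)$, since an individual flip does not preserve the cross factors $(1-tx_ix_j)(1-tx_i/x_j)$; the global flip still forces $P^{(C_n)}_{(1^r)}$ to have parity $(-1)^r$, which is all you need). The genuine gaps are in the hard parts. First, stability is not a fact you can simply invoke: that the coefficients of $P^{(C_n)}_{(1^r)}$ on $m_{(1^{r-2j})}$ depend on $n$ and $r$ only through $n-r$ is precisely what has to be proved, and in the paper it is the consequence of an explicit formula (the coefficient is $C\big(t^{n-r+1},j\big)$, Theorem~\ref{Pvsm}), not an input. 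Second and more seriously, the derivation of (\ref{Catalan-2})--(\ref{Catalan-3}) with the specific function (\ref{f[s]}) is never carried out: you propose a Pieri-rule mechanism, flag the two obstacles yourself (the contamination by $P^{(C_n)}_{(2,1^{k})}$, whose one-column monomial content cannot be discarded, and the identification of the ratio of Pieri coefficients with $f$), and leave both unresolved. Since the entire content of the theorem beyond formal bookkeeping is the exact form of $f$, the proposal as written does not constitute a proof.

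For comparison, the paper takes a computational route that avoids Pieri rules entirely. Mimachi's dual-Cauchy kernel reduces the problem to the Askey--Wilson ($BC_1$) eigenfunction and yields the twofold sum (\ref{PtoG_0}) expressing $P_{(1^r)}$ in the intermediate basis $E_{r-2j}(x)$; Bressoud's matrix inversion and a $q$-analogue of Bailey's transformation convert this into single sums with ${}_4\phi_3$ coefficients $B(s,j)$ (Theorem~\ref{Thm2.6}); a ${}_5\phi_4$ manipulation gives the four-term relation (\ref{B4-1}) for the $B$'s; and since $E_r(x)=\sum_j\binom{n-r+2j}{j}m_{(1^{r-2j})}(x)$, the transition coefficient is $C(s,j)=\sum_i B(s,i)\binom{m+2j}{j-i}$, whose three-term relation (\ref{C+FC=C}) follows from (\ref{B4-1}) combined with Pascal's rule, the boundary case coming from $C(1,j)=\delta_{j,0}$. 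If you want to pursue your Pieri/branching alternative you would need to supply, at minimum, the exact one-column Pieri coefficients for $P_{(1^r)}(x\,|\,a,-a,c,-c\,|\,q,t)$ and a mechanism for eliminating the two-column terms; neither step is routine, and neither appears in the paper.
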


A proof of this is presented in Section~\ref{ProofMAIN}. The solution to the three term recursion relations (\ref{Catalan-1}), (\ref{Catalan-2}) and (\ref{Catalan-3}) for $\mathcal{C}_{i,j}$ given in terms of the function $f(s)$ is presented in Proposition~\ref{FFF}.

Consider the Macdonald polynomials of types $(C_n,C_n)$ and $(D_n,D_n)$ \cite{M2, RW, St}
\begin{gather*}
P^{(C_n,C_n)}_{(1^r)}(x\,|\,b;q,t)= P_{(1^r)}\big(x\,|\,b^{1/2},-b^{1/2},q^{1/2}b^{1/2},-q^{1/2}b^{1/2}\,|\,q,t\big), \\
P^{(D_n,D_n)}_{(1^r)}(x\,|\,q,t)=P_{(1^r)}\big(x\,|\,1,-1,q^{1/2},-q^{1/2}\,|\,q,t\big).
\end{gather*}

\begin{cor} When $b=t=q$, the Macdonald polynomials of type $(C_n,C_n)$ become the Schur polynomials $s_{\lambda}(x)=s^{(C_n)}_{\lambda}(x)$ of type $C_n$. In this case we have $f\big(t^{i+1}\big)=1$ for $i\geq 0$, indicating that the recursion relations \eqref{Catalan-1}--\eqref{Catalan-3}
reduces to the ones for the ordinary Catalan triangle $($or ballot$)$ numbers. Therefore it holds that
\begin{gather}
s^{(C_n)}_{(1^r)}(x)=P^{(C_n,C_n)}_{(1^r)}(x\,|\,q;q,q) =
\sum_{k=0}^{\lfloor{r \over 2}\rfloor}{n-r+1\over n-r+k+1}{n-r+2k \atopwithdelims() k} m_{(1^{r-2k})}(x), \label{s-m}
\end{gather}
where $\binom{m}{j}={m(m-1)\cdots(m-j+1) \over j!}$ denotes the ordinary binomial coefficient.
\end{cor}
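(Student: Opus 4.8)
The plan is to specialize the main theorem at $b=t=q$ and show that the resulting recursion is exactly the classical Catalan triangle recursion, then solve it explicitly. First I would substitute $a^2=b=q$ and $c^2=qb=q^2$ (which is what the type $(C_n,C_n)$ specialization $a=b^{1/2}$, $c=q^{1/2}b^{1/2}$ gives) together with $t=q$ into the definition \eqref{f[s]} of $f(s)$. Evaluating $f\big(t^{i+1}\big)=f\big(q^{i+1}\big)$ at these values, I expect the numerator and denominator to collapse: each factor of the form $\big(1-q^k/s\cdots\big)$ should cancel against a companion factor after the substitution $s=q^{i+1}$, yielding $f\big(q^{i+1}\big)=1$ for all $i\ge 0$. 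This is the key simplification and I would carry it out by direct substitution, being careful to track which powers of $q$ appear in each of the six factors.

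With $f\big(t^{i+1}\big)=1$ established, the three-term recursions \eqref{Catalan-1}--\eqref{Catalan-3} become $\mathcal{C}_{0,0}=1$, $\mathcal{C}_{i-1,i-1}=\mathcal{C}_{i,i}$, $\mathcal{C}_{1,j-1}=\mathcal{C}_{0,j}$, and $\mathcal{C}_{i-1,j-1}+\mathcal{C}_{i+1,j-1}=\mathcal{C}_{i,j}$. These are precisely the defining relations of the Catalan triangle (ballot numbers). Next I would identify the entries of $\mathcal{C}$ with the claimed binomial expression. Concretely, in the notation of the theorem the vectors ${\bf P}^{(n)}$ and ${\bf m}^{(n)}$ are indexed so that the top entry corresponds to $(1^n)$; for a fixed $n$ the relevant coefficient of $m_{(1^{r-2k})}$ in $P^{(C_n,C_n)}_{(1^r)}$ is a single entry $\mathcal{C}_{i,j}$ with $j-i=2k$ and with $i,j$ determined by $n$ and $r$. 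I would verify by induction on $j-i$ that the solution of the ballot recursion with these boundary conditions is $\frac{n-r+1}{n-r+k+1}\binom{n-r+2k}{k}$, matching \eqref{s-m}.

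The cleanest route to the explicit binomial formula is to invoke Proposition~\ref{FFF}, which solves the recursion \eqref{Catalan-1}--\eqref{Catalan-3} for general $f$, and then specialize that closed form at $f\equiv 1$. Since the general solution already encodes the $(a,c,t)$-deformed ballot numbers, setting every deformation factor to $1$ must reproduce the undeformed ballot numbers $\frac{m-j+1}{m+1}\binom{m+j}{j}$-type expressions; a change of indices expressing $i,j$ in terms of $n$, $r$, $k$ then gives the stated coefficient $\frac{n-r+1}{n-r+k+1}\binom{n-r+2k}{k}$. I would confirm that the reindexing is consistent with the convention that the entry $\mathcal{C}_{i,j}$ feeds the coefficient of $m_{(1^{r-2k})}$ in $s^{(C_n)}_{(1^r)}$.

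The main obstacle is the bookkeeping of indices: the theorem states the recursion abstractly for an infinite matrix independent of $n$, whereas the corollary asserts a concrete coefficient depending on both $n$ and $r$. Translating between the abstract index pair $(i,j)$ and the geometric data $(n,r,k)$, and checking that the boundary relation $\mathcal{C}_{i-1,i-1}=\mathcal{C}_{i,i}$ correctly seeds the induction at the diagonal, is where care is needed; once the dictionary is fixed, the identification with the classical ballot numbers and hence with the binomial formula \eqref{s-m} is routine. The verification that $f\big(q^{i+1}\big)=1$ under the specialization, while a short computation, is the conceptual heart, since it is what degenerates the deformed Catalan triangle to the classical one.
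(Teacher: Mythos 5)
Your proposal is correct and follows essentially the same route the paper intends for this corollary: under the $(C_n,C_n)$ specialization $a^2=q$, $c^2=q^2$, $t=q$ the six factors of $f(s)$ in \eqref{f[s]} pair off as $(1-1/s)(1+1/s)=1-1/s^2$ and $(1-1/sq)(1+1/sq)=1-1/s^2q^2$ in the numerator, which exactly matches the denominator $(1-1/s^2q^2)(1-1/s^2)$, so $f\equiv 1$ identically; the recursion \eqref{Catalan-1}--\eqref{Catalan-3} then becomes the classical ballot-number recursion, whose unique solution gives $\mathcal{C}_{n-r,n-r+2k}=\frac{n-r+1}{n-r+k+1}\binom{n-r+2k}{k}$, and your index dictionary $i=n-r$, $j=n-r+2k$ is the one the paper uses. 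For completeness, note that the paper also re-derives \eqref{s-m} by an independent route in Section~\ref{DEGEN} (Corollary~\ref{Schur}): specializing $B(s,j)$ via the Saalsch\"utz summation yields $s^{(C_n)}_{(1^r)}=E_r(x)-E_{r-2}(x)$, and Lemma~\ref{Lem-Em} then gives the coefficient as $\binom{n-r+2k}{k}-\binom{n-r+2k}{k-1}$, which avoids solving the recursion altogether.
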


\begin{cor} When $b=1$ and $t=q$, the Macdonald polynomials of type $(C_n,C_n)$ become the Schur polynomials $s_{\lambda}(x)=s^{(D_n)}_{\lambda}(x)$ of type $D_n$. $($See Remark~{\rm \ref{limit-D}} below.$)$ In this case we have $f(t)=2$ and $f\big(t^{i+1}\big)=1$ for $i> 0$, and the recursion relations \eqref{Catalan-1}--\eqref{Catalan-3} reduces to the ones for $($the half of$)$ the ordinary Pascal triangle. We have
\begin{gather}
s^{(D_n)}_{(1^r)}(x)=P^{(D_n,D_n)}_{(1^r)}(x\,|\,q,q) =
\sum_{k=0}^{\lfloor{r \over 2}\rfloor}{n-r+2k \atopwithdelims() k} m_{(1^{r-2k})}(x) .\label{sD-m}
\end{gather}
\end{cor}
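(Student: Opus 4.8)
The plan is to derive this Corollary from Theorem~\ref{MAIN} by specializing the entries $\mathcal{C}_{ij}$ --- which are rational functions in $a,c,t$ --- to the point dictated by the type $(D_n,D_n)$ condition. Setting $b=1$ turns $P^{(C_n,C_n)}_{(1^r)}(x\,|\,b;q,t)$ into $P^{(D_n,D_n)}_{(1^r)}(x\,|\,q,t)$, and putting $t=q$ further collapses it to the type $D_n$ Schur polynomial $s^{(D_n)}_{(1^r)}(x)$ (see Remark~\ref{limit-D}); in terms of the Koornwinder parameters this means $a=b^{1/2}=1$, $c=q^{1/2}b^{1/2}=q^{1/2}$, $t=q$. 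So I would compute the recursion coefficients $f(t)$ and $f\big(t^{i+1}\big)$ of \eqref{Catalan-2}--\eqref{Catalan-3} at this specialization, solve the resulting simplified recursion, and translate the answer back through the vector identification of Theorem~\ref{MAIN}.

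The delicate point is the evaluation of $f(t)$, which I expect to be the main obstacle. I would first put $s=t$ in \eqref{f[s]} while $a,c,t$ are still generic: there the numerator factor $1-t^2/(sa^2c^2)$ becomes $1-t/(a^2c^2)$ and coincides with a denominator factor, and after cancelling it one is left with $f(t)=(1-1/t)(1+1/a^2)(1+1/c^2)/(1-1/(ta^2c^2))$. Only then would I set $a^2=1$, $c^2=q$, $t=q$: the factor $1+1/a^2$ becomes $2$, the remaining product $(1-1/q)(1+1/q)/(1-1/q^2)$ equals $1$, and hence $f(t)=2$. The subtlety is that if one instead substitutes the parameters into \eqref{f[s]} before fixing $s$, then the numerator of \eqref{f[s]} equals its denominator and $f(s)$ collapses to the constant rational function $1$, so that the removable indeterminacy at $s=t=q$ silently swallows the factor $1+1/a^2=2$; it is essential that the coefficients be read off from the generic rational functions and specialized afterwards. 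For $i>0$ no such coincidence occurs: $s=t^{i+1}=q^{i+1}$ is a regular point of the (now identically $1$) function, so $f\big(t^{i+1}\big)=1$.

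With $f(t)=2$ and $f\big(t^{i+1}\big)=1$ for $i>0$, the recursion \eqref{Catalan-1}--\eqref{Catalan-3} becomes $\mathcal{C}_{ii}=1$, together with $\mathcal{C}_{0,j}=2\mathcal{C}_{1,j-1}$ for $j$ even and $\mathcal{C}_{i,j}=\mathcal{C}_{i-1,j-1}+\mathcal{C}_{i+1,j-1}$ for $0<i<j$. I would then verify that $\mathcal{C}_{ij}=\binom{j}{(j-i)/2}$ (for $i\le j$ with $i+j$ even, and $0$ otherwise) solves this system: the diagonal gives $\binom{j}{0}=1$; the interior relation is Pascal's rule $\binom{j-1}{(j-i)/2}+\binom{j-1}{(j-i)/2-1}=\binom{j}{(j-i)/2}$; and on the boundary the symmetry $\binom{j-1}{j/2}=\binom{j-1}{j/2-1}$ gives $\binom{j}{j/2}=2\binom{j-1}{j/2-1}$, matching $\mathcal{C}_{0,j}=2\mathcal{C}_{1,j-1}$. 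Since the recursion determines each column from the preceding one, the uniqueness asserted in Theorem~\ref{MAIN} shows this binomial formula is the specialized $\mathcal{C}$.

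Finally I would translate back. Under the identification in which the $i$-th entry of ${\bf P}^{(n)}$ is $P^{(C_n)}_{(1^{n-i})}$ and the $j$-th entry of ${\bf m}^{(n)}$ is $m_{(1^{n-j})}$, the stability relation \eqref{STAB} reads $P^{(C_n)}_{(1^r)}=\sum_{k}\mathcal{C}_{n-r,\,n-r+2k}\,m_{(1^{r-2k})}$, writing $r=n-i$ and $r-2k=n-j$. Substituting $\mathcal{C}_{n-r,\,n-r+2k}=\binom{n-r+2k}{k}$ and using the $b=1$, $t=q$ identification of the left-hand side with $s^{(D_n)}_{(1^r)}(x)$ yields exactly \eqref{sD-m}, the sum running over $0\le k\le\lfloor r/2\rfloor$ so that $r-2k\ge 0$.
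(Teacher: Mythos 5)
Your argument is correct, and it is essentially the route that the corollary's own wording gestures at: specialize the coefficients $f\big(t^{i+1}\big)$ of the recursion in Theorem~\ref{MAIN}, observe that the system becomes (half of) Pascal's triangle, solve it as $\mathcal{C}_{ij}=\binom{j}{(j-i)/2}$, and read off \eqref{sD-m} through the stability relation. Your checks of the diagonal, interior and boundary cases of the recursion, and the uniqueness argument (each $j$-th column is determined by the $(j-1)$-st), are all sound, and the identification $\mathcal{C}_{n-r,n-r+2k}=\binom{n-r+2k}{k}$ matches the displayed $D_n$ matrix in the paper. The point you isolate about the order of specialization is a genuine subtlety the paper passes over in silence: with $a^2=1$, $c^2=q$, $t=q$ the function $f(s)$ of \eqref{f[s]} is identically $1$ as a rational function of $s$, and only by first cancelling the common factor $1-t^2/sa^2c^2$ against $1-t^3/s^2a^2c^2$ at $s=t$, while $a,c,t$ are still generic, does one recover $f(t)=2$; since the entries of $\mathcal{C}$ are by definition rational functions of $a,c,t$, yours is the correct reading. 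For comparison, the paper's worked-out derivation of \eqref{sD-m} (Section~\ref{DEGEN}, Corollary~\ref{Schur}) takes a different and more direct path: at $a=1$, $c=q^{1/2}$, $t=q$ the coefficients $B(s,j)$ collapse to $\delta_{j,0}$, so $s^{(D_n)}_{(1^r)}(x)=E_r(x)$, and the binomial expansion then comes straight from Lemma~\ref{Lem-Em}. That route avoids solving any recursion but uses the machinery of Theorem~\ref{Thm2.6}, whereas yours needs only the statement of Theorem~\ref{MAIN} together with its uniqueness clause.
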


\begin{rmk}\label{limit-D} To be precise, when $\ell(\lambda)=n$, the polynomial $P^{(C_n,C_n)}_{\lambda}(x\,|\,1;q,t)$ (or $m_{\lambda}$) has to
be further decomposed in terms of the type $D_n$ Macdonald (or monomial) polynomials~\cite{RW, St}, since the Weyl group is smaller than the one for $C_n$. Such a decomposition is easy but takes some space for a separate treatment. Therefore throughout in this paper, we do not go into the actual details, leaving this to the interested reader.
\end{rmk}

The first few terms of (\ref{s-m}) and (\ref{sD-m}) read
\begin{gather*}
 \left(
\begin{matrix}
s_{(1^{n})}^{(C_n)}\vspace{1mm}\\
s_{(1^{n-1})}^{(C_n)}\vspace{1mm}\\
s_{(1^{n-2})}^{(C_n)}\vspace{1mm}\\
s_{(1^{n-3})}^{(C_n)}\vspace{1mm}\\
\vdots
\end{matrix}
\right)=
\left(
\begin{array}{@{}ccccccccccc@{}}
1& &1 & & 2 & & 5 & & 14 &&\cdots\\
&1& & 2 & & 5 & &14 && 42& \\
&&1& & 3 & & 9 & &28 && \cdots\\
&&&1& & 4 & & 14 & &48 & \\
&&&&\ddots& &\ddots& & \ddots
\end{array}
 \right)
 \left(
\begin{matrix}
m_{(1^{n})}\\
m_{(1^{n-1})}\\
m_{(1^{n-2})}\\
m_{(1^{n-3})}\\
\vdots
\end{matrix}
\right), \\
\left(
\begin{matrix}
s_{(1^{n})}^{(D_n)}\vspace{1mm}\\
s_{(1^{n-1})}^{(D_n)}\vspace{1mm}\\
s_{(1^{n-2})}^{(D_n)}\vspace{1mm}\\
s_{(1^{n-3})}^{(D_n)}\\
\vdots
\end{matrix}
\right)=
\left(
\begin{array}{@{}ccccccccccc@{}}
1& &2 & & 6 & & 20 & & 70 &&\cdots\\
&1& & 3 & & 10 & &35 && 126& \\
&&1& & 4 & & 15 & &56 && \cdots\\
&&&1& & 5 & & 21 & &84 & \\
&&&&\ddots& &\ddots& & \ddots
\end{array}
 \right)
 \left(
\begin{matrix}
m_{(1^{n})}\\
m_{(1^{n-1})}\\
m_{(1^{n-2})}\\
m_{(1^{n-3})}\\
\vdots
\end{matrix}
\right).
\end{gather*}

As an application of our results obtained in this paper, we calculate the transition matrix from the Schur polynomials to the Hall--Littlewood polynomials, namely the Kostka polynomials, associated with one column diagrams.
\begin{dfn}
Let $K^{(C_n)}_{(1^r)(1^{r-2j})}(t)$ and $K^{(D_n)}_{(1^r)(1^{r-2j})}(t)$ be the transition coefficients defined by
\begin{gather*}
s^{(C_n)}_{(1^r)}(x)=\sum_{j=0}^{\lfloor {r\over 2}\rfloor}K^{(C_n)}_{(1^r)(1^{r-2j})}(t)P^{(C_n,C_n)}_{(1^{r-2j})}(x\,|\,t;0,t),\\
s^{(D_n)}_{(1^r)}(x)=\sum_{j=0}^{\lfloor {r\over 2}\rfloor}K^{(D_n)}_{(1^r)(1^{r-2j})}(t)P^{(D_n,D_n)}_{(1^{r-2j})}(x\,|\,0,t).
\end{gather*}
\end{dfn}

\begin{thm}\label{KOSTKAthm} The $K^{(C_n)}_{(1^r)(1^{r-2j})}(t)$ and $K^{(D_n)}_{(1^r)(1^{r-2j})}(t)$ are polynomials in $t$ with nonnegative integral coefficients. We have
\begin{align*}
K^{(C_n)}_{(1^r)(1^{r-2j})}(t)
 & =
 t^{2j}{[n-r+1]_{t^2}\over [n-r+j+1]_{t^2}}
{n-r+2j \atopwithdelims[] j}_{t^2}
=
 {n-r+2j \atopwithdelims[] j}_{t^2} - {n-r+2j \atopwithdelims[] j-1}_{t^2},\nonumber\\
K^{(D_n)}_{(1^r)(1^{r-2j})}(t)
&=
t^j {1+t^{n-r}\over 1+t^{n-r+2j}}
\left[ n-r+2j\atop j\right]_{t^2} \\
&=
t^{n-r+j}
\left[ n-r+2j-1\atop j-1\right]_{t^2}+
t^j
\left[ n-r+2j-1\atop j\right]_{t^2}. \nonumber
\end{align*}
Here we have used the notation for the $q$-integer $[n]_q$, the $q$-factorial $ [n]_q!$ and the $q$-binomial coefficient $ \left[m\atop j\right]_q$ as
\begin{gather*}
 [n]_q={1-q^n \over 1-q}, \qquad [n]_q!=[1]_q[2]_q\cdots [n]_q,
\qquad \left[m\atop j\right]_q=\prod_{k=1}^j {[m-k+1]_q\over [k]_q}={[m]_q!\over [j]_q![m-j]_q!}.
\end{gather*}
\end{thm}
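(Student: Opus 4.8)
The plan is to realize both the Schur and the Hall--Littlewood one--column polynomials as specializations of the single matrix $\mathcal{C}$ of Theorem~\ref{MAIN}, and to characterize the Kostka matrix as the intertwiner between the two. Throughout write $q=t^{2}$ and $i=n-r$. On the Schur side the first Corollary already identifies $\mathcal{C}^{S}$ as the solution of \eqref{Catalan-1}--\eqref{Catalan-3} with $f\equiv 1$, namely the ordinary ballot numbers. On the Hall--Littlewood side, $P^{(C_n,C_n)}_{(1^{r-2j})}(x\,|\,t;0,t)$ is the Koornwinder polynomial with $a^{2}=t$ and $c=0$ (read as the limit $c\to 0$); substituting this into $f$ and taking the limit gives $f(s)=1-s^{2}$, hence $f(t^{i+1})=1-t^{2i+2}=1-q^{i+1}$, and lets $\mathcal{C}^{HL}$ be the corresponding solution of \eqref{Catalan-1}--\eqref{Catalan-3}. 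Expanding the definition of the Kostka coefficients in monomials and invoking stability, the assertion $s^{(C_n)}_{(1^r)}=\sum_{j}K^{(C_n)}_{(1^r)(1^{r-2j})}(t)\,P^{(C_n,C_n)}_{(1^{r-2j})}(x\,|\,t;0,t)$ becomes the triangular system
\[
\mathcal{C}^{S}_{i,i+2k}=\sum_{j=0}^{k}K(i,j)\,\mathcal{C}^{HL}_{i+2j,\,i+2k},
\]
where $K(i,j)$ denotes the conjectured value of $K^{(C_n)}_{(1^r)(1^{r-2j})}(t)$.

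I would avoid computing $\mathcal{C}^{HL}$ in closed form. Instead I set $D_{i,k}=\sum_{j}K(i,j)\,\mathcal{C}^{HL}_{i+2j,i+2k}$ and show that $D$ obeys \emph{the same} recursions \eqref{Catalan-1}--\eqref{Catalan-3} with $f\equiv 1$ that pin down $\mathcal{C}^{S}$; since those recursions have a unique solution normalized by $\mathcal{C}_{0,0}=1$ (Theorem~\ref{MAIN}), this forces $D=\mathcal{C}^{S}$ and establishes the system above. Applying the Hall--Littlewood relation \eqref{Catalan-3} to each entry $\mathcal{C}^{HL}_{i+2j,i+2k}$ (and \eqref{Catalan-1} to the diagonal term $j=k$) rewrites $D_{i,k}$ in terms of the entries $\mathcal{C}^{HL}_{(i-1)+2j,(i-1)+2k}$ and $\mathcal{C}^{HL}_{(i+1)+2j,(i+1)+2(k-1)}$. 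The decisive observation is that the second family is the shift $j\mapsto j+1$ of the first, so that after regrouping over the single family $\{\mathcal{C}^{HL}_{(i-1)+2j,(i-1)+2k}\}_{j}$ the interior identity $D_{i,k}=D_{i-1,k}+D_{i+1,k-1}$ collapses, coefficient by coefficient, to the three--term recursion
\[
K(i,j)-K(i-1,j)+\bigl(1-q^{\,i+2j-1}\bigr)\,K(i,j-1)-K(i+1,j-1)=0,
\]
while the boundary relation \eqref{Catalan-2} for $D$ is exactly the $i=0$ instance of the same identity, with $K(-1,\cdot)=0$.

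It then remains to verify this recursion for the conjectured $q$--ballot numbers $K(i,j)=\left[{i+2j\atop j}\right]_{q}-\left[{i+2j\atop j-1}\right]_{q}$, which is a termwise application of the $q$--Pascal rule. The two displayed expressions for $K^{(C_n)}$ coincide through the ballot identity $\left[{N\atop j}\right]_{q}-\left[{N\atop j-1}\right]_{q}=q^{\,j}\,\frac{[N-2j+1]_{q}}{[N-j+1]_{q}}\left[{N\atop j}\right]_{q}$ with $N=i+2j$, which produces precisely the prefactor $t^{2j}[n-r+1]_{t^{2}}/[n-r+j+1]_{t^{2}}$. Polynomiality in $t$ is immediate from either form, and nonnegativity together with integrality of the coefficients follows from the known positivity of the $q$--ballot numbers, a difference of consecutive Gaussian binomials $\left[{N\atop j}\right]_{q}-\left[{N\atop j-1}\right]_{q}$ (with $j\le N/2$) lying in $\mathbb{Z}_{\ge 0}[q]$; see \cite{A,FH}.

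The type $D_n$ statement runs in exact parallel. The second Corollary gives $\mathcal{C}^{S}$ with $f(t^{i+1})=1$ for $i>0$ and $f(t)=2$, while the specialization $a^{2}=1$, $c\to 0$ yields $f(s)=(1-s)(1+s/t)$, i.e.\ $f(t^{i+1})=(1-t^{i+1})(1+t^{i})$; the same reindexing reduces the claim to a single $q$--Pascal recursion for $K^{(D_n)}(i,j)=t^{j}\,\frac{1+t^{i}}{1+t^{i+2j}}\left[{i+2j\atop j}\right]_{t^{2}}$, whose two--term form $t^{i+j}\left[{i+2j-1\atop j-1}\right]_{t^{2}}+t^{j}\left[{i+2j-1\atop j}\right]_{t^{2}}$ renders positivity and integrality manifest. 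The main obstacle is the bookkeeping of the two middle steps: extracting the correct Hall--Littlewood limit of $f$, and above all confirming that the reindexed Hall--Littlewood recursion collapses \emph{exactly} onto the $q$--ballot three--term recursion rather than onto a weaker linear relation among the $\mathcal{C}^{HL}_{i+2j,i+2k}$. Once these are secured, the residual $q$--binomial manipulations are routine.
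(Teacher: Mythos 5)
Your proposal is correct in outline, but it takes a genuinely different route from the paper. The paper (Sections~\ref{DEGEN} and~\ref{Kostka}) works through the basis $E_r(x)$: it specializes the ${}_4\phi_3$ series defining $B(s,j)$ and $\widetilde{B}(s,j)$ to $(a,c)=\big(t^{1/2},q^{1/2}t^{1/2}\big)$ (resp.\ $\big(1,q^{1/2}\big)$), evaluates them in closed product form by the Saalsch\"utz summation, takes $q\to 0$ to obtain $E_r=\sum_j\left[{n-r+2j\atop j}\right]_{t^2}P^{(C_n,C_n)}_{(1^{r-2j})}(x\,|\,t;0,t)$, and then simply subtracts, using $s^{(C_n)}_{(1^r)}=E_r-E_{r-2}$ from Corollary~\ref{Schur}, so that the $q$-ballot numbers appear as a difference of two Gaussian binomials with no recursion argument at all. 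You instead bypass all closed-form evaluation of transition coefficients and use the uniqueness clause of Theorem~\ref{MAIN}: both $\mathcal{C}^{S}$ and $\mathcal{C}^{HL}$ are pinned down by the recursions \eqref{Catalan-1}--\eqref{Catalan-3} with the limiting values $f\equiv 1$ and $f(s)=1-s^2$ (your computation of these limits of \eqref{f[s]} is correct, as is the type $D$ value $(1-s)(1+s/t)$), and the intertwining $\mathcal{C}^S=K\,\mathcal{C}^{HL}$ collapses to the three-term relation $K(i,j)+\big(1-q^{i+2j-1}\big)K(i,j-1)=K(i-1,j)+K(i+1,j-1)$, which the $q$-ballot numbers do satisfy (indeed $\left[{i+2j\atop j}\right]_q$ and $\left[{i+2j\atop j-1}\right]_q$ each satisfy it separately, by $q$-Pascal together with $(1-q^{i+2j-1})\left[{i+2j-2\atop j-1}\right]_q=(1-q^{i+j})\left[{i+2j-1\atop j-1}\right]_q$; the boundary case $i=0$ works because $K(-1,j)=\left[{2j-1\atop j}\right]_q-\left[{2j-1\atop j-1}\right]_q=0$ by symmetry). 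Two points you should make explicit to close the argument: the existence of the $q\to 0$ limit of the entries of $\mathcal{C}$, which is guaranteed by the product formula of Theorem~\ref{FFF} since each factor $F(t^{r+1},d)$ has a finite limit; and the diagonal/top-row boundary cases of the regrouping, where \eqref{Catalan-1} and \eqref{Catalan-2} replace \eqref{Catalan-3} (these turn out to be consistent with the same coefficient identity). What the paper's route buys is brevity and the explicit HL-to-$E$ transition formulas as a byproduct; what yours buys is independence from the Saalsch\"utz evaluation, at the cost of more bookkeeping. The positivity and integrality claims are handled identically in both arguments, by citing the known positivity of the $q$-ballot and $q$-binomial expressions.
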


As for our proof of this, see Section~\ref{Kostka}.

\begin{rmk}Note that the $K^{(C_n)}(t)$'s are essentially given by the $t^2$-deformed ballot num\-bers~\cite{A} (the case $n=r$ corresponds to the $t^2$-deformation of the Catalan numbers~\cite{FH}), and the $K^{(C_n)}(t)$'s by a version of the $t$-deformed binomial numbers.
\end{rmk}

First few entries of $K^{(C_n)}(t)$ read
\begin{gather*}
\left(
\begin{array}{@{}ccccccccccc@{}}
1& &t^2 & & t^4+t^8 & & t^6+t^{10}+t^{12}+t^{14}+t^{18} \\
&1& & t^2+t^4 & & \displaystyle{ t^4+t^{6}+t^{8}\atop \qquad +t^{10}+t^{12} }& &\cdots \\
&&1& & t^2+t^4+t^6 & &\displaystyle { t^4+t^6+2t^8+t^{10}\atop \qquad \qquad +2 t^{12}+t^{14}+t^{16}} \\
&&&1& & t^2+t^4+t^6+t^8 & &\cdots \\
&&&&\ddots& & \ddots
\end{array}
 \right),
\end{gather*}
and for $K^{(D_n)}(t)$ we have
\begin{gather*}
\left(
\begin{array}{@{}ccccccccccc@{}}
1& &2 t & & 2 t^2+2t^4+2t^6 & &\cdots\\
&1& & t+t^2+t^3 & & \displaystyle{t^{2}+t^{3}+t^{4}+t^{5}+2t^{6}\atop\qquad +t^{7}+t^{8}+t^{9}+t^{10}\qquad } & \\
&&1& & t+2 t^3+t^5 & &\cdots\\
&&&1& & t+ t^3+t^4+t^5+t^7 & & \\
&&&&\ddots& & \ddots
\end{array}
 \right).
\end{gather*}

The present article is organized as follows. In Section~\ref{Collection}, several transition formulas obtained in this paper are summarized for the convenience of reading. Then we present a proof of our main result Theorem~\ref{MAIN}. In Sections~\ref{Koornwinder} and~\ref{one-column}, we use Mimachi's kernel function identity to have a description of the Koornwinder polynomials and the Macdonald polynomials of type $(C_n,C_n)$ with one column diagrams. Sections~\ref{MatrixInverse} and~\ref{BBt} form the core of the technical part of this article. In Section~\ref{MatrixInverse}, Bressoud's matrix inversion is applied to invert the formula for the Macdonald polynomials of type $(C_n,C_n)$ with one column diagrams. In Section~\ref{BBt}, the four term relations for $B(s,j)$ and $\widetilde{B}(s,j)$ are derived. In Section~\ref{SEC-C} is given the basic properties for the transition matrix $\mathcal{C}$. In Section~\ref{DEGEN}, we study some degenerate cases, including the calculation of the Kostka polynomials. Some conjectures are presented in Section~\ref{CON}, concerning the asymptotically free type eigenfunctions for type $C_n$ when $b=t$. It is quite conceivable that Theorem~\ref{MAIN} admits an elliptic generalization in terms
of the $BC_n$ abelian functions~\cite{R1, R2}, but we have not yet attempted to formulate such a generalization.

Throughout the paper, we use the standard notation (see~\cite{GR})
\begin{gather*}
(z;q)_\infty =\prod_{k=0}^{\infty}\big(1-q^k z\big), \qquad (z;q)_k=\frac{(z;q)_{\infty}}{(q^kz;q)_{\infty}},\qquad k\in\mathbb{Z}, \nonumber \\
(a_1, a_2, \ldots, a_r;q)_k = (a_1;q)_k (a_2;q)_k \cdots (a_r;q)_k, \qquad k\in\mathbb{Z}, \nonumber \\
{}_{r+1}\phi_r\left[ {a_1,a_2,\ldots,a_{r+1}\atop b_1,\dots,b_{r}};q,z\right]= \sum_{n=0}^\infty {(a_1, a_2, \ldots, a_{r+1};q)_n \over
(q, b_1, b_2, \ldots, b_{r};q)_n} z^n ,\nonumber\\
{}_{r+1}W_r(a_1;a_4,a_5,\ldots,a_{r+1};q,z)= {}_{r+1}\phi_r\left[ {a_1,q a_1^{1/2},-q a_1^{1/2},a_4,\ldots,a_{r+1}\atop
a_1^{1/2},-a_1^{1/2},q a_1/a_4,\dots,qa_1/a_{r+1}};q,z\right].\nonumber
\end{gather*}

\section{Collection of transition formulas and proof of Theorem~\ref{MAIN}}\label{Collection}

In this section, we collect several transformation formulas which we need to establish Theorem~\ref{MAIN}, giving brief explanations about our ideas and methods for their derivations.

\subsection[Koornwinder polynomials $P_{(1^r)}(x\,|\,a,b,c,d\,|\,q,t)$ with one column diagrams]{Koornwinder polynomials $\boldsymbol{P_{(1^r)}(x\,|\,a,b,c,d\,|\,q,t)}$\\ with one column diagrams}

In \cite{FHNSS}, we studied some explicit formulas for the Koornwinder polynomials \cite{K} with one-row diagrams. The results were interpreted as certain summations over the sets of tableaux of types~$C_n$ and~$D_n$. While using the same technique as in~\cite{FHNSS}, but replacing the Cauchy type kernel function by Mimachi's dual-Cauchy type one (as to the kernel functions, see~\cite{KNS,Mi}), we can study an explicit formula for the Koornwinder polynomials with one column diagrams. Mimachi's kernel function~\cite{Mi} intertwines the action of the Koornwinder operator of type $BC_n$ to the one for $BC_1$ (namely for the Askey--Wilson operator) which in turn acts on the Askey--Wilson eigenfunction. To perform explicit calculations based on this idea, as in the one-row diagram case, we need the fourfold summation formula for the Askey--Wilson eigenfunction~\cite{HNS}. The details will be given in Sections~\ref{Koornwinder} and~\ref{one-column}.

Specializing the parameters of the Koornwinder polynomials, we obtain the Macdonald polynomials of types $C_n$ and $D_n$ with one column diagram. In these particular limits, the fourfold summation (for the Askey--Wilson eigenfunction) reduces to a twofold one. In this way, we have explicit expressions for the Macdonald polynomials of types $C_n$ and $D_n$ with one column diagrams.

Let $n\in \bbZ_{>0}$ and $x=(x_1,\ldots,x_n)$ be a sequence of variables. Let $P_{(1^r)}(x\,|\,a,b,c,d\,|\,q,t)$ be the Koornwinder polynomial with one column diagram $(1^r)$, $r\in \bbZ_{\geq 0}$. (See Section~\ref{Koornwinder}, as to our notation.)

\begin{dfn}\label{Def-E} Define the symmetric Laurent polynomial $E_r(x)$'s by expanding the generating function $E(x\,|\,y)$ as
\begin{gather*}
E(x\,|\,y)=\prod_{i=1}^{n} (1-y x_i)(1-y/x_i) = \sum_{r \geq 0} (-1)^r E_r(x) y^r.
\end{gather*}
Note that we have $E_{2n-r}(x)=E_{r}(x)$ for $0\leq r\leq n$ and $E_r(x)=0$ for $r>2n$.
\end{dfn}

\begin{thm}\label{KoornwinderPoly_1^r} We have the following fourfold summation formula for the $BC_n$ Koornwinder polynomial $P_{(1^r)}(x\,|\,a,b,c,d\,|\,q,t)$ with one column diagram
\begin{gather*}
P_{(1^r)}(x\,|\,a,b,c,d\,|\,q,t) = \sum_{k,l,i,j\geq 0} (-1)^{i+j} E_{r-2k-2l-i-j} (x)
\widehat{c}\,'_e\big(k,l;t^{n-r+1+i+j}\big) \widehat{c}_o\big(i,j; t^{n-r+1}\big),
\end{gather*}
where
\begin{gather*}
\widehat{c}\,'_e(k,l;s) = { \big(tc^2/a^2 ; t^2\big)_k \big(sc^2t ; t^2\big)_k \big(s^2c^4/t^2 ; t^2\big)_k
\over \big(t^2 ; t^2\big)_k \big(sc^2/t ; t^2\big)_k \big(s^2a^2c^2/t ; t^2\big)_k }
{ \big(1/c^2 ; t\big)_l (s/t ; t)_{2k+l} \over (t ; t)_l \big(sc^2 ; t\big)_{2k+l} } { 1-st^{2k+2l-1}
\over 1-st^{-1} } a^{2k}c^{2l}, \\
\widehat{c}_o(i,j; s) = { (-a/b ; t)_i (scd/t ; t)_i \over (t ; t)_i (-sac/t ; t)_i } { (s ; t)_{i+j} (-sac/t ; t)_{i+j} \big(s^2a^2c^2/t^3 ; t\big)_{i+j}
\over (s^2abcd/t^2 ; t)_{i+j} (sac/t^{3/2} ; t)_{i+j} \big({-}sac/t^{3/2} ; t \big)_{i+j}} \\
\hphantom{\widehat{c}_o(i,j; s) =}{} \times { (-c/d ; t)_j (sab/t ; t)_j \over (t ; t)_j (-sac/t ; t)_j } b^id^j.\nonumber
\end{gather*}
\end{thm}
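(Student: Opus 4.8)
The plan is to adapt the kernel-function method of \cite{FHNSS}, replacing the Cauchy-type kernel used there for one-row diagrams by Mimachi's dual-Cauchy kernel \cite{Mi}, which is the object suited to one-column diagrams. The starting observation is that the generating function $E(x\,|\,y)=\prod_{i=1}^{n}(1-yx_i)(1-y/x_i)$ of Definition~\ref{Def-E} is, up to an overall monomial in $y$, the single-auxiliary-variable specialization of the dual-Cauchy kernel $\prod_{i=1}^{n}\big(x_i+x_i^{-1}-y-y^{-1}\big)$, since $(1-yx_i)(1-y/x_i)=-y\big(x_i+x_i^{-1}-y-y^{-1}\big)$. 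The crucial analytic input is the \emph{intertwining} property of this kernel: the Koornwinder $q$-difference operator $D_x$ of type $BC_n$ and the Askey--Wilson ($BC_1$) operator $D_y$ in the single variable $y$, whose parameters are determined by $a,b,c,d,q,t$, satisfy a relation of the form $D_x\,E(x\,|\,y)=D_y\,E(x\,|\,y)$ (up to an additive constant). First I would write down both operators explicitly and verify this identity, keeping careful track of how the four parameters $a,b,c,d$ and the multiplicity $t$ enter on the $BC_1$ side.

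Granting the intertwining, the second step is to diagonalize on the $y$-side. Expanding $E(x\,|\,y)$ in the basis of Askey--Wilson eigenfunctions in $y$, each coefficient is, by the intertwining relation, an eigenfunction of $D_x$ carrying the matching eigenvalue; comparing with the triangular expansion $E(x\,|\,y)=\sum_{r\geq 0}(-1)^rE_r(x)y^r$ and using that $P_{(1^s)}(x\,|\,a,b,c,d\,|\,q,t)$ is the unique $D_x$-eigenfunction triangular with respect to the $E_s(x)$, one identifies these coefficient functions with the Koornwinder polynomials up to explicit normalization constants. This reduces the entire computation to the expansion coefficients of a single Askey--Wilson eigenfunction in the variable $y$.

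The third step supplies those coefficients from the fourfold summation formula for the Askey--Wilson eigenfunction established in \cite{HNS}. That formula presents the relevant eigenfunction as a nested product of an ``even'' double sum and an ``odd'' double sum, which is exactly the origin of the two factors: $\widehat{c}\,'_e(k,l;s)$, carrying the squared parameters $a^2,c^2$ and contributing the even degree shift $2k+2l$, and $\widehat{c}_o(i,j;s)$, carrying $b^id^j$ and contributing $i+j$ with the sign $(-1)^{i+j}$. Substituting this fourfold expansion into the kernel identity and collecting, at each total degree, the contributions against $E(x\,|\,y)=\sum_r(-1)^rE_r(x)y^r$ produces the asserted sum $\sum_{k,l,i,j\geq 0}(-1)^{i+j}E_{r-2k-2l-i-j}(x)\,\widehat{c}\,'_e\big(k,l;t^{n-r+1+i+j}\big)\,\widehat{c}_o\big(i,j;t^{n-r+1}\big)$, where the shift of the spectral argument by $t^{i+j}$ records that the even summation is nested inside the odd one.

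I expect the difficulty to concentrate in two places. The lighter one is the verification of the intertwining identity and the exact parameter dictionary between the $BC_n$ and $BC_1$ operators, which is routine but error-prone. The genuine obstacle is the bookkeeping of the third step: importing from \cite{HNS} the precise very-well-poised factors of the fourfold Askey--Wilson expansion and then reorganizing the resulting quadruple sum---re-indexing the four summation variables so that the argument of $E$ becomes $r-2k-2l-i-j$ and the two spectral parameters acquire their stated shifts $t^{n-r+1}$ and $t^{n-r+1+i+j}$---without sign or normalization slips. The even/odd split and the nesting of the two double sums is precisely what makes this reorganization delicate.
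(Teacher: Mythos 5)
Your overall strategy is indeed the paper's: use Mimachi's dual--Cauchy kernel \cite{Mi} to pass from $BC_n$ to $BC_1$, then feed in the fourfold summation of \cite{HNS} for the $BC_1$ eigenfunction. But the middle step, where the monomial expansion of a $BC_1$ eigenfunction is supposed to become the $E$-expansion of $P_{(1^r)}$, has a genuine gap. If you expand $\Psi(x;y)=y^{-n}E(x\,|\,y)$ in a family of Askey--Wilson eigenfunctions $h_k(y)$ and identify the coefficient functions with $P_{(1^k)}(x)$, then substituting the monomial expansions of the $h_k(y)$ (which is what \cite{HNS} supplies) and comparing with $\sum_r(-1)^rE_r(x)y^{r-n}$ expresses each $E_r$ as a combination of the $P_{(1^k)}$ --- the transition in the \emph{opposite} direction to the one claimed. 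To land on $P_{(1^r)}=\sum(\cdots)E_{r-2k-2l-i-j}$ the paper instead pairs $\Psi(x;y)$ against a \emph{single} asymptotically free eigenfunction through a constant-term functional: Theorem~\ref{thm:reproduction} computes $\big[\Psi(x;y)V(y)\widehat f(y;s)\big]_{1,y}$ with $V(y)=y^{-1}\big(1-y^2\big)$ the $C_1$ Weyl denominator, and its proof needs the adjoint relation of Proposition~\ref{ADJOINT} to move $\widetilde{\mathcal D}_y$ across the constant term and onto $\widehat f$, together with Lemma~\ref{Lem-EEE} for the triangularity. Your outline contains no such pairing, so the announced reduction ``to the expansion coefficients of a single Askey--Wilson eigenfunction'' is not actually achieved.

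Two concrete consequences of that omission. First, the eigenfunction whose coefficients are needed is not the Askey--Wilson eigenfunction for $(a,b,c,d;q)$ but the one for the twisted parameters $\widehat{\ast}$, i.e., $(t/a,t/b,t/c,t/d)$ in base $t$, with spectral value $s=t^{n-r+1}$ fixed by the choice (\ref{choice-s}); this twist is produced exactly by the adjoint-plus-conjugation step you skip, and ``the parameter dictionary is routine but error-prone'' does not recover it. Second, the factor $\big(1-y^2\big)$ coming from $V(y)$ must be absorbed into the even double sum, and that absorption (Lemma~\ref{1-y^2phi}) is precisely what replaces $c_e$ by the primed coefficient $\widehat{c}\,'_e$ in the statement; following your steps literally would yield the unprimed coefficient and hence the wrong formula. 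Both failures sit exactly where you flag the work as routine bookkeeping.
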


\begin{cor}\label{MacdonaldPoly_1^r} Degenerating Koornwinder's parameters as $(a,b,c,d)\rightarrow (a,-a,c,-c)$
we have
\begin{gather}
 P_{(1^r)}(x\,|\,a,-a,c,-c\,|\,q,t) =
 \sum_{k, l \geq 0 \atop
2k+2l \leq r}E_{r-2k-2l}(x) { \big(1/c^2 ; t\big)_l (s/t ; t)_{2k+l} \over (t ; t)_l \big(sc^2 ; t\big)_{2k+l} }
{ 1-st^{2k+2l-1} \over 1-st^{-1} } c^{2l} \nonumber\\
\hphantom{P_{(1^r)}(x\,|\,a,-a,c,-c\,|\,q,t) = }{} \times
 { \big(tc^2/a^2 ; t^2\big)_k \big(sc^2t ; t^2\big)_k \big(s^2c^4/t^2 ; t^2\big)_k
\over \big(t^2 ; t^2\big)_k \big(sc^2/t ; t^2\big)_k \big(s^2a^2c^2/t ; t^2\big)_k }
a^{2k},\label{PtoG_0}
\end{gather}
where $s=t^{n-r+1}$.
\end{cor}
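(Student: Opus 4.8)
The plan is to obtain~\eqref{PtoG_0} as a direct specialization of the fourfold summation in Theorem~\ref{KoornwinderPoly_1^r}, setting $b=-a$ and $d=-c$ and showing that the odd part $\widehat{c}_o(i,j;s)$ forces $i=j=0$. The key observation is that, under this substitution, the numerator factors $(-a/b;t)_i$ and $(-c/d;t)_j$ of $\widehat{c}_o(i,j;s)$ become $(1;t)_i$ and $(1;t)_j$, because $-a/b=-a/(-a)=1$ and $-c/d=-c/(-c)=1$.

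First I would invoke the empty-product convention: $(1;t)_m=\prod_{k=0}^{m-1}\big(1-t^k\big)$ has its $k=0$ factor equal to $1-t^0=0$, whence $(1;t)_m=0$ for every $m\geq 1$ while $(1;t)_0=1$. Thus every summand of Theorem~\ref{KoornwinderPoly_1^r} with $i\geq 1$ is annihilated by the factor $(1;t)_i$, and every summand with $j\geq 1$ by $(1;t)_j$, so that only the term $i=j=0$ survives. At this term all Pochhammer symbols in $\widehat{c}_o$ are empty products equal to $1$ and the prefactor $b^id^j$ equals $1$, so $\widehat{c}_o(0,0;s)=1$ identically. Since $i=j=0$, the argument of the even part is $t^{\,n-r+1+i+j}=t^{\,n-r+1}=:s$, and the remaining sum collapses to
\[
\sum_{k,l\geq 0} E_{r-2k-2l}(x)\,\widehat{c}\,'_e(k,l;s).
\]

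It then remains to record the summation range and to substitute the explicit form of $\widehat{c}\,'_e(k,l;s)$. By Definition~\ref{Def-E}, $E_m(x)$ is defined only for $m\geq 0$ and vanishes for $m<0$, so the factor $E_{r-2k-2l}(x)$ truncates the double sum to $2k+2l\leq r$, matching the stated range; inserting the displayed expression for $\widehat{c}\,'_e(k,l;s)$ then reproduces~\eqref{PtoG_0}. No genuine obstacle arises here, the argument being a single clean specialization; the only point requiring care is the empty-product bookkeeping that simultaneously annihilates the terms with $i\geq 1$ or $j\geq 1$ and normalizes the surviving $i=j=0$ term to~$1$.
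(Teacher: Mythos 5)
Your proposal is correct and follows essentially the same route as the paper: the paper's proof (Section~\ref{Mac-C}) likewise rests on the observation that under $(b,d)\to(-a,-c)$ the odd coefficient collapses, $\widehat{c}_o(i,j;s)=\delta_{i,0}\delta_{j,0}$, which is exactly the vanishing of $(1;t)_i$ and $(1;t)_j$ for $i,j\geq 1$ that you spell out, after which only the twofold sum over $k,l$ with the coefficient $\widehat{c}\,'_e(k,l;t^{n-r+1})$ survives. The only difference is cosmetic: the paper performs the specialization at the level of the series $\Phi$ and the constant-term formula~\eqref{UE} rather than in the already-assembled fourfold formula of Theorem~\ref{KoornwinderPoly_1^r}, and it leaves the $(1;t)_i$ bookkeeping implicit.
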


The following formula (\ref{G-to-P}) can be derived from (\ref{PtoG_0}) by applying the Bressoud matrix inversion technique~\cite{B, L}. See Section~\ref{MatrixInverse}, for details.
\begin{thm}\label{THME-P}We have
\begin{gather}
E_{r}(x) = \sum_{k, l \geq 0 \atop 2k+2l \leq r} P_{(1^{r-2l-2k})}(x\,|\,a,-a,c,-c\,|\,q,t)
{ \big(c^2 ; t\big)_l \over (t ; t)_l }{ \big(s t^l ; t\big)_{l+2k} \over \big(st^{l-1}c^2 ; t\big)_{l+2k} } \nonumber\\
\hphantom{E_{r}(x) =}{} \times
{ \big(a^2/tc^2 ; t^2\big)_k \over \big(t^2 ; t^2\big)_k } { \big(s^2 t^{4l-2} c^4 ; t^2\big)_k \over \big(s^2 t^{4l} c^4 ; t^2\big)_k }
{ \big(s^2 t^{4l+2k-2} c^4 ; t^2\big)_k \over \big(s^2 t^{4l+2k-3} a^2c^2 ; t^2\big)_k }\big(tc^2\big)^k,\label{G-to-P}
\end{gather}
where $s=t^{n-r+1}$.
\end{thm}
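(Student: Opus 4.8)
The plan is to obtain \eqref{G-to-P} from \eqref{PtoG_0} by recognizing that the two formulas express a pair of mutually inverse infinite-dimensional upper-triangular matrices, and then to identify the inverse matrix explicitly via Bressoud's matrix inversion theorem \cite{B, L}. Concretely, I would first rewrite Corollary~\ref{MacdonaldPoly_1^r} as a statement that the column vector $\big(P_{(1^r)}(x\,|\,a,-a,c,-c\,|\,q,t)\big)_{r\geq 0}$ is obtained from the column vector $\big(E_r(x)\big)_{r\geq 0}$ by an upper-triangular matrix $G=(G_{r,m})$, where $G_{r,m}=0$ unless $r-m$ is a nonnegative even integer, and the nonzero entries $G_{r,r-2l-2k}$ are read off from the double-sum coefficient in \eqref{PtoG_0}. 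Because the summation index shift is always by an even number $2k+2l$, the natural move is to separate the single even index $p=k+l$ (governing the column offset) and to regard the matrix as indexed by the half-offset. This casts $G$ into the precise shape to which the Bressoud inversion applies.

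The core step is the application of Bressoud's matrix inversion \cite{B, L}. Bressoud's theorem gives an explicit closed form for the inverse of any lower- (or upper-) triangular matrix whose entries factor as a very-well-poised ${}_6\phi_5$-type product in the offset, subject to a pair of compatibility conditions on the parameters. So I would next match the $(k,l)$-product appearing in \eqref{PtoG_0}---namely the $q$-shifted factorial ratio $\tfrac{(1/c^2;t)_l (s/t;t)_{2k+l}}{(t;t)_l (sc^2;t)_{2k+l}}\,\tfrac{1-st^{2k+2l-1}}{1-st^{-1}}c^{2l}$ times the base-$t^2$ factor in $k$---against the standard Bressoud template, reading off the parameters $a_i$, $b_i$ of his inversion pair. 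The very-well-poised factor $\tfrac{1-st^{2k+2l-1}}{1-st^{-1}}$ is exactly the diagnostic signature that Bressoud's (rather than the simpler Gauss--Andrews) inversion is the correct tool, so this identification should go through once the indices $k$, $l$ are correctly bundled. Applying the theorem then yields the inverse matrix entries, which I would simplify to the product displayed on the right-hand side of \eqref{G-to-P}.

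Finally, having the inverse matrix $G^{-1}$ in hand, the identity \eqref{G-to-P} is just the statement $\big(E_r(x)\big)=G^{-1}\big(P_{(1^{\,\cdot})}(x)\big)$, read entrywise. I would verify that the substitution $s=t^{n-r+1}$ is consistent on both sides: since the column offset is $2l+2k$, the argument of the factorials must be correctly re-expressed after the offset, and one must check that the $s$ appearing inside the summand of \eqref{G-to-P} carries the index $r$ of the left-hand side (not $r-2l-2k$), which is the convention already fixed in \eqref{PtoG_0}. A short induction on $r$, or a direct check that $G\,G^{-1}=\mathrm{Id}$ reproduces $E_{2n-r}=E_r$ and the triangularity, would serve as confirmation.

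I expect the main obstacle to be the parameter-matching step: Bressoud's inversion is stated for a specific normalized form, and the coefficient in \eqref{PtoG_0} is a product of a base-$t$ part (in $l$) and a base-$t^2$ part (in $k$) glued together through the shared shift $2k+l$. Disentangling these so that the combined offset $k+l$ fits a single Bressoud pair---rather than two separate inversions that must then be composed---is delicate, and getting the well-poised balancing condition to hold under the specialization $s=t^{n-r+1}$ is where the bookkeeping is heaviest. Once that alignment is correct, deriving \eqref{G-to-P} is a mechanical simplification of $q$-shifted factorials.
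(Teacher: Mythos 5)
Your overall strategy---invert the triangular transition matrix implicit in \eqref{PtoG_0} via Bressoud's inversion---is the right one, and it is what the paper does in Section~\ref{MatrixInverse}. But the specific way you propose to execute it contains a genuine gap. You insist on bundling the two summation indices into a single offset $p=k+l$ and matching the resulting matrix entry against ``a single Bressoud pair rather than two separate inversions that must then be composed.'' That step cannot work as stated: the entry $G_{r,r-2p}=\sum_{k+l=p}(\cdots)$ of the transition matrix is itself a sum, and it does not collapse to a single product of $q$-shifted factorials of the form \eqref{Bre} required by Bressoud's theorem. Indeed, the paper later shows (Theorem~\ref{BIBASIC}, via the $q$-analogue of Bailey's transformation) that these combined entries are terminating ${}_4\phi_3$ series, which are genuinely not of Bressoud's factored form, so no choice of parameters in a single inversion pair will reproduce them.

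The resolution the paper actually uses is the one you explicitly try to avoid: the twofold coefficient in \eqref{PtoG_0} is recognized as the matrix product of \emph{two} lower-triangular Bressoud matrices, $\mathcal{B}(s)=\widetilde{\mathcal{M}}\big(t^{2}/s^2c^4,1/s^2t^4;c^2/ta^2,1/t^2;t\big)\,\mathcal{M}\big(1/s,t;1/c^2,1;t\big)$, where the $\mathcal{M}$ factor (base $t$) carries the $l$-sum and the conjugated $\widetilde{\mathcal{M}}$ factor (base $t^2$) carries the $k$-sum; the shared shift $2k+l$ is exactly what the composition of the two matrices produces. Since $\mathcal{M}(u,v;x,y;q)$ and $\mathcal{M}(u,v;y,x;q)$ are mutually inverse, one inverts each factor by swapping its last two parameters and reverses the order, obtaining $\widetilde{\mathcal{B}}(s)$, whose entries---again a composition of two single sums---give precisely the double sum in \eqref{G-to-P} (equation \eqref{e-p} of Theorem~\ref{thm:g_r}). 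If you want to repair your write-up, replace the ``single Bressoud pair'' step with this factorization-and-compose argument; the rest of your outline (triangularity, evenness of the offset, the bookkeeping of $s=t^{n-r+1}$ under the index shift) then goes through as you describe.
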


This is proved in (\ref{e-p}) of Theorem \ref{thm:g_r}.

\subsection[The coefficients $B(s,j)$ and $\widetilde{B}(s,j)$]{The coefficients $\boldsymbol{B(s,j)}$ and $\boldsymbol{\widetilde{B}(s,j)}$}
By using the $q$-analogue of Bailey's transformation \cite[p.~99, equation~(3.10.14)]{GR}, one can rewrite the twofold summations in~(\ref{PtoG_0}) and~(\ref{G-to-P}) as sums having certain~${}_4\phi_3$ series as their coefficients.

\begin{dfn}\label{B-Bt-def} Let $B(s,j)$ and $\widetilde{B}(s,j)$ be the rational functions in $s$ defined by
\begin{gather*}
B(s,j)=(-1)^j s^{-j} {\big(s^2/t^2; t^2\big)_j \over \big(t^2; t^2\big)_j} {1-s^2 t^{4j-2} \over 1-s^2 t^{-2}}
{}_{4}\phi_3 \left[ { -sa^2, -sc^2, s^2 t^{2j-2}, t^{-2j}
\atop -s, -st, s^2a^2 c^2/t } ; t^2, t^2\right],\\
 \widetilde{B}(s,j)=
\big(st^{j-1}\big)^{-j}
{\big(t^{2j}s^2;t^2\big)_j\over \big(t^2;t^2\big)_j}
 {}_{4}\phi_3 \left[ {-t^{-2j+2}/sa^2, -t^{-2j+2}/sc^2, t^{-2j+2}/s^2 , t^{-2j}
\atop
 -t^{-2j+1}/s, -t^{-2j+2}/s,t^{-4j+5}/s^2a^2c^2} ; t^2, t^2\right].
\end{gather*}
\end{dfn}

\begin{thm}\label{Thm2.6} The formulas \eqref{PtoG_0} and \eqref{G-to-P} can be recast as $($see Theorem~{\rm \ref{BIBASIC})}
\begin{subequations}
\begin{gather}
P_{(1^r)}(x\,|\,a,-a,c,-c\,|\,q,t) = \sum_{j=0}^{\lfloor {r \over 2} \rfloor} B\big(t^{n-r+1},j\big) E_{r-2j}(x),\label{PtoE}\\
E_{r}(x) = \sum_{j=0}^{\lfloor {r \over 2} \rfloor} \widetilde{B}\big(t^{n-r+1},j\big) P_{(1^{r-2j})}(x\,|\,a,-a,c,-c\,|\,q,t).
\end{gather}
\end{subequations}
\end{thm}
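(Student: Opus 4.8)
The plan is to collapse the two double summations of Corollary~\ref{MacdonaldPoly_1^r} and Theorem~\ref{THME-P} into single sums by merging the two indices, and then to identify each resulting inner summation with one of the balanced ${}_4\phi_3$ series of Definition~\ref{B-Bt-def}. Concretely, in \eqref{PtoG_0} the symmetric function $E_{r-2k-2l}(x)$ depends on $k,l$ only through $j=k+l$; so I would set $j=k+l$ and read off the coefficient of $E_{r-2j}(x)$ as the finite sum obtained by letting $l$ range over $0\le l\le j$ with $k=j-l$. Since $2k+2l=2j$ is then fixed, the factor $(1-st^{2k+2l-1})/(1-st^{-1})=(1-st^{2j-1})/(1-st^{-1})$ pulls out of the inner sum, and what remains is a terminating sum of length $j+1$ whose summand is a ratio of $q$-shifted factorials in the two bases $t$ and $t^2$. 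The goal is to show that this coefficient equals $B\big(t^{n-r+1},j\big)$, and symmetrically that the coefficient of $P_{(1^{r-2j})}$ produced from \eqref{G-to-P} equals $\widetilde B\big(t^{n-r+1},j\big)$.

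The next step is to turn each reindexed inner sum into the normalized balanced ${}_4\phi_3$ in base $t^2$ appearing in Definition~\ref{B-Bt-def}. After extracting a suitable end term as a prefactor, the ratio of consecutive summands becomes rational in the base, exhibiting the sum as a terminating ${}_4\phi_3$; the base-$t$ Pochhammer symbols of even and odd length must be split into base-$t^2$ ones so that the entire summand is expressed with base $t^2$ and argument $t^2$. A count of numerator and denominator parameters shows the target series is Saalschützian (one checks $t^2\cdot(-sa^2)(-sc^2)(s^2t^{2j-2})(t^{-2j})=(-s)(-st)(s^2a^2c^2/t)$), so that the $q$-analogue of Bailey's transformation recorded in \cite{GR} (equation (3.10.14)) applies. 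That transformation carries the reindexed single sum to the specific ${}_4\phi_3$ of Definition~\ref{B-Bt-def}, and also effects the passage from the original base $t$ to the base $t^2$ of the target series. I would then match the accompanying prefactor against $(-1)^j s^{-j}\big(s^2/t^2;t^2\big)_j/\big(t^2;t^2\big)_j\cdot(1-s^2t^{4j-2})/(1-s^2t^{-2})$ — noting the pulled-out $(1-st^{2j-1})/(1-st^{-1})$ is the $(1-st^{-1})$-branch of this factor — thereby establishing \eqref{PtoE} with $s=t^{n-r+1}$.

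For the companion identity I would argue in exactly the same fashion starting from \eqref{G-to-P}: reindexing by $j=k+l$ produces the coefficient of $P_{(1^{r-2j})}$ as a terminating sum, and the same transformation of \cite{GR} brings it to the ${}_4\phi_3$ defining $\widetilde B\big(t^{n-r+1},j\big)$, now with prefactor $\big(st^{j-1}\big)^{-j}\big(t^{2j}s^2;t^2\big)_j/\big(t^2;t^2\big)_j$. Alternatively, since \eqref{G-to-P} is the Bressoud matrix inverse \cite{B, L} of \eqref{PtoG_0}, one may instead transport the $B$-computation through the inversion to read off the $\widetilde B$-form; either route should be recorded as Theorem~\ref{BIBASIC}.

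The main obstacle is the bookkeeping in the second step. The reindexed summand genuinely mixes the bases $t$ and $t^2$: the $l$-dependent factors such as $(1/c^2;t)_l$, together with $(s/t;t)_{2j-l}$ and $(sc^2;t)_{2j-l}$, carry base $t$, whereas the factors indexed by $k=j-l$ carry base $t^2$, and the length $2j-l$ of the base-$t$ symbols changes parity with $l$. Getting the even/odd splitting right so that the expression collapses to a single balanced base-$t^2$ series, verifying that the hypotheses (balance, termination, and argument $z=t^2$) of the precise version of Bailey's transformation in \cite{GR} are met, and tracking the signs and the powers of $s$, $a$, $c$ in the prefactor, is where essentially all of the work lies. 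Once the identity of rational functions in $s$ is established, the specialization $s=t^{n-r+1}$ and the independence of the coefficients on $n$ are immediate.
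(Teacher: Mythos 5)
Your proposal is correct and follows essentially the same route as the paper: the paper's proof (Theorem \ref{BIBASIC}) reindexes the double sums of \eqref{PtoG_0} and \eqref{G-to-P} into single sums over $k$ with $k+l=i$ fixed, recognizes each as a bibasic $\Phi$-series mixing bases $t$ and $t^2$ (rather than first rewriting everything in base $t^2$), and applies exactly the $q$-analogue of Bailey's transformation \cite[equation~(3.10.14)]{GR} you cite, in its $a=c^2$ and $at=c^2$ special cases, to land on the ${}_4\phi_3$ forms of Definition~\ref{B-Bt-def}. The only caution is your proposed shortcut for the second identity via Bressoud inversion: within the paper's logical order that would be circular, since the fact that the ${}_4\phi_3$ expressions $B$ and $\widetilde{B}$ are mutually inverse is itself derived later (Proposition~\ref{ANOTHER}) from the four-term relations.
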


\begin{dfn} Let $f(s)$ be the function defined in (\ref{f[s]}). For ease of notation, define
\begin{gather*}
F(s,l)=f\big(s/t^l\big)=\dfrac{\big(1-t^l/s \big)\big(1-t^{l+2}/s a^2c^2 \big)\big(1+t^{l+1}/s a^2\big)\big(1+t^{l+1}/sc^2\big)}
{\big(1-t^{2l+1}/s^{2}a^2 c^2\big)\big(1-t^{2l+3}/s^{2}a^2 c^2\big)}.\label{F[s,j]}
\end{gather*}
\end{dfn}
We summarize the basic properties for the functions $B(s,i)$'s and $\widetilde{B}(s,i)$'s. See Theorem \ref{four-term} and Proposition~\ref{ANOTHER}.
\begin{thm}We have the four term relations
\begin{gather*}
B(s,i)+F(s,-1)B\big(st^{2},i-1\big) =B(st,i) + B(st,i-1),\\
\widetilde{B}(s,i) +F(s,2-2i) \widetilde{B}(s,i-1)=\widetilde{B}\big(st^{-1},i\big)+ \widetilde{B}(st,i-1),
\end{gather*}
and the inversion relations
\begin{gather*}
\sum_{k=0}^i B(s,k)\widetilde{B}\big(s t^{2k},i-k\big)=\delta_{i,0},\qquad
\sum_{k=0}^i \widetilde{B}(s,k)B\big(s t^{2k},i-k\big)=\delta_{i,0}.
\end{gather*}
\end{thm}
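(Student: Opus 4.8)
The plan is to dispose of the inversion relations first, since they follow formally from the expansions already available, and then to attack the four term relations, which carry the analytic content.

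For the inversion relations I would simply compose the two expansions of Theorem~\ref{Thm2.6}. Set $s=t^{n-r+1}$ and insert the expansion of each $E_{r-2j}(x)$ into the formula for $P_{(1^r)}(x\,|\,a,-a,c,-c\,|\,q,t)$; since the $E_{r-2j}(x)$ expansion carries the argument $t^{n-(r-2j)+1}=st^{2j}$, collecting the coefficient of $P_{(1^{r-2i})}(x)$ and using the linear independence of these polynomials (they have distinct top degrees) yields $\sum_{k=0}^{i}B(s,k)\,\widetilde{B}\big(st^{2k},i-k\big)=\delta_{i,0}$ at every point $s=t^{n-r+1}$. Fixing $i$ and letting $n$ grow gives this identity for the infinitely many values $s=t,t^2,t^3,\dots$, so by rational continuation in $s$ it holds identically. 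The second inversion relation comes out of the reverse substitution (or, equivalently, from the fact that a triangular matrix and its inverse commute), and both are in any case nothing but the Bressoud matrix inversion that was used to pass between \eqref{PtoG_0} and \eqref{G-to-P}.

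For the four term relations I would work directly from the ${}_4\phi_3$ representations of Definition~\ref{B-Bt-def}. Each $B(s,i)$ is a terminating balanced ${}_4\phi_3$ in base $t^2$ times an explicit prefactor, and likewise for $\widetilde{B}$. The asserted relation $B(s,i)+F(s,-1)\,B(st^2,i-1)=B(st,i)+B(st,i-1)$, in which $F(s,-1)=f(st)$, is then a contiguous-type identity where the shifts $s\mapsto st,st^2$ act simultaneously on the prefactor and on the upper and lower parameters of the series. The strategy is to substitute all four series, pull out a common prefactor, and reindex so that the four contributions are summed over one internal variable $m$; the identity then collapses, coefficient by coefficient in $t^{2m}$, to a single rational identity among $q$-shifted factorials in $s,a,c,t$, the point being that the factor $F(s,-1)$ is exactly what makes these coefficients telescope. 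This local identity can be checked by hand, or identified as an instance of a known three term relation for balanced ${}_4\phi_3$ series---either a contiguous relation derived from Sears' transformation, or, after a Watson-type reduction to a very-well-poised ${}_8W_7$, from Bailey's three term relation for ${}_8W_7$. The relation for $\widetilde{B}$ is obtained the same way, with the parameter substitutions read off from its definition; alternatively, once the relation for $B$ and the inversion relations are in hand, the $\widetilde{B}$ relation can be deduced as the dual recurrence satisfied by the inverse.

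The hard part will be the four term relation, not for any conceptual reason but because of the bookkeeping. Since $s$ enters the numerator and denominator parameters of the ${}_4\phi_3$ and not merely its prefactor, the four shifted series have no common summand, so aligning them over a single index---while keeping track of how the balancing condition and the prefactors transform under $s\mapsto st,st^2$---is delicate. The crux is to verify that $F(s,-1)$ is precisely the coefficient that produces the telescoping; once the four terms are brought to a common form this is a routine, if lengthy, manipulation of $q$-Pochhammer symbols.
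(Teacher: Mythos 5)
Your strategy is sound, and the half that carries the analytic weight --- the four term relations --- is essentially the paper's own proof. The paper writes $B(s,i)=b(s,i)\sum_{k}{}_4\phi_3(s,i,k)$, merges the two left-hand terms at indices $k$ and $k-1$ (the factor $F(s,-1)=f(st)$ being exactly what makes them combine), and merges the two right-hand terms at the same index $k$ after first passing to the Sears-transformed representation of $B(st,\cdot)$; both sides then collapse to one and the same ${}_5\phi_4$ series. That is precisely your ``reindex over one internal variable and telescope coefficient by coefficient,'' including your guess that Sears' transformation is the tool needed to align the two sides, so on this point you and the paper agree. Where you genuinely diverge is the inversion relations. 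You obtain them by composing the two expansions of Theorem~\ref{Thm2.6}, using linear independence of the $P_{(1^{r-2i})}$ and rational continuation in $s$; this is legitimate and is in substance the paper's Proposition~\ref{mutually-inverse} (Bressoud's matrix inversion) combined with the identification in Theorem~\ref{BIBASIC}. The proof the paper attaches to this particular theorem, however, is Proposition~\ref{ANOTHER}: it uses the four term relations themselves to show that $\sum_{k}B(s,k)\widetilde{B}(st^{2k},i-k)$ satisfies a first-order $q$-difference equation in $s$, hence is constant, and then evaluates at $s=1$ via the special values $B(1,j)$ and $\widetilde{B}(1,i)-\widetilde{B}(t^2,i-1)=0$. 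Your route buys independence from the four term relations (so no circularity), while the paper's buys a derivation internal to the $B$, $\widetilde{B}$ calculus that doubles as a consistency check on the four term relations. One caution: your fallback of deducing the $\widetilde{B}$ recurrence ``as the dual recurrence satisfied by the inverse'' is not automatic --- the paper's difference-equation argument consumes \emph{both} four term relations as inputs rather than producing one from the other, and extracting the $\widetilde{B}$ recurrence from the $B$ recurrence plus inversion would itself require an argument. Stick with the direct ${}_4\phi_3$ computation for $\widetilde{B}$, as you propose in the first instance.
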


\subsection[Transition matrix from $E_{r}(x)$ to the $BC_n$ interpolation polynomial $E_r(x;a|t)$]{Transition matrix from $\boldsymbol{E_{r}(x)}$\\ to the $\boldsymbol{BC_n}$ interpolation polynomial $\boldsymbol{E_r(x;a|t)}$}\label{interpolate}

Let $\langle z; w \rangle = z + z^{-1} - w - w^{-1}$.

\begin{dfn}[{\cite[equation~(5.1)]{KNS}}]\label{KNS} Set
\begin{gather*}
E_r(x;a\,|\,t)=\sum_{1\leq i_1<\cdots< i_r\leq n}
\big\langle x_{i_1};t^{i_1-1}a\big\rangle \big\langle x_{i_2};t^{i_2-2}a\big\rangle \cdots \big\langle x_{i_r};t^{i_r-r}a\big\rangle .
\end{gather*}
\end{dfn}

These Laurent polynomials $E_r(x;a\,|\,t)$, $r = 0, 1, \dots, n$, are essentially the $BC_n$ interpolation polynomials of Okounkov~\cite{O} attached to single columns~$(1^r)$.

\begin{thm}[{\cite[Theorem~5.1]{KNS}}]
Let $s=t^{n-r+1}$. We have
\begin{gather*}
P_{(1^r)}(x\,|\,a,b,c,d\,|\,q,t) =\sum_{l=0}^r {(s,s a b/t,s a c/t,s a d/t;t)_l \over t^{l(l-1)/2} (as/t)^l \big(t,s^2 a b c d/t^2;t\big)_l }E_{r-l}(x;a|t).
\end{gather*}
\end{thm}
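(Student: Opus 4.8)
The plan is to use the interpolation (vanishing) characterization of the polynomials $E_m(x;a\,|\,t)$ together with the Koornwinder evaluation formula. First I would record the triangularity that makes the very shape of the claimed expansion legitimate. The top homogeneous part of each factor $\langle x_{i_k};t^{i_k-k}a\rangle$ in Definition~\ref{KNS} is $x_{i_k}+x_{i_k}^{-1}$, so the leading term of $E_r(x;a\,|\,t)$ is the monomial symmetric polynomial $m_{(1^r)}$, which is also the leading term of $P_{(1^r)}(x\,|\,a,b,c,d\,|\,q,t)$. Consequently $\{E_{r-l}(x;a\,|\,t)\}_{l=0}^{r}$ and $\{P_{(1^{r-l})}\}_{l=0}^{r}$ span the same degree-filtered space, and there is a unique triangular expansion $P_{(1^r)}=\sum_{l=0}^{r} c_l(s)\,E_{r-l}(x;a\,|\,t)$ with $c_0(s)=1$, matching the $l=0$ term of the asserted formula.

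Next I would determine the coefficients $c_l(s)$ by evaluating both sides at the Okounkov interpolation points attached to single columns. Writing $\xi_p$ for the point representing the column $(1^p)$ (in the $BC_n$ convention of \cite{O,KNS}, a geometric progression in $t$ with the reflection built from $a$), the family $E_m(x;a\,|\,t)$ is characterized by the extra-vanishing property $E_m(\xi_p;a\,|\,t)=0$ for $m>p$, together with an explicit nonzero product value at $m=p$. Substituting $x=\xi_p$ turns the expansion into the linear system $P_{(1^r)}(\xi_p)=\sum_{l\,:\,r-l\le p}c_l(s)\,E_{r-l}(\xi_p;a\,|\,t)$, which is triangular in $l$ and is solved recursively, starting from $\xi_0$ (where $E_0=1$ gives $c_r$ directly) and working upward in $p$.

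To carry this out I would need two explicit inputs. The value $P_{(1^r)}(\xi_p)$ is supplied by the Koornwinder principal-specialization (evaluation) formula, which for a one-column diagram is a product of $t$-shifted factors in the four parameters $a,b,c,d$; this is where the factors $(s,sab/t,sac/t,sad/t;t)_l$ and $(t,s^2abcd/t^2;t)_l$ will originate. The values $E_{r-l}(\xi_p;a\,|\,t)$ are the single-column Okounkov evaluations, contributing the powers $t^{l(l-1)/2}$ and $(as/t)^l$ in the denominator. Forming the ratio dictated by the triangular system and checking that all intermediate Pochhammer products telescope to the single closed-form coefficient in the statement is the computational heart of the argument.

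The main obstacle will be exactly this telescoping: both evaluations are genuine product formulas of comparable complexity, and one must align the specialization convention for $\xi_p$ so that the vanishing is honestly triangular and so that the four parameters enter only in the symmetric combinations $sab/t$, $sac/t$, $sad/t$, $s^2abcd/t^2$. An alternative route, closer to the kernel-function methods used elsewhere in this paper, would be to expand Mimachi's dual-Cauchy kernel simultaneously in the Koornwinder basis $\{P_\lambda\}$ and in the interpolation basis $\{E_r(\cdot;a\,|\,t)\}$ of \cite{KNS} and to compare the coefficients of a fixed single column; there the difficulty shifts to controlling the one-column truncation of the dual kernel, but it avoids invoking the general evaluation formula.
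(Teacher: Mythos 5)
First, a point of comparison: the paper offers no proof of this statement at all --- it is imported verbatim from \cite[Theorem~5.1]{KNS} --- so your proposal has to be judged on its own terms rather than against an internal argument. Your set-up is sound: both $E_m(x;a\,|\,t)$ and $P_{(1^m)}$ are unitriangular against $\{m_{(1^j)}\}_{j\le m}$, so the expansion exists, is unique, and has leading coefficient $1$; and the extra-vanishing of the single-column interpolation polynomials does make evaluation at the nodes $\xi_0,\xi_1,\dots,\xi_r$ into a triangular system, with $\xi_0=(a,ta,\dots,t^{n-1}a)$ giving $c_r=P_{(1^r)}(\xi_0)$ from the genuine Koornwinder evaluation formula.

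The gap is in the very next step. For $p\ge 1$ the quantity $P_{(1^r)}(\xi_p)$ is \emph{not} supplied by the principal-specialization formula: that formula evaluates $P_\lambda$ only at the single principal node $\xi_0=t^{\delta}a$, whereas $\xi_p$ is a $q$-shifted point. The values $P_{(1^r)}(\xi_p)$ at the remaining interpolation nodes are precisely the data packaged by Okounkov's binomial formula, i.e., by the expansion of $P_{(1^r)}$ in the interpolation basis --- which is the statement you are trying to prove. As written, the recursion is therefore circular, or at best missing its essential input. To repair it you would need an independent computation of $P_{(1^r)}(\xi_p)$ for all $0\le p\le r$ (for instance from the explicit summation of Theorem~\ref{KoornwinderPoly_1^r}), or you would invoke Okounkov's binomial formula as an external black box, in which case the theorem is simply its single-column specialization and the triangular system becomes superfluous. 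A smaller inaccuracy: the coefficient is symmetric only in $b,c,d$; the parameter $a$ is necessarily distinguished because it labels the interpolation basis $E_{r-l}(x;a\,|\,t)$, so the ``alignment of conventions'' you flag is forced, not optional. Your alternative route through Mimachi's dual-Cauchy kernel is much closer in spirit to how this paper (Sections~\ref{Koornwinder} and~\ref{one-column}) and \cite{KNS} actually extract one-column data, and is the more realistic way to complete the argument.
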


\begin{thm}\label{inverse-EtoP}We have
\begin{gather*}
E_{r}(x;a|t) =\sum_{l=0}^r (-1)^l {(s,s a b/t,s a c/t,s a d/t;t)_l \over (as/t)^l \big(t,t^{l-3}s^2 a b c d ;t\big)_l }P_{(1^{r-l})}(x|a,b,c,d|q,t).
\end{gather*}
\end{thm}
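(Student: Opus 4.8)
The plan is to show that the upper-triangular transition matrix of the preceding theorem (from \cite{KNS}) and the matrix of coefficients in the asserted formula are mutually inverse; since both are unitriangular with respect to the grading by the degree $r$, it suffices to verify that one composition is the identity. Write $s=t^{n-r+1}$ and abbreviate the forward coefficients as $A_l(s)=\dfrac{(s,sab/t,sac/t,sad/t;t)_l}{t^{l(l-1)/2}(as/t)^l(t,s^2abcd/t^2;t)_l}$ and the asserted inverse coefficients as $\widetilde A_l(s)=(-1)^l\dfrac{(s,sab/t,sac/t,sad/t;t)_l}{(as/t)^l(t,t^{l-3}s^2abcd;t)_l}$, so that the previous theorem reads $P_{(1^r)}=\sum_{l\ge0}A_l(s)E_{r-l}(x;a|t)$ and the claim is $E_r(x;a|t)=\sum_{l\ge0}\widetilde A_l(s)P_{(1^{r-l})}$. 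The one essential bookkeeping point is that $s=t^{n-r+1}$ depends on the degree, so that lowering the level from $r$ to $r-l$ replaces $s$ by $st^l$; accordingly I shall treat $s$ as a free parameter throughout.

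First I would substitute the forward expansion $P_{(1^{r-l})}=\sum_{m\ge0}A_m(st^l)E_{r-l-m}(x;a|t)$ into the right-hand side of the asserted formula and collect the coefficient of $E_{r-p}(x;a|t)$, with $p=l+m$. The theorem is then equivalent to the scalar identity
\[
\sum_{l=0}^{p}\widetilde A_l(s)\,A_{p-l}\big(st^l\big)=\delta_{p,0},\qquad p\ge0,
\]
for arbitrary $s$. The $p=0$ term is $\widetilde A_0A_0=1$, and the $p=1$ case is the elementary identity $\widetilde A_1(s)=-A_1(s)$; the content is the vanishing for $p\ge2$.

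The key simplification is a clean cancellation: applying the shift identity $(xt^l;t)_{p-l}=(x;t)_p/(x;t)_l$ to each factor $(st^l;t)_{p-l},(st^lab/t;t)_{p-l},\dots$ of $A_{p-l}(st^l)$ produces exactly the reciprocals of the four Pochhammer symbols $(s;t)_l,(sab/t;t)_l,(sac/t;t)_l,(sad/t;t)_l$ that constitute the numerator of $\widetilde A_l(s)$, so these eight symbols cancel in the product $\widetilde A_l(s)A_{p-l}(st^l)$. Next I would rewrite the surviving pieces using $(s^2t^{2l}abcd/t^2;t)_{p-l}=(s^2abcd/t^2;t)_{p+l}/(s^2abcd/t^2;t)_{2l}$ and $(t;t)_{p-l}^{-1}=(-1)^lt^{pl-\binom l2}(t^{-p};t)_l/(t;t)_p$; after pulling the $l$-independent factors (including $(as/t)^{-p}$ and $t^{-\binom p2}$) out of the sum, the summand collapses to a single very-well-poised term. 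Concretely, writing $q=t$ and $a_\ast=s^2abcd/t^3$, the inner sum becomes, up to a generically nonzero prefactor, the terminating very-well-poised series ${}_4W_3(a_\ast;q^{-p};q,q^p)$, and the whole theorem reduces to the scalar evaluation ${}_4W_3(a;q^{-p};q,q^p)=0$ for $p\ge1$.

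I would establish this last identity either by recognizing it as a terminating special case of the ${}_6\phi_5$ summation (after a transformation supplying the two missing parameters), or — most transparently — by direct verification, since the series telescopes, as one checks at $p=1,2$ where the alternating terms cancel to $0$. Assembling these steps gives $\sum_{l}\widetilde A_l(s)A_{p-l}(st^l)=\delta_{p,0}$ and hence the asserted inversion. I expect this final scalar evaluation to be the main obstacle: isolating the correct ${}_4W_3$ structure after the Pochhammer bookkeeping, and matching it to a summation theorem that produces the Kronecker delta rather than a generic product. An attractive alternative, consistent with the paper's use of Bressoud's matrix inversion for Theorem~\ref{THME-P}, is to recognize the coefficients $A_l(s)$ as one half of a known parametric inverse pair and simply read off $\widetilde A_l(s)$; in that route the obstacle migrates to verifying that the inverse pair's Saalschütz-type balancing hypotheses are met.
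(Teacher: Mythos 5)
Your proposal is correct, and it reaches the theorem by a genuinely different route from the paper. The paper's own proof is a two-line appeal to Krattenthaler's matrix inversion \cite{Kr}: the pair $\mathcal{N}(x,y;t)$, $\mathcal{N}(y,x;t)$ is mutually inverse, conjugation by the sequence $(g(\vector{u},v)_r)$ preserves mutual inversion, and the two conjugated matrices $\widetilde{\mathcal{N}}_{r,r-l}(\ldots,a/t,0;t)$ and $\widetilde{\mathcal{N}}_{r,r-l}(\ldots,0,a/t;t)$ are checked to reproduce, respectively, the forward coefficients quoted from \cite{KNS} and the claimed inverse coefficients. This is exactly the ``attractive alternative'' you mention in your last sentence, except that the relevant parametric pair is Krattenthaler's rather than Bressoud's (Bressoud's inversion is even in the step $l$ and is what the paper uses for Theorem~\ref{THME-P}; here the step is odd, so Krattenthaler's pair is the right one). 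Your main line --- direct verification of $\sum_{l=0}^{p}\widetilde A_l(s)A_{p-l}(st^l)=\delta_{p,0}$ --- does go through: the eight Pochhammer symbols cancel as you say; the two remaining balanced factors combine, with $a_*=s^2abcd/t^3$, into $(a_*t^{l};t)_{p+1}/(1-a_*t^{2l})$, which supplies the very-well-poised factor $1-a_*t^{2l}$; and after reversing $(t;t)_{p-l}$ all powers of $t$ collapse to $t^{pl}$ times an $l$-independent $t^{-\binom{p}{2}}$. The inner sum becomes
\begin{gather*}
\sum_{l=0}^{p}\frac{1-a_*t^{2l}}{1-a_*}\,\frac{(a_*;t)_l\,(t^{-p};t)_l}{(t;t)_l\,(a_*t^{p+1};t)_l}\,t^{pl}=\delta_{p,0},
\end{gather*}
which is precisely the $b\to 0$, $c\to\infty$ degeneration of the terminating ${}_6\phi_5$ summation \cite[equation~(II.21)]{GR}: in that limit the right-hand side $(a_*t;t)_p/(a_*t/b;t)_p$ tends to $0$ for $p\geq 1$ and to $1$ for $p=0$, so your ${}_4W_3$ evaluation is a theorem and not merely an expectation. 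What the paper's route buys is brevity and a citable structural statement; what yours buys is a self-contained computation that does not require recognizing the coefficients as a known inverse pair, at the cost of the Pochhammer bookkeeping and of pinning down the correct degenerate ${}_6\phi_5$ evaluation.
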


A proof of this is given by using Krattenthaler's matrix inversion as follows. An infinite-dimensional matrix $(f_{ij})_{i,j \in \mathbb{Z}_{\geq 0}}$ is said to be lower-triangular if $f_{ij}=0$ unless $i \geq j$. Two infinite-dimensional lower-triangular matrices $(f_{ij})_{i,j \in \mathbb{Z}_{\geq 0}}$ and $(g_{ij})_{i,j \in \mathbb{Z}_{\geq 0}}$ are said to be mutually inverse if $\sum\limits_{i \geq j \geq k} f_{ij} g_{jk} = \delta_{i,k}$.

\begin{thm}[\cite{Kr}] Let $\mathcal{N}(x,y;q)$ be the infinite lower-triangle matrix with entries
\begin{gather*}
\mathcal{N}_{i,j}(x,y;q)=y^{i-j} { (x/y;q)_{i-j} \over (q;q)_{i-j} } {1 \over \big(xq^{i+j};q\big)_{i-j} \big(yq^{2j+1};q\big)_{i-j} },
\end{gather*}
Then $\mathcal{N}(x,y;q)$ and $\mathcal{N}(y,x;q)$ are mutually inverse.
\end{thm}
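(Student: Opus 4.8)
The plan is to prove the two matrices are mutually inverse by directly verifying the defining relation $\sum_{i\geq j\geq k}\mathcal{N}_{i,j}(x,y;q)\,\mathcal{N}_{j,k}(y,x;q)=\delta_{i,k}$, using that both matrices are lower-triangular. Since $\mathcal{N}_{i,j}(x,y;q)$ vanishes for $i<j$, both $\mathcal{N}(x,y;q)$ and $\mathcal{N}(y,x;q)$ are lower-triangular, so their product is as well, and the sum reduces to $\sum_{j=k}^{i}\mathcal{N}_{i,j}(x,y;q)\,\mathcal{N}_{j,k}(y,x;q)$ for $i\geq k$. The diagonal case $i=k$ is immediate: the sum has the single term $j=i$, and since $(x/y;q)_0=(q;q)_0=(\,\cdot\,;q)_0=1$ one reads off $\mathcal{N}_{i,i}(x,y;q)=\mathcal{N}_{i,i}(y,x;q)=1$. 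The entire task therefore reduces to showing that for $i>k$ the inner sum vanishes.

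First I would fix $i>k$, set $M=i-k>0$ and $\nu=j-k$, and write out the two factors using $i-j=M-\nu$, $i+j=2k+M+\nu$, $2j+1=2k+2\nu+1$, and $j+k=2k+\nu$, so that the summand becomes a product of $q$-shifted factorials in $\nu$ times the monomial $y^{M-\nu}x^{\nu}$. I would then normalize by the $\nu=0$ term and compute the ratio $T_{\nu+1}/T_{\nu}$ of consecutive summands using the elementary shift rules for $(\alpha;q)_m$. The paired dependence of the factors $\big(xq^{i+j};q\big)_{i-j}$ and $\big(yq^{2j+1};q\big)_{i-j}$ on $j$ through both $i\pm j$ is exactly what produces a very-well-poised structure, so I expect the normalized sum to collapse to a terminating very-well-poised series — a ${}_6\phi_5$, equivalently a balanced ${}_4\phi_3$ (matching the ${}_4\phi_3$'s that govern $B(s,j)$ and $\widetilde{B}(s,j)$ in Definition~\ref{B-Bt-def}) — carrying a factor $\big(q^{-M};q\big)_{\nu}$ that makes it a genuinely terminating basic hypergeometric sum.

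The final step is to apply the appropriate terminating $q$-summation theorem — the very-well-poised ${}_6\phi_5$ summation, or Jackson's ${}_8\phi_7$ sum \cite{GR} specialized so that one numerator parameter forces the ${}_6\phi_5$ form — to evaluate the inner sum in closed product form. The key point is that for $M>0$ this closed form carries a numerator $q$-shifted factorial that contains the factor $(1-1)=0$, arising from the reciprocal arguments $x/y$ and $y/x$ entering from the two matrices, so the sum vanishes identically, which is precisely the required off-diagonal relation. The main obstacle is the middle step: correctly normalizing and reindexing the several interlocking $q$-Pochhammer factors to expose the very-well-poised/Saalschützian form and to check the balancing condition, after which the choice of summation theorem and the identification of the vanishing factor are routine. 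As an independent check — and an alternative route that avoids the series manipulation entirely — one may instead realize this pair as the specialization of Krattenthaler's general three-parameter matrix inversion \cite{Kr} obtained by taking the auxiliary sequence proportional to $q^{j}$, thereby reducing the whole statement to the parameter matching in that theorem.
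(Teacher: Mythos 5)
The paper offers no proof of this statement at all: it is quoted as a known result from Krattenthaler's paper \cite{Kr} and used as a black box (after conjugation by the sequence $g(\vector{u},v)_r$) to invert the $E$-to-$P$ transition. Your self-contained verification is therefore a genuinely different route, and it is correct; indeed the fallback you mention at the end --- specializing Krattenthaler's general inversion --- is precisely what the paper does, so your series argument is the more informative of the two. I checked the middle step you flagged as the main obstacle. Writing the off-diagonal sum $\sum_{j=k}^{i}\mathcal{N}_{i,j}(x,y;q)\,\mathcal{N}_{j,k}(y,x;q)$ with $M=i-k>0$, $\nu=j-k$ and $z=q^{\nu}$, the term ratio works out to
\begin{gather*}
\frac{T_{\nu+1}}{T_{\nu}}=q\cdot\frac{(1-az)\big(1-q a^{1/2}z\big)\big(1+q a^{1/2}z\big)(1-bz)(1-cz)\big(1-q^{-M}z\big)}{\big(1-a^{1/2}z\big)\big(1+a^{1/2}z\big)(1-(aq/b)z)(1-(aq/c)z)\big(1-aq^{M+1}z\big)(1-qz)},\\
a=yq^{2k},\qquad b=y/x,\qquad c=xq^{2k+M},
\end{gather*}
so the normalized sum is exactly the terminating very-well-poised ${}_6W_5\big(a;b,c,q^{-M};q,q\big)$ with argument $q=aq^{M+1}/(bc)$, and the ${}_6\phi_5$ summation \cite[equation~(2.4.2)]{GR} evaluates it to $(aq;q)_M(aq/bc;q)_M/\big((aq/b;q)_M(aq/c;q)_M\big)$. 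Since $aq/bc=q^{1-M}$, the factor $\big(q^{1-M};q\big)_M$ contains $(1-q^0)=0$ for $M\geq 1$, exactly as you predicted, and the diagonal entries are trivially $1$. Two minor quibbles: the parenthetical ``equivalently a balanced ${}_4\phi_3$'' is loose (the summable object here is the ${}_6\phi_5$ itself, not the general balanced ${}_4\phi_3$'s of Definition~\ref{B-Bt-def}), and there is no need to invoke Jackson's ${}_8\phi_7$ sum --- the ${}_6\phi_5$ summation applies directly. Neither affects the validity of the argument.
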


If two infinite matrices $(f_{ij})$ and $(g_{ij})$ are mutually inverse, then the conjugated ones $(f_{ij} d_i/d_j)$ and $(g_{ij} d_i/d_j)$ are also mutually inverse for any sequence $(d_{r})$ with nonzero entries.

\begin{dfn} Let $\vector{u} = (u_1, u_2, u_3, u_4)$. Set
\begin{gather*}
g(\vector{u},v)_r= v^{-r} \big(u_1 t^{-r};t\big)_r (u_2;t)_r (u_3;t)_r (u_4;t)_r.
\end{gather*}
Note that we have
\begin{gather*}
g(\vector{u}, v)_r / g(\vector{u}, v)_{r-i}= v^{-i} \big(u_1 t^{-r};t\big)_i \big(u_2 t^{r-i};t\big)_i \big(u_3 t^{r-i};t\big)_i \big(u_4 t^{r-i};t\big)_i.
\end{gather*}
Let $\widetilde{\mathcal{N}}{( \vector{u}, v, x, y ;t)}$ be the conjugation of the matrix $\mathcal{N}(x,yv;t)$ by the sequence $(g(\vector{u}, v)_r)$ with entries given by
\begin{align*}
\widetilde{\mathcal{N}}_{r,r-i}( \vector{u}, v, x, y ;t)&=\mathcal{N}_{r,r-i}(x,yv;t) \times g(\vector{u}, v)_r / g(\vector{u}, v)_{r-i} \\
&= y^i { (x/yv;t)_i \over (t;t)_i }
{ \big(u_1 t^{-r};t\big)_i \big(u_2 t^{r-i};t\big)_i \big(u_3 t^{r-i};t\big)_i \big(u_4 t^{r-i};t\big)_i
\over \big(x t^{2r-i};t\big)_i \big(yv t^{2r-2i+1};t\big)_i }.
\end{align*}
\end{dfn}

\begin{proof}[Proof of Theorem \ref{inverse-EtoP}]
It follows Krattenthaler's matrix inversion and
\begin{gather*}
\widetilde{\mathcal{N}}_{r,r-l}\big(t^{n-1}, t^{-n+1}/ab, t^{-n+1}/ac, t^{-n+1}/ad, t^{-2n+2}/a^2bcd, a/t, 0 ; t\big)\\
\qquad{} ={(s,s a b/t,s a c/t,s a d/t;t)_l \over t^{l(l-1)/2} (as/t)^l \big(t,s^2 a b c d/t^2;t\big)_l },\\
 \widetilde{\mathcal{N}}_{r,r-l}\big(t^{n-1}, t^{-n+1}/ab, t^{-n+1}/ac, t^{-n+1}/ad, t^{-2n+2}/a^2bcd, 0, a/t ; t\big) \\
\qquad{} =(-1)^l {(s,s a b/t,s a c/t,s a d/t;t)_l \over (as/t)^l \big(t,t^{l-3}s^2 a b c d ;t\big)_l}.\tag*{\qed}
\end{gather*}\renewcommand{\qed}{}
\end{proof}

\begin{thm}
We have
\begin{gather*}
 E_{r}(x;a|t)=\sum_{m=0}^r (-1)^m e\big(t^{n-r+1},m\big)E_{r-m}(x),
\end{gather*}
where
\begin{gather}
e(s,m)=(t/as)^m {(s;t)_{m} \big({-}s a^2 t^{-m};t^2\big)_{m}\over (t;t)_{m}}\nonumber\\
\hphantom{e(s,m)=}{} \times
{}_4\phi_3\left[{t^{-m},t^{-m+1}, -t^{-m+1}/s,-t^{-m+2}/s \atop
-t^{-m+2}/a^2s, -t^{-m}a^2s,t^{-2m+4}/s^2};t^2,t^2 \right]. \label{e(s,m)}
\end{gather}
\end{thm}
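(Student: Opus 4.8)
The plan is to obtain the expansion by composing two expansions already at our disposal: the inverse Koornwinder expansion of Theorem~\ref{inverse-EtoP}, specialized to the degeneration $(a,b,c,d)\to(a,-a,c,-c)$, and the $P$-to-$E$ expansion \eqref{PtoE} of Theorem~\ref{Thm2.6}. Setting $b=-a$, $d=-c$ in Theorem~\ref{inverse-EtoP} and writing $s=t^{n-r+1}$, the parameter combinations become $sab/t=-sa^2/t$, $sac/t$, $sad/t=-sac/t$ and $s^2abcd/t^2=s^2a^2c^2/t^2$, so that
\begin{gather*}
E_r(x;a|t)=\sum_{l=0}^r (-1)^l \frac{(s,-sa^2/t,sac/t,-sac/t;t)_l}{(as/t)^l\,(t,t^{l-3}s^2a^2c^2;t)_l}\,P_{(1^{r-l})}(x\,|\,a,-a,c,-c\,|\,q,t).
\end{gather*}
Each $P_{(1^{r-l})}$ carries its own interpolation parameter $t^{n-(r-l)+1}=st^l$, so by \eqref{PtoE} and Definition~\ref{B-Bt-def},
\begin{gather*}
P_{(1^{r-l})}(x\,|\,a,-a,c,-c\,|\,q,t)=\sum_{j=0}^{\lfloor (r-l)/2\rfloor} B\big(st^l,j\big)\,E_{r-l-2j}(x).
\end{gather*}

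Substituting and collecting the coefficient of $E_{r-m}(x)$ by setting $m=l+2j$ gives $E_r(x;a|t)=\sum_{m=0}^r (-1)^m e(s,m)\,E_{r-m}(x)$ with
\begin{gather*}
e(s,m)=\sum_{j=0}^{\lfloor m/2\rfloor} \frac{(s,-sa^2/t,sac/t,-sac/t;t)_{m-2j}}{(as/t)^{m-2j}\,(t,t^{m-2j-3}s^2a^2c^2;t)_{m-2j}}\,B\big(st^{m-2j},j\big),
\end{gather*}
where I have used $(-1)^{m-2j}=(-1)^m$. The remaining task is to show that this single sum equals the closed form \eqref{e(s,m)}; in particular the right-hand side must be free of $c$, since both $E_r(x;a|t)$ and the $E_{r-m}(x)$ are independent of $c$.

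That $c$-independence is in fact automatic: taking $n$ large enough that $E_0(x),\dots,E_r(x)$ are linearly independent, the coefficients in $E_r(x;a|t)=\sum_m(-1)^m e(s,m)E_{r-m}(x)$ are uniquely determined, hence $c$-free. As a check, for $m=0$ one has $B(s,0)=1$ and $e(s,0)=1$, while for $m=1$ only $(l,j)=(1,0)$ contributes and the factor $(sac/t;t)_1(-sac/t;t)_1=1-s^2a^2c^2/t^2$ cancels against the denominator factor $(t^{-2}s^2a^2c^2;t)_1$, leaving $e(s,1)=(t/as)(1-s)(1+sa^2/t)/(1-t)$, which matches \eqref{e(s,m)}. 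Because $e(s,m)$ is $c$-free, I may evaluate the sum at the convenient value $c=1$: there the numerator parameter $-sc^2=-s$ of the ${}_4\phi_3$ defining $B$ cancels the lower parameter $-s$, so $B(s,j)$ collapses to a terminating balanced ${}_3\phi_2$, which the $q$-Saalsch\"utz summation evaluates in closed product form. After this reduction $e(s,m)$ becomes a single sum over $j$ of $t^2$-shifted factorials.

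The \emph{main obstacle} is the final identification of that single sum with the balanced terminating ${}_4\phi_3$ in \eqref{e(s,m)}. Structurally this says that a sum of products of two balanced ${}_4\phi_3$ series (the prefactor and the series in $B$) collapses to one balanced ${}_4\phi_3$, which I expect to follow from a Bailey/Sears-type transformation of the kind used earlier in the paper (see \cite[Chapter~3]{GR}); concretely, after the $c=1$ reduction one matches term ratios in $j$ against the hypergeometric term of \eqref{e(s,m)} and invokes $q$-Saalsch\"utz once more, or equivalently reads off the transition coefficients directly as the product of the two known upper/lower triangular matrices from Theorems~\ref{inverse-EtoP} and~\ref{Thm2.6}. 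Verifying that the bookkeeping of the shift $s\mapsto st^{m-2j}$ inside $B$ is compatible with the claimed parameters $-t^{-m+1}/s,\,-t^{-m+2}/s,\,t^{-2m+4}/s^2,\dots$ is the one genuinely delicate computation; everything else is routine manipulation of $q$-shifted factorials.
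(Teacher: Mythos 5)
Your setup is exactly the paper's: both start from Theorem~\ref{inverse-EtoP} specialized to $(b,d)=(-a,-c)$, insert the expansion \eqref{PtoE}, and arrive at the same double sum
$e(s,m)=\sum_{j}\frac{(s,-sa^2/t,sac/t,-sac/t;t)_{m-2j}}{(as/t)^{m-2j}(t,t^{m-2j-3}s^2a^2c^2;t)_{m-2j}}B\big(st^{m-2j},j\big)$.
From that point on, however, you have not proved the theorem: the entire content of the closed form \eqref{e(s,m)} is the evaluation of this sum, and your text explicitly defers it (``the main obstacle is the final identification\dots which I expect to follow from a Bailey/Sears-type transformation''). Naming a class of transformations without exhibiting the one that works, and without verifying that the $j\mapsto j+1$ term ratio of your $c=1$ reduction actually matches the hypergeometric term of \eqref{e(s,m)}, is a genuine gap, not a routine omission: the parameter bookkeeping under the shift $s\mapsto st^{m-2j}$ is precisely where the work lies. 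For comparison, the paper closes this gap by a different device: it writes $B(s,j)=b(s,j)\sum_k{}_4\phi_3(s,j,k)$, reverses the order of summation (equation~\eqref{e-temp}), recognizes the inner sum over the ``column length'' index as a terminating very-well-poised ${}_6W_5$ (equation~\eqref{e-temp2}), and sums it in closed form by \cite[equation~(2.4.2)]{GR} (equation~\eqref{e-temp3}); the cancellation of $c$ then happens explicitly rather than being argued a priori.

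Two further remarks. First, your $c$-independence argument is correct and is a nice observation the paper does not make: $E_r(x;a|t)$ and the $E_{r-m}(x)$ are $c$-free, the $E_j(x)$ are linearly independent (being $m_{(1^j)}$ plus lower terms), and $e(s,m)$ is a rational function of $s$ agreeing with a $c$-free quantity at the infinitely many points $s=t^{n-r+1}$, so it is $c$-free identically; your $q$-Saalsch\"utz reduction of $B(s,j)|_{c=1}$ to a product is also legitimate (the ${}_3\phi_2$ obtained after the cancellation of the parameter $-s$ is indeed balanced). So your route could in principle be completed, and would even be somewhat more elementary than the ${}_6W_5$ summation. But as written the decisive computation is missing, so the proposal should be regarded as an outline of an alternative proof rather than a proof.
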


\begin{proof}For simplicity of display, we write
\begin{gather}
b(s,i):= (-1)^i s^{-i} {\big(s^2; t^2\big)_i \over (t^2; t^2)_i}{1-s^2 t^{4i-2} \over 1-s^2 t^{2i-2}},\label{b(s,i)}\\
{}_{4}\phi_3(s,i,k) := { \big( {-}sa^2, -sc^2, s^2 t^{2i-2}, t^{-2i} ; t^2\big)_k
\over \big(t^2, -s, -st, s^2a^2 c^2t^{-1} ; t^2\big)_k }.\label{4phi3(s,i,k)}
\end{gather}
Then we have
$B(s,i) = b(s,i) \sum\limits_{k=0}^i {}_{4}\phi_3(s,i,k)$. From Theorems~\ref{Thm2.6} and~\ref{KNS} written for the case $b=-a$ and $d=-c$, we have
\begin{gather*}
E_{r}(x;a\,|\,t) = \sum_{l=0}^r (-1)^l
{\big(s,-s a^2/t,s a c/t,-s ac/t;t\big)_l \over (as/t)^l \big(t,t^{l-3}s^2 a^2c^2 ;t\big)_l }
\sum_{j=0}^{\lfloor {r-l\over 2}\rfloor} B\big(st^{l},j\big)E_{r-l-2j}(x)\\
\hphantom{E_{r}(x;a\,|\,t)}{} =\sum_{m=0}^r (-1)^m e(s,m) E_{r-m}(x),
\end{gather*}
where
\begin{gather*}
e(s,m)=\sum_{j=0}^{\lfloor {m\over 2}\rfloor}
{\big(s,- s a^2/t,s a c/t,-s a c/t;t\big)_{m-2j} \over (as/t)^{m-2j} \big(t,t^{m-2j-3}s^2 a^2c^2 ;t\big)_{m-2j} }B\big(st^{m-2j},j\big).
\end{gather*}
Changing the order of the summation, we have
\begin{gather}
e(s,m)= \sum_{j=0}^{\lfloor {m\over 2}\rfloor}
{\big(s, -s a^2/t,s a c/t,-s ac/t;t\big)_{m-2j} \over (as/t)^{m-2j} \big(t,t^{m-2j-3}s^2 a^2c^2 ;t\big)_{m-2j} }
b\big(st^{m-2j},j\big) \sum_{k=0}^j {}_{4}\phi_3\big(s t^{m-2j},j ,k\big)\nonumber\\
\hphantom{e(s,m)}{} = \sum_{i=0}^{\lfloor {m\over 2}\rfloor}\sum_{k=0}^{\lfloor {m-2i\over 2}\rfloor}
{\big(s,-s a^2/t,s a c/t,-s a c/t;t\big)_{m-2i-2k} \over (as/t)^{m-2i-2k} \big(t,t^{m-2i-2k-3}s^2 a^2c^2 ;t\big)_{m-2i-2k} }\nonumber\\
\hphantom{e(s,m)=}{} \times b\big(st^{m-2i-2k},i+k\big){}_{4}\phi_3\big(s t^{m-2i-2k},i+k ,k\big). \label{e-temp}
\end{gather}
Simplifying the expression, we have
\begin{gather}
 \sum_{k=0}^{\lfloor {m-2i\over 2}\rfloor}
{\big(s,-s a^2/t,s a c/t,-s a c/t;t\big)_{m-2i-2k} \over (as/t)^{m-2i-2k} \big(t,t^{m-2i-2k-3}s^2 a^2c^2 ;t\big)_{m-2i-2k} }\nonumber\\
\qquad\quad{}\times b\big(st^{m-2i-2k},i+k\big){}_{4}\phi_3\big(s t^{m-2i-2k},i+k ,k\big)\nonumber\\
\qquad{} = {\big(s,-s a^2/t,s a c/t,-s a c/t;t\big)_{m-2i} \over (as/t)^{m-2i} \big(t,t^{m-2i-3}s^2 a^2c^2 ;t\big)_{m-2i} } b\big(st^{m-2i},i\big)\label{e-temp2}\\
 \qquad{}\quad\times
{}_6W_5\big(t^{-2m+4i+3}/s^2a^2c^2;-t^{-m+2i+2}/sc^2, t^{-m+2i},t^{-m+2i+1};t^2,- t^{m-2i+2}/a^2s\big).\nonumber
\end{gather}
By using \cite[p.~42, equation~(2.4.2)]{GR}, we have the factorized expression for this ${}_6W_5$-series as
\begin{gather}
{\big(s^2 a^2 c^2 t^{m-2i-3};t\big)_{m-2i}\over \big(s^2 a^2 c^2/t^2;t^2\big)_{m-2i}}
{\big({-}s a^2 t^{-m+2i};t^2\big)_{m-2i}\over \big({-}s a^2/t;t\big)_{m-2i}}.\label{e-temp3}
\end{gather}
Then simplifying the factors again, we have (\ref{e(s,m)}) from (\ref{e-temp}), (\ref{e-temp2}) and (\ref{e-temp3}).
\end{proof}

\subsection[The coefficients $C(s,j)$ and Catalan triangle three term relations]{The coefficients $\boldsymbol{C(s,j)}$ and Catalan triangle three term relations}\label{basic-C}

We have (in Lemma \ref{Lem-Em} below) the expansion of $E_r(x)$ in terms of the monomial symmetric polynomials as
\begin{gather}
E_r(x) = \sum_{j=0}^{\lfloor{r \over 2}\rfloor} \binom{n-r+2j}{j} m_{(1^{r-2j})}(x), \label{E-m}
\end{gather}
where $\binom{m}{j}$ denotes the ordinary binomial coefficient. In view of this, we are naturally led to the following definition.
\begin{dfn}\label{C(s,j)}Let $s=t^{m+1}$ for $m \in \mathbb{C}$ and $C(s,j)$ be the function of $s$ defined by
\begin{gather*}
C(s,j):=\sum_{i=0}^{j} B(s,i) \binom{m+2j}{j-i}.
\end{gather*}
\end{dfn}

\begin{thm}\label{Pvsm} The Koornwinder polynomial $P_{(1^r)}(x\,|\,a,-a,c,-c\,|\,q,t)$ is expanded in terms of the monomial symmetric polynomials as
\begin{gather*}
P_{(1^r)}(x\,|\,a,-a,c,-c\,|\,q,t) = \sum_{j=0}^{\lfloor {r \over 2} \rfloor} C\big(t^{n-r+1},j\big) m_{(1^{r-2j})}(x).
\end{gather*}
\end{thm}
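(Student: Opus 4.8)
The plan is to obtain this expansion by composing two formulas already in hand: the expansion \eqref{PtoE} of the Koornwinder polynomial in terms of the $E_{r-2j}(x)$, and the expansion \eqref{E-m} of each $E_r(x)$ into monomial symmetric polynomials. Substituting the second into the first yields a double sum over monomials, and the entire content of the theorem is then to recognize the emerging coefficient as the function $C(s,j)$ of Definition~\ref{C(s,j)}.

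First I would write, using Theorem~\ref{Thm2.6} with $s=t^{n-r+1}$,
\begin{gather*}
P_{(1^r)}(x\,|\,a,-a,c,-c\,|\,q,t) = \sum_{i=0}^{\lfloor r/2 \rfloor} B\big(t^{n-r+1},i\big)\, E_{r-2i}(x),
\end{gather*}
and then replace each $E_{r-2i}(x)$ by its monomial expansion, which \eqref{E-m} supplies (with $r$ there set to $r-2i$) as
\begin{gather*}
E_{r-2i}(x) = \sum_{l=0}^{\lfloor (r-2i)/2 \rfloor} \binom{n-r+2i+2l}{l}\, m_{(1^{r-2i-2l})}(x).
\end{gather*}

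The key step is the change of order of summation. Setting $j=i+l$, so that $r-2i-2l=r-2j$ and the monomial index is $m_{(1^{r-2j})}(x)$, the coefficient of $m_{(1^{r-2j})}(x)$ becomes $\sum_{i=0}^{j} B\big(t^{n-r+1},i\big)\binom{n-r+2j}{j-i}$, where I have used that the binomial argument $n-r+2i+2l=n-r+2j$ no longer depends on $i$. Writing $m=n-r$ so that $s=t^{n-r+1}=t^{m+1}$, this is exactly $\sum_{i=0}^{j} B(s,i)\binom{m+2j}{j-i}=C(s,j)$, which is Definition~\ref{C(s,j)}, and the theorem follows.

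The only point needing care is the bookkeeping of the summation ranges: under $j=i+l$ one must check that $i$ runs over $0\le i\le j$ and $j$ over $0\le j\le \lfloor r/2\rfloor$ with no terms lost at the boundary, which is immediate from the triangular constraints $i\ge 0$, $l\ge 0$, $i+l\le \lfloor r/2\rfloor$. I expect no genuine obstacle here, since all the analytic work has already been invested in establishing \eqref{PtoE} and \eqref{E-m}; this statement is essentially their composition, repackaged through the definition of $C(s,j)$.
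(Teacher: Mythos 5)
Your proof is correct and is exactly the paper's argument: the paper's own proof of Theorem~\ref{Pvsm} simply states that it follows from \eqref{PtoE} and \eqref{E-m}, and your write-up supplies the routine substitution, reindexing $j=i+l$, and identification of the resulting coefficient with $C(s,j)$ from Definition~\ref{C(s,j)}. The bookkeeping of the summation ranges is handled correctly, so nothing is missing.
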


\begin{proof} It follows from (\ref{PtoE}) and (\ref{E-m}).
\end{proof}

\begin{thm} \label{threeterm-C}We have the three term relation $($see Proposition~{\rm \ref{THREE-1})}
\begin{gather}
 C(s,j) + F(s, -1) C\big(st^2,j-1\big)= C(st,j), \label{CCC}
\end{gather}
and the specialization formula for $s=1$, i.e., for $m=-1$ $($see Proposition~{\rm \ref{THREE-2})}
\begin{gather}
C(1,j)=\delta_{j,0}.\label{SPE}
\end{gather}
\end{thm}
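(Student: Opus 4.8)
The plan is to treat the two assertions separately: the three term relation \eqref{CCC} should follow mechanically from the four term relation for $B(s,i)$ together with Pascal's rule, while the specialization \eqref{SPE} should come from an explicit evaluation of $B(1,i)$.

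For \eqref{CCC}, recall that with $s=t^{m+1}$ one has $st=t^{(m+1)+1}$ and $st^2=t^{(m+2)+1}$, so that, by Definition~\ref{C(s,j)},
\begin{gather*}
C(st,j)=\sum_{i=0}^{j}B(st,i)\binom{m+2j+1}{j-i},\qquad
C\big(st^2,j-1\big)=\sum_{i=0}^{j-1}B\big(st^2,i\big)\binom{m+2j}{j-1-i}.
\end{gather*}
I would start from $C(st,j)$ and apply $\binom{m+2j+1}{j-i}=\binom{m+2j}{j-i}+\binom{m+2j}{j-i-1}$. After reindexing the second sum (its $i=j$ term vanishes since $\binom{m+2j}{-1}=0$), the expression collapses to $\sum_{i=0}^{j}\big(B(st,i)+B(st,i-1)\big)\binom{m+2j}{j-i}$, with the convention $B(st,-1):=0$. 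At this point the four term relation $B(st,i)+B(st,i-1)=B(s,i)+F(s,-1)B(st^2,i-1)$ converts the bracket, and a final shift $i\mapsto i+1$ in the term carrying $F(s,-1)$ identifies the two resulting sums as $C(s,j)$ and $F(s,-1)C(st^2,j-1)$ respectively. This yields \eqref{CCC} with no residual terms; the only thing to watch is the boundary bookkeeping at $i=0$ and $i=j$, which is harmless because $B$ and the binomials vanish there.

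For \eqref{SPE} I would compute $B(1,i)$ directly from its definition in Definition~\ref{B-Bt-def}. Since $B(s,0)=1$ identically, $C(1,0)=1$. For $i=1$ the terminating ${}_4\phi_3$ has numerator parameter $s^2t^{2i-2}=s^2$, whose $k=1$ factor $(1-s^2)$ kills every term beyond $k=0$ at $s=1$, so the series equals $1$, and the surviving prefactor simplifies to $B(1,1)=-1$. For $i\geq 2$ the prefactor $\big(s^2/t^2;t^2\big)_i$ contains the factor $1-s^2$ (the $m=1$ factor of the product) and hence vanishes at $s=1$, while the ${}_4\phi_3$ and the remaining factors stay finite there; therefore $B(1,i)=0$ for $i\geq 2$. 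Substituting these three values into $C(1,j)=\sum_{i}B(1,i)\binom{2j-1}{j-i}$ (here $m=-1$, so $m+2j=2j-1$) leaves only $\binom{2j-1}{j}-\binom{2j-1}{j-1}$, which vanishes for $j\geq 1$ by the symmetry $\binom{2j-1}{j}=\binom{2j-1}{j-1}$, giving \eqref{SPE}.

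The genuinely delicate step is the vanishing $B(1,i)=0$ for $i\geq 2$: one must confirm that the simple zero coming from $\big(s^2/t^2;t^2\big)_i$ is not compensated by a pole of the ${}_4\phi_3$ as $s\to 1$. This reduces to checking that none of the denominator Pochhammer symbols $(-s;t^2)_k$, $(-st;t^2)_k$, $\big(s^2a^2c^2/t;t^2\big)_k$ in the terminating series degenerates at $s=1$, which holds generically in $a$, $c$, $t$. Everything else is routine algebra with Pochhammer symbols and binomial coefficients.
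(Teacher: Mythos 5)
Your proposal is correct and follows essentially the same route as the paper: relation \eqref{CCC} is obtained by combining the four term relation for $B$ with Pascal's rule (the paper runs the computation from the left-hand side to $C(st,j)$, you run it in reverse, which is the same argument), and \eqref{SPE} is obtained from the values $B(1,0)=1$, $B(1,1)=-1$, $B(1,i)=0$ for $i\geq 2$ together with $\binom{2j-1}{j}=\binom{2j-1}{j-1}$, exactly as in the paper's Lemma~\ref{special-B} and Proposition~\ref{THREE-2}. Your verification that the zero of $\big(s^2/t^2;t^2\big)_i$ at $s=1$ is not cancelled by a pole of the terminating ${}_4\phi_3$ supplies the detail the paper leaves implicit in Lemma~\ref{special-B}.
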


\begin{dfn}Define the infinite upper triangular matrix $\mathcal{C}=(\mathcal{C}_{ij})_{i,j\in\mathbb{Z}_{\geq 0}} $ by setting for $r,i\geq 0$
\begin{gather*}
\mathcal{C}_{r,r+2i} =C\big(t^{r+1},i\big), \qquad \mathcal{C}_{r,r+2i+1}=0.
\end{gather*}
\end{dfn}

\begin{thm}The $\mathcal{C}_{ij}$'s satisfy the recursion relations in Theorem~{\rm \ref{MAIN}}
\begin{subequations}
\begin{gather}
\mathcal{C}_{0,0}=1,\qquad \mathcal{C}_{i-1,i-1}=\mathcal{C}_{i,i}, \qquad i=1,2,3,\ldots,\label{CAT2}\\
F(1,-1)\mathcal{C}_{1,j-1}=\mathcal{C}_{0,j}, \qquad j=2,4,6,\ldots,\label{CAT3}\\
\mathcal{C}_{i-1,j-1}+F\big( t^i,-1\big)\mathcal{C}_{i+1,j-1}=\mathcal{C}_{i,j}, \qquad i+j \ {\rm even},\quad 0<i< j.\label{CAT4}
\end{gather}
\end{subequations}
\end{thm}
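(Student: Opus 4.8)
The plan is to reduce every assertion to the three term relation \eqref{CCC} and the specialization \eqref{SPE} of Theorem~\ref{threeterm-C}, using nothing beyond the translation furnished by the definition $\mathcal{C}_{r,r+2i}=C(t^{r+1},i)$, $\mathcal{C}_{r,r+2i+1}=0$. The one preliminary fact I would record is that $C(s,0)=1$ for every $s$: by Definition~\ref{C(s,j)} the sum defining $C(s,0)$ has the single term $B(s,0)\binom{m}{0}=B(s,0)$, and in $B(s,0)$ (Definition~\ref{B-Bt-def}) the truncating numerator parameter $t^{-2j}$ becomes $t^{0}=1$ at $j=0$, so $(1;t^2)_k$ kills all $k\geq1$ terms of the ${}_4\phi_3$, leaving its $k=0$ term $1$; hence $B(s,0)=1$ and $C(s,0)=1$.

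First I would dispose of \eqref{CAT2}. The diagonal entry is $\mathcal{C}_{r,r}=C(t^{r+1},0)=1$ independently of $r$, so both $\mathcal{C}_{0,0}=1$ and $\mathcal{C}_{i-1,i-1}=\mathcal{C}_{i,i}$ follow at once. Next, for \eqref{CAT4} I would write an even pair $0<i<j$ as $j=i+2k$ with $k\geq1$, and note $F(s,-1)=f(s/t^{-1})=f(st)$, so $F(t^i,-1)=f(t^{i+1})$. Translating the three entries through the definition gives $\mathcal{C}_{i,j}=C(t^{i+1},k)$, $\mathcal{C}_{i-1,j-1}=C(t^{i},k)$ and $\mathcal{C}_{i+1,j-1}=C(t^{i+2},k-1)$, so \eqref{CAT4} becomes $C(t^i,k)+F(t^i,-1)C(t^i\cdot t^2,k-1)=C(t^i\cdot t,k)$, which is precisely \eqref{CCC} at $s=t^i$, $j=k$. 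Finally \eqref{CAT3} is the boundary case $i=0$: writing $j=2k$ with $k\geq1$, the same translation turns it into $F(1,-1)C(t^2,k-1)=C(t,k)$, and this is \eqref{CCC} at $s=1$ once \eqref{SPE} is used to annihilate the leading term $C(1,k)=\delta_{k,0}=0$.

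Since the whole argument merely substitutes the definition into three relations already established, there is no analytic difficulty. The only point demanding care is the parity bookkeeping: in each case one must check that the shifted column indices $j-1$ and rows $i\pm1$ preserve the evenness of the difference, so that the entries appearing are genuinely the nonzero $C(t^{\bullet},\bullet)$ values rather than the vanishing odd entries. I expect this index matching, together with keeping track of the identity $F(s,-1)=f(st)$ that aligns the present $F$-form with the $f$-form of the recursion in Theorem~\ref{MAIN}, to be the only place where attention is required.
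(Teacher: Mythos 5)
Your proposal is correct and follows essentially the same route as the paper: both reduce \eqref{CAT4} and \eqref{CAT3} to the three term relation \eqref{CCC} together with the specialization \eqref{SPE} via the dictionary $\mathcal{C}_{i,j}=C\big(t^{i+1},(j-i)/2\big)$, with the identity $F(s,-1)=f(st)$ aligning the two forms of the recursion. The only (immaterial) difference is at \eqref{CAT2}, where you compute $C(s,0)=B(s,0)=1$ directly from the definitions, whereas the paper obtains $\mathcal{C}_{i-1,i-1}=\mathcal{C}_{i,i}$ as the degenerate $i=j$ case of the three term relation using $\mathcal{C}_{i+1,i-1}=0$ from upper triangularity.
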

\begin{proof} We have $\mathcal{C}_{0,0}=1$. When $i+j$ is even and $0\leq i\leq j$, we have $\mathcal{C}_{ij}=C\big(t^{i+1},(j-i)/2\big)$. Therefore from (\ref{CCC}) we have
\begin{gather}
\mathcal{C}_{i-1,j-1}+F\big( t^i,-1\big)\mathcal{C}_{i+1,j-1} = C\big(t^{i},(j-i)/2\big)+F\big( t^i,-1\big)C\big(t^{i+2},(j-i)/2-1\big) \nonumber\\
\hphantom{\mathcal{C}_{i-1,j-1}+F\big( t^i,-1\big)\mathcal{C}_{i+1,j-1}}{} =C\big(t^{i+1},(j-i)/2\big)=\mathcal{C}_{ij}, \label{CCC-check}
\end{gather}
giving the three term recursion relation (\ref{CAT4}) for $0<i< j$ in (\ref{CCC-check}). When $0<i=j$, noting that $\mathcal{C}_{i+1,i-1}=0$ from the upper triangularity, we have~(\ref{CAT2}). When $i=0$ and $j\in2\mathbb{Z}_{>0}$, we have from~(\ref{SPE}) that $\mathcal{C}_{-1,j-1}=C(1,(j-2)/2)=0$, hence (\ref{CAT3}) holds.
\end{proof}

\subsection{Proof of the main theorem}\label{ProofMAIN}
Now we are ready to present a proof of our main theorem.

\begin{proof}[Proof of Theorem \ref{MAIN}] The transition matrix $\mathcal{C}$ is even and upper triangular. In view of Theorem \ref{Pvsm} and $C\big(t^{n-r+1},j\big)={\mathcal C}_{n-r,n-r+2j}$, we have for any $n>0$ and $0\leq r\leq n$
\begin{gather*}
P_{(1^r)}(x\,|\,a,-a,c,-c\,|\,q,t) = \sum_{j=0}^{\lfloor {r \over 2} \rfloor}{\mathcal C}_{n-r,n-r+2j}
m_{(1^{r-2j})}(x),
\end{gather*}
indicating the stabilized transition formula (\ref{STAB}). The three term recursion relation (\ref{Catalan-1}), (\ref{Catalan-2}) and
(\ref{Catalan-3}) are shown in Theorem~\ref{threeterm-C}.
\end{proof}

\section[Koornwinder's $q$-difference operator, Koornwinder polynomials and Mimachi's kernel function]{Koornwinder's $\boldsymbol{q}$-difference operator,\\ Koornwinder polynomials and Mimachi's kernel function}\label{Koornwinder}

We briefly recall some basic properties concerning the Koornwinder polynomials \cite{K} and the Mimachi's kernel function identity \cite{Mi}.

\subsection{Koornwinder's operator and Mimachi's kernel function}\label{Kernel}

Let $a$, $b$, $c$, $d$, $q$, $t$ be complex parameters. We assume that $|q|<1$. Set $\alpha=(abcd/q)^{1/2}$ for simplicity. Let $x=(x_1,\ldots,x_n)$ be a sequence of independent indeterminates. The Weyl group of type $BC_n$ is denoted by $W_n (\simeq \mathbb{Z}_2^n \rtimes \mathfrak{S}_n)$. Let $\mathbb{C}\big[x_1^{\pm}, x_2^{\pm}, \ldots, x_n^{\pm}\big]^{W_n}$ be the ring of $W_n$-invariant Laurent polynomials in~$x$. For a partition $\lambda = (\lambda_1, \lambda_2, \ldots, \lambda_n)$ of length $n$, i.e., $\lambda_i\in \bbZ_{\geq 0}$ and $\lambda_1\geq \cdots\geq \lambda_n$, we denote by $m_{\lambda}=m_{\lambda}(x)$ the monomial symmetric polynomial being defined as the orbit sums of monomials
\begin{gather*}
m_{\lambda} = {1 \over |\operatorname{Stab}(\lambda)|} \sum_{\mu \in W_n \cdot \lambda} \prod_{i} x_i^{\mu_i},
\end{gather*}
where $\operatorname{Stab}(\lambda)=\{ s \in W_n \,|\, s \lambda = \lambda \}$.

Koornwinder's $q$-difference operator ${\mathcal D}_x={\mathcal D}_x(a,b,c,d\,|\,q,t)$ \cite{K} reads
\begin{gather*}
{\mathcal D}_x= \sum_{i=1}^n {(1-ax_i)(1-bx_i)(1-cx_i)(1-dx_i)\over
\alpha t^{n-1}\big(1-x_i^2\big)\big(1-qx_i^2\big)} \prod_{j\neq i} {(1-t x_ix_j)(1-t x_i/x_j)\over (1-x_ix_j)(1-x_i/x_j)} \big(T_{q,x_i}^{+1}-1\big) \\
{}+ \sum_{i=1}^n {(1-a/x_i)(1-b/x_i)(1-c/x_i)(1-d/x_i)\over \alpha t^{n-1}\big(1-1/x_i^2\big)\big(1-q/x_i^2\big)} \prod_{j\neq i} {(1-t x_j/x_i)(1-t /x_ix_j)\over (1-x_j/x_i)(1-1/x_ix_j)} \big(T_{q,x_i}^{-1}-1\big),
\end{gather*}
where we have used the notation $T_{q,x}^{\pm1}f(x_1,\ldots,x_i,\ldots ,x_n)=f\big(x_1,\ldots,q^{\pm 1}x_i,\ldots ,x_n\big)$.

The Koornwinder polynomial $P_\lambda(x)=P_\lambda(x\,|\,a,b,c,d\,|\,q,t)\in \mathbb{C}\big[x_1^{\pm 1},\ldots,x_n^{\pm 1}\big]^{W_n}$
is uniquely characterized by the conditions
\begin{gather*}
(a) \quad \mbox{$P_\lambda(x)=m_\lambda(x)+\mbox{lower order terms}$ w.r.t.\ the dominance ordering}, \\
(b) \quad \mbox{${\mathcal D}_x P_\lambda=d_\lambda P_\lambda$.}
\end{gather*}
The eigenvalue $d_\lambda$ is explicitly written as
\begin{gather*}
 d_\lambda=\sum_{j=1}^n\big\langle abcdq^{-1}t^{2n-2j}q^{\lambda_j}\big\rangle
\big\langle q^{\lambda_j}\big\rangle =
\sum_{j=1}^n \big\langle \alpha t^{n-j}q^{\lambda_j}; \alpha t^{n-j} \big\rangle,
\end{gather*}
where we used the notations $\langle x\rangle=x^{1/2}-x^{-1/2}$ and $\langle x;y\rangle=\langle xy\rangle\langle x/y\rangle=x+x^{-1}-y-y^{-1}$
for simplicity of display.

\begin{dfn} Define the involution $\widetilde{*}$ of the parameters by
\begin{gather*}
\widetilde{a}=a, \qquad
\widetilde{b}=b, \qquad
\widetilde{c}=c, \qquad
\widetilde{d}=d, \qquad
\widetilde{q}=t,\qquad
\widetilde{t}=q.
\end{gather*}
We write $\widetilde{\mathcal D}_x={\mathcal D}_x( a, b, c, d \,|\, t, q)$ and $\widetilde{P}_\lambda(x)=P_\lambda(x\,|\, a, b, c, d \,|\, t, q)$ for short.
\end{dfn}

\begin{thm}[{\cite[Lemma 3.2]{Mi}}]\label{MIMACHI} Let $n$ and $m$ be positive integers, and let $x=(x_1,\ldots,x_n)$, $y=(y_1,\ldots,y_m)$ be two sets of independent indeterminates. Mimachi's kernel function
\begin{gather*}
\Psi(x;y)=(y_1 y_2 \cdots y_m)^{-n} \prod_{i=1}^n \prod_{j=1}^m (1-y_j x_i)(1-y_j/x_i),
\end{gather*}
enjoys the kernel function identity
\begin{gather*}
\langle t \rangle {\mathcal D}_x \Psi(x;y)+ \langle q \rangle \widetilde{{\mathcal D}}_y \Psi(x;y)=
\big\langle t^n \big\rangle \big\langle q^m \big\rangle \big\langle abcdt^{n-1}q^{m-1} \big\rangle \Psi(x;y).
\end{gather*}
\end{thm}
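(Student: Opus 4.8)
The plan is to prove the identity by showing that the Laurent polynomial
\[
G(x;y):=\langle t\rangle\,{\mathcal D}_x\Psi(x;y)+\langle q\rangle\,\widetilde{\mathcal D}_y\Psi(x;y)
\]
is a constant multiple of $\Psi(x;y)$, and then to pin down the constant. Since ${\mathcal D}_x$ preserves $\mathbb{C}\big[x^{\pm}\big]^{W_n}$ (the property underlying Koornwinder's construction) and $\widetilde{\mathcal D}_y$ preserves $\mathbb{C}\big[y^{\pm}\big]^{W_m}$, $G$ is a genuine Laurent polynomial, symmetric under $W_n$ in $x$ and under $W_m$ in $y$. The kernel $\Psi$ factors into the distinct irreducible pieces $(1-y_jx_i)$ and $(1-y_j/x_i)$, each occurring to first order, so $\Psi\mid G$ in the Laurent polynomial ring as soon as $G$ vanishes on every hyperplane $x_i=y_j$ (the loci $x_i=1/y_j$ being equivalent under $x_i\mapsto 1/x_i$ together with the $W_n$-invariance of $G$). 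Granting this divisibility, $F:=G/\Psi$ is again a Laurent polynomial.

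The heart of the argument is the vanishing $G|_{x_i=y_j}=0$. On $x_i=y_j$ the factor $(1-y_j/x_i)$ of $\Psi$ vanishes, so the diagonal term $-\big(\sum_k A^{+}_k+A^{-}_k\big)\Psi$ of ${\mathcal D}_x\Psi$ disappears; among the shifted contributions $A^{\pm}_k\,T_{q,x_k}^{\pm1}\Psi$ only those with $k=i$ survive, since for $k\neq i$ the shift does not touch the vanishing factor $(1-y_j/x_i)$, which therefore persists. The same reasoning leaves only the $j$-th terms of $\widetilde{\mathcal D}_y\Psi$. Thus on $x_i=y_j$ the identity reduces to a local four term relation among
\[
\langle t\rangle A^{+}_i\,T^{+1}_{q,x_i}\Psi,\quad \langle t\rangle A^{-}_i\,T^{-1}_{q,x_i}\Psi,\quad \langle q\rangle \widetilde A^{+}_j\,T^{+1}_{t,y_j}\Psi,\quad \langle q\rangle \widetilde A^{-}_j\,T^{-1}_{t,y_j}\Psi,
\]
each evaluated at $x_i=y_j$. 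Writing $\Psi=(1-y_j/x_i)\,\Psi_0$ and computing the shift ratios $T^{\pm1}_{q,x_i}\Psi/\Psi_0$, $T^{\pm1}_{t,y_j}\Psi/\Psi_0$ together with the restricted coefficients $A^{\pm}_i,\widetilde A^{\pm}_j$, one checks that the weights $\langle t\rangle=t^{1/2}-t^{-1/2}$ and $\langle q\rangle=q^{1/2}-q^{-1/2}$ are exactly those that make the four terms cancel. This explicit cancellation—verifying that the Koornwinder coefficients conspire with the kernel's shift ratios to vanish on $x_i=y_j$—is the main obstacle and the only place where the detailed form of $\Psi$ and of ${\mathcal D}_x$ genuinely enters.

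Once $\Psi\mid G$ is in hand, I would show $F=G/\Psi$ is constant by a degree bound. As $x_i\to\infty$ (other variables fixed) each coefficient $A^{\pm}_k$ tends to a finite limit and each shift ratio $T^{\pm1}_{q,x_k}\Psi/\Psi$ stays bounded (the one with $k=i$ tending to $q^{\pm m}$), so ${\mathcal D}_x\Psi/\Psi$ remains bounded, and likewise $\widetilde{\mathcal D}_y\Psi/\Psi$. Hence $F$ is bounded as $x_i\to\infty$, and by the $W_n$-invariance $x_i\mapsto 1/x_i$ also as $x_i\to 0$, forcing $\deg_{x_i}F=0$; the same holds in every $y_j$. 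A $W_n\times W_m$-symmetric Laurent polynomial of degree $0$ in each variable is a constant, so $G=c\,\Psi$ for some $c$ independent of $x,y$.

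Finally I would evaluate $c$ by extracting the coefficient of the top monomial $x_1^m\cdots x_n^m$. In $\Psi$ this coefficient equals $(-1)^{nm}$ and is independent of $y$, so applying $\widetilde{\mathcal D}_y$ to it gives $0$, while the dominance triangularity ${\mathcal D}_x m_\mu=d_\mu m_\mu+(\text{lower})$, with $(m^n)$ the maximal weight occurring in $\Psi$, shows that the same coefficient of ${\mathcal D}_x\Psi$ equals $(-1)^{nm}d_{(m^n)}$. Comparing with $c\cdot(-1)^{nm}$ yields $c=\langle t\rangle d_{(m^n)}$. Computing $d_{(m^n)}=\sum_{j=1}^{n}\big\langle\alpha t^{n-j}q^{m};\alpha t^{n-j}\big\rangle$, summing the two geometric series in $t$ against the prefactor $\langle t\rangle$, and using $abcd=\alpha^2 q$, then gives
\[
\langle t\rangle\, d_{(m^n)}=\big\langle t^{n}\big\rangle\big\langle q^{m}\big\rangle\big\langle abcd\,t^{n-1}q^{m-1}\big\rangle,
\]
which is the asserted eigenvalue. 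Since $c$ is already known to be a constant, any convenient specialization would suffice; this coefficient extraction is simply the most transparent one.
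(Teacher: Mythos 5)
The paper does not prove this statement at all: it is quoted directly from Mimachi's paper \cite{Mi} (his Lemma~3.2), so there is no in-paper argument to compare yours against. Judged on its own terms, your plan is sound and follows the standard architecture for kernel-function identities. Each of your four steps is correct: $G$ is a $W_n\times W_m$-invariant Laurent polynomial because both operators preserve invariant Laurent polynomials; $\Psi\mid G$ follows (up to monomial units) from vanishing on the pairwise coprime irreducible divisors $x_i=y_j^{\pm1}$, with the $x_i=y_j^{-1}$ case handled by $W_n$-invariance; the boundedness of $\mathcal{D}_x\Psi/\Psi$ and $\widetilde{\mathcal{D}}_y\Psi/\Psi$ as each variable tends to $\infty$, combined with invariance under $x_i\mapsto x_i^{-1}$, forces $G/\Psi$ to be a constant; and the coefficient of $x_1^m\cdots x_n^m$ in $\Psi$ is $(-1)^{nm}$ independently of $y$, so $\widetilde{\mathcal{D}}_y$ contributes nothing there, while the triangularity of $\mathcal{D}_x$ on the $m_\mu$ gives $c=\langle t\rangle d_{(m^n)}$, which indeed simplifies to $\langle t^n\rangle\langle q^m\rangle\langle abcd\,t^{n-1}q^{m-1}\rangle$ using $\alpha^2 q=abcd$.

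The one place where you assert rather than prove is, as you yourself flag, the crux: the cancellation on $x_i=y_j$. In a finished write-up this computation must be supplied, but it does go through, and in fact more cleanly than a ``four term relation'' suggests: the cancellation is pairwise, namely $\langle t\rangle A_i^{+}T^{+1}_{q,x_i}\Psi+\langle q\rangle\widetilde A_j^{+}T^{+1}_{t,y_j}\Psi=0$ and $\langle t\rangle A_i^{-}T^{-1}_{q,x_i}\Psi+\langle q\rangle\widetilde A_j^{-}T^{-1}_{t,y_j}\Psi=0$ separately on $x_i=y_j$. In each pair the Askey--Wilson numerators $(1-ay_j)(1-by_j)(1-cy_j)(1-dy_j)$ (respectively their reflected versions) coincide, and the cross factors over $k\neq i$ and $l\neq j$ coming from $A_i^{\pm}$, $\widetilde A_j^{\pm}$ and from the shift ratios of $\Psi$ match up to explicit powers of $q$ and $t$; after these are cleared, the first pair reduces to the scalar identity
\begin{gather*}
\frac{\langle t\rangle\big(1-q^{-1}\big)}{\alpha\, t^{n-1}}+\frac{\langle q\rangle(1-t)}{\widetilde\alpha\, t^{n}}=0,
\end{gather*}
which holds because $q^{1/2}\alpha=t^{1/2}\widetilde\alpha=(abcd)^{1/2}$, where $\widetilde\alpha=(abcd/t)^{1/2}$ is the image of $\alpha$ under $\widetilde{\ast}$; the second pair is analogous. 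With that verification written out, your argument is a complete and self-contained proof of the identity the paper imports from \cite{Mi}.
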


When we apply Mimachi's kernel function, the following lemmas will be used. Recall that the generating function $E(x|y)$ is introduced in Definition~\ref{Def-E}
\begin{gather*}
E(x\,|\,y)=\prod_{i=1}^{n} (1-y x_i)(1-y/x_i) = \sum_{r \geq 0} (-1)^r E_r(x) y^r.
\end{gather*}

\begin{lem}\label{Lem-Em} We have
\begin{gather*}
E_r(x) = \sum_{k=0}^{\lfloor{r \over 2}\rfloor} \binom{n-r+2k}{k} m_{(1^{r-2k})}(x),
\end{gather*}
where $\binom{m}{j}$ denotes the ordinary binomial coefficient.
\end{lem}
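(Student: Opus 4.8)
The plan is to reduce the whole statement to a single generating-function identity in the auxiliary variable $y$, after identifying the column monomials $m_{(1^r)}(x)$ with elementary symmetric functions of the $W_n$-invariant combinations $z_i := x_i + x_i^{-1}$. First I would unwind the definition of the type $C_n$ monomial symmetric polynomial. The exponent vector $(1^r,0^{n-r})$ has $W_n$-orbit consisting of all vectors obtained by selecting $r$ of the $n$ coordinates and assigning each a sign $\pm 1$, and the normalizing factor $1/|\operatorname{Stab}(\lambda)|$ is exactly what is needed so that the orbit sum records each such signed monomial once. Hence, for $0\le r\le n$,
\[
m_{(1^r)}(x) = \sum_{1\le i_1<\cdots<i_r\le n}\ \prod_{k=1}^{r}\big(x_{i_k}+x_{i_k}^{-1}\big) = e_r(z_1,\dots,z_n),
\]
the $r$-th elementary symmetric polynomial in the $z_i$ (and $m_{(1^r)}(x)=0$ for $r>n$).

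Next I would rewrite the generating function of Definition~\ref{Def-E}. Since $(1-yx_i)(1-y/x_i)=1-y z_i+y^2$, we have $E(x\,|\,y)=\prod_{i=1}^{n}\big(1+y^2-y z_i\big)$. Factoring $(1+y^2)$ out of each factor and using the elementary-symmetric expansion gives
\[
E(x\,|\,y) = (1+y^2)^n \prod_{i=1}^{n}\bigl(1-\tfrac{y}{1+y^2}\,z_i\bigr) = \sum_{r=0}^{n}(-1)^r\, y^r\,(1+y^2)^{n-r}\, e_r(z).
\]
Replacing $e_r(z)$ by $m_{(1^r)}(x)$ and expanding $(1+y^2)^{n-r}=\sum_{j\ge 0}\binom{n-r}{j}y^{2j}$ yields a double sum in which the coefficient of $y^R$ collects precisely the terms with $r+2j=R$.

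Finally I would extract coefficients. Comparing the coefficient of $y^R$ on both sides of $E(x\,|\,y)=\sum_R (-1)^R E_R(x)\,y^R$, substituting $r=R-2k$ and $j=k$, the sign $(-1)^{R-2k}=(-1)^R$ cancels and $\binom{n-(R-2k)}{k}=\binom{n-R+2k}{k}$, which is exactly the asserted formula. There is no serious obstacle here: the only points requiring genuine care are the combinatorial identification in the first step (checking that the stabilizer normalization counts each signed monomial with multiplicity one, so that the orbit sum really is $e_r(z)$) and the bookkeeping of the reindexing $\binom{n-r}{j}$ into $\binom{n-R+2k}{k}$. The potentially worrisome terms with $R-2k>n$, where $m_{(1^{R-2k})}(x)$ vanishes, are automatically discarded by the same convention in both the derived and the stated sum, so the identity holds for all $r\ge 0$ and in particular in the range $0\le r\le n$ relevant to the rest of the paper.
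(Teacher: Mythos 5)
Your proof is correct and follows essentially the same route as the paper: both arrive at the intermediate identity $E(x\,|\,y)=\sum_{s}(-1)^s y^s\big(1+y^2\big)^{n-s}m_{(1^s)}(x)$ and then expand $\big(1+y^2\big)^{n-s}$ binomially and reindex the double sum. The only cosmetic difference is that you justify that identity by identifying $m_{(1^r)}(x)$ with $e_r\big(x_1+x_1^{-1},\dots,x_n+x_n^{-1}\big)$, whereas the paper reads off the coefficient of the dominant monomial $x_1\cdots x_s$ in $\prod_i\big(1-(x_i+1/x_i)y+y^2\big)$ and invokes $W_n$-invariance.
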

\begin{proof}
For an integer $s$ satisfying $0\leq s\leq n$, we can find that the coefficient of the monomial $x_1x_2\cdots x_s$ in $E(x\,|\,y)=\prod\limits_{i=1}^{n} \big(1-(x_i+1/x_i)y +y^2\big)$ is $(-1)^s y^s \big(1+y^2\big)^{n-s}$. Hence we have
\begin{gather*}
E(x\,|\,y) =\sum_{s=0}^n \sum_{k=0}^{n-s} m_{(1^s)}(x) (-1)^s y^{s+2k} \binom{ n-s }{k} \\
\hphantom{E(x\,|\,y)}{} =\sum_{r=0}^n(-1)^r y^r\sum_{k=0}^{\lfloor{ r\over 2} \rfloor}
 \binom{ n-r+2k }{ k }m_{(1^{r-2k})}(x). \tag*{\qed}
\end{gather*}\renewcommand{\qed}{}
\end{proof}

\begin{lem}\label{Lem-EEE} Let $\lambda=(\lambda_1,\ldots,\lambda_m)$ be a partition satisfying the condition $\lambda_1\leq n$. We have
\begin{gather}
\prod_{i=1}^m E_{\lambda_i}(x)=m_{\lambda'}(x)+\mbox{ lower order terms}. \label{EEEE}
\end{gather}
\end{lem}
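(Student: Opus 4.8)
The plan is to identify the dominance-leading term of the product $\prod_{i=1}^m E_{\lambda_i}(x)$ and to show that it equals $m_{\lambda'}(x)$ with coefficient $1$, every other monomial symmetric polynomial occurring being strictly lower. First I would reinterpret the $E_r(x)$ combinatorially: by Definition~\ref{Def-E}, $E_r(x)$ is the $r$-th elementary symmetric polynomial in the $2n$ variables $x_1,x_1^{-1},\dots,x_n,x_n^{-1}$. Consequently every monomial occurring in $E_{\lambda_i}(x)$ has an exponent vector $\nu\in\{-1,0,1\}^n$ with $\sum_j|\nu_j|\le\lambda_i$ (choosing both $x_j$ and $x_j^{-1}$ contributes $0$ to the exponent of $x_j$ but $2$ to the degree). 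A general monomial of the product then has exponent vector $\mu=\sum_{i=1}^m\nu^{(i)}$; writing $\mu^+$ for its dominant representative, the partition obtained by sorting $(|\mu_1|,\dots,|\mu_n|)$ into decreasing order, this term contributes to $m_{\mu^+}(x)$. For $m=1$ this already recovers $E_r=m_{(1^r)}+(\text{lower})$, consistent with Lemma~\ref{Lem-Em}.

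Next I would establish the dominance bound $\mu^+\preceq\lambda'$ for every such $\mu$. For any set $S$ of $p$ indices,
\[
\sum_{j\in S}|\mu_j|\le\sum_{j\in S}\sum_{i=1}^m\big|\nu^{(i)}_j\big|=\sum_{i=1}^m\#\{\,j\in S:\nu^{(i)}_j\neq0\,\}\le\sum_{i=1}^m\min(p,\lambda_i)=\sum_{k=1}^p\lambda'_k,
\]
using $\#\{j:\nu^{(i)}_j\neq0\}\le\sum_j|\nu^{(i)}_j|\le\lambda_i$ together with $\lambda'_k=\#\{i:\lambda_i\ge k\}$. Taking $S$ to be the indices of the $p$ largest $|\mu_j|$ gives $\sum_{k\le p}\mu^+_k\le\sum_{k\le p}\lambda'_k$ for all $p$, i.e.\ $\mu^+\preceq\lambda'$ in the dominance order. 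The hypothesis $\lambda_1\le n$ is used precisely here: it guarantees $\ell(\lambda')=\lambda_1\le n$, so that $m_{\lambda'}(x)\neq0$, and also $\lambda_i\le n$ for all $i$. This already shows $\prod_iE_{\lambda_i}(x)=c\,m_{\lambda'}(x)+(\text{strictly lower terms})$ for some scalar $c$.

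It remains to compute $c$, which equals the coefficient of the dominant monomial $x^{\lambda'}=x_1^{\lambda'_1}\cdots x_n^{\lambda'_n}$ in the product, since that monomial occurs only in $m_{\lambda'}$ and there with coefficient $1$. Equality in the estimate above for every $p$ forces each $\nu^{(i)}\in\{0,1\}^n$ with full support of size $\lambda_i$; thus the only contributions to $x^{\lambda'}$ come from choosing in each factor $E_{\lambda_i}$ a squarefree positive monomial $\prod_{j\in S_i}x_j$ with $|S_i|=\lambda_i$, i.e.\ a monomial of the ordinary elementary symmetric polynomial $e_{\lambda_i}(x_1,\dots,x_n)$. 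Hence $c=[x^{\lambda'}]\,e_\lambda(x_1,\dots,x_n)$, where $e_\lambda=\prod_ie_{\lambda_i}$. By the classical unitriangularity of the transition from elementary to monomial symmetric functions, $e_\lambda=m_{\lambda'}+\sum_{\mu\prec\lambda'}M_{\lambda\mu}\,m_\mu$ with $M_{\lambda\lambda'}=1$ (the $0$--$1$ matrix count in Macdonald's symmetric function theory), so $c=1$, which proves \eqref{EEEE}.

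The main obstacle is the careful bookkeeping of cancellations. Both the intra-factor cancellations coming from the relation $x_jx_j^{-1}=1$ and the inter-factor cancellations coming from opposite signs must be shown to strictly decrease the dominance weight; this is exactly what the partial-sum estimate of the second paragraph accomplishes. Once that is in place, the computation of the leading coefficient becomes insensitive to the signed structure and collapses to the purely type $A_{n-1}$ statement about $e_\lambda(x_1,\dots,x_n)$.
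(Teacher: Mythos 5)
Your proof is correct, but it takes a genuinely different (and considerably more self-contained) route than the paper's. The paper's proof is two lines: it first invokes Lemma~\ref{Lem-Em} to get $E_r(x)=m_{(1^r)}(x)+\mbox{lower order terms}$, and then applies the general multiplicativity fact that $m_\lambda m_\mu = m_{\lambda+\mu}+\mbox{lower order terms}$ for $W_n$-monomial symmetric polynomials, together with the observation that $\lambda'=(1^{\lambda_1})+\cdots+(1^{\lambda_m})$. You instead bypass Lemma~\ref{Lem-Em} entirely and work directly with $E_r$ as the elementary symmetric polynomial in the $2n$ variables $x_j^{\pm1}$: your partial-sum estimate $\sum_{j\in S}|\mu_j|\le\sum_i\min(p,\lambda_i)=\sum_{k\le p}\lambda'_k$ is a clean, explicit substitute for the abstract triangularity statement, and your equality analysis (no $x_jx_j^{-1}$ pairs inside a factor, no sign cancellation between factors) correctly reduces the leading coefficient to $[x^{\lambda'}]e_\lambda(x_1,\dots,x_n)=M_{\lambda\lambda'}=1$, the classical type $A_{n-1}$ unitriangularity. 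What the paper's approach buys is brevity, at the cost of leaving the product rule for monomial symmetric polynomials unproved; what yours buys is a complete verification from first principles, including an explicit identification of exactly which terms achieve the dominant weight -- in effect you prove the needed special case of the product rule along the way. Both arguments use the hypothesis $\lambda_1\le n$ in the same place, to guarantee $\ell(\lambda')\le n$ so that $m_{\lambda'}(x)\neq0$.
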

\begin{proof} Note that for any partitions $\lambda$ and $\mu$, we have $m_\lambda m_\mu=m_{\lambda+\mu} +\mbox{lower order terms}$.
By using Lemma \ref{Lem-Em}, we have $E_r(x)=m_{(1^r)}+\mbox{lower order terms}$.
Hence we have~(\ref{EEEE}).
\end{proof}

\subsection[Asymptotically free eigenfunction $f(x;s)$ for ${\mathcal D}_x$ and reproduction formula]{Asymptotically free eigenfunction $\boldsymbol{f(x;s)}$ for $\boldsymbol{{\mathcal D}_x}$\\ and reproduction formula}\label{reproduction}

Let $\lambda=(\lambda_1,\ldots,\lambda_n)\in \mathbb{C}^n$ and $s=(s_1,\ldots,s_n)$ be a sequence of complex parameters as $s_i=t^{-n+i}q^{-\lambda_i}$, $i=1,\ldots,n$. We use the shorthand notations $x^{\lambda}=\prod_ix_i^{\lambda_i}$ and $x^{-\lambda}=\prod_ix_i^{-\lambda_i}$. Let $f(x;s)\in x^{-\lambda} \bbC[[x_1/x_2,\ldots,x_{n-1}/x_n,x_n]]$ be the infinite series satisfying the conditions
\begin{gather*}
f(x;s)=x^{-\lambda}\sum_{\beta \in Q^+} c_\beta(s)x^\beta,\qquad c_0(s)=1, \\
{\mathcal D}_x f(x;s)= \sum_{i=1}^n \big\langle \alpha s_i^{-1} ;\alpha t^{n-i}\big\rangle f(x;s),
\end{gather*}
where $Q^+$ denotes the positive cone of the root lattice of type $BC_n$. To be more explicit, corresponding to the simple roots $\alpha_1=\varepsilon_1-\varepsilon_2,\alpha_2=\varepsilon_2-\varepsilon_2,\ldots, \alpha_{n-1}=\varepsilon_{n-1}-\varepsilon_n,\alpha_{n}=\varepsilon_{n}$, we have $x^{\alpha_1}=x_1/x_2,\ldots,x^{\alpha_{n-1}}=x_{n-1}/x_n, x^{\alpha_n}=x_n$. Assuming the genericity of the eigenvalue, one can show that $f(x;s)$ is determined uniquely.

\begin{dfn}The adjoint ${\mathcal D}_x^*$ of ${\mathcal D}_x$ is defined to be
\begin{gather*}
{\mathcal D}_x^*= \sum_{i=1}^n \big(T_{q,x_i}^{-1}-1\big) {(1-ax_i)(1-bx_i)(1-cx_i)(1-dx_i)\over
\alpha t^{n-1}\big(1-x_i^2\big)\big(1-qx_i^2\big)} \prod_{j\neq i} {(1-t x_ix_j)(1-t x_i/x_j)\over (1-x_ix_j)(1-x_i/x_j)} \\
{}+ \sum_{i=1}^n \big(T_{q,x_i}^{+1}-1\big){(1-a/x_i)(1-b/x_i)(1-c/x_i)(1-d/x_i)\over
\alpha t^{n-1}\big(1-1/x_i^2\big)\big(1-q/x_i^2\big)} \prod_{j\neq i} {(1-t x_j/x_i)(1-t /x_ix_j)\over (1-x_j/x_i)(1-1/x_ix_j)}.
\end{gather*}
\end{dfn}
\begin{dfn} Denote by $V(x)$ the Weyl denominator of type~$C_n$
\begin{gather*}
V(x)= \prod_{k=1}^n x_k^{-n+k-1} \prod_{i=1}^n \big(1-x_i^{2}\big) \prod_{1\leq i<j\leq n}(1-x_ix_j)(1-x_i/x_j).
\end{gather*}
\end{dfn}

\begin{dfn}Define the involution $\overline{*}$ of the parameters by
\begin{gather*}
\overline{a}=q/a,\qquad
\overline{b}=q/b,\qquad
\overline{c}=q/c,\qquad
\overline{d}=q/d,\qquad
\overline{q}=q,\qquad
\overline{t}=q/t. \nonumber
\end{gather*}
Write for simplicity the composition of the two involutions $\widetilde{*}$ and $\overline{*}$ as $\widehat{*}=\widetilde{\overline{*}}$. Note that $\widehat{*}$ is not an involution but has order~$6$
\begin{gather*}
\widehat{a}=t/a,\qquad
\widehat{b}=t/b,\qquad
\widehat{c}=t/c,\qquad
\widehat{d}=t/d,\qquad
\widehat{q}=t,\qquad
\widehat{t}=t/q.
\end{gather*}
\end{dfn}

\begin{prp}[{\cite[Proposition 6.2]{HNS}}]\label{ADJOINT} We have
\begin{gather*}
V(x)^{-1}{\mathcal D}_x^*V(x)- \overline{\mathcal D}_x =\sum_{j=1}^n \big\langle\overline{\alpha}\bar{t}^{n-j};\alpha t^{n-j}\big\rangle.
 \end{gather*}
\end{prp}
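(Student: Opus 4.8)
The statement is \cite[Proposition 6.2]{HNS}; I would establish it by a direct comparison of the two $q$-difference operators, handling the shift parts and the multiplication parts separately. Write Koornwinder's operator as $\mathcal{D}_x=\sum_i A_i^{+}\big(T_{q,x_i}^{+1}-1\big)+\sum_i A_i^{-}\big(T_{q,x_i}^{-1}-1\big)$, where $A_i^{\pm}$ denote the two rational coefficient functions appearing in its definition, so that $\mathcal{D}_x^*=\sum_i\big(T_{q,x_i}^{-1}-1\big)A_i^{+}+\sum_i\big(T_{q,x_i}^{+1}-1\big)A_i^{-}$. Moving each shift past the commuting multiplication operators $V$ and $A_i^{\pm}$ gives
\[
V(x)^{-1}\mathcal{D}_x^* V(x)=\sum_i\frac{T_{q,x_i}^{-1}\big(A_i^{+}V\big)}{V}\,T_{q,x_i}^{-1}+\sum_i\frac{T_{q,x_i}^{+1}\big(A_i^{-}V\big)}{V}\,T_{q,x_i}^{+1}-\sum_i\big(A_i^{+}+A_i^{-}\big).
\]
Since $\overline{\mathcal{D}}_x=\sum_i\overline{A_i^{+}}\,T_{q,x_i}^{+1}+\sum_i\overline{A_i^{-}}\,T_{q,x_i}^{-1}-\sum_i\big(\overline{A_i^{+}}+\overline{A_i^{-}}\big)$, where $\overline{A_i^{\pm}}$ is $A_i^{\pm}$ with the bar involution applied to the parameters, and since the right-hand side of the proposition is a scalar, it suffices to show (i) that the shift coefficients agree and (ii) that the leftover multiplication function is the asserted constant.

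For step (i) the key object is the conjugation factor $T_{q,x_i}^{\mp1}(V)/V$, which I would compute directly from the product form of $V(x)$: the monomial part $\prod_k x_k^{-n+k-1}$ contributes a power of $q$, the factor $1-x_i^2$ contributes $\big(1-q^{\mp2}x_i^2\big)/\big(1-x_i^2\big)$, and each off-diagonal pair $(1-x_ix_j)(1-x_i/x_j)$ contributes its shifted ratio. The mechanism that makes everything match is that a $q$-shift turns a $t$-linear factor into a $(q/t)$-linear one up to a monomial; for instance $T_{q,x_i}^{-1}(1-ax_i)=-(a/q)x_i\big(1-\overline a/x_i\big)$ with $\overline a=q/a$, and similarly for the Weyl factors $(1-tx_ix_j)$. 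Multiplying $T_{q,x_i}^{-1}\big(A_i^{+}\big)$ by $T_{q,x_i}^{-1}(V)/V$, I would check that the poles of $A_i^{+}$ cancel and that the numerators, the prefactor $\alpha t^{n-1}$, and the accumulated monomials reorganize exactly into $\overline{A_i^{-}}$ (the same coefficient with $a\mapsto q/a,\dots,d\mapsto q/d$, $t\mapsto q/t$, $\alpha\mapsto\overline\alpha=q/\alpha$); the symmetric identity gives $T_{q,x_i}^{+1}\big(A_i^{-}V\big)/V=\overline{A_i^{+}}$. This is the step I expect to be the main obstacle, because the involution $t\mapsto q/t$ has to be reproduced by the $q$-shift acting on the off-diagonal factors, and the scalar prefactor $\alpha t^{n-1}$ together with the powers of $q$ coming from the monomial part of $V$ must all be matched against $\overline\alpha\,\overline t^{\,n-1}$; the bookkeeping is delicate.

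Granting (i), the difference $V(x)^{-1}\mathcal{D}_x^* V(x)-\overline{\mathcal{D}}_x$ has no shift part and equals multiplication by $g(x)=\sum_i\big(\overline{A_i^{+}}+\overline{A_i^{-}}\big)-\sum_i\big(A_i^{+}+A_i^{-}\big)$. I would argue that $g$ is a $W_n$-invariant rational function whose only possible poles, coming from the factors $1-x_i^{\pm2}$ and $1-x_ix_j^{\pm1}$, cancel between the two sums, so that $g$ is regular and hence constant. To evaluate the constant I would pass to the dominant limit $x_1\gg\cdots\gg x_n\gg1$, in which $A_i^{\pm}\to(\alpha t^{n-i})^{\pm1}$ and correspondingly $\overline{A_i^{\pm}}\to(\overline\alpha\,\overline t^{\,n-i})^{\pm1}$; summing these limits yields $g(x)=\sum_{j=1}^n\big\langle\overline\alpha\,\overline t^{\,n-j};\alpha t^{n-j}\big\rangle$, which is precisely the scalar in the proposition and is consistent with the normalization already recorded in the formula for $d_\lambda$.
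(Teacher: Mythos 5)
The paper does not actually prove this statement: it is imported verbatim as \cite[Proposition~6.2]{HNS}, so there is no internal proof to compare you against. Your direct verification is nonetheless a sound route, and its two pillars check out. For step (i), writing $V(x)=\prod_k x_k^{-n+k-1}\prod_i(1-x_i^2)\prod_{i<j}(1-x_ix_j)(1-x_i/x_j)$ one gets
$T_{q,x_i}^{-1}(V)/V=q^{n}\,\frac{1-x_i^2/q^2}{1-x_i^2}\prod_{j\neq i}\frac{(1-x_ix_j/q)(1-x_i/qx_j)}{(1-x_ix_j)(1-x_i/x_j)}$,
the factor $q^{n-i+1}$ from the monomial part combining with $q^{i-1}$ from rewriting $(1-qx_j/x_i)$ for $j<i$; multiplying by $A_i^{+}(x_i/q)$ and using $\overline{\alpha}=q/\alpha$ and $abcd=q\alpha^2$ reproduces $\overline{A_i^{-}}$ exactly, so the shift coefficients do match as you claim.

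Two points in step (ii) need repair. First, your list of potential poles omits the factors $1-qx_i^{\pm 2}$, and these are exactly the poles that do \emph{not} cancel inside $\sum_i\big(A_i^{+}+A_i^{-}\big)$ alone (already for $n=1$ that sum is non-constant); they cancel only against the matching poles of $\sum_i\big(\overline{A_i^{+}}+\overline{A_i^{-}}\big)$, via $\big(1-q^{1/2}x_j/t\big)\big(1-q^{1/2}/tx_j\big)=\big(q/t^2\big)\big(1-tq^{-1/2}x_j\big)\big(1-tq^{-1/2}/x_j\big)$ together with the same scalar identities as above, which make the residue ratio exactly $1$. By contrast the poles at $x_i^{\pm 2}=1$ and $x_ix_j^{\pm1}=1$ cancel \emph{within} each sum separately (pairing $A_i^{+}$ with $A_i^{-}$, respectively $A_i^{\pm}$ with $A_j^{\mp}$), not "between the two sums" as you state. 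Second, "regular hence constant" requires noting that each $A_i^{\pm}$ is bounded as any $x_k\to 0,\infty$ (numerator and denominator have equal degree in each variable), after which your dominant limit $A_i^{\pm}\to\big(\alpha t^{n-i}\big)^{\pm1}$ evaluates the constant to $\sum_{j=1}^n\big\langle\overline{\alpha}\,\overline{t}^{\,n-j};\alpha t^{n-j}\big\rangle$. With these two repairs your argument is complete.
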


\begin{thm}\label{thm:reproduction}Let $n\geq m$ be positive integers, and $x=(x_1,\ldots,x_n), y=(y_1,\ldots,y_m)$ be sequences of independent indeterminates. Let $\lambda=(\lambda_1,\lambda_2,\ldots)$ be a partition
satisfying $\ell(\lambda)\leq m$ and $\lambda_1\leq n$. Set
\begin{gather}
s_i=\widehat{t}^{\,\,-m+i}\,\widehat{q}^{\,\,-\lambda_{m+1-i} +m+1-i+n },\qquad 1\leq i\leq m .\label{choice-s}
\end{gather}
Let $\widehat{f}(y;s)$ be the formal series in $y$ uniquely characterized by $\widehat{c}_0(s)=1$ and
\begin{gather*}
 \widehat{f}(y;s)=\prod_{i=1}^m y_i^{-\lambda_{m+1-i} +m+1-i+n }\sum_{\beta\in Q^+} \widehat{c}_\beta(s) y^\beta, \\
\widehat{\mathcal D}_y \widehat{f}(y;s)=\sum_{i=1}^m \big\langle \widehat{\alpha}s_i^{-1};\widehat{\alpha} \widehat{t}^{m-i}\big\rangle \widehat{f}(y;s).
\end{gather*}
Then we have
\begin{gather}
P_{\lambda'}(x\,|\,a,b,c,d\,|\,q,t)=(-1)^{|\lambda|}\big[\Psi(x;y)V(y) \widehat{f}(y;s)\big]_{1,y}, \label{PUT}
\end{gather}
where the notation $[\cdots]_{1,y}$ denotes the constant term in $y$, and $\lambda'$ is the conjugate diagram of $\lambda$.
\end{thm}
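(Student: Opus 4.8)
The strategy is to verify that the right-hand side
\[
\Phi(x):=(-1)^{|\lambda|}\big[\Psi(x;y)\,V(y)\,\widehat{f}(y;s)\big]_{1,y}
\]
satisfies the two conditions (a) and (b) that characterize the Koornwinder polynomial $P_{\lambda'}$, and then to conclude by uniqueness. Since $\Psi(x;y)$ is $W_n$-invariant in $x$ and, coefficientwise in $y$, a Laurent polynomial in $x$, the constant term $\Phi(x)$ (once it is seen to be well defined, as addressed in the last step) lies in $\bbC[x_1^{\pm},\ldots,x_n^{\pm}]^{W_n}$. It therefore remains to check that $\Phi$ is an eigenfunction of $\mathcal{D}_x$ with eigenvalue $d_{\lambda'}$ and that $\Phi=m_{\lambda'}+(\text{lower order terms})$.

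First I would prove the eigenfunction property. As $\mathcal{D}_x$ acts only on the $x$-variables it passes inside the bracket and acts on $\Psi$ alone, so Mimachi's identity (Theorem~\ref{MIMACHI}) lets me replace $\langle t\rangle\mathcal{D}_x\Psi$ by $\langle t^n\rangle\langle q^m\rangle\langle abcd\,t^{n-1}q^{m-1}\rangle\Psi-\langle q\rangle\widetilde{\mathcal{D}}_y\Psi$. The main manipulation is then integration by parts in the constant-term pairing: because $[\,\cdot\,]_{1,y}$ is invariant under the $y$-shift operators occurring in $\widetilde{\mathcal{D}}_y$, one has $\big[\widetilde{\mathcal{D}}_y\Psi\cdot g\big]_{1,y}=\big[\Psi\cdot\widetilde{\mathcal{D}}_y^{*}g\big]_{1,y}$ for $g=V(y)\widehat{f}$. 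Applying Proposition~\ref{ADJOINT} to the operator $\widetilde{\mathcal{D}}_y$ gives $V(y)^{-1}\widetilde{\mathcal{D}}_y^{*}V(y)=\widehat{\mathcal{D}}_y+(\text{scalar})$, so that $\widetilde{\mathcal{D}}_y^{*}\big(V(y)\widehat{f}\big)=V(y)\big(\widehat{\mathcal{D}}_y+(\text{scalar})\big)\widehat{f}$; since $\widehat{f}(y;s)$ is an eigenfunction of $\widehat{\mathcal{D}}_y$ with eigenvalue $\sum_{i=1}^m\langle\widehat{\alpha}s_i^{-1};\widehat{\alpha}\widehat{t}^{m-i}\rangle$, collecting the scalars shows that $\Phi$ is a $\mathcal{D}_x$-eigenfunction.

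Next I would identify the eigenvalue with $d_{\lambda'}=\sum_{j=1}^n\langle\alpha t^{n-j}q^{\lambda'_j};\alpha t^{n-j}\rangle$. This amounts to the scalar identity
\[
\tfrac{1}{\langle t\rangle}\Big(\langle t^n\rangle\langle q^m\rangle\langle abcd\,t^{n-1}q^{m-1}\rangle-\langle q\rangle\big(\textstyle\sum_{i=1}^m\langle\widehat{\alpha}s_i^{-1};\widehat{\alpha}\widehat{t}^{m-i}\rangle+(\text{scalar of Prop.~\ref{ADJOINT}})\big)\Big)=d_{\lambda'},
\]
which is where the precise choice \eqref{choice-s} of the $s_i$ and the passage from $\lambda$ to the conjugate $\lambda'$ enter: the $m$-fold sum on the left must telescope into the $n$-fold sum on the right.

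Finally, for the normalization I would write $\Psi(x;y)=\prod_{j=1}^m y_j^{-n}E(x\,|\,y_j)=\sum_{r_1,\ldots,r_m\geq0}(-1)^{r_1+\cdots+r_m}\prod_{j}E_{r_j}(x)\prod_j y_j^{r_j-n}$ and extract the constant term against $V(y)\widehat{f}(y;s)$, whose leading part is $\prod_i y_i^{-\lambda_{m+1-i}+m+1-i+n}$ (since $\widehat{c}_0=1$ and all other $y^\beta$, $\beta\in Q^+$, are lower). The dominant contribution should come from $\{r_1,\ldots,r_m\}=\{\lambda_1,\ldots,\lambda_m\}$, giving $\prod_i E_{\lambda_i}(x)$ with sign $(-1)^{|\lambda|}$ that cancels the prefactor, and $\prod_i E_{\lambda_i}(x)=m_{\lambda'}(x)+(\text{lower order})$ by Lemma~\ref{Lem-EEE}. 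I expect this step to be the main obstacle: one must check that the constant term is well defined (only finitely many $(r_1,\ldots,r_m)$ survive, controlled by the exponents in $\widehat{f}$ and the antisymmetry of $V(y)$) and that the surviving coefficient of $m_{\lambda'}$ is exactly $1$, all remaining terms being strictly lower in the dominance order. Conditions (a) and (b) then force $\Phi=P_{\lambda'}$ by uniqueness.
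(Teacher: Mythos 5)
Your proposal is correct and follows essentially the same route as the paper: Mimachi's kernel identity, passage to the adjoint $\widetilde{\mathcal{D}}_y^{*}$ under the constant-term pairing combined with Proposition~\ref{ADJOINT} and the eigenvalue equation for $\widehat{f}$, the scalar identity matching the eigenvalue to $d_{\lambda'}$ (which the paper carries out via Lemmas~\ref{Le-1} and~\ref{Le-2}), and the leading-term computation via Lemma~\ref{Lem-EEE}, concluding by uniqueness of the Koornwinder polynomial.
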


\begin{proof} Firstly, we show that the product $\Psi(x;y)V(y) \widehat{f}(y;s)$ has a non-vanishing constant term in~$y$. Write
\begin{gather*}
 \prod_{i=1}^m \big(1-y_i^{2}\big)\cdot
\prod_{1\leq i<j\leq m}(1-y_iy_j)(1-y_i/y_j)\cdot \sum_{\beta\in Q^+} \widehat{c}_\beta(s) y^\beta=\sum_{\beta\in Q^+}\widehat{c}\,'_\beta(s) y^\beta,
\end{gather*}
for short. Noting that we have $\ell (\lambda')\leq n$ from the assumption $\lambda_1\leq n$, we have
\begin{gather*}
 \big[\Psi(x;y)V(y) \widehat{f}(y;s)\big]_{1,y} =\left[\prod_{i=1}^m y_i^{-\lambda_{m+1-i}}\prod_{i=1}^m E(x\,|\,y_i)\cdot \sum_{\beta\in Q^+}\widehat{c}\,'_\beta(s) y^\beta\right]_{1,y} \\
\qquad{} =(-1)^{|\lambda|}\sum_{\beta=\sum k_i\alpha_i \in Q^+} (-1)^{\sum k_i}\widehat{c}\,'_\beta(s) E_{\lambda_{m}-k_{1}}(x)\prod_{i=2}^{m} E_{\lambda_{m+1-i}+k_{i-1}-k_{i}}(x)\\
\qquad{} =(-1)^{|\lambda|} m_{\lambda'}(x) +\mbox{lower order terms}\neq 0.
\end{gather*}
In the last step, we have used Lemma \ref{Lem-EEE}.

Next, we can show that the constant term satisfies the eigenvalue equation as
\begin{gather*}
 \left( {\mathcal D}_x - {\langle t^n\rangle \langle q^m\rangle \langle abcdt^{n-1}q^{m-1}\rangle \over \langle t\rangle}\right) \big[ \Psi(x;y)V(y) \widehat{f}(y;s)\big]_{1,y}\\
\qquad{} = \left[ \left( -\dfrac{\langle q \rangle}{\langle t \rangle} \widetilde{{\mathcal D}}_y\Psi(x;y)\right)V(y) \widehat{f}(y;s)\right]_{1,y}
=-\dfrac{\langle q \rangle}{\langle t \rangle}\big[ \Psi(x;y) \big(\widetilde{{\mathcal D}}_y ^* V(y) \widehat{f}(y;s) \big)\big]_{1,y}\\
\qquad{} = -\dfrac{\langle q \rangle}{\langle t \rangle}\left[ \Psi(x;y) V(y)\left(
\left(\widehat{{\mathcal D}}_y +\sum_{i=1}^m \langle \widehat{\alpha} \widehat{t}^{m-i};\widetilde{\alpha} \widetilde{t}^{m-i} \rangle \right)\widehat{f}(y;s) \right)\right]_{1,y} \\
\qquad{} = -\dfrac{\langle q \rangle}{\langle t \rangle} \left(\sum_{i=1}^m\langle \widehat{\alpha}s_i^{-1};\widehat{\alpha}\widehat{t}^{m-i}\rangle + \langle \widehat{\alpha} \widehat{t}^{m-i};\widetilde{\alpha} \widetilde{t}^{m-i} \rangle \right)\big[ \Psi(x;y) V(y)\widehat{f}(y;s) \big]_{1,y}\\
\qquad{} =-\dfrac{\langle q \rangle}{\langle t \rangle}\left(\sum_{i=1}^m\langle \widehat{\alpha}s_i^{-1} ;\widetilde{\alpha} \widetilde{t}^{m-i} \rangle\right)\big[ \Psi(x;y) V(y)\widehat{f}(y;s) \big]_{1,y}.
\end{gather*}
Here we have used Theorem \ref{MIMACHI}, Proposition \ref{ADJOINT}, and the property $\langle x;y\rangle+\langle y;z\rangle=\langle x;z\rangle$.

To check that the eigenvalue is the desired one, we prepare some lemmas.
\begin{lem}\label{Le-1}The eigenvalue of the Koornwinder polynomial $P_{\lambda'}(x)$ can be recast as
\begin{gather*}
 \sum_{l=1}^{m} \sum_{i=\lambda_{l+1}+1}^{\lambda_l} \big\langle \alpha t^{n-i}q^l ; \alpha t^{n-i} \big\rangle
= \sum_{l=1}^m \dfrac{1}{\langle t \rangle} \big\langle q^l \big\rangle \big\langle t^{\lambda_{l}-\lambda_{l+1}} \big\rangle \big\langle \alpha^2 q^l t^{2n-1-\lambda_l -\lambda_{l+1}} \big\rangle \\
\qquad{}= \dfrac{\langle q^m \rangle}{\langle t \rangle} \big( \alpha q^{m/2} t^{-1/2+n} + \alpha^{-1} q^{-m/2} t^{1/2-n} \big)\\
\qquad\quad{} -\dfrac{\langle q \rangle}{\langle t \rangle} \sum_{l=1}^m \big( \alpha q^{l-1/2} t^{-1/2+n-\lambda_l} + \alpha^{-1} q^{-l+1/2} t^{1/2-n+\lambda_l} \big).
\end{gather*}
\end{lem}

\begin{lem}\label{Le-2} We have
\begin{gather*}
 {1\over \langle t\rangle} \big\langle t^n\big\rangle\big\langle q^{m}\big\rangle \big\langle abcdt^{n-1}q^{m-1}\big\rangle
+\sum_{l=1}^m \dfrac{\langle q \rangle}{\langle t \rangle} \big( \alpha q^{1/2+m-l} t^{-1/2} + \alpha^{-1} q^{-1/2-m+l} t^{1/2} \big) \\
\qquad{} = \dfrac{\langle q^m \rangle}{\langle t \rangle}\big( \alpha q^{m/2} t^{-1/2+n} + \alpha^{-1} q^{-m/2} t^{1/2-n} \big).
\end{gather*}
\end{lem}

Using Lemmas \ref{Le-1} and \ref{Le-2}, and by noting $\widetilde{\alpha}=\alpha q^{1/2} t^{-1/2}$, $\widehat{\alpha}= \alpha^{-1}q^{-1/2}t^{3/2} $ and~(\ref{choice-s}), we can show that
\begin{gather*}
 {1\over \langle t\rangle} \big\langle t^n\big\rangle\big\langle q^{m}\big\rangle \big\langle abcdt^{n-1}q^{m-1}\big\rangle -\dfrac{\langle q \rangle}{\langle t \rangle}
\sum_{i=1}^m \big\langle \widehat{\alpha}s_i^{-1};\widetilde{\alpha} \widetilde{t}^{m-i} \big\rangle \\
\qquad{} = \sum_{l=1}^{m} \sum_{i=\lambda_{l+1}+1}^{\lambda_l}\big\langle \alpha t^{n-i}q^l ; \alpha t^{n-i} \big\rangle
+\dfrac{\langle q \rangle}{\langle t \rangle} \sum_{l=1}^m \big( \alpha q^{l-1/2} t^{-1/2+n-\lambda_l} + \alpha^{-1} q^{-l+1/2} t^{1/2-n+\lambda_l} \big)\\
\qquad\quad{} - \sum_{l=1}^m \dfrac{\langle q \rangle}{\langle t \rangle}\big( \alpha q^{1/2+m-l} t^{-1/2} + \alpha^{-1} q^{-1/2-m+l} t^{1/2} \big)
-\dfrac{\langle q \rangle}{\langle t \rangle} \sum_{i=1}^m \big\langle \widehat{\alpha}s_i^{-1};\widetilde{\alpha} \widetilde{t}^{m-i} \big\rangle \\
\qquad {} = \sum_{l=1}^{m} \sum_{i=\lambda_{l+1}+1}^{\lambda_l} \big\langle \alpha t^{n-i}q^l ; \alpha t^{n-i} \big\rangle.
\end{gather*}
Therefore we have
\begin{gather*}
 \big[\Psi(x;y)V(y) \widehat{f}(y;s)\big]_{1,y} =(-1)^{|\lambda|} m_{\lambda'}(x) +\mbox{lower order terms} ,\\
{\mathcal D}_x \big[ \Psi(x;y)V(y) \widehat{f}(y;s)\big]_{1,y}=\sum_{i=1}^m \big\langle \alpha t^{n-i}q^{\lambda'_i};\alpha t^{n-i}\big\rangle \big[ \Psi(x;y)V(y) \widehat{f}(y;s)\big]_{1,y},
\end{gather*}
thereby proving $P_{\lambda'}(x\,|\,a,b,c,d\,|\,q,t)=(-1)^{|\lambda|} \big[\Psi(x;y)V(y) \widehat{f}(y;s)\big]_{1,y} $.
\end{proof}

\subsection[Macdonald polynomials of types $(C_n,C_n)$, $(C_n,B_n)$ and $(D_n,D_n)$]{Macdonald polynomials of types $\boldsymbol{(C_n,C_n)}$, $\boldsymbol{(C_n,B_n)}$ and $\boldsymbol{(D_n,D_n)}$}\label{notation-C}
We consider some degenerations of the Koornwinder polynomials to the Macdonald polynomials. As for the details, we refer the readers to \cite{K, M1, St}.

Setting the parameters as $(a,b,c,d;q,t)\rightarrow \big(b^{1/2}, -b^{1/2}, q^{1/2}b^{1/2}, -q^{1/2}b^{1/2};q,t\big)$ in the Koornwinder polynomial $P_\lambda(x)$, we obtain the Macdonald polynomials of type $(C_n,C_n)$
\begin{gather*}
P^{(C_n,C_n)}_\lambda(x\,|\,b;q,t)=P_\lambda\big(x\,|\, b^{1/2},-b^{1/2}, q^{1/2}b^{1/2},-q^{1/2}b^{1/2}\,|\,q,t\big).
\end{gather*}
We obtain the Macdonald polynomials of type $(C_n,B_n)$ as
\begin{gather*}
P^{(C_n,B_n)}_\lambda(x\,|\,b;q,t)=P_\lambda\big(x\,|\, b^{1/2},-b^{1/2}, q^{1/2},-q^{1/2}\,|\,q,t\big),
\end{gather*}
and the Macdonald polynomials of type $(D_n,D_n)$ as
\begin{gather*}
P^{(D_n,D_n)}_\lambda(x\,|\,q,t)=P_\lambda\big(x\,|\,1,-1, q^{1/2},-q^{1/2}\,|\,q,t\big).
\end{gather*}
Note that $P^{(D_n,D_n)}_\lambda(x\,|\,q,t)=P^{(C_n,C_n)}_\lambda(x\,|\,1;q,t)= P^{(C_n,B_n)}_\lambda(x\,|\,1;q,t)$.

Note that setting the parameters as $(a,b,c,d;q,t)\rightarrow (a,-a,c,-c;q,t)$ and the application of the twist $\widehat{*}$ on $(a,-a,c,-c;q,t)$ gives
\begin{gather}
(t/a,-t/a, c/t, -c/t; t, t/q). \label{param-C}
\end{gather}

\section{Koornwinder polynomial with one column diagram}\label{one-column}
When we apply Theorem~\ref{thm:reproduction} to the simplest case $m=1$, namely when we plug the $BC_1$ asymptotically free eigenfunction $\widehat{f}(y;s)$ into the formula~(\ref{PUT}), we have the Koornwinder polyno\-mials~$P_{(1^r)}(x)$ with one column diagrams. Note that in $m=1$ case, $\widehat{f}(y;s)$ does not have the para\-me\-ter~$t$. To execute the explicit calculation based on this, we need to recall the fourfold series expansion of the Askey--Wilson polynomials~\cite{HNS}.

Let $D$ denote the Askey--Wilson $q$-difference operator~\cite{AW}
\begin{gather*}
D ={(1-ax)(1-bx)(1-cx)(1-dx)\over \big(1-x^2\big)\big(1-q x^2\big)} \big(T_{q,x}^{+1}-1\big) \\
\hphantom{D =}{} +{(1-a/x)(1-b/x)(1-c/x)(1-d/x)\over \big(1-1/x^2\big)\big(1-q /x^2\big)} \big(T_{q,x}^{-1}-1\big).
\end{gather*}
Let $s\in \bbC$ be a parameter. Introduce $\lambda$ satisfying $s=q^{-\lambda}$. Then we have $T_{q,x} x^{-\lambda} =s x^{-\lambda}$. Let $f(x;s)=f(x;s\,|\,a,b,c,d\,|\,q)$ be a formal series in~$x$
\begin{gather*}
 f(x;s)=x^{-\lambda} \sum_{n\geq 0} c_n x^{n},\qquad c_0\neq 0,
\end{gather*}
satisfying the $q$-difference equation
\begin{gather}
 D f(x;s)=\left(s+{abcd\over qs}-1-{abcd\over q}\right)f(x;s). \label{Df}
\end{gather}
With the normalization $c_0=1$, (\ref{Df}) determines the coefficients $c_n=c_n(s\,|\,a,b,c,d\,|\,q)$ uniquely as rational functions in $a$, $b$, $c$, $d$, $q$ and $s$. We call $f(x;s)=f(x;s\,|\,a,b,c,d\,|\,q)$ the asymptotically free eigenfunction associated with the Askey--Wilson operator~$D$.
\begin{dfn}[{\cite[Definition 3.1]{HNS}}] Set
\begin{gather*}
\Phi(x;s\,|\,a,b,c,d\,|\,q)= \sum_{k,l,m,n\geq 0} c_e\big(k,l;q^{m+n}s\,|\,a,c\,|\,q\big) c_o(m,n;s\,|\,a,b,c,d\,|\,q)x^{2k+2l+m+n}, \nonumber
\end{gather*}
where
\begin{gather*}
c_e(k,l;s) ={\big(q a^2/c^2;q^2\big)_k \big(q^3 s/c^2;q^2\big)_k \big(q^2 s^2/c^4;q^2\big)_k \over \big(q^2;q^2)_k(q s/c^2;q^2\big)_k\big(q^3 s^2/a^2 c^2;q^2\big)_k}\big(q^2/a^2\big)^k \nonumber \\
\hphantom{c_e(k,l;s) =}{} \times {\big(c^2/q;q\big)_l (s;q)_{2k+l} \over (q;q)_l \big(q^2 s/c^2;q\big)_{2k+l} }\big(q^2/c^2\big)^l, \\
c_o(m,n;s) ={(-b/a;q)_m(qs/cd;q)_m\over (q;q)_m(-qs/ac;q)_m} \\
\hphantom{c_o(m,n;s) =}{} \times {(s;q)_{m+n}(-qs/ac;q)_{m+n}\big(qs^2/a^2c^2;q\big)_{m+n}\over \big(q^2s^2/abcd;q\big)_{m+n}\big(q^{1/2}s/ac ;q\big)_{m+n}\big({-}q^{1/2}s/ac ;q\big)_{m+n}} (q/b)^m \\
\hphantom{c_o(m,n;s) =}{} \times {(-d/c;q)_n (qs/ab;q)_n\over (q;q)_n (-qs/ac;q)_n}(q/d)^n.
\end{gather*}
\end{dfn}

\begin{thm}[{\cite[Theorem 1.2, Proposition 4.3]{HNS}}] The asymptotically free eigenfunction $f(x; s)$ associated with the Askey--Wilson operator~$D$ is expressed as the following fourfold summation
\begin{gather*}
f(x; s)= x^{-\lambda}\Phi(x;s\,|\,a,b,c,d\,|\,q).
\end{gather*}
\end{thm}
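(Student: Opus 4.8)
The plan is to exploit the uniqueness already established for the asymptotically free eigenfunction: the normalization $c_0=1$ together with the $q$-difference equation (\ref{Df}) determines all the coefficients $c_n$ as rational functions, so it is enough to check that the formal series $g(x;s):=x^{-\lambda}\Phi(x;s\,|\,a,b,c,d\,|\,q)$ has leading coefficient $1$ and solves (\ref{Df}); then $f(x;s)=g(x;s)$ follows. The normalization is immediate, since the unique term of $\Phi$ of lowest degree is the one with $k=l=m=n=0$, where $c_e(0,0;s)=c_o(0,0;s)=1$, so the coefficient of $x^{-\lambda}$ in $g$ is $1$.

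The real content is the eigenvalue equation. Writing $A(x)$ and $B(x)$ for the two rational coefficient functions of $D$, both regular at $x=0$, and using $T_{q,x}^{\pm1}x^{N-\lambda}=q^{\pm N}s^{\pm1}x^{N-\lambda}$ (recall $s=q^{-\lambda}$), I would expand $(D-\varepsilon)g$, with $\varepsilon:=s+abcd/(qs)-1-abcd/q$, as a formal series $x^{-\lambda}\sum_{N\ge0}(\cdots)x^N$ and require each coefficient to vanish. Matching the coefficient of $x^{N-\lambda}$ yields a linear recursion whose diagonal entry is $(q^N-1)\big(q^{N+1}s^2-abcd\big)/\big(q^{N+1}s\big)$; this is nonzero for $N\ge1$ at generic parameters, which simultaneously reconfirms the uniqueness and pins down the recursion that $g$ must obey. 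Clearing the denominators $(1-x^2)(1-qx^2)$ and $(1-1/x^2)(1-q/x^2)$ turns this into a finite-term recursion, which is the convenient form for the verification.

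It then remains to prove that the explicit coefficient $u_N=\sum_{2k+2l+m+n=N}c_e\big(k,l;q^{m+n}s\big)c_o(m,n;s)$ satisfies that recursion, and here the factorized shape of $\Phi$ is the organizing principle. The denominator $(1-x^2)(1-qx^2)$ is a series in $x^2$, so it generates a step-by-two recursion in which only $a$ and $c$ enter, through $a^2$ and $c^2$; this is recorded by $c_e$. The linear factors $(1-ax)(1-bx)(1-cx)(1-dx)$ supply the step-by-one transitions in the two remaining directions, governed by $c_o$ and all four parameters, the coupling entering through the shifted argument $q^{m+n}s$ of $c_e$. Substituting and reorganizing the sum, the recursion collapses onto a single basic hypergeometric identity --- a contiguous relation or summation of ${}_4\phi_3$ / very-well-poised ${}_8W_7$ type, of the same flavour as the $q$-Bailey transformation and the ${}_6W_5$ summation used later in this paper. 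This last reduction is the main obstacle: once the requisite basic hypergeometric identity is isolated and invoked, the check becomes mechanical and uniqueness delivers $f(x;s)=x^{-\lambda}\Phi(x;s\,|\,a,b,c,d\,|\,q)$.

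As a guiding special case I would first treat the degeneration $b=-a$, $d=-c$, where $(-b/a;q)_m=(1;q)_m$ and $(-d/c;q)_n=(1;q)_n$ force $c_o(m,n;s)=\delta_{m,0}\delta_{n,0}$; then $\Phi$ collapses to the twofold sum $\sum_{k,l}c_e(k,l;s)x^{2k+2l}$ and (\ref{Df}) becomes the pure ``even'' recursion. Proving the formula here isolates exactly the identity needed for $c_e$, after which reinstating $b$ and $d$ is controlled by the $c_o$ factor.
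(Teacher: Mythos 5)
This statement is not proved in the paper at all: it is imported verbatim from \cite[Theorem~1.2, Proposition~4.3]{HNS}, so there is no internal proof to compare against, and your attempt has to be judged on its own. Your frame is the right one and the standard one: genericity of the eigenvalue makes the solution of (\ref{Df}) with $c_0=1$ unique, the lowest term of $\Phi$ is $k=l=m=n=0$ with $c_e(0,0;s)=c_o(0,0;s)=1$, and your diagonal coefficient $(q^N-1)\big(q^{N+1}s^2-abcd\big)/\big(q^{N+1}s\big)$ is correct (it is $s(q^N-1)+\tfrac{abcd}{qs}(q^{-N}-1)$, coming from $A(0)=1$ and $B(\infty)=abcd/q$). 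But everything after that is a promissory note. The entire content of the theorem is the claim that $u_N=\sum_{2k+2l+m+n=N}c_e\big(k,l;q^{m+n}s\big)c_o(m,n;s)$ satisfies the resulting finite-term recursion, and you do not identify the hypergeometric identity that is supposed to make this work, let alone verify it. A fourfold sum with the coupling $c_e(k,l;q^{m+n}s)$ does not ``collapse onto a single basic hypergeometric identity'' after substitution into a recursion that mixes steps of size one and two in four summation indices; this is exactly why the formula occupies a theorem and a proposition in \cite{HNS} rather than a remark. Declaring the main obstacle and then calling the rest mechanical is not a proof.

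The fallback plan is also not a valid reduction. Proving the degenerate case $b=-a$, $d=-c$ (where $c_o(m,n;s)=\delta_{m,0}\delta_{n,0}$ and $\Phi$ becomes the twofold sum, which is what the present paper actually uses in Section~\ref{Mac-C}) establishes nothing about generic $(a,b,c,d)$: the specialization loses two parameters, and there is no a priori mechanism by which ``reinstating $b$ and $d$ is controlled by the $c_o$ factor.'' One genuinely needs either a parameter-adding transformation (an integral or series representation that inserts the pairs $(b,d)$ one direction at a time, which is how the shifted argument $q^{m+n}s$ in $c_e$ arises structurally in \cite{HNS}) or a direct verification of the full recursion; you supply neither. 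One further small inaccuracy: multiplying (\ref{Df}) by $(1-x^2)(1-qx^2)$ alone does not clear the pole of the $T_{q,x}^{-1}$ coefficient, whose denominator is $\big(x^2-1\big)\big(x^2-q\big)$; you need the product $\big(1-x^2\big)\big(1-qx^2\big)\big(x^2-q\big)$ (or equivalent) before the recursion becomes finite-term. This is cosmetic, but it signals that the ``mechanical'' part has not actually been set up.
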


\begin{lem}[{\cite[Lemma 5.1]{HNS}}] \label{1-y^2phi} We have
\begin{gather*}
 \big(1-x^2\big)\sum_{k,l\geq 0} c_e(k,l;s) x^{2k+2l}= \sum_{k,l\geq 0}c'_e(k,l;s\,|\,a, c\,|\,q) x^{2k+2l},
\end{gather*}
where
\begin{gather}
c'_e(k,l;s\,|\,a,c\,|\,q) = {\big(q a^2/c^2;q^2\big)_k \big(q^3 s/c^2;q^2\big)_k \big(q^2 s^2/c^4;q^2\big)_k \over
\big(q^2;q^2\big)_k\big(q s/c^2;q^2\big)_k\big(q^3 s^2/a^2 c^2;q^2\big)_k}\big(q^2/a^2\big)^k \nonumber \\
\hphantom{c'_e(k,l;s\,|\,a,c\,|\,q) =}{} \times {\big(c^2/q^2;q\big)_l (s/q;q)_{2k+l} \over (q;q)_l \big(q^2 s/c^2;q\big)_{2k+l} }
{1-q^{2k+2l-1}s\over 1-q^{-1}s} \big(q^2/c^2\big)^l. \label{type-4}
\end{gather}
\end{lem}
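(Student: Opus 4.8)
The plan is to exploit the fact that the $k$-dependent prefactor
\[
A_k = {\big(q a^2/c^2;q^2\big)_k \big(q^3 s/c^2;q^2\big)_k \big(q^2 s^2/c^4;q^2\big)_k \over \big(q^2;q^2\big)_k\big(q s/c^2;q^2\big)_k\big(q^3 s^2/a^2 c^2;q^2\big)_k}\big(q^2/a^2\big)^k
\]
is literally the same in $c_e(k,l;s)$ and in $c'_e(k,l;s\,|\,a,c\,|\,q)$. Writing $c_e(k,l;s)=A_k B_{k,l}$ and $c'_e(k,l;s\,|\,a,c\,|\,q)=A_k B'_{k,l}$, where $B_{k,l}$ and $B'_{k,l}$ collect the remaining $l$-dependent factors, the claimed identity separates into a family of one-variable generating-function identities indexed by $k$. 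First I would pull the common factor $A_k x^{2k}$ out of each $k$-block, so that the statement becomes, for every fixed $k\geq 0$,
\[
\big(1-x^2\big)\sum_{l\geq 0} B_{k,l}\,\big(x^2\big)^l = \sum_{l\geq 0} B'_{k,l}\,\big(x^2\big)^l.
\]

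Multiplication by $1-x^2$ is a telescoping operation: expanding the left-hand side and comparing the coefficient of $\big(x^2\big)^l$ reduces the whole lemma to the purely algebraic first-difference recurrence
\[
B_{k,l}-B_{k,l-1}=B'_{k,l},\qquad l\geq 0,
\]
with the convention $B_{k,-1}=0$. Thus no analysis is required and everything becomes a finite check at the level of $q$-shifted factorials. To carry it out I would divide through by $B_{k,l}$ and compute the two elementary ratios. Using $(x;q)_l=(x;q)_{l-1}\big(1-xq^{l-1}\big)$ one finds
\[
\frac{B_{k,l-1}}{B_{k,l}}=\frac{c^2}{q^2}\,\frac{\big(1-q^l\big)\big(1-sq^{2k+l+1}/c^2\big)}{\big(1-c^2q^{l-2}\big)\big(1-sq^{2k+l-1}\big)},
\]
while telescoping the lower ends of the long symbols $\big(c^2/q^2;q\big)_l$ against $\big(c^2/q;q\big)_l$ and $(s/q;q)_{2k+l}$ against $(s;q)_{2k+l}$ gives
\[
\frac{B'_{k,l}}{B_{k,l}}=\frac{\big(1-c^2q^{-2}\big)\big(1-sq^{2k+2l-1}\big)}{\big(1-c^2q^{l-2}\big)\big(1-sq^{2k+l-1}\big)}.
\]

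The recurrence in the form $1-B_{k,l-1}/B_{k,l}=B'_{k,l}/B_{k,l}$ then has the common denominator $\big(1-c^2q^{l-2}\big)\big(1-sq^{2k+l-1}\big)$; clearing it leaves a single Laurent-polynomial identity in $q$, $s$, $c^2$ and $q^{2k+l}$, and on expansion the four cross terms cancel, reducing the claim to the manifest equality $1+c^2sq^{2k+2l-3}-c^2q^{-2}-sq^{2k+2l-1}=\big(1-c^2q^{-2}\big)\big(1-sq^{2k+2l-1}\big)$. The only genuine work, and hence the ``hard part'', is the bookkeeping of the $q$-powers when computing the two Pochhammer ratios correctly; there is no conceptual obstacle, since the factor $1-x^2$ encodes exactly the first difference $B_{k,l}-B_{k,l-1}$ and the residual identity is a routine polynomial verification.
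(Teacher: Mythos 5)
Your proof is correct. The paper itself offers no argument for this statement: it is imported verbatim as \cite[Lemma~5.1]{HNS}, so the ``paper's proof'' is a citation, and in the source [HNS] the identity is likewise established by direct manipulation of the $l$-sum at fixed $k$. Your route is a clean, self-contained version of exactly that kind of verification. The key observations all check out: the $k$-dependent prefactor $A_k$ is identical in $c_e$ and $c'_e$, so it suffices (this is a sufficient, not a necessary, reduction, but that is all one needs) to prove the one-variable identity $\big(1-x^2\big)\sum_l B_{k,l}x^{2l}=\sum_l B'_{k,l}x^{2l}$ for each fixed $k$; multiplication by $1-x^2$ is the first difference $B_{k,l}-B_{k,l-1}$ on coefficients; and your two Pochhammer ratios are correct, namely
\[
\frac{B'_{k,l}}{B_{k,l}}=\frac{\big(1-c^2q^{-2}\big)\big(1-sq^{2k+2l-1}\big)}{\big(1-c^2q^{l-2}\big)\big(1-sq^{2k+l-1}\big)},
\qquad
\frac{B_{k,l-1}}{B_{k,l}}=\frac{c^2}{q^2}\,\frac{\big(1-q^l\big)\big(1-sq^{2k+l+1}/c^2\big)}{\big(1-c^2q^{l-2}\big)\big(1-sq^{2k+l-1}\big)},
\]
after which the cross terms cancel as you say, and the base case $l=0$ is covered by the convention $B_{k,-1}=0$ together with $B'_{k,0}=B_{k,0}$ (the factor $(s/q;q)_{2k}\,\frac{1-sq^{2k-1}}{1-sq^{-1}}=(s;q)_{2k}$). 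Division by $B_{k,l}$ is harmless since everything is an identity of rational functions in generic parameters. What your argument buys over the paper's citation is a two-line verifiable proof that makes the role of the factor $1-x^2$ transparent; what it does not capture is the broader context in [HNS], where this rewriting is one step in producing the fourfold summation formula for the Askey--Wilson eigenfunction.
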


\subsection[Koornwinder polynomial with one column diagram $P_{(1^r)}(x\,|\,a,b,c,d\,|\,q,t)$]{Koornwinder polynomial with one column diagram $\boldsymbol{P_{(1^r)}(x\,|\,a,b,c,d\,|\,q,t)}$} \label{P(r)}

We move on to the proof of Theorem \ref{KoornwinderPoly_1^r}. Recall that $n$ is a positive integer, $x=(x_1,\ldots,x_n)$ is a sequence of variables, and $P_{(1^r)}(x\,|\,a,b,c,d\,|\,q,t)$ denotes the Koornwinder polynomial with one column diagram~$(1^r)$.

\begin{proof}[Proof of Theorem \ref{KoornwinderPoly_1^r}] We consider the following special case of Theorem \ref{thm:reproduction} above
\begin{gather*}
 x=(x_1,\ldots,x_n), \qquad n\in \bbZ_{>0},\qquad
y=(y_1), \qquad m=1,\\
\Psi(x;y)=y^{-n}\prod_{i=1}^n (1-y x_i)(1-y/x_i)=y^{-n} \sum_{r \geq 0} (-1)^r E_r(x) y^r, \\
 \lambda=(r),\qquad r\in \bbZ_{\geq 0} \quad \text{and} \quad r\leq n, \qquad
 s=(s_1)=t^{n-r+1}, \qquad  V(y)=y^{-1}\big(1-y^2\big), \\
 \widehat{f}(y;s)=y^{-r+1+n} \widehat{\Phi}(y;s)= y^{-r+1+n} \sum_{k,l,i,j\geq 0} \widehat{c}_e\big(k,l;t^{i+j} s\big)\widehat{c}_o(i,j; s) y^{2k+2l+i+j},
\end{gather*}
where
\begin{gather*}
 \widehat{c}_e(k,l;s)=c_e(k,l;s\,|\,t/a,t/c\,|\,t),\qquad \widehat{c}_o(i,j; s)=c_o(i,j;s\,|\,t/a,t/b,t/c,t/d\,|\,t).
\end{gather*}
Then calculating the constant term in $y$, we have
\begin{gather}
\big[ \Psi(x;y)V(y) \widehat{f}(y;s)\big]_{1,y} = \left[ \left(\sum_{r \geq 0} (-1)^r E_r(x) y^{-n+r}\right) y^{-1}\big(1-y^2\big)\right.\nonumber\\
\left.\qquad\quad{}\times \left(y^{-r+1+n} \sum_{k,l,i,j\geq 0} \widehat{c}_e\big(k,l;t^{i+j} s\big)\widehat{c}_o(i,j; s) y^{2k+2l+i+j} \right) \right]_{1,y} \nonumber \\
\qquad{} = (-1)^r \sum_{k,l,i,j\geq 0}(-1)^{-i-j}E_{r-2k-2l-i-j} (x)\widehat{c}\,'_e(k,l;t^{n-r+1+i+j})\widehat{c}_o\big(i,j; t^{n-r+1}\big) \nonumber \\
\qquad{} = (-1)^r P_{(1^r)}(x|a,b,c,d|q,t), \label{UE}
 \end{gather}
where $\widehat{c}\,'_e(k,l;s)= c'_e(k,l;s\,|\,t/a,t/c\,|\,t)$ (see (\ref{type-4}) above). This proves Theorem \ref{KoornwinderPoly_1^r}.
\end{proof}

\subsection[Koornwinder polynomial $P_{(1^r)}(x\,|\,a,-a,c,-c\,|\,q,t)$ with one column diagram]{Koornwinder polynomial $\boldsymbol{P_{(1^r)}(x\,|\,a,-a,c,-c\,|\,q,t)}$\\ with one column diagram}\label{Mac-C}
In view of (\ref{param-C}), we need $\Phi(x;s\,|\,a,b,c,d\,|\,q)$ written for the parameters
\begin{gather*}
(t/a, -t/a, t/c, -t/c; t, t/q).
\end{gather*}
Note that in this case we have the simplification of the coefficient as $\widehat{c}_o(m,n;s)=\delta_{m,0}\delta_{n,0}$. Hence we have a twofold summation formula for the~$\Phi(x;s)$. By Lemma~\ref{1-y^2phi}, we have
\begin{gather}
 \big(1-y^2\big) \widehat{\Phi}(y;s)= \big(1-y^2\big) \Phi(y;s\,|\,t/a, -t/a, t/c, -t/c\,|\,t)\label{PHI} \\
= \sum_{k, l \geq 0} {\big( tc^2/a^2 ; t^2 \big)_k \big(stc^2 ; t^2\big)_k \big(s^2 c^4/t^2 ; t^2\big)_k \over \big( t^2 ; t^2 \big)_k \big(sc^2/t ; t^2\big)_k \big(s^2 a^2c^2/t ; t^2\big)_k }
{ \big(1/c^2 ; t\big)_l (s/t ; t)_{2k+l} \over (t ; t)_l \big(sc^2 ; t\big)_{2k+l} } { 1-st^{2k+2l-1} \over 1-st^{-1} } a^{2k}c^{2l} y^{2k+2l}.\nonumber
\end{gather}
Write $s=t^{n-r+1}$ for simplicity. Plugging (\ref{PHI}) in (\ref{UE}), we have
\begin{gather}
 P_{(1^r)}(x\,|\,a,-a,c,-c\,|\,q,t)= \sum_{0 \leq 2k + 2l \leq r}E_{r-2k-2l}(x){ \big(1/c^2 ; t\big)_l (s/t ; t)_{2k+l} \over (t ; t)_l \big(sc^2 ; t\big)_{2k+l} }{1-st^{2k+2l-1} \over 1-st^{-1} } c^{2l} \nonumber \\
\hphantom{P_{(1^r)}(x\,|\,a,-a,c,-c\,|\,q,t)=}{} \times { \big(tc^2/a^2 ; t^2\big)_k \big(stc^2 ; t^2\big)_k \big(s^2 c^4 /t^2 ; t^2\big)_k \over \big(t^2 ; t^2\big)_k \big(sc^2/t ; t^2\big)_k \big(s^2 a^2c^2/t ; t^2\big)_k}a^{2k}. \label{PCE}
\end{gather}
This proves the formula in Corollary \ref{MacdonaldPoly_1^r}.

\section[Transition matrices $\mathcal{B}(s)$, $\widetilde{\mathcal{B}}(s)$ and Bressoud's matrix inversion]{Transition matrices $\boldsymbol{\mathcal{B}(s)}$, $\boldsymbol{\widetilde{\mathcal{B}}(s)}$\\ and Bressoud's matrix inversion}\label{MatrixInverse}
\subsection{Bressoud's matrix inversion}

\begin{thm}[{\cite[p.~1, Theorem]{B}, \cite[p.~5, Corollary]{L}}] Let $\mathcal{M}(u,v;x,y;q)$ be the infinite even lower-triangle matrix with nonzero entries given by
\begin{gather}
\mathcal{M}_{r,r-2i}(u,v;x,y;q)= y^i v^i { (x/y ; q)_i \over (q ; q)_i}
{ \big(u q^{r-2i} ; q\big)_{2i} \over (uxq^{r-i} ; q)_i \big(uyq^{r-2i+1} ; q\big)_i }, \label{Bre}
\end{gather}
for $r, i \in \mathbb{Z}_{\geq 0}$, $i \leq [{r \over 2}] $. The we have
\begin{gather}
\mathcal{M}(u,v;x,y;q)\mathcal{M}(u,v;y,z;q)=\mathcal{M}(u,v;x,z;q). \label{Bressoud}
\end{gather}
In particular, $\mathcal{M}(u,v;x,y;q)$ and $\mathcal{M}(u,v;y,x;q)$ are mutually inverse.
\end{thm}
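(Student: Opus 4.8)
The plan is to reduce the entire statement to the single ``cocycle'' identity (\ref{Bressoud}) and then to prove (\ref{Bressoud}) by extracting one matrix entry and recognizing the resulting finite sum as a classical terminating basic hypergeometric summation. The ``mutually inverse'' clause is a formal consequence of (\ref{Bressoud}): specializing $y\to x$ in (\ref{Bre}) makes the entry carry the factor $(x/x;q)_i=(1;q)_i$, which vanishes for $i\geq 1$ and equals $1$ for $i=0$, so that $\mathcal{M}(u,v;x,x;q)=I$. Granting (\ref{Bressoud}) and setting $z=x$ then gives $\mathcal{M}(u,v;x,y;q)\mathcal{M}(u,v;y,x;q)=\mathcal{M}(u,v;x,x;q)=I$, and interchanging the roles of $x$ and $y$ produces the identity for the product taken in the opposite order. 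Thus everything comes down to proving (\ref{Bressoud}).

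Next I would turn (\ref{Bressoud}) into a scalar identity. Since both factors are even lower-triangular, the $(r,r-2i)$ entry of the product is the \emph{finite} sum $\sum_{j=0}^{i}\mathcal{M}_{r,r-2j}(u,v;x,y;q)\,\mathcal{M}_{r-2j,\,r-2i}(u,v;y,z;q)$. Substituting (\ref{Bre}) and pulling out the one genuinely $j$-independent factor $v^iz^i$, proving (\ref{Bressoud}) becomes equivalent to showing that a terminating sum over $j$ reproduces the residual factor $\frac{(x/z;q)_i}{(q;q)_i}\frac{(uq^{r-2i};q)_{2i}}{(uxq^{r-i};q)_i(uzq^{r-2i+1};q)_i}$ of the target entry $\mathcal{M}_{r,r-2i}(u,v;x,z;q)$.

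I would then evaluate that sum. Reversing the descending segments of length $i-j$ by the standard $q$-shifted-factorial reflection $(a;q)_{m-k}=\frac{(a;q)_m}{(q^{1-m}/a;q)_k}(-q/a)^{k}q^{\binom{k}{2}-mk}$, the summand acquires the canonical shape of a terminating \emph{very-well-poised} series, the doubled factor $(uq^{r-2j};q)_{2j}$ with shifting base point $uq^{r-2j}$ being precisely the very-well-poised signature. I would identify it as a terminating ${}_6\phi_5$ (${}_6W_5$) and sum it by \cite[p.~42, equation~(2.4.2)]{GR}, the same formula already used earlier in this paper (equivalently, after one standard transformation, by the $q$-Pfaff--Saalsch\"utz theorem); its closed form is exactly the residual factor displayed above, which completes (\ref{Bressoud}).

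The main obstacle lies entirely in the $q$-Pochhammer bookkeeping of this last step. The $j$-segment and the $(i-j)$-segment run in opposite directions, the doubled lengths $2j$ and $2(i-j)$ must be split and realigned against the shifting argument $q^{r-2j}$, and the five parameters $u,v,x,y,z$ together with the base point and argument of the series must be matched exactly to the hypotheses of the summation theorem. Once those parameters are aligned the summation is immediate, and (\ref{Bressoud}) --- hence the full matrix inversion --- follows.
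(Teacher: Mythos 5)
The paper does not prove this statement at all: it is imported verbatim from Bressoud \cite{B} (in the ``groupoid'' form $\mathcal{M}(x,y)\mathcal{M}(y,z)=\mathcal{M}(x,z)$ due to Lassalle \cite{L}), so you are supplying a proof where the authors only cite. Your outline is correct and is essentially the standard argument behind Bressoud's inversion. The two reductions are sound: $(x/x;q)_i=(1;q)_i$ kills every entry with $i\geq 1$, so $\mathcal{M}(u,v;x,x;q)=I$ and the inversion follows from \eqref{Bressoud} with $z=x$; and the $(r,r-2i)$ entry of the product is the finite sum over $0\leq j\leq i$ you describe, which after the reversals is indeed a terminating very-well-poised series --- concretely a ${}_6W_5$ with $a=q^{-r}/uy$, $b=x/y$, $c=q^{-i}$, $d=q^{i-r}/uz$, argument $qz/x$ --- summable by \cite[equation~(2.4.2)]{GR}, and the closed form reassembles into $\mathcal{M}_{r,r-2i}(u,v;x,z;q)$ (I checked $i=1$ directly). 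One small correction to your bookkeeping roadmap: the doubled factor $(uq^{r-2j};q)_{2j}$ is \emph{not} where the very-well-poised structure comes from; it combines with $(uq^{r-2i};q)_{2(i-j)}$ from the second matrix into the $j$-independent product $(uq^{r-2i};q)_{2i}$ and drops out of the sum entirely. The very-well-poised factor $\frac{1-uyq^{r-2j}}{1-uyq^{r}}$ instead arises from merging the two $y$-factors $(uyq^{r-2j+1};q)_j$ and $(uyq^{r-i-j};q)_{i-j}$ in the denominator, whose exponent ranges tile $\{r-i-j,\dots,r-j\}$ with the single gap at $r-2j$. With that adjustment the parameter matching goes through exactly as you predict.
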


\begin{dfn}Set
\begin{gather*}
d_r = { \big(t^2 v^{1/2} ; t\big)_r \over \big(u^{1/2} ; t\big)_r } \big(u^{1/4}/v^{3/4}\big)^r.
\end{gather*}
Let $\widetilde{\mathcal{M}}(u,v;x,y;t)$ denotes the conjugation of the matrix $\mathcal{M}\big(u,v;x,y;t^2\big)$ by the $(d_r)$ with entries
\begin{gather*}
 \widetilde{\mathcal{M}}_{r,r-2i}(u,v;x,y;t)=
 \mathcal{M}_{r,r-2i}\big(u,v;x,y;t^2\big) d_r/d_{r-2i} \\
\qquad {}= { \big(x/y ; t^2\big)_i \over \big(t^2 ; t^2\big)_i}
{ \big(v^{1/2} t^{r-2i+2} ; t\big)_{2i} \over \big(u^{1/2} t^{r-2i} ; t\big)_{2i} }
{ \big(u t^{2r-4i} ; t^2\big)_{2i} \over \big(uxt^{2r-2i} ; t^2\big)_i \big(uyt^{2r-4i+2} ; t^2\big)_i }\big(y u^{1/2}/v^{1/2}\big)^{i}.
\end{gather*}
\end{dfn}

\subsection[Transition matrices $\mathcal{B}(s)$ and $\widetilde{\mathcal{B}}(s)$]{Transition matrices $\boldsymbol{\mathcal{B}(s)}$ and $\boldsymbol{\widetilde{\mathcal{B}}(s)}$}
\begin{dfn}Let $\mathcal{B}(s)$ and $\widetilde{\mathcal{B}}(s)$ be even lower triangular matrices defined by
\begin{gather*}
\mathcal{B}(s) =
\widetilde{\mathcal{M}}\big(t^{2}/s^2 c^4, 1/s^2 t^4; c^2/t a^2, 1/t^2;t\big) \mathcal{M}\big(1/s, t; 1/c^2, 1;t\big) , \\
\widetilde{\mathcal{B}}(s)= \mathcal{M}\big(1/s, t; 1, 1/c^2;t\big) \widetilde{\mathcal{M}}\big(t^{2}/s^2c^4, 1/s^2t^4; 1/t^2, c^2/t a^2;t\big).
\end{gather*}
\end{dfn}

\begin{prp}\label{mutually-inverse} The $\mathcal{B}(s)$ and $\widetilde{\mathcal{B}}(s)$ are mutually inverse.
\end{prp}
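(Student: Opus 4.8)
The plan is to compute the product $\mathcal{B}(s)\widetilde{\mathcal{B}}(s)$ directly and to recognize it as a telescoping product of four matrices that collapses to the identity via Bressoud's composition law \eqref{Bressoud}. Unfolding the definitions,
\begin{gather*}
\mathcal{B}(s)\widetilde{\mathcal{B}}(s)
= \widetilde{\mathcal{M}}\big(t^{2}/s^2 c^4, 1/s^2 t^4; c^2/t a^2, 1/t^2;t\big)\,
\mathcal{M}\big(1/s, t; 1/c^2, 1;t\big) \\
\hphantom{\mathcal{B}(s)\widetilde{\mathcal{B}}(s)=}{}
\times \mathcal{M}\big(1/s, t; 1, 1/c^2;t\big)\,
\widetilde{\mathcal{M}}\big(t^{2}/s^2c^4, 1/s^2t^4; 1/t^2, c^2/t a^2;t\big).
\end{gather*}
The essential observation is that both the two inner factors and the two outer factors share their first two parameters $(u,v)$ while having the last two parameters $(x,y)$ interchanged.

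First I would cancel the two inner $\mathcal{M}$-factors. They share $u=1/s$, $v=t$ and differ only by the swap $(x,y)=(1/c^2,1)\leftrightarrow(1,1/c^2)$, so by \eqref{Bressoud} their product equals $\mathcal{M}\big(1/s,t;1/c^2,1/c^2;t\big)$. Setting $x=y$ in \eqref{Bre} forces the factor $(x/y;t)_i=(1;t)_i$, which vanishes for $i\geq 1$; hence $\mathcal{M}(u,v;x,x;t)$ is the identity matrix and the inner pair cancels. (Equivalently, this is the stated mutual-inverse property of $\mathcal{M}(u,v;x,y;t)$ and $\mathcal{M}(u,v;y,x;t)$.)

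Next I would dispose of the two remaining $\widetilde{\mathcal{M}}$-factors. By definition $\widetilde{\mathcal{M}}(u,v;x,y;t)$ is the conjugation of $\mathcal{M}\big(u,v;x,y;t^2\big)$ by the diagonal sequence $(d_r)$, and $(d_r)$ depends only on $(u,v)$. The two outer factors share $u=t^2/s^2c^4$, $v=1/s^2 t^4$ and differ only by the swap $(x,y)=(c^2/ta^2,1/t^2)\leftrightarrow(1/t^2,c^2/ta^2)$, so they are conjugated by one and the same $(d_r)$. Since $\mathcal{M}\big(u,v;x,y;t^2\big)$ and $\mathcal{M}\big(u,v;y,x;t^2\big)$ are mutually inverse by \eqref{Bressoud} with base $t^2$, and since conjugation by a nonzero diagonal sequence preserves the property of being mutually inverse, the two $\widetilde{\mathcal{M}}$-factors are mutually inverse as well. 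Hence $\mathcal{B}(s)\widetilde{\mathcal{B}}(s)=I$; as every matrix involved is even lower triangular, this one-sided identity already yields the full claim.

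I do not anticipate a genuine analytic obstacle here, since no term-by-term summation is required. The only point demanding care is the parameter bookkeeping: verifying that the inner pair and the outer pair each have matching $(u,v)$ and interchanged $(x,y)$, so that both cancellations are literal instances of \eqref{Bressoud}. Once that matching is confirmed the proof is immediate.
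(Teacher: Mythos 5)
Your proposal is correct and is essentially the paper's own argument: the authors prove Proposition \ref{mutually-inverse} with the single line ``This follows from Bressoud's matrix inversion (\ref{Bressoud})'', and your telescoping of the four factors (inner $\mathcal{M}$-pair cancelling via the composition law and $(x/y;q)_0$-type degeneration, outer $\widetilde{\mathcal{M}}$-pair cancelling because they are conjugates of mutually inverse $\mathcal{M}$'s by the same diagonal $(d_r)$) is exactly the detail that one-line proof leaves implicit. The parameter bookkeeping you flag checks out, so nothing further is needed.
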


\begin{proof}This follows from Bressoud's matrix inversion (\ref{Bressoud}).
\end{proof}

\begin{thm}\label{thm:g_r}We have
\begin{subequations}
\begin{gather}
P_{(1^r)}(x\,|\,a,-a,c,-c\,|\,q,t) =\sum_{k=0}^{\lfloor {r \over 2} \rfloor}\mathcal{B}_{r,r-2k}(t^n)E_{r-2k}(x), \label{P-E}\\
E_{r}(x)=\sum_{k=0}^{\lfloor {r \over 2} \rfloor}\widetilde{\mathcal{B}}_{r,r-2k}(t^n)P_{(1^{r-2k})}(x\,|\,a,-a,c,-c\,|\,q,t) . \label{E-P}
\end{gather}
\end{subequations}
Writing the coefficients explicitly, these read
\begin{subequations}
\begin{gather}
 P_{(1^r)}(x\,|\,a,-a,c,-c\,|\,q,t) =\sum_{k=0}^{\lfloor {r \over 2} \rfloor}\sum_{l=0}^{\lfloor {r -2k \over 2} \rfloor}
E_{r-2k-2l}(x) { \big( 1/c^2 ; t \big)_l \big( s t^{2k-1} ; t\big)_l \big( s t^{2k} ; t\big)_{2l} \over ( t ; t )_l \big( s c^2 t^{2k} ; t\big)_l \big( s t^{2k-1} ; t\big)_{2l} } c^{2l} \nonumber \\
\hphantom{P_{(1^r)}(x\,|\,a,-a,c,-c\,|\,q,t)=}{} \times
{ \big(t c^2/a^2 ; t^2\big)_k \big(s tc^2 ; t^2\big)_k \big(s^2 c^4 /t^2 ; t^2\big)_k (s ; t)_{2k} \over \big(t^2 ; t^2\big)_k \big(sc^2/t ; t^2\big)_k \big(s^2 a^2c^2/t ; t^2\big)_k \big(sc^2 ; t\big)_{2k} } a^{2k},\label{P-to-G}\\
E_{r}(x) = \sum_{l=0}^{\lfloor {r \over 2} \rfloor }\sum_{k=0}^{ \lfloor{r-2l \over 2} \rfloor }P_{(1^{r-2l-2k})}(x\,|\,a,-a,c,-c\,|\,q,t)
{ \big(c^2 ; t\big)_l \over (t ; t)_l }{ \big(s t^l ; t\big)_{l+2k} \over \big(st^{l-1}c^2 ; t\big)_{l+2k} } \nonumber\\
\hphantom{E_{r}(x) =}{} \times { \big(a^2/tc^2 ; t^2\big)_k \over \big(t^2 ; t^2\big)_k } { \big(s^2 t^{4l-2} c^4 ; t^2\big)_k \over \big(s^2 t^{4l} c^4 ; t^2\big)_k }
{ \big(s^2 t^{4l+2k-2} c^4 ; t^2\big)_k \over \big(s^2 t^{4l+2k-3} a^2c^2 ; t^2\big)_k }\big( t c^2\big)^k, \label{e-p}
\end{gather}
\end{subequations}
where $s=t^{n-r+1}$. In particular form \eqref{e-p}, we have Theorem~{\rm \ref{THME-P}}.
\end{thm}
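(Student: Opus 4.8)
The plan is to read \eqref{P-E} off the twofold summation \eqref{PtoG_0} of Corollary~\ref{MacdonaldPoly_1^r} by recognizing its coefficient as an entry of the matrix product $\mathcal{B}(t^n)$, and then to deduce \eqref{E-P} by inversion. Throughout write $s=t^{n-r+1}$ for the parameter of the formula. First I would split the base-$t$ symbols in \eqref{PtoG_0} via $(a;t)_{2k+l}=(a;t)_{2k}\,(a t^{2k};t)_l$, so that the coefficient of $E_{r-2k-2l}(x)$ factors as a ``$k$-factor'' (the base-$t^2$ symbols, the monomial $a^{2k}$, and the $2k$-parts $(s/t;t)_{2k}$ and $(sc^2;t)_{2k}$) times an ``$l$-factor'' (the surviving $l$-parts $(st^{2k-1};t)_l$ and $(sc^2t^{2k};t)_l$, the monomial $c^{2l}$, and the quotient $(1-st^{2k+2l-1})/(1-st^{-1})$). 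The structural point is that, although the $l$-factor still depends on $k$ through the shifts $t^{2k}$, this is exactly the dependence generated by composing two even lower-triangular matrices: in $\mathcal{B}_{r,r-2j}(t^n)=\sum_{k}\widetilde{\mathcal{M}}_{r,r-2k}\,\mathcal{M}_{r-2k,\,r-2j}$ the second factor has its row index lowered from $r$ to $r-2k$, which is precisely what inserts the shift by $2k$ into its subscripts.

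The identification is then a direct check that the $\widetilde{\mathcal{M}}_{r,r-2k}$ factor of $\mathcal{B}(t^n)$ equals the $k$-factor and that the $\mathcal{M}_{r-2k,\,r-2k-2l}$ factor equals the $l$-factor at level $r-2k$. Here one substitutes the matrix argument $t^n$ into the defining entries and converts the resulting $r$-dependence back to $s$ through $t^{r-n}=t/s$; the base-$t$ symbols $(v^{1/2}t^{r-2i+2};t)_{2i}$ and $(u^{1/2}t^{r-2i};t)_{2i}$ appearing in $\widetilde{\mathcal{M}}$ account for the $2k$-parts, while the base-$t^2$ symbols account for the rest. Turning the Bressoud arguments of the form $t^{a}/s$ into the formula arguments of the form $s\,t^{b}$ requires the reversal $(x;t)_m=(-x)^m t^{\binom{m}{2}}(t^{1-m}/x;t)_m$, and the stray powers of $t$ and signs are absorbed in the process. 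Collecting the two factors yields \eqref{P-to-G}, and summing over $k+l=j$ recovers \eqref{PtoG_0}; this establishes \eqref{P-E}.

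For the inverse expansion I would invoke Proposition~\ref{mutually-inverse}: since $\mathcal{B}(t^n)$ and $\widetilde{\mathcal{B}}(t^n)$ are mutually inverse even lower-triangular matrices, applying $\widetilde{\mathcal{B}}(t^n)$ to \eqref{P-E} gives \eqref{E-P} at once. Writing out the product $\widetilde{\mathcal{B}}(t^n)=\mathcal{M}\,\widetilde{\mathcal{M}}$ explicitly in the same way produces \eqref{e-p}, which is the expansion \eqref{G-to-P} asserted in Theorem~\ref{THME-P}.

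I expect the main obstacle to be the bookkeeping of the preceding step: matching the Bressoud entries, whose $r$-dependence enters through powers $t^{r-2i}$ and whose base is $t$ for the $\mathcal{M}$ factor but $t^2$ for the $\widetilde{\mathcal{M}}$ factor, against the coefficient of \eqref{PtoG_0} written with $s=t^{n-r+1}$. One must keep the reversal-induced signs and powers of $t$ consistent across the mixed-base symbols of the two factors, and verify that the composite shift by $2k$ lands on every subscript correctly. None of this is conceptually deep, but it is where essentially all of the computational labor resides.
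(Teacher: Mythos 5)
Your proposal is correct and follows essentially the same route as the paper: the paper likewise identifies the coefficient of \eqref{PtoG_0} term by term with the product $\widetilde{\mathcal{M}}_{r,r-2k}\big(t^{-2n+2}/c^4,t^{-2n-4},c^2/ta^2,1/t^2;t\big)\,\mathcal{M}_{r-2k,r-2k-2l}\big(t^{-n},t,1/c^2,1;t\big)$ after splitting the $(\,\cdot\,;t)_{2k+l}$ symbols, and obtains \eqref{e-p} by the analogous computation for $\widetilde{\mathcal{B}}(t^n)$, with \eqref{E-P} coming from the Bressoud mutual inversion (Proposition~\ref{mutually-inverse}). The only cosmetic difference is that the paper leaves the inversion step implicit, whereas you state it explicitly.
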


\begin{proof} Clearly, (\ref{PCE}) and (\ref{P-to-G}) are the same. We show that~(\ref{P-to-G}) and~(\ref{P-E}) are the same. By~(\ref{Bre}) and $s=t^{n-r+1}$, we have
\begin{gather*}
 { \big( 1/c^2 ; t \big)_l \big( s t^{2k-1} ; t\big)_l \big( s t^{2k} ; t\big)_{2l} \over ( t ; t )_l \big( s c^2 t^{2k} ; t\big)_l \big( s t^{2k-1} ; t\big)_{2l} } c^{2l}=
{ \big(1/c^2 ; t\big)_l \over (t ; t)_l} { \big( s^{-1} t^{-2k-2l+1} ; t\big)_{2l} \over \big(s^{-1} t^{-2k-l+1}/c^2 ; t\big)_l \big(s^{-1} t^{-2k-2l+2} ; t\big)_l } t^l \\
 \hphantom{{ \big( 1/c^2 ; t \big)_l \big( s t^{2k-1} ; t\big)_l \big( s t^{2k} ; t\big)_{2l} \over ( t ; t )_l \big( s c^2 t^{2k} ; t\big)_l \big( s t^{2k-1} ; t\big)_{2l} } c^{2l}}{} =
\mathcal{M}_{r-2k,r-2k-2l}\big(t^{-n}, t, 1/c^2, 1;t\big),
\end{gather*}
and
\begin{gather*}
 { \big(t c^2/a^2 ; t^2\big)_k \big(stc^2 ; t^2\big)_k \big(s^2 c^4 /t^2 ; t^2\big)_k (s ; t)_{2k} \over \big(t^2 ; t^2\big)_k \big(sc^2/t ; t^2\big)_k \big(s^2 a^2 c^2/t ; t^2\big)_k \big(sc^2 ; t\big)_{2k} } a^{2k} \\
 \qquad{} = { \big(tc^2/a^2 ; t^2\big)_k \over \big(t^2 ; t^2\big)_k } { \big(s^{-1} t^{-2k+1} ; t\big)_{2k} \over \big(s^{-1} t^{-2k+2}/c^2 ; t\big)_{2k} }
{ \big(s^{-2} t^{-4k+4}/c^4 ; t^2\big)_{2k} \over \big(s^{-2} t^{-2k+3}/a^2 c^2 ; t^2\big)_k \big(s^{-2} t^{-4k+4}/c^4 ; t^2\big)_k } \big(t/c^2\big)^k \\
 \qquad{} = \widetilde{\mathcal{M}}_{r, r-2k}\big(t^{-2n+2}/c^4, t^{-2n-4}, c^2/ta^2, 1/t^2;t\big).
\end{gather*}
As for (\ref{E-P}) and (\ref{e-p}), we have
\begin{gather*}
\widetilde{\mathcal{M}}_{r, r-2k-2l}\big(t^{-2n+2}/c^4, t^{-2n-4}, 1/t^2,c^2/ta^2;t\big)
\mathcal{M}_{r-2k,r-2k-2l}\big(t^{-n}, t,1, 1/c^2;t\big) \\
\qquad{} = { \big(a^2/t c^2 ; t^2\big)_k \over \big(t^2 ; t^2\big)_k }
{ \big(s^{-1} t^{-2l-2k+1} ; t\big)_{2k} \over \big(s^{-1} t^{-2l-2k+2}/c^2 ; t\big)_{2k} }
{ \big(s^{-2} t^{-4l-4k+4}/c^4 ; t^2\big)_{2k} \over \big(s^{-2} t^{-4l-2k+2}/c^4 ; t^2\big)_k \big(s^{-2} t^{-4l-4k+5}/c^2a^2 ; t^2\big)_k } \\
\qquad\quad{}\times \big(t^2/a^2\big)^k { \big(c^2 ; t\big)_l \over (t ; t)_l }
{\big(s^{-1} t^{-2l+1} ; t\big)_l \over {\big(s^{-1} t^{-2l+2}/c^2 ; t\big)_l} }\big(t/c^2\big)^l \\
\qquad{} =
{ \big(c^2 ; t\big)_l \over (t ; t)_l }{ \big(s t^l ; t\big)_{l+2k} \over \big(st^{l-1}c^2 ; t\big)_{l+2k} }
{ \big(a^2/tc^2 ; t^2\big)_k \over \big(t^2 ; t^2\big)_k }
{ \big(s^2 t^{4l-2} c^4 ; t^2\big)_k \over \big(s^2 t^{4l} c^4 ; t^2\big)_k }
{ \big(s^2 t^{4l+2k-2}c^4 ; t^2\big)_k \over \big(s^2 t^{4l+2k-3} a^2 c^2 ; t^2\big)_k }\big( tc^2\big)^k.\!\!\!\!\tag*{\qed}
\end{gather*}\renewcommand{\qed}{}
\end{proof}

\subsection[Entries of $\mathcal{B}(s)$ and $\widetilde{\mathcal{B}}(s)$ in terms of ${}_4\phi_3$ series]{Entries of $\boldsymbol{\mathcal{B}(s)}$ and $\boldsymbol{\widetilde{\mathcal{B}}(s)}$ in terms of $\boldsymbol{{}_4\phi_3}$ series}

Recall that we have defined $B(s,j)$ and $\widetilde{B}(s,j)$ in Definition~\ref{B-Bt-def} as
\begin{gather*}
 B(s,j)=(-1)^j s^{-j} {\big(s^2/t^2; t^2\big)_j \over \big(t^2; t^2\big)_j} {1-s^2 t^{4j-2} \over 1-s^2 t^{-2}}
{}_{4}\phi_3 \left[ { -sa^2, -sc^2, s^2 t^{2j-2}, t^{-2j}
\atop
 -s, -st, s^2a^2 c^2/t } ; t^2, t^2\right],\nonumber\\
 \widetilde{B}(s,j)=(st^{j-1})^{-j}{\big(t^{2j}s^2;t^2\big)_j\over \big(t^2;t^2\big)_j}
 {}_{4}\phi_3 \left[ {-t^{-2j+2}/sa^2, -t^{-2j+2}/sc^2, t^{-2j+2}/s^2 , t^{-2j} \atop -t^{-2j+1}/s, -t^{-2j+2}/s,t^{-4j+5}/s^2a^2c^2} ; t^2, t^2\right].
\end{gather*}

\begin{prp}We have
\begin{subequations}
\begin{gather}
B(s,j) =(-1)^j t^{j}s^{-j} {\big(s^2/t^2; t^2\big)_j \over \big(t^2; t^2\big)_j} {1-s t^{2j-1} \over 1-s t^{-1}} {}_{4}\phi_3 \left[ { -sa^2/t, -sc^2/t, s^2 t^{2j-2}, t^{-2j}
\atop -s, -s/t, s^2a^2c^2/t } ; t^2, t^2\right], \!\!\!\!\label{B[s,j]} \\
\widetilde{B}(s,j) =t^j \big(st^{j-1}\big)^{-j}{\big(s^2t^{2j};t^2\big)_j\over \big(t^2;t^2\big)_j}{ 1+st^{-1} \over 1+st^{2j-1}} \nonumber \\
\hphantom{\widetilde{B}(s,j) =}{} \times {}_{4}\phi_3 \left[ {- t^{-2j+3}/sa^2 ,-t^{-2j+3}/sc^2, t^{-2j + 2}/s^2, t^{-2j} \atop
 -t^{-2j+2}/s, - t^{-2j+3}/s, t^{-4j+5}/s^2a^2 c^2} ;t^2, t^2 \right]. \label{tildeB[s,j]}
\end{gather}
\end{subequations}
\end{prp}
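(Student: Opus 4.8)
The plan is to recognize that each of the two expressions for $B(s,j)$ (and likewise for $\widetilde{B}(s,j)$) is built from a \emph{terminating balanced} (Saalsch\"utzian) ${}_4\phi_3$ series in the base $t^2$, and that passing from the form in Definition~\ref{B-Bt-def} to~\eqref{B[s,j]} is a single instance of Sears' transformation of such a series (equivalently, the $q$-analogue of Bailey's transformation already invoked to produce these series; see~\cite{GR}). First I would record the balance condition for the series in Definition~\ref{B-Bt-def}: in base $t^2$ the factor $t^{-2j}$ forces termination, and the product of the three lower parameters equals $t^2$ times the product of the four upper parameters, since $(-s)(-st)(s^2a^2c^2/t)=s^4a^2c^2=t^2\cdot(-sa^2)(-sc^2)(s^2t^{2j-2})(t^{-2j})$. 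Hence Sears' transformation applies.

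Next I would apply Sears with the retained upper parameter $s^2t^{2j-2}$ and the retained lower parameter $s^2a^2c^2/t$. Then the two mobile upper parameters $-sa^2,-sc^2$ are sent to $(s^2a^2c^2/t)/(-sa^2)=-sc^2/t$ and $(s^2a^2c^2/t)/(-sc^2)=-sa^2/t$, while the two mobile lower parameters $-s,-st$ are sent to $s^2/(-s)=-s$ and $s^2/(-st)=-s/t$, using $(s^2t^{2j-2})\,t^{2-2j}=s^2$. This reproduces exactly the upper parameters $-sa^2/t,\,-sc^2/t,\,s^2t^{2j-2},\,t^{-2j}$ and lower parameters $-s,\,-s/t,\,s^2a^2c^2/t$ appearing in~\eqref{B[s,j]}, so the two ${}_4\phi_3$'s coincide up to the prefactor.

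It then remains to collect the prefactors. Sears contributes the factor $\dfrac{(-t^{2-2j}/s;t^2)_j\,(-t^{3-2j}/s;t^2)_j}{(-s;t^2)_j\,(-st;t^2)_j}(s^2t^{2j-2})^j$, which I would simplify using the inversion rule $(\alpha;q)_n=(-\alpha)^n q^{\binom{n}{2}}(\alpha^{-1}q^{1-n};q)_n$. This yields $(-t^{2-2j}/s;t^2)_j=(-s;t^2)_j\,s^{-j}t^{-j(j-1)}$ and $(-t^{3-2j}/s;t^2)_j=(-s/t;t^2)_j\,(t/s)^j t^{-j(j-1)}$, together with the telescoping identity $\dfrac{(-s/t;t^2)_j}{(-st;t^2)_j}=\dfrac{1+st^{-1}}{1+st^{2j-1}}$. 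Multiplying by the defining prefactor of $B(s,j)$ and writing $1-s^2t^{4j-2}=(1-st^{2j-1})(1+st^{2j-1})$ and $1-s^2t^{-2}=(1-st^{-1})(1+st^{-1})$, the $s^{\pm}$ powers and the $(1+st^{\cdot})$ factors cancel in pairs and one lands on the stated prefactor $(-1)^j t^j s^{-j}\dfrac{(s^2/t^2;t^2)_j}{(t^2;t^2)_j}\dfrac{1-st^{2j-1}}{1-st^{-1}}$ of~\eqref{B[s,j]}.

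Finally, \eqref{tildeB[s,j]} would follow by the identical mechanism: the series defining $\widetilde{B}(s,j)$ is again terminating balanced in base $t^2$, and the same Sears transformation, now retaining the upper parameter $t^{-2j+2}/s^2$ and the lower parameter $t^{-4j+5}/s^2a^2c^2$, sends the mobile parameters precisely to those of~\eqref{tildeB[s,j]}, with an analogous prefactor reduction. I expect the only real difficulty to be bookkeeping: choosing the correct one among the several equivalent forms of Sears' transformation and pairing off the right parameters, and then carrying the half-integer powers of the base (e.g.\ the parameter $-st=-s\,t$ with $t=(t^2)^{1/2}$) correctly through the inversion and telescoping so that the very-well-poised-type factor collapses exactly to $t^j\dfrac{1-st^{2j-1}}{1-st^{-1}}$. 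There is no conceptual obstacle once the Sears specialization is fixed.
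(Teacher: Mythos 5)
Your proposal is correct and is exactly the paper's argument: the paper's proof consists of the single line ``This follows from the Sears transformation \cite[p.~49, equation~(2.10.4)]{GR}'', and your write-up carries out precisely that application (same choice of retained upper parameter $s^2t^{2j-2}$, resp.\ $t^{-2j+2}/s^2$, and retained lower parameter $s^2a^2c^2/t$, resp.\ $t^{-4j+5}/s^2a^2c^2$), with the balance check and prefactor simplification done correctly. No gaps.
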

\begin{proof}This follows from the Sears transformation \cite[p.~49, equation~(2.10.4)]{GR}.
\end{proof}

\begin{thm}\label{BIBASIC}We have
\begin{subequations}
\begin{gather}
\mathcal{B}_{r,r-2i}(s)=B\big(st^{-r+1},i\big), \label{Btophi-1}\\
\widetilde{\mathcal{B}}_{r,r-2i}(s)=\widetilde{B}\big(st^{-r+1},i\big). \label{Btophi-2}
\end{gather}
\end{subequations}
\end{thm}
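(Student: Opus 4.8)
The plan is to read off the matrix entry as an explicit single sum and then collapse that sum to the ${}_4\phi_3$ defining $B$ by a basic-hypergeometric transformation. First I would expand the matrix product. Since $\mathcal{B}(s)=\widetilde{\mathcal M}\big(t^{2}/s^2c^4,1/s^2t^4;c^2/ta^2,1/t^2;t\big)\mathcal{M}\big(1/s,t;1/c^2,1;t\big)$ and both factors are even lower triangular,
\begin{gather*}
\mathcal{B}_{r,r-2i}(s)=\sum_{k=0}^{i}\widetilde{\mathcal M}_{r,r-2k}\big(t^{2}/s^2c^4,1/s^2t^4;c^2/ta^2,1/t^2;t\big)\,\mathcal{M}_{r-2k,r-2i}\big(1/s,t;1/c^2,1;t\big).
\end{gather*}
The two entries on the right are exactly the factors already computed in the proof of Theorem \ref{thm:g_r}; carrying out that substitution, but keeping the matrix argument generic instead of setting it to $t^n$, identifies this sum with the coefficient of $E_{r-2i}(x)$ in the twofold formula (\ref{P-to-G}), provided one writes $\sigma:=st^{-r+1}$ in place of the $s=t^{n-r+1}$ appearing there. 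This substitution is the decisive bookkeeping step: it removes all explicit $r$-dependence, so the right-hand side becomes a function of $\sigma$, $i$, $a$, $c$, $t$ alone, matching the shape of $B(\sigma,i)$.

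Next I would simplify the single sum over $k$ (with $l=i-k$). Merging the $\sigma$-Pochhammers coming from $\widetilde{\mathcal M}$ and $\mathcal{M}$ via $(\sigma;t)_{2k}(\sigma t^{2k};t)_{2(i-k)}=(\sigma;t)_{2i}$ and the analogous cancellations in the denominators, the very-well-poised factor
\begin{gather*}
\frac{(\sigma;t)_{2i}}{(\sigma/t;t)_{2i}}=\frac{1-\sigma t^{2i-1}}{1-\sigma t^{-1}}
\end{gather*}
factors out cleanly, and it is precisely the prefactor occurring in the ${}_4\phi_3$ form (\ref{B[s,j]}) of $B(\sigma,i)$, which is the natural target of the computation. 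What remains inside the sum is a terminating series whose summand mixes base-$t$ Pochhammers (in the indices $i\pm k$) with base-$t^2$ Pochhammers (in $k$): its $k$-term ratio is genuinely a rational function of $t^{k}$, not of $t^{2k}$, so it is not yet of base $t^2$. The core step is to apply the $q$-analogue of Bailey's transformation \cite[p.~99, equation~(3.10.14)]{GR} — which in this specialization effects the passage from base $t$ to base $t^2$ and thereby explains the squared arguments $-\sigma a^2/t$, $-\sigma c^2/t$, $\sigma^2t^{2i-2}$, $t^{-2i}$ — to recast the residual sum as the balanced base-$t^2$ ${}_4\phi_3$ in (\ref{B[s,j]}). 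Tracking the overall monomial and Pochhammer prefactors then gives $\mathcal{B}_{r,r-2i}(s)=B(\sigma,i)$, and the Definition \ref{B-Bt-def} form follows from the Sears transformation recorded in the preceding proposition.

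The relation (\ref{Btophi-2}) for $\widetilde{\mathcal B}(s)$ I would handle identically, starting from $\widetilde{\mathcal B}(s)=\mathcal{M}\big(1/s,t;1,1/c^2;t\big)\widetilde{\mathcal M}\big(t^{2}/s^2c^4,1/s^2t^4;1/t^2,c^2/ta^2;t\big)$ and using the companion formula (\ref{e-p}) of Theorem \ref{thm:g_r}; the same quadratic transformation produces the ${}_4\phi_3$ defining $\widetilde B(\sigma,i)$. I would deliberately not try to deduce (\ref{Btophi-2}) from (\ref{Btophi-1}) together with Proposition \ref{mutually-inverse}, since the matching inversion relations for $B$ and $\widetilde B$ are themselves downstream of this theorem and invoking them would be circular. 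The main obstacle is the transformation step: one must pin down exactly which specialization of (3.10.14) applies, check that its very-well-poised/balanced hypotheses hold for the series at hand, and then verify that the squaring of parameters and the parity bookkeeping in the base-$t\to t^2$ passage reproduce precisely the four numerator and three denominator parameters of the ${}_4\phi_3$'s in Definition \ref{B-Bt-def}. Everything surrounding it is routine Pochhammer manipulation.
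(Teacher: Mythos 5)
Your proposal follows essentially the same route as the paper: expand the matrix product into a single sum over $k$, observe that the mixed base-$t$/base-$t^2$ Pochhammers make it a terminating bibasic $\Phi$ series, and apply the $q$-analogue of Bailey's transformation \cite[equation~(3.10.14)]{GR} (in the specializations that collapse the resulting ${}_5\phi_4$ to a ${}_4\phi_3$, namely $a=c^2$ for $B$ and $at=c^2$ for $\widetilde B$) to land on the series of (\ref{B[s,j]}) and (\ref{tildeB[s,j]}), with the Sears transformation bridging to Definition~\ref{B-Bt-def}. This is exactly the paper's proof, including your decision to treat (\ref{Btophi-2}) directly rather than via the inversion relations.
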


\begin{proof} Recall that the bibasic hypergeometric series $\Phi$ (see \cite[p.~99, equation~(3.9.1)]{GR}) is defined by
\begin{gather*}
\Phi\left[{a_1,\ldots,a_{r+1}:c_1,\ldots,c_s \atop b_1,\ldots,b_r:d_1,\ldots,d_s};q,p;z \right]
=\sum_{n=0}^\infty {(a_1,\ldots,a_{r+1};q)_n\over (q,b_1,\ldots,b_r;q)_n} {(c_1,\ldots,c_s;p)_n\over (d_1,\ldots,d_s;p)_n} z^n.
\end{gather*}
We use the $q$-analogue of Bailey's transformation \cite[p.~99, equation~(3.10.14)]{GR}:
\begin{gather}
\Phi\left[
{a^2,at^2,-at^2,b^2,c^2:-at/w,t^{-i}\atop a,-a,a^2t^2/b^2,a^2t^2/c^2:w,-a t^{i+1}};t^2,t\,;{awt^{i+1}\over b^2 c^2}\right]\nonumber\\
\qquad{} ={(-at, at^2/w, w/at ; t)_i\over (-t, at/w, w;t)_i}
{}_5\phi_4\left[{at,at^2,a^2t^2/b^2c^2,a^2t^2/w^2,t^{-2i}\atop a^2t^2/b^2,a^2t^2/c^2,at^{2-i}/w,at^{3-i}/w};t^2,t^2\right].\label{Bailey1}
\end{gather}

When $a=c^2$, (\ref{Bailey1}) becomes
\begin{gather}
\Phi\left[{a^2,-at^2,b^2:-at/w,t^{-i}\atop -a,a^2t^2/b^2:w,-a t^{i+1}};t^2,t ;{wt^{i+1}\over b^2}\right]\nonumber \\
\qquad{} = {(-at, at^2/w, w/at ; t)_i\over (-t, at/w, w;t)_i} {}_4\phi_3\left[ {at,at^2/b^2,a^2t^2/w^2,t^{-2i} \atop a^2t^2/b^2,at^{2-i}/w,at^{3-i}/w};t^2,t^2 \right]. \label{Bailey1-1}
\end{gather}
Replacing the parameters in (\ref{Bailey1-1}) as
\begin{gather*}
\big(a, w, b^2\big) \rightarrow \big({-}sc^2/t, c^2 t^{-i+1}, tc^2/a^2\big),
\end{gather*}
we can prove (\ref{Btophi-1}) as
\begin{gather*}
 \mathcal{B}_{r,r-2i}\big(s t^{r-1}\big)= \sum_{k=0}^{i}{\big(1/c^2;t\big)_{i-k}\over (t;t)_{i-k}}{\big(st^{2k-1};t\big)_{i-k}\over \big(sc^2t^{2k};t\big)_{i-k}} {\big(st^{2k};t\big)_{2i-2k}\over \big(st^{2k-1};t\big)_{2i-2k}}c^{2i-2k} \\
\hphantom{\mathcal{B}_{r,r-2i}\big(s t^{r-1}\big)=}{}\times
{ \big(tc^2/a^2 ; t^2\big)_k \big(stc^2 ; t^2\big)_k \big(s^2 c^4 /t^2 ; t^2\big)_k (s ; t)_{2k} \over \big(t^2 ; t^2\big)_k \big(sc^2/t ; t^2\big)_k \big(s^2 a^2c^2/t ; t^2\big)_k \big(sc^2 ; t\big)_{2k} } a^{2k}\\
\hphantom{\mathcal{B}_{r,r-2i}\big(s t^{r-1}\big)}{} ={\big(1/c^2;t\big)_{i}\over (t;t)_{i}}
{(s/t;t)_{i}\over \big(sc^2;t\big)_{i}} {(s;t)_{2i}\over (s/t;t)_{2i}}c^{2i} \\
\hphantom{\mathcal{B}_{r,r-2i}\big(s t^{r-1}\big)=}{} \times \sum_{k=0}^{i}{\big(t^{-i};t\big)_k\over \big(c^2t^{-i+1};t\big)_k}
{\big(s t^{i-1};t\big)_k\over \big(s c^2 t^i;t\big)_k} { \big(tc^2/a^2 ; t^2\big)_k \big(stc^2 ; t^2\big)_k \big(s^2 c^4 /t^2 ; t^2\big)_k \over
\big(t^2 ; t^2\big)_k \big(sc^2/t ; t^2\big)_k \big(s^2 a^2 c^2/t ; t^2\big)_k } a^{2k} t^k \\
\hphantom{\mathcal{B}_{r,r-2i}\big(s t^{r-1}\big)}{}
= {\big(1/c^2;t\big)_{i}\over (t;t)_{i}} {(s/t;t)_{i}\over \big(sc^2;t\big)_{i}} {(s;t)_{2i}\over (s/t;t)_{2i}}c^{2i}
\Phi\left[{tc^2/a^2,sc^2t,s^2c^4/t^2:st^{i-1},t^{-i}\atop sc^2/t,s^2a^2 c^2/t:c^2t^{-i+1},sc^2t^i};t^2,t;a^2t\right]\\
\hphantom{\mathcal{B}_{r,r-2i}\big(s t^{r-1}\big)}{}
={\big(1/c^2;t\big)_{i}\over (t;t)_{i}} {(s/t;t)_{i}\over \big(sc^2;t\big)_{i}} {(s;t)_{2i}\over (s/t;t)_{2i}}c^{2i}\\
\hphantom{\mathcal{B}_{r,r-2i}\big(s t^{r-1}\big)=}{} \times
\big({-}1/sc^2\big)^i {\big(s c^2,-s/t;t\big)_i (-s;t)_{2i}\over \big(1/c^2,-t;t\big)_i (-s/t;t)_{2i}}
{}_4\phi_3\left[{-sa^2,-sc^2,s^2t^{2i-2},t^{-2i} \atop -s,-st,s^2a^2c^2/t};t^2,t^2 \right] \\
\hphantom{\mathcal{B}_{r,r-2i}\big(s t^{r-1}\big)}{}
=B(s,i).
\end{gather*}

When $at=c^2$, (\ref{Bailey1}) becomes
\begin{gather}
\Phi\left[{a^2,at^2,-at^2,b^2:-at/w,t^{-i}\atop a,-a,a^2t^2/b^2:w,-a t^{i+1}};t^2,t;{wt^{i}\over b^2}\right] \nonumber\\
\qquad{} = {(-at, at^2/w, w/at ; t)_i\over (-t, at/w, w;t)_i}{}_4\phi_3\left[{at^2,at/b^2,a^2t^2/w^2,t^{-2i}\atop a^2t^2/b^2,at^{2-i}/w,at^{3-i}/w};t^2,t^2\right].\label{Bailey1-2}
\end{gather}
Replacing the parameters in (\ref{Bailey1-2}) as
\begin{gather*}
\big(a, w, b^2\big) \rightarrow \big({-}t^{-2i+1}/sc^2, t^{-i+1}/c^2, a^2/tc^2\big),
\end{gather*}
we can prove (\ref{Btophi-2}) as
\begin{gather*}
 \widetilde{\mathcal{B}}_{r,r-2i}\big(st^{r-1}\big) = \sum_{k=0}^{i}
\dfrac{\big(c^2, t^{-2i+2k+1} /s; t\big)_{i-k} }{\big(t, t^{-2i+2k+2}/sc^2 ; t\big)_{i-k}} \big(t/c^2\big)^{i-k} {\big(a^2/tc^2 ; t^2\big)_{k} \over \big(t^2 ; t^2\big)_k}
{\big(t^{-2i+1} /s; t\big)_{2k} \over \big(t^{-2i+2}/sc^2 ; t\big)_{2k}} \\
\qquad\quad{} \times
{ \big(t^{-4i+4}/s^2c^4 ; t^2\big)_{2k} \over \big(t^{-4i+2k+2}/s^2c^4 ; t^2\big)_{k} \big(t^{-4i+5}/s^2a^2 c^2 ; t^2\big)_{k}}
\big(t^2/a^2\big)^k \nonumber \\
\qquad{} = \dfrac{\big(c^2, t^{-2i+1}/s ; t\big)_i }{\big(t, t^{-2i+2}/sc^2 ; t\big)_i } \big(t/c^2\big)^i \\
\qquad\quad{} \times \Phi\left[{a^2/tc^2, t^{-4i+2}/s^2c^4, t^{-2i+3}/sc^2, - t^{-2i+3}/sc^2 : t^{-i+1}/s, t^{-i}
\atop t^{-4i+5}/s^2a^2c^2, t^{-2i+1}/sc^2, - t^{-2i+1}/sc^2 : t^{-i+2}/sc^2, t^{-i+1}/c^2}
; t^2 , t ; t^2/a^2 \right] \\
\qquad{} = \dfrac{(- s /t, s ; t)_{2i} } {\big(t^2, s^2 ; t^2\big)_i } ( t/s)^i t^{-i(i-1)} {}_{4}\phi_3 \left[ {- t^{-2i+3}/sc^2 ,- t^{-2i+3}/sa^2, t^{-2i + 2}/s^2, t^{-2i}\atop t^{-4i+5}/s^2a^2 c^2, - t^{-2i+2}/s, - t^{-2i+3}/s}
;t^2, t^2 \right]\nonumber\\
\qquad{} =\widetilde{B}(s ,i).\tag*{\qed}
\end{gather*}\renewcommand{\qed}{}
\end{proof}

\section[Four term relations for $B(s,i)$ and $\widetilde{B}(s,i)$]{Four term relations for $\boldsymbol{B(s,i)}$ and $\boldsymbol{\widetilde{B}(s,i)}$}\label{BBt}

\subsection{Four term relations}
Recall that we have defined $f(s)$ in (\ref{f[s]}) and have introduced the notation $F(s,l)$ in (\ref{F[s,j]}) as
\begin{gather*}
F(s,l)=f\big(s/t^l\big)=\dfrac{\big(1-t^l/s \big)\big(1-t^{l+2}/s a^2c^2 \big)\big(1+t^{l+1}/s a^2\big)\big(1+t^{l+1}/sc^2\big)} {\big(1-t^{2l+1}/s^{2}a^2 c^2\big)\big(1-t^{2l+3}/s^{2}a^2 c^2\big)}.
\end{gather*}

\begin{thm}\label{four-term}We have
\begin{subequations}
\begin{gather}
B(s,i)+F(s,-1)B\big(st^{2},i-1\big) =B(st,i) + B(st,i-1),\label{B4-1}\\
\widetilde{B}(s,i) +F(s,2-2i) \widetilde{B}(s,i-1) =\widetilde{B}\big(st^{-1},i\big)+\widetilde{B}(st,i-1).\label{B4-2}
\end{gather}
\end{subequations}
\end{thm}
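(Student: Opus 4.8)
The plan is to establish both four-term relations (\ref{B4-1}) and (\ref{B4-2}) by direct verification, starting from the explicit ${}_{4}\phi_3$ expressions for $B(s,i)$ and $\widetilde B(s,i)$ recorded in Definition~\ref{B-Bt-def}. The structural fact that makes this feasible is that each of the series ${}_{4}\phi_3[\,\cdots\,;t^2,t^2]$ appearing there is balanced (Saalsch\"utzian): for the series defining $B(s,i)$ the product of the four numerator parameters, multiplied by the base $t^2$, equals the product of the three denominator parameters, both being $s^4a^2c^2$. Balanced terminating ${}_{4}\phi_3$ series satisfy a rich supply of contiguous and transformation identities, and it is through these that the shifts $s\mapsto st,\ st^2$ and $i\mapsto i-1$ in (\ref{B4-1})--(\ref{B4-2}) should be controlled.

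For (\ref{B4-1}) I would first write $B(s,i)=\mathrm{pref}(s,i)\,\Phi(s,i)$, where $\Phi(s,i)$ is the ${}_{4}\phi_3$ factor and $\mathrm{pref}(s,i)=(-1)^is^{-i}\,(s^2/t^2;t^2)_i/(t^2;t^2)_i\cdot(1-s^2t^{4i-2})/(1-s^2t^{-2})$. Dividing the relation through by $\mathrm{pref}(s,i)$ turns the prefactor ratios attached to $B(st^2,i-1)$, $B(st,i)$ and $B(st,i-1)$ into finite products of a few elementary factors in $s,a,c,t$, so that (\ref{B4-1}) becomes a linear relation among four ${}_{4}\phi_3$ series with explicit rational coefficients, one of which must reproduce $F(s,-1)=f(st)$ from (\ref{f[s]}). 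I would then reindex the four terminating sums onto a common summation variable $k$ and compare coefficients term by term: for each fixed $k$ the identity collapses to a rational-function identity in $s,a,c,t$, which is cleared of denominators and checked as a polynomial identity. The cleanest realization of this step is to recognize the reduced relation as a three-term contiguous relation for balanced terminating ${}_{4}\phi_3$ series in \cite{GR}, after which (\ref{B4-1}) follows by a single parameter substitution, exactly as the authors converted between the two forms of $B$ via the Sears transformation in the previous subsection.

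The relation (\ref{B4-2}) for $\widetilde B$ has the same shape and rests on the same balanced ${}_{4}\phi_3$ structure, now in the form (\ref{tildeB[s,j]}); I would prove it by the identical normalization-and-comparison argument, with $F(s,2-2i)=f(st^{2i-2})$ playing the role of $F(s,-1)$. As an alternative derivation, once (\ref{B4-1}) is in hand one can obtain (\ref{B4-2}) from it by invoking the mutual inversion of $\mathcal{B}(s)$ and $\widetilde{\mathcal{B}}(s)$ (Proposition~\ref{mutually-inverse}), which supplies the convolution identity $\sum_{k}B(s,k)\widetilde B(st^{2k},i-k)=\delta_{i,0}$; propagating the $B$-recurrence through this convolution forces the dual recurrence on $\widetilde B$. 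The direct route is preferable, since it sidesteps the need to push a recurrence through the convolution.

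The main obstacle is the bookkeeping imposed by the simultaneous shifts. Because the four series carry the three distinct arguments $s,st,st^2$ and two distinct terminating parameters $t^{-2i}$ and $t^{-2(i-1)}$, they do not share a summation index on the nose, and the lengths of the sums differ; aligning them onto one index $k$ while keeping track of the balanced structure is delicate. I expect the genuinely error-prone point to be confirming that the elementary prefactor ratios assemble into precisely the rational function $F(s,-1)$ (respectively $F(s,2-2i)$), since any stray sign or power of $t$ there contaminates every term of the comparison. If the matching contiguous relation in \cite{GR} can be pinned down, this bookkeeping is almost entirely bypassed and the proof reduces to a substitution; otherwise the term-by-term verification, though lengthy, is routine.
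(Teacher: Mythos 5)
Your outline follows the same strategy as the paper's proof: divide \eqref{B4-1} by the prefactor $b(s,i)$, observe that the prefactor ratios together with $F(s,-1)=f(st)$ are elementary rational functions, and then verify the resulting identity among terminating balanced ${}_4\phi_3$ series by contiguous-relation manipulations; your observation that the series are Saalsch\"utzian and that the Sears transformation (i.e., the passage between the form in Definition~\ref{B-Bt-def} and the form \eqref{B[s,j]}, \eqref{tildeB[s,j]}) must enter is exactly right. However, the one step you describe concretely --- ``reindex the four terminating sums onto a common summation variable $k$ and compare coefficients term by term: for each fixed $k$ the identity collapses to a rational-function identity'' --- would fail if carried out literally. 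An equality of finite sums of rational functions does not localize to an equality of summands for an arbitrary alignment of indices, and in fact no single common reindexing of the four series in their definitional form makes \eqref{B4-1} hold summand by summand. What actually happens in the paper is that \emph{each side separately} telescopes into one and the same ${}_5\phi_4$ series \eqref{mp_lhs0}: on the left, the $k$-th term of the series for $B(s,i)$ is paired with the $(k-1)$-st term of the series for $B(st^2,i-1)$ (a shifted pairing), while on the right one must \emph{first} replace both ${}_4\phi_3(st,i,\cdot)$ and ${}_4\phi_3(st,i-1,\cdot)$ by their Sears-transformed counterparts $\overline{{}_4\phi_3}$ and then pair equal indices with no shift. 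So the Sears transformation is applied asymmetrically (to the right-hand side only), and the two sides use different index alignments; neither feature is visible in a naive ``common index $k$'' comparison. The same caveat applies to \eqref{B4-2}.

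This is a repairable gap rather than a wrong approach: your fallback of identifying the reduced relation as a contiguous relation for balanced series is essentially the correct fix, provided you accept that the relevant ``contiguous relation'' is the statement that two ${}_4\phi_3$'s with parameters shifted as above combine into a single ${}_5\phi_4$, applied once to each side of the equation with the appropriate form of the series. Your alternative derivation of \eqref{B4-2} from \eqref{B4-1} via the inversion $\sum_k B(s,k)\widetilde B(st^{2k},i-k)=\delta_{i,0}$ is not circular (Proposition~\ref{mutually-inverse} is proved independently by Bressoud's inversion), but note that the paper runs the logic in the opposite direction in Proposition~\ref{ANOTHER}, deducing the inversion from the two four-term relations; to use your direction you would still need to argue that the convolution identity together with the $B$-recurrence \emph{uniquely} forces the $\widetilde B$-recurrence, which is an extra induction you have not supplied. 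The direct route you prefer is indeed the one the paper takes.
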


\begin{proof}Recall the shorthand notation (\ref{b(s,i)}), (\ref{4phi3(s,i,k)}), and $B(s,i) = b(s,i) \!\sum\limits_{k=0}^i\! {}_{4}\phi_3(s,i,k)$. For~(\ref{B4-1}), we need to show
\begin{gather}
\sum_{k=0}^{i} {}_{4}\phi_3(s, i, k)+F(s,-1) \dfrac{b(st^2,i-1)}{b(s,i)}\sum_{k=0}^{i-1} {}_{4}\phi_3\big(st^2, i-1, k\big)\nonumber \\
\qquad{} = \dfrac{b(st,i)}{b(s,i)} \sum_{k=0}^{i} {}_{4}\phi_3(st, i, k)+ \dfrac{b(st, i-1)}{b(s, i)} \sum_{k=0}^{i-1} {}_{4}\phi_3(st, i-1, k).\label{MainProp0}
\end{gather}

Firstly, we have
\begin{gather}
\text{l.h.s.\ of } (\ref{MainProp0}) = 1 + \sum_{k=1}^{i} \left( {}_{4}\phi_3(s, i, k) + F(s,-1) \dfrac{b(s,i-1)}{b(s,i)} {}_{4}\phi_3\big(st^2, i-1, k-1\big) \right)\nonumber \\
\hphantom{\text{l.h.s.\ of } (\ref{MainProp0})}{} = 1 + \sum_{k=1}^{i}\dfrac{ \big({-}sa^2, -sc^2, st^2, s^2 t^{2i-2}, t^{-2i} ; t^2 \big)_k }{ \big( t^2, s, -s, -st, s^2a^2 c^2t ; t^2 \big)_k} t^{2k}\nonumber \\
\hphantom{\text{l.h.s.\ of } (\ref{MainProp0})}{} = {}_{5}\phi_4 \left[ {-sa^2, -sc^2, st^2, s^2 t^{2i-2}, t^{-2i}\atop s, -s, -st, s^2a^2 c^2t } ; t^2 , t^2 \right]. \label{mp_lhs0}
\end{gather}

Nextly, by setting
\begin{gather*}
\overline{{}_{4}\phi_3}(s,i,k):= { \big( t^{-2i}, s^2 t^{2i-2}, -sc^2/t, -sa^2/t ; t^2 \big)_k \over \big( t^2, s^2a^2 c^2/t, -s, -s/t ;t^2\big)_k }t^{2k},
\end{gather*}
we can write
\begin{gather*}
\sum_{k=0}^i {}_{4}\phi_3(s,i,k) = {1+s^{-1}t \over 1+s^{-1}t^{-2i+1} } t^{-i} \sum_{k=0}^i \overline{{}_{4}\phi_3}(s,i,k).
\end{gather*}
Hence we have
\begin{gather*}
\text{r.h.s.\ of } (\ref{MainProp0}) =
\sum_{k=0}^{i} \left(\dfrac{b(st,i)}{b(s,i)}{ 1+s^{-1} \over 1+s^{-1}t^{-2i} }t^{-i} \,\overline{{}_{4}\phi_3}(st, i, k) \right. \\
\left. \hphantom{\text{r.h.s.\ of } (\ref{MainProp0}) =}{}
+ \dfrac{b(st,i-1)}{b(s,i)} { 1+s^{-1} \over 1+s^{-1}t^{-2i+2} }t^{-i+1} \,\overline{{}_{4}\phi_3}(st, i-1, k)\right)\\
\hphantom{\text{r.h.s.\ of } (\ref{MainProp0}) =}{} =\sum_{k=0}^{i} \dfrac{ \big( {-}sa^2, -sc^2, st^2, s^2 t^{2i-2}, t^{-2i} ; t^2 \big)_k }{ \big( t^2, s, -s, -st, s^2a^2 c^2t ; t^2 \big)_k} t^{2k}\\
\hphantom{\text{r.h.s.\ of } (\ref{MainProp0}) =}{} = {}_{5}\phi_4 \left[ {-sa^2, -sc^2, st^2, s^2 t^{2i-2}, t^{-2i}\atop s, -s, -st, s^2a^2 c^2t } ; t^2 , t^2 \right]
 =\text{l.h.s.\ of } (\ref{mp_lhs0}).
\end{gather*}

Now we turn to (\ref{B4-2}). Set
\begin{subequations}
\begin{gather}
\widetilde{b}(s, i) :=\dfrac{\big({-} s t^{-1}, s ; t\big)_{2i} } {\big(t^2, s^2 ; t^2\big)_i } \big(s^{-1} t\big)^i t^{-i(i-1)}, \label{tilde-b} \\
{}_{4}{\widetilde{\phi}}_3(s, i, k) := { \big({-}s^{-1} t^{-2i+3}/c^2 ,- s^{-1} t^{-2i+3}/a^2, s^{-2} t^{-2i + 2}, t^{-2i} ; t^2 \big)_i \over
 \big( s^{-2} t^{-4i+5}/a^2 c^2, -s^{-1} t^{-2i+2}, - s^{-1} t^{-2i+3} ; t^2 \big)_i }t^{2k},\label{tilde-phi}\\
\overline{{}_{4}\widetilde{\phi}_3}(s, i, k) :=\dfrac{ \big(t^{-2i}, s^{-2} t^{-2i + 2}, -s^{-1}t^{-2i+2}/a^2 ,-s^{-1}t^{-2i+2}/a^2; t^2 \big)_k }
{ \big( s^{-2} t^{-4i+5}/a^2 c^2, -s^{-1}t^{-2i+2}, -s^{-1}t^{-2i+1}; t^2 \big)_k }t^{2k},
\end{gather}
\end{subequations}
for simplicity. Then we can write
\begin{gather*}
\widetilde{B}(s, i) =\widetilde{b}(s, i) \sum_{k=0}^{i} {}_{4}{\widetilde{\phi}}_3(s, i, k),\\
\sum_{k=0}^{i} {}_{4}{\widetilde{\phi}}_3(s, i, k) = { 1+s^{-1}t^{-2i+1} \over 1+s^{-1}t }t^{i} \sum_{k=0}^{i} \overline{{}_{4}\widetilde{\phi}_3}(s, i, k).
\end{gather*}

We shall show
\begin{gather}
 \sum_{k=0}^{i} {}_{4}{\widetilde{\phi}}_3(s, i, k)+F(s,2-2i) \dfrac{\widetilde{b}(s,i-1)}{\widetilde{b}(s,i)} \sum_{k=0}^{i-1} {}_{4}{\widetilde{\phi}}_3(s, i-1, k)\nonumber \\
\qquad {}= \dfrac{\widetilde{b}(st^{-1},i)}{\widetilde{b}(s,i)}\sum_{k=0}^{i} {}_{4}{\widetilde{\phi}}_3\big(st^{-1}, i, k\big)+\dfrac{\widetilde{b}(st, i-1)}{\widetilde{b}(s, i)}
\sum_{k=0}^{i-1} {}_{4}{\widetilde{\phi}}_3(st, i-1, k).\label{MainProp}
\end{gather}
We have
\begin{gather}
\text{l.h.s.\ of } (\ref{MainProp}) =
1 + \sum_{k=1}^{i} \left( {}_{4}{\widetilde{\phi}}_3(s, i, k) + F(s,2-2i) \dfrac{\widetilde{b}(s,i-1)}{\widetilde{b}(s,i)} {}_{4}{\widetilde{\phi}}_3(s, i-1, k-1) \right)\nonumber \\
\hphantom{\text{l.h.s.\ of } (\ref{MainProp})}{} =
 1 + \sum_{k=1}^{i} \dfrac{ \big( t^{-2i}, s^{-2}t^{-2i+2}, -s^{-1}t^{-2i+3}/a^2, -s^{-1}t^{-2i+3}/c^2, s^{-1}t^{-2i+3}; t^2 \big)_k }
{ \big( t^2, s^{-2}t^{-4i+7}/a^2 c^2, -s^{-1}t^{-2i+3}, -s^{-1}t^{-2i+2}, s^{-1}t^{-2i+1}; t^2 \big)_k} t^{2k}\nonumber \\
\hphantom{\text{l.h.s.\ of } (\ref{MainProp})}{} =
 {}_{5}\phi_4 \left[ { t^{-2i}, s^{-2}t^{-2i+2}, -s^{-1}t^{-2i+3}/a^2, -s^{-1}t^{-2i+3}/c^2, s^{-1}t^{-2i+3}
\atop s^{-2}t^{-4i+7}/a^2 c^2, -s^{-1}t^{-2i+3}, -s^{-1}t^{-2i+2}, s^{-1}t^{-2i+1}} ; t^2 , t^2 \right]\!. \!\!\!\!\!\label{mp_lhs}
\end{gather}
On the other hand, we have
\begin{gather*}
\text{r.h.s.\ of } (\ref{MainProp}) =\sum_{k=0}^{i} \left(
\dfrac{\widetilde{b}(st^{-1},i)}{\widetilde{b}(s,i)}{ 1+s^{-1}t^{-2i+2} \over 1+s^{-1}t^{2} } t^{i} \,\,
\overline{{}_{4}\widetilde{\phi}_3}\big(st^{-1}, i, k\big)\right.\\
\left. \hphantom{\text{r.h.s.\ of } (\ref{MainProp})=}{}+
\dfrac{\widetilde{b}(st, i-1)}{\widetilde{b}(s, i)} { 1+s^{-1}t^{-2i+2} \over 1+s^{-1} } t^{i-1} \,\,
 \overline{{}_{4}\widetilde{\phi}_3}(st, i-1, k) \right) \\
\hphantom{\text{r.h.s.\ of } (\ref{MainProp})}{} =
 \sum_{k=1}^{i} \dfrac{ \big( t^{-2i}, s^{-2}t^{-2i+2}, -s^{-1}t^{-2i+3}/a^2, -s^{-1}t^{-2i+3}/c^2, s^{-1}t^{-2i+3}; t^2 \big)_k }
{ \big( t^2, s^{-2}t^{-4i+7}/a^2 c^2, -s^{-1}t^{-2i+3}, -s^{-1}t^{-2i+2}, s^{-1}t^{-2i+1}; t^2 \big)_k} t^{2k}\\
\hphantom{\text{r.h.s.\ of } (\ref{MainProp})}{} =
 {}_{5}\phi_4 \left[ { t^{-2i}, s^{-2}t^{-2i+2}, -s^{-1}t^{-2i+3}/a^2, -s^{-1}t^{-2i+3}/c^2, s^{-1}t^{-2i+3}
\atop
 s^{-2}t^{-4i+7}/a^2 c^2, -s^{-1}t^{-2i+3}, -s^{-1}t^{-2i+2}, s^{-1}t^{-2i+1}} ; t^2 , t^2 \right] \\
\hphantom{\text{r.h.s.\ of } (\ref{MainProp})}{}
 = \text{r.h.s.\ of } (\ref{mp_lhs}). \tag*{\qed}
\end{gather*}\renewcommand{\qed}{}
\end{proof}

\subsection{Another proof of Theorem \ref{mutually-inverse}}
As an application of the four term relations $B(s,i)$ and $\widetilde{B}(s,i)$, we present another proof of Theorem~\ref{mutually-inverse}, providing an amusing complementary argument based on the Bressoud matrix inversion.
\begin{prp}\label{ANOTHER}The four terms relations in Theorem~{\rm \ref{four-term}} imply that
\begin{gather*}
\sum_{k=0}^i B(s,k)\widetilde{B}\big(s t^{2k},i-k\big)=\delta_{i,0},\qquad
\sum_{k=0}^i \widetilde{B}(s,k)B\big(s t^{2k},i-k\big)=\delta_{i,0},
\end{gather*}
hence, that the matrices $\mathcal{B}(s)$ and $\widetilde{\mathcal{B}}(s)$ are mutually inverse.
\end{prp}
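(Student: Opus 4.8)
The plan is to show that the two inversion relations in Proposition~\ref{ANOTHER} follow purely formally from the four term relations of Theorem~\ref{four-term} together with the specialization data, thereby giving an alternative to the direct Bressoud computation. Define the quantities $S_i(s):=\sum_{k=0}^i B(s,k)\widetilde{B}(st^{2k},i-k)$ and $\widetilde{S}_i(s):=\sum_{k=0}^i \widetilde{B}(s,k)B(st^{2k},i-k)$. The goal is to prove $S_i(s)=\widetilde{S}_i(s)=\delta_{i,0}$. The base case $i=0$ is immediate, since $B(s,0)=\widetilde{B}(s,0)=1$ directly from Definition~\ref{B-Bt-def} (each ${}_4\phi_3$ reduces to its $k=0$ term $1$, and the prefactors are $1$). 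So the substance is an inductive step driving $S_i$ (and $\widetilde{S}_i$) to zero for $i\geq 1$.

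First I would set up the recursion. The idea is to take the four term relation \eqref{B4-1}, namely
\begin{gather*}
B(s,i)+F(s,-1)B\big(st^{2},i-1\big)=B(st,i)+B(st,i-1),
\end{gather*}
and substitute it into the convolution $S_i(s)$, splitting off the $k=i$ and $k=0$ boundary terms so that the shifted arguments $B(s,k)\mapsto B(st,k),\,B(st^2,k-1)$ match the shifted arguments appearing in the companion relation \eqref{B4-2} for $\widetilde{B}$. Concretely, inside $S_i(s)=\sum_k B(s,k)\widetilde{B}(st^{2k},i-k)$ I would rewrite $B(s,k)$ using \eqref{B4-1} (applied with $s\mapsto st^{\,2(k-?)}$ chosen to align the arguments), and simultaneously rewrite the $\widetilde{B}(st^{2k},i-k)$ factor using \eqref{B4-2}; the factors $F(s,-1)$ and $F(s,2-2i)$ are engineered so that the two rewritings produce telescoping pairs. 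The payoff I am aiming for is an identity of the schematic form $S_i(s)=\big(\text{combination of }S_i(st),\,S_{i-1}(st^2),\ldots\big)$ that closes the induction: assuming $S_{i-1}\equiv 0$ as a function of its argument, the telescoping collapses the remaining sum so that $S_i(s)$ is expressed through lower $S$'s, forcing $S_i(s)=0$.

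The key steps, in order, are: (1) verify $S_0=\widetilde S_0=1$ and $B(s,0)=\widetilde B(s,0)=1$; (2) substitute the four term relation for $B$ into $S_i$, track the argument shifts $s\to st,\,st^2$ carefully and reindex the sum; (3) substitute the four term relation for $\widetilde B$ and match it against the shifted $B$-terms so that the $F$-factor coefficients cancel in pairs; (4) observe that the resulting expression is a linear combination of $S_{i-1}(st^2)$ and boundary terms, all of which vanish by the inductive hypothesis; (5) run the symmetric argument for $\widetilde S_i$. Since the two matrices are triangular with unit diagonal, proving one inversion relation in fact forces the other, so establishing $S_i=\delta_{i,0}$ already yields $\widetilde S_i=\delta_{i,0}$ and hence that $\mathcal{B}(s)$ and $\widetilde{\mathcal{B}}(s)$ are mutually inverse via \eqref{Btophi-1} and \eqref{Btophi-2}.

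The main obstacle I anticipate is the bookkeeping of the shifted arguments in the convolution: the relation \eqref{B4-1} shifts $s\mapsto st,\,st^2$ while the summand already carries the factor $\widetilde{B}(st^{2k},i-k)$ whose base depends on the summation index $k$. Getting the $F$-factors $F(s,-1)$ and $F(s,2-2i)$ to align so that cross terms genuinely telescope — rather than merely producing another messy sum — is delicate, and it may require first proving a mixed four term relation relating $B(s,i)\widetilde B(st^{2i},\cdot)$-type products, or equivalently verifying a compatibility between the two shift structures. A clean way to organize this would be to phrase everything at the level of the matrices $\mathcal{B}(s)$ and $\widetilde{\mathcal{B}}(s)$ and recognize the four term relations as statements that $\mathcal{B}(st)=L\,\mathcal{B}(s)$ and $\widetilde{\mathcal{B}}(st)=\widetilde{\mathcal{B}}(s)\,L^{-1}$ for a common bidiagonal factor $L$; then mutual inversion at one value of $s$ propagates along the $s\mapsto st$ orbit, and one only needs a single anchoring value (e.g.\ a degenerate $s$ where the inversion is transparent) to complete the argument.
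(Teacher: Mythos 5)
Your overall strategy --- convolve the four term relations \eqref{B4-1}, \eqref{B4-2} against the sums $S_i(s)$ and reduce to lower $i$ --- is essentially the paper's, but as written your inductive step does not close, and the missing piece is precisely the part you defer to the very end as an ``anchoring value''. What the telescoping actually yields (and what the paper proves) is the \emph{difference equation}
\begin{gather*}
S_i(s)-S_i(s/t)=-F(s,2-2i)\,S_{i-1}(s)+F(s,0)\,S_{i-1}(st),
\end{gather*}
whose right-hand side vanishes by the inductive hypothesis $S_{i-1}\equiv 0$ (for $i\geq 2$; for $i=1$ it vanishes because the two $F$-factors coincide). This only says that $S_i$ is invariant under $s\mapsto s/t$; it does \emph{not} ``express $S_i(s)$ through lower $S$'s'' as your step (4) claims. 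To conclude you must add two further ingredients: (i) $S_i(s)$ is a rational function of $s$, so invariance under $s\mapsto s/t$ for generic $t$ forces it to be a constant; (ii) that constant must then be computed at one special point.

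Point (ii) is not ``transparent'' at any degenerate $s$, and it is the genuinely missing content. The paper takes $s=1$, where $B(1,0)=1$, $B(1,1)=-1$, $B(1,j)=0$ for $j>1$ collapses the convolution to $\widetilde{B}(1,i)-\widetilde{B}\big(t^2,i-1\big)$, and then needs the separate, nontrivial ${}_4\phi_3$ identity $\widetilde{B}(1,i)=\widetilde{B}\big(t^2,i-1\big)$ for $i>1$ (Lemma~\ref{special-B}) to conclude that this is $0$. Without identifying such a point and proving the corresponding evaluation, your argument establishes only that each $S_i$ is some unknown constant. Your closing remarks --- that unit-triangularity makes the second inversion relation follow from the first, and the reformulation of the four term relations as $\mathcal{B}(st)=L\,\mathcal{B}(s)$ for a bidiagonal $L$ --- are sound and would organize the computation nicely, but they do not substitute for the anchor computation.
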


\begin{proof} Set
\begin{gather*}
{\rm l.h.s.}(s,i):=\sum_{k=0}^i B(s,k)\widetilde{B}\big(s t^{2k},i-k\big),
\end{gather*}
for simplicity.

First we show that for $i\geq 0$ we have the difference equation
\begin{gather}
{\rm l.h.s.}(s,i)-{\rm l.h.s.}(s/t,i)=0.\label{dif-eq}
\end{gather}

We prove this by induction. The case $i=0$ is clearly correct. Suppose that it is valid for $i-1$. Then we have
\begin{gather*}
{\rm l.h.s.}(s,i)-{\rm l.h.s.}(s/t,i) = \sum_{k=0}^i B(s,k)\big(\widetilde{B}\big(s t^{2k-1},i-k\big)\\
\qquad\quad{} -F(s,2-2i) \widetilde{B}\big(s t^{2k},i-k-1\big)+\widetilde{B}\big(s t^{2k+1},i-k-1\big) \big)\\
\qquad \quad {} - \sum_{k=0}^i \big(B(s,k)-F(s,0) B(s t,k-1)+B(s,k-1) \big)\widetilde{B}\big(s t^{2k-1},i-k\big)\\
\qquad{} = -F(s,2-2i) \,{\rm l.h.s.}(s,i-1)+F(s,0)\, {\rm l.h.s.}(s t,i-1)=0.
\end{gather*}

By definition ${\rm l.h.s.}(s,i)$ is a rational function in $s$, and it satisfies the difference equation~(\ref{dif-eq}). Therefore, ${\rm l.h.s.}(s,i)$ must be a constant. We have ${\rm l.h.s.}(s,0)=1$. Then we can check that for $i>0$ ${\rm l.h.s.}(1,i)=0$ (hence ${\rm l.h.s.}(s,i)=0$) by using the following lemma as
\begin{gather*}
 {\rm l.h.s.}(s,i)=\sum_{k=0}^i B(1,k)\widetilde{B}\big( t^{2k},i-k\big)= B(1,0)\widetilde{B}( 1,i)-B(1,1). \tag*{\qed}
\end{gather*}\renewcommand{\qed}{}
\end{proof}

\begin{lem}\label{special-B} We have
\begin{gather}
B(1,j)=
\begin{cases}
\hphantom{-}1,& j=0,\\
-1,& j=1,\\
\hphantom{-}0,& j>1,
\end{cases}\label{B1j}\\
\widetilde{B}(1,i)-\widetilde{B}\big(t^2,i-1\big)=0,\qquad i>1.\label{Bt1j}
\end{gather}
\end{lem}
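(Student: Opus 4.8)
The plan is to prove both identities by direct evaluation of the explicit ${}_4\phi_3$ expressions for $B(s,j)$ and $\widetilde{B}(s,j)$ in Definition~\ref{B-Bt-def} at the specialized arguments; the whole mechanism is that certain $q$-shifted factorials either vanish outright or truncate the hypergeometric sum to a single term.

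For \eqref{B1j} I would set $s=1$ in $B(s,j)$. The outer factor contains $\big(s^2/t^2;t^2\big)_j$, which at $s=1$ becomes $\big(t^{-2};t^2\big)_j=\prod_{k=0}^{j-1}\big(1-t^{2k-2}\big)$; its $k=1$ factor is $1-t^0=0$, so this factor vanishes for every $j\ge 2$. Since the remaining prefactor $\frac{1-s^2t^{4j-2}}{1-s^2t^{-2}}$ and the ${}_4\phi_3$ series are finite at $s=1$ for generic $a,c,t$ (the denominator parameters $-s,-st,s^2a^2c^2/t$ give no poles there), this forces $B(1,j)=0$ for $j\ge 2$. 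For $j=0$ and $j=1$ I would note that the numerator parameter $t^{-2j}$ (when $j=0$) or $s^2t^{2j-2}$ (when $j=1$) equals $1$ at $s=1$, collapsing the ${}_4\phi_3$ to its $k=0$ term $=1$; a one-line evaluation of the prefactor then yields $B(1,0)=1$ and $B(1,1)=-1$.

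For \eqref{Bt1j} my plan is to compare $\widetilde{B}(1,i)$ and $\widetilde{B}\big(t^2,i-1\big)$ factor by factor. Substituting $(s,j)=(1,i)$ and $(s,j)=(t^2,i-1)$ into the ${}_4\phi_3$ of $\widetilde{B}(s,j)$, the six upper and lower parameters turn out to coincide (the entries $t^{-2j+2}/s^2$ and $t^{-2j}$ merely exchange roles), so the two series are literally identical, and both terminate at $k=i-1$ because the upper parameter $t^{-2i+2}$ kills the terms with $k\ge i$. It then remains to match the prefactors: the power $\big(st^{j-1}\big)^{-j}$ reduces to $t^{-i(i-1)}$ in both cases, and the ratio $\big(t^{2i};t^2\big)_i/\big(t^2;t^2\big)_i$ equals $\big(t^{2i+2};t^2\big)_{i-1}/\big(t^2;t^2\big)_{i-1}$ after cancelling the common factor $1-t^{2i}$ from numerator and denominator, which is exactly the prefactor of $\widetilde{B}\big(t^2,i-1\big)$. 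Hence the two expressions agree for all $i\ge 1$, which in particular yields \eqref{Bt1j}.

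The computations are elementary, so I do not expect a conceptual obstacle; the only points demanding care are bookkeeping ones. I must verify that the zero of $\big(t^{-2};t^2\big)_j$ in part \eqref{B1j} is genuine and not masked by a pole of the ${}_4\phi_3$, so that $B(1,j)$ is a true evaluation rather than a $0/0$ limit, and I must track the parameter substitutions in part \eqref{Bt1j} accurately enough to confirm the term-by-term matching of the two series together with their prefactors.
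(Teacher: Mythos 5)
Your proposal is correct. For \eqref{B1j} you do exactly what the paper does (it simply says the claim ``follows from the definition of $B(s,i)$''): the factor $\big(s^2/t^2;t^2\big)_j\big|_{s=1}=\big(t^{-2};t^2\big)_j$ contains $1-t^0=0$ once $j\geq 2$, the remaining factors are manifestly finite for generic $a,c,t$, and for $j=0,1$ the unit upper parameter truncates the ${}_4\phi_3$ to $1$, leaving a trivial prefactor evaluation. For \eqref{Bt1j}, however, your route is genuinely different from the paper's. The paper proves the \emph{general-$s$} identity
\begin{gather*}
\widetilde{B}(s,i)-\widetilde{B}\big(st^2,i-1\big)=s^{-i}t^{-i(i-1)}\frac{\big(t^{2i+2}s^2;t^2\big)_i}{\big(t^2;t^2\big)_i}\,\frac{1-s}{1-st^{2i}}\ {}_4\phi_3\left[{-t^{-2i+2}/sa^2,\,-t^{-2i+2}/sc^2,\,t^{-2i}/s^2,\,t^{-2i}\atop -t^{-2i}/s,\,-t^{-2i+1}/s,\,t^{-4i+5}/s^2a^2c^2};t^2,t^2\right],
\end{gather*}
obtained by subtracting the two series termwise, and then reads off the vanishing at $s=1$ from the explicit factor $1-s$. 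You instead specialize first and observe that $\widetilde{B}(1,i)$ and $\widetilde{B}\big(t^2,i-1\big)$ are \emph{literally the same expression}: the parameter lists coincide after swapping the upper entries $t^{-2j+2}/s^2$ and $t^{-2j}$ (I checked: both become $\{t^{-2i+2},t^{-2i}\}$, and the lower parameters $-t^{-2i+1}$, $-t^{-2i+2}$, $t^{-4i+5}/a^2c^2$ agree), both powers reduce to $t^{-i(i-1)}$, and $\big(t^{2i};t^2\big)_i/\big(t^2;t^2\big)_i=\big(t^{2i+2};t^2\big)_{i-1}/\big(t^2;t^2\big)_{i-1}$ after cancelling the nonzero factor $1-t^{2i}$. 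Your argument is shorter and requires no termwise subtraction; what it gives up is the general-$s$ difference formula, which exhibits \emph{why} $s=1$ is special (the factor $1-s$) and would be reusable if one needed the difference away from $s=1$ --- though in this paper only the $s=1$ case is ever used, so nothing is lost.
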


\begin{proof} The (\ref{B1j}) follows from the definition of $B(s,i)$. By noting
\begin{gather*}
 \widetilde{b}(s,i){}_4\widetilde{\phi}_3(s,i,k)- \widetilde{b}\big(st^2,i-1\big){}_4\widetilde{\phi}_3\big(st^2,i-1,k\big) =
 s^{-i} t^{-i(i-1)} {\big( t^{2i+2}s^2;t^2\big)_i \over \big(t^2;t^2\big)_i } {1-s\over 1-s t^{2i}} \\
 \qquad{}\times {\big({-}t^{-2i+2}/sa^2;t^2\big)_k\over \big({-}t^{-2i}/s;t^2\big)_k} {\big({-}t^{-2i+2}/sc^2;t^2\big)_k\over \big({-}t^{-2i+1}/s;t^2\big)_k}
{\big(t^{-2i}/s^2;t^2\big)_k\over \big(t^{-4i+5}/s^2a^2 c^2;t^2\big)_k} {\big(t^{-2i};t^2\big)_k\over \big(t^2;t^2\big)_k} t^{2k},
\end{gather*}
where we used the notation (\ref{tilde-b}) and (\ref{tilde-phi}), we have (\ref{Bt1j}) from the identity
\begin{gather*}
 \widetilde{B}(s,i)-\widetilde{B}\big(s t^2,i-1\big) = s^{-i} t^{-i(i-1)} {\big( t^{2i+2}s^2;t^2\big)_i \over \big(t^2;t^2\big)_i }
{1-s\over 1-s t^{2i}} \\
\qquad{} \times {}_4\phi_3 \left[
{ -t^{-2i+2}/sa^2,-t^{-2i+2}/sc^2, t^{-2i}/s^2,t^{-2i} \atop
-t^{-2i}/s,-t^{-2i+1}/s, t^{-4i+5}/s^2 a^2 c^2};t^2,t^2\right].\tag*{\qed}
\end{gather*}\renewcommand{\qed}{}
\end{proof}

\section[Transition matrix $\mathcal{C}$ and $(a,c,t)$-deformation of Catalan triangle three term recursion relations]{Transition matrix $\boldsymbol{\mathcal{C}}$ and $\boldsymbol{(a,c,t)}$-deformation\\ of Catalan triangle three term recursion relations}\label{SEC-C}

\subsection[Coefficient $C(s,j)$]{Coefficient $\boldsymbol{C(s,j)}$}

Recall that in Definition \ref{C(s,j)}, we have defined the function $C(s,j)$ as
\begin{align}
C(s,j):=\sum_{i=0}^{j} B(s,i) \binom{ m+2j }{ j-i }.\label{def-C}
\end{align}
Then (\ref{P-E}), (\ref{Btophi-1}), and (\ref{def-C}) imply (Theorem \ref{Pvsm})
\begin{gather*}
P_{(1^r)}(x\,|\,a,-a,c,-c\,|\,q,t) =\sum_{k=0}^{\lfloor {r \over 2} \rfloor} C\big(t^{n-r+1},k\big) m_{(1^{r-2k})}(x).
\end{gather*}

\subsection{Deformed Catalan triangle recursion relations}

\begin{prp}\label{THREE-1}We have the three term relation
\begin{gather}
 C(s,j) + F(s, -1) C\big(st^2,j-1\big)= C(st,j) . \label{C+FC=C}
\end{gather}
\end{prp}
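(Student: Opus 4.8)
The plan is to derive the three-term relation for $C(s,j)$ directly from the four-term relation for $B(s,i)$ in Theorem~\ref{four-term}, using nothing beyond Pascal's rule for binomial coefficients. The starting point is the defining expansion $C(s,j)=\sum_{i=0}^{j}B(s,i)\binom{m+2j}{j-i}$ with $s=t^{m+1}$ from (\ref{def-C}), together with the bookkeeping observation that shifting $s\mapsto st$ advances $m$ by $1$, while $s\mapsto st^2$ advances $m$ by $2$. Writing $N:=m+2j$ for brevity, one checks that the three quantities in (\ref{C+FC=C}) read $C(s,j)=\sum_{i=0}^{j}B(s,i)\binom{N}{j-i}$, $C(st,j)=\sum_{i=0}^{j}B(st,i)\binom{N+1}{j-i}$, and $C(st^2,j-1)=\sum_{i=0}^{j-1}B(st^2,i)\binom{N}{(j-1)-i}$, the last because $(m+2)+2(j-1)=N$.

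First I would work on the right-hand side $C(st,j)$, applying Pascal's rule $\binom{N+1}{j-i}=\binom{N}{j-i}+\binom{N}{j-i-1}$ to split it into two sums. Reindexing the second sum by $i\mapsto i-1$ and using the convention $B(st,-1)=0$ (so that the boundary term drops out), the two sums combine into $\sum_{i=0}^{j}\big(B(st,i)+B(st,i-1)\big)\binom{N}{j-i}$. This is the shape into which the four-term relation feeds.

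Next I would invoke the four-term relation (\ref{B4-1}), in the form $B(st,i)+B(st,i-1)=B(s,i)+F(s,-1)B(st^2,i-1)$, to replace the bracketed combination. This turns the expression into $\sum_{i=0}^{j}B(s,i)\binom{N}{j-i}+F(s,-1)\sum_{i=0}^{j}B(st^2,i-1)\binom{N}{j-i}$. The first sum is exactly $C(s,j)$. Reindexing the second sum by $i\mapsto i-1$ (again killing the spurious $i=-1$ term via $B(st^2,-1)=0$) yields $F(s,-1)\sum_{i=0}^{j-1}B(st^2,i)\binom{N}{(j-1)-i}=F(s,-1)C(st^2,j-1)$, which establishes (\ref{C+FC=C}).

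I expect the only delicate point to be the index and parameter bookkeeping: one must track how the upper binomial index ``$m$'' shifts in lockstep with the argument $s$ of $B$, and verify that the two reindexings genuinely land on the binomials appearing in $C(s,j)$ and $C(st^2,j-1)$ rather than on neighbouring ones. The entire algebraic content is carried by the four-term relation (\ref{B4-1}); once the sums are aligned via Pascal's rule, no further hypergeometric identity is required.
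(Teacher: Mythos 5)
Your proposal is correct and is essentially the paper's own proof run in the opposite direction: the paper starts from $C(s,j)+F(s,-1)C(st^2,j-1)$, applies the four-term relation \eqref{B4-1} and then Pascal's rule to reach $C(st,j)$, whereas you start from $C(st,j)$ and unwind it with the same two ingredients. The index bookkeeping (how $m$ shifts with $s$) and the boundary conventions $B(\cdot,-1)=0$ all check out, matching the paper's explicit handling of the $i=0$ and $i=j$ terms.
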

\begin{proof} We have
\begin{gather*}
 C(s,j)+F(s,-1)C\big(st^2,j-1\big) \\
 \qquad{} =\sum_{i=0}^j B(s,i) \binom{ m+2j }{ j-i }+\sum_{i=0}^{j-1} F(s, -1) B\big(st^2 ,i\big) \binom{ m+2j }{ j-1-i } \\
\qquad{} =\binom{ m+2j }{ j }+\sum_{i=1}^{j} \left(B(st,i)+ B(st ,i-1) \right) \binom{ m+2j }{ j-i }\\
\qquad{} =\binom{ m+2j }{ j }+\binom{ m+2j }{ j-1 }+B(st,j)+\sum_{i=1}^{j-1} B(st,i)\left( \binom{ m+2j }{ j-i }+\binom{ m+2j }{ j-i-1 } \right) \\
\qquad{} =\sum_{i=0}^{j} B(st,i) \binom{ m+1+2j }{ j-i }=C(st,j) .\tag*{\qed}
\end{gather*}\renewcommand{\qed}{}
\end{proof}

\begin{prp}\label{THREE-2}We have
\begin{gather*}
 C(1,j)=\delta_{j,0}.
\end{gather*}
Hence the three term relation \eqref{C+FC=C} for $s=1$ reads
\begin{gather*}
 F(1, -1) C\big(t^2,j-1\big)= C(t,j).
\end{gather*}
\end{prp}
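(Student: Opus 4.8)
The plan is to evaluate the defining sum of $C(s,j)$ directly at the specialization $s=1$, which corresponds to $m=-1$, and to feed in the explicit values of $B(1,i)$ furnished by Lemma~\ref{special-B}. Recall from \eqref{def-C} that $C(s,j)=\sum_{i=0}^{j}B(s,i)\binom{m+2j}{j-i}$ with $s=t^{m+1}$. First I would set $m=-1$, so that every binomial coefficient collapses to $\binom{m+2j}{j-i}=\binom{2j-1}{j-i}$, reducing the problem to a purely combinatorial identity once the $B(1,i)$ are known.

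Next I would apply Lemma~\ref{special-B}, whose formula \eqref{B1j} gives $B(1,0)=1$, $B(1,1)=-1$, and $B(1,i)=0$ for $i>1$. Hence only the $i=0$ and $i=1$ terms survive in the sum. For $j=0$ there is a single term and one gets $C(1,0)=B(1,0)\binom{-1}{0}=1$. For $j\ge 1$ the two surviving terms give
\begin{gather*}
C(1,j)=\binom{2j-1}{j}-\binom{2j-1}{j-1}.
\end{gather*}

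The final step is to note that for $j\ge 1$ the upper index $2j-1$ is a nonnegative integer and $0\le j-1<j\le 2j-1$, so both coefficients are ordinary binomial coefficients; the symmetry $\binom{2j-1}{j}=\binom{2j-1}{(2j-1)-j}=\binom{2j-1}{j-1}$ then makes the difference vanish. Combining the two cases yields $C(1,j)=\delta_{j,0}$. The stated consequence is then immediate: putting $s=1$ in the three term relation \eqref{C+FC=C} of Proposition~\ref{THREE-1} and using $C(1,j)=0$ for $j\ge 1$ leaves $F(1,-1)C(t^2,j-1)=C(t,j)$.

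I do not expect a genuine obstacle here, as the computation is short. The only points requiring care are treating the $j=0$ case separately, since it has no subtraction partner, and confirming that the generalized binomial coefficient $\binom{2j-1}{j-i}$ (defined via falling factorials as in the statement of Theorem~\ref{MAIN}) coincides with the ordinary one, so that the symmetry argument is legitimate for every $j\ge 1$.
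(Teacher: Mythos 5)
Your proof is correct and follows essentially the same route as the paper: both evaluate the defining sum \eqref{def-C} at $m=-1$, invoke Lemma~\ref{special-B} to reduce to the two surviving terms, and conclude $\binom{2j-1}{j}-\binom{2j-1}{j-1}=0$ for $j\ge 1$. Your added remarks on the symmetry of the binomial coefficients and on the consistency of the falling-factorial definition are just explicit versions of steps the paper leaves implicit.
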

\begin{proof} We have $C(1,0)=1$. From Lemma~\ref{special-B}, we have for $j>0$
\begin{gather*}
 C(1,j)=B(1,0) \binom{ -1+2j }{ j }+B(1,1) \binom{ -1+2j }{ j-1} =\binom{ -1+2j }{ j }- \binom{ -1+2j }{ j-1 }=0. \tag*{\qed}
\end{gather*}\renewcommand{\qed}{}
\end{proof}

\subsection{Solution to the deformed Catalan triangle recursion relations}

\begin{thm} \label{FFF} We have $C(t^{r+1},0)=1$ for $r\in \mathbb{Z}_{\geq 0}$, and for $i\in \mathbb{Z}_{> 0},r\in \mathbb{Z}_{\geq 0}$ we have
\begin{gather}
 C\big(t^{r+1},i\big)= \sum_{(d_1,\ldots,d_i)\in \mathcal{P}[r,i]} F\big(t^{r+1}, d_1\big)F\big(t^{r+1}, d_2\big) \cdots F\big(t^{r+1}, d_i\big), \label{FFFF}
\end{gather}
where $\mathcal{P}[r,i]$ denotes the finite set defined by
\begin{gather*}
\mathcal{P}[r,i]=\big\{(d_1,d_2,\ldots,d_i)\in \mathbb{Z}^i\,|\, 0 \leq d_1\leq r ,\, d_{k} -1 \leq d_{k+1}\leq r \,\, {\rm for}\,\, 1\leq k<i \big\}.
\end{gather*}
\end{thm}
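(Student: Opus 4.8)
The plan is to show that the right-hand side of \eqref{FFFF}, which I denote by
\[
G(r,i):=\sum_{(d_1,\ldots,d_i)\in\mathcal{P}[r,i]}\prod_{k=1}^{i}F\big(t^{r+1},d_k\big),
\]
satisfies the very same recursion and initial data as $C\big(t^{r+1},i\big)$, and then to invoke uniqueness. First I would record a master recursion for $C$: putting $s=t^{r}$ in \eqref{C+FC=C} and using $F\big(t^{r},-1\big)=f\big(t^{r+1}\big)$ together with $C(1,i)=\delta_{i,0}$ from Proposition~\ref{THREE-2}, one obtains
\[
C\big(t^{r+1},i\big)=C\big(t^{r},i\big)+f\big(t^{r+1}\big)C\big(t^{r+2},i-1\big),\qquad r\geq 0,\ i\geq 1,
\]
with $C\big(t^{0},i\big)=C(1,i)=\delta_{i,0}$ and $C\big(t^{r+1},0\big)=1$. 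These data determine every $C\big(t^{r+1},i\big)$ uniquely: by induction on $i$ (the case $i=0$ being $C(\,\cdot\,,0)=1$), once all values with second index $i-1$ are known, the recursion yields $C\big(t^{1},i\big)$ from $C(1,i)=0$, then $C\big(t^{2},i\big)$, and so on, marching upward in $r$. Hence it suffices to prove that $G(r,i)$ obeys the identical recursion and boundary values.

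The boundary values are immediate: $\mathcal{P}[r,0]$ is the single empty tuple, so $G(r,0)=1$, while $\mathcal{P}[-1,i]=\varnothing$ forces $G(-1,i)=0$, matching $C(1,i)=0$ for $i\geq 1$. For the recursion I would partition $\mathcal{P}[r,i]$ according to the first coordinate $d_1$, separating $d_1\geq 1$ from $d_1=0$. For the block $d_1\geq 1$, the shift $(d_1,\ldots,d_i)\mapsto(d_1-1,\ldots,d_i-1)$ is a bijection onto $\mathcal{P}[r-1,i]$: the bound $0\leq d_1$ becomes $0\leq d_1-1$, each bound $d_k\leq r$ becomes $d_k-1\leq r-1$, and the chain inequalities $d_k-1\leq d_{k+1}$ are translation invariant. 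Since $F\big(t^{r+1},d_k\big)=f\big(t^{r+1-d_k}\big)=F\big(t^{r},d_k-1\big)$, the weight is preserved, so this block contributes exactly $G(r-1,i)$. For the block $d_1=0$, the map $(0,d_2,\ldots,d_i)\mapsto(d_2+1,\ldots,d_i+1)$ is a bijection onto $\mathcal{P}[r+1,i-1]$ (here $d_2\geq d_1-1=-1$ guarantees $d_2+1\geq 0$, and the remaining bounds and chain inequalities transform correctly), and $F\big(t^{r+1},d_{k}\big)=F\big(t^{r+2},d_{k}+1\big)$ turns the surviving factors $\prod_{k\geq 2}F\big(t^{r+1},d_k\big)$ into the weight attached to $\mathcal{P}[r+1,i-1]$, the deleted factor being $F\big(t^{r+1},0\big)=f\big(t^{r+1}\big)$. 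Thus this block contributes $f\big(t^{r+1}\big)G(r+1,i-1)$. Summing the two blocks gives
\[
G(r,i)=G(r-1,i)+f\big(t^{r+1}\big)G(r+1,i-1),
\]
which is precisely the recursion for $C$; together with the matching boundary values and the uniqueness above, this proves $C\big(t^{r+1},i\big)=G(r,i)$.

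The routine parts are the checks that the two shifts land in $\mathcal{P}[r-1,i]$ and $\mathcal{P}[r+1,i-1]$ and are bijective; these are direct manipulations of the defining inequalities. The genuinely delicate point — and the heart of the argument — is the weight bookkeeping: under each shift the product $\prod_k F\big(t^{r+1},d_k\big)=\prod_k f\big(t^{r+1-d_k}\big)$ must transform so that the $r$-dependence of the weights exactly compensates the shift of the tuple. The downward shift of all coordinates is matched by decreasing $r$ by one, leaving every factor $f\big(t^{r+1-d_k}\big)$ fixed; in the $d_1=0$ block the upward shift of the truncated tuple is matched by increasing $r$ by one, peeling off a single factor $f\big(t^{r+1}\big)$. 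This is where the specific form of the lower bound $d_k-1\leq d_{k+1}$ (and not, say, $d_k\leq d_{k+1}$) is essential.

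Finally I would remark that the whole computation is transparent through the lattice-path reading of the Catalan-type recursion \eqref{CAT4}: $\mathcal{P}[r,i]$ is in weight-preserving bijection with nonnegative lattice paths of length $r+2i$ from height $0$ to height $r$ having exactly $i$ down-steps, via $d_k=r-h_{i+1-k}$, where $h_1,\ldots,h_i$ are the successive arrival heights of the down-steps and a down-step arriving at height $h$ carries weight $f\big(t^{h+1}\big)$. Under this dictionary the case split on $d_1$ is exactly the classification of a path by its final step (a closing up-step when $d_1\geq 1$, a closing down-step when $d_1=0$), which is the source of both bijections and explains the invariance of the weights.
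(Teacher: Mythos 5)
Your proof is correct and follows essentially the same route as the paper: both arguments show that the combinatorial sum over $\mathcal{P}[r,i]$ satisfies the recursion coming from the three-term relation \eqref{C+FC=C} together with the boundary value $C(1,j)=\delta_{j,0}$, by decomposing tuples according to their first coordinate and using the translation covariance $F\big(t^{r+1},d\big)=F\big(t^{r+1-e},d-e\big)$. The only organizational difference is that the paper first unrolls the recursion in $r$ (Lemma~\ref{lemma-a}, matched against the full decomposition over $d_1=0,\ldots,r$ in Lemma~\ref{lemma-b}) and then inducts on $i$, whereas you keep the recursion local, split only on $d_1=0$ versus $d_1\geq 1$, and supply an explicit uniqueness argument; iterating your $d_1\geq 1$ block $r$ times recovers the paper's decomposition.
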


We prepare some lemmas.
\begin{lem}\label{lemma-a}For $r\in \mathbb{Z}_{\geq 0}$, we have
\begin{gather*}
C\big(t^{r+1},i+1\big) =\sum_{k=0}^{r}F\big(t^{k},-1\big) C\big(t^{k+2},i\big).
\end{gather*}
\end{lem}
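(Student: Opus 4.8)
The plan is to obtain this identity by telescoping the basic three term relation \eqref{C+FC=C} of Proposition~\ref{THREE-1}. First I would specialize that relation to $s=t^k$ for an integer $k$, so that it reads $C(t^k,j)+F(t^k,-1)C(t^{k+2},j-1)=C(t^{k+1},j)$. Rearranging isolates exactly the kind of summand occurring on the right-hand side of the claimed formula, namely
\[
F(t^k,-1)\,C(t^{k+2},j-1)=C(t^{k+1},j)-C(t^k,j).
\]
The point of this rewriting is that each factor $F(t^k,-1)C(t^{k+2},\cdot)$ has been expressed as a difference of two consecutive values of $C(\,\cdot\,,j)$ along the $s$-direction.

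Next I would put $j=i+1$ and sum this identity over $k=0,1,\ldots,r$. On the left-hand side I recover precisely $\sum_{k=0}^{r}F(t^k,-1)C(t^{k+2},i)$, which is the right-hand side of the statement of the lemma. On the right-hand side the sum telescopes: all intermediate terms cancel in pairs, leaving only $C(t^{r+1},i+1)-C(t^0,i+1)=C(t^{r+1},i+1)-C(1,i+1)$.

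The only input beyond this bookkeeping is to dispose of the boundary term $C(1,i+1)$. Since $i\in\mathbb{Z}_{\geq 0}$ we have $i+1\geq 1>0$, and Proposition~\ref{THREE-2} gives $C(1,j)=\delta_{j,0}$, hence $C(1,i+1)=0$. This leaves $C(t^{r+1},i+1)=\sum_{k=0}^{r}F(t^k,-1)C(t^{k+2},i)$, as desired. I do not expect any substantive obstacle here, as the argument is essentially a one-line telescoping; the only thing to watch is that the shift $s\mapsto st^2$ inside $C$ is compatible with the index shift $k\mapsto k+1$ coming from $s\mapsto st$, which is exactly the compatibility that makes the sum collapse, together with the vanishing of the $k=0$ boundary value supplied by Proposition~\ref{THREE-2}.
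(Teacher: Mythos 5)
Your proof is correct and is essentially the paper's argument in unrolled form: the paper proves the lemma by induction on $r$, where the induction step is exactly the three-term relation \eqref{C+FC=C} at $s=t^{r+1}$ and the base case $r=0$ is exactly the vanishing $C(1,i+1)=0$ from Proposition~\ref{THREE-2}, which are the same two inputs your telescoping sum uses.
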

\begin{proof}The case $r=0$ holds since $C(t,i+1)=F(1,-1) C\big(t^{2},i\big)$. Then we can show the induction step as
\begin{gather*}
 C\big(t^{r+2},i+1\big)=C\big(t^{r+1},i+1\big)+F\big(t^{r+1},-1\big) C\big(t^{r+3},i\big) \nonumber\\
\qquad{} =\sum_{k=0}^{r}F\big(t^{k},-1\big) C\big(t^{k+2},i\big)+F\big(t^{r+1},-1\big) C\big(t^{r+3},i\big)=\sum_{k=0}^{r+1}F\big(t^{k},-1\big) C\big(t^{k+2},i\big).\tag*{\qed}
\end{gather*}\renewcommand{\qed}{}
\end{proof}

\begin{lem}\label{lemma-b}We have
\begin{gather*}
 \mathcal{P}[r,i+1] =
\big\{ (d,d_1,d_2,\ldots,d_i)\in \mathbb{Z}^i\,|\\
\hphantom{\mathcal{P}[r,i+1] = \big\{}{} 0 \leq d_1\leq r ,\, (d_1-d+1,\ldots,d_i-d+1)\in \mathcal{P}[r-d+1,i+1]\big\}.
\end{gather*}
\end{lem}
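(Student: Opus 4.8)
The plan is to establish the asserted set equality by directly unpacking the defining inequalities on each side and observing that they coincide; the argument is purely order-combinatorial and uses no hypergeometric input. First I would relabel the $(i+1)$-tuple appearing on the left, writing $(d,d_1,\dots,d_i)=(e_1,\dots,e_{i+1})$ with $e_1=d$ and $e_{k+1}=d_k$ for $1\le k\le i$. By the definition of $\mathcal{P}[r,i+1]$, this tuple lies in the set exactly when $0\le e_1\le r$ and $e_k-1\le e_{k+1}\le r$ for all $1\le k\le i$.

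Next I would peel off the head $d=e_1$. The bound $0\le e_1\le r$ is just $0\le d\le r$, which I read as the head condition displayed in the statement. The remaining relations split into the single constraint $d-1\le d_1\le r$ (the case $k=1$, relating the head $d$ to its successor $d_1$) together with the chain $d_{k-1}-1\le d_k\le r$ for $2\le k\le i$. The key step is to recognize that these constraints on $(d_1,\dots,d_i)$ are exactly the membership conditions for the shifted tuple $(d_1-d+1,\dots,d_i-d+1)$ in $\mathcal{P}[r-d+1,i]$: applying the definition to the shifted tuple, its first-entry bound $0\le d_1-d+1\le r-d+1$ rearranges to $d-1\le d_1\le r$, while its consecutive bound $(d_k-d+1)-1\le(d_{k+1}-d+1)\le r-d+1$ rearranges to $d_k-1\le d_{k+1}\le r$. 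These match the split conditions term by term.

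Putting the two observations together, I would conclude that membership of $(d,d_1,\dots,d_i)$ in $\mathcal{P}[r,i+1]$ is equivalent to the pair of conditions $0\le d\le r$ and $(d_1-d+1,\dots,d_i-d+1)\in\mathcal{P}[r-d+1,i]$, which is the desired characterization, the shifted tuple having the correct length $i$.

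I expect no genuine obstacle here, since the whole proof is an exchange of equivalent inequalities. The one point worth stating carefully is that the uniform affine shift $x\mapsto x-d+1$ does double duty: it lowers every ceiling $\le r$ to $\le r-d+1$, and it converts the inherited lower bound $d_1\ge d-1$ — which in $\mathcal{P}[r,i+1]$ arises solely from the consecutive relation between the head $d$ and its successor $d_1$ — into the genuine initial bound $d_1-d+1\ge0$ required by the first-entry condition of $\mathcal{P}[r-d+1,i]$. This is precisely the reindexing that makes the lemma combine with Lemma~\ref{lemma-a} to drive the induction in Theorem~\ref{FFF}.
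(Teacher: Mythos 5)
Your proof is correct, and it is the natural (indeed essentially the only) argument: the paper states Lemma~\ref{lemma-b} without any proof at all, so there is nothing to compare against beyond the evident unpacking of inequalities, which you carry out accurately. Two remarks. First, you have (rightly) read the lemma as it must be intended rather than as it is literally printed: the displayed statement contains typos --- the tuple $(d,d_1,\dots,d_i)$ should lie in $\mathbb{Z}^{i+1}$, the head condition should be $0\le d\le r$ rather than $0\le d_1\le r$ (as literally written, nothing would bound $d$ from below and the right-hand set would be strictly larger), and the shifted $i$-tuple should belong to $\mathcal{P}[r-d+1,i]$, not $\mathcal{P}[r-d+1,i+1]$. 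Your reading is the one forced by the definition of $\mathcal{P}[r,i]$ and by the way the lemma is used in the proof of Theorem~\ref{FFF}, where the inner sum runs over $\mathcal{P}[r-d+1,i]$. Second, your closing observation --- that the single affine shift $x\mapsto x-d+1$ simultaneously converts the consecutive constraint $d-1\le d_1$ into the initial constraint $d_1-d+1\ge 0$ and lowers all ceilings from $r$ to $r-d+1$ --- is exactly the point that makes the recursion of Lemma~\ref{lemma-a} mesh with the set decomposition, so the proof is complete as written.
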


\begin{proof}[Proof of Proposition \ref{FFF}.] We prove (\ref{FFFF}) by induction on $i$. It holds for $i=0$, since we have $C\big[t^{r+1},0\big]=1$, $r\in \mathbb{Z}_{\geq 0}$. The induction step is shown as follows. Lemmas~\ref{lemma-a} and~\ref{lemma-b} and the induction hypothesis give us
\begin{gather*}
C\big(t^{r+1},i+1\big) =\sum_{k=0}^{r}F\big(t^{k},-1\big) C\big(t^{k+2},i\big)=\sum_{d=0}^{r}F\big(t^{r+1},d\big) C\big(t^{r-d+2},i\big) \\
\hphantom{C\big(t^{r+1},i+1\big)}{} =\sum_{d=0}^{r}F\big(t^{r+1},d\big) \sum_{(d_1,\ldots,d_i)\in \mathcal{P}[r-d+1,i]} F\big(t^{r+1}, d_1\big)F\big(t^{r+1}, d_2\big) \cdots F\big(t^{r+1}, d_i\big) \\
\hphantom{C\big(t^{r+1},i+1\big)}{} = \sum_{(d,d_1,\ldots,d_i)\in \mathcal{P}[r,i+1]}F\big(t^{r+1}, d\big) F\big(t^{r+1}, d_1\big)F\big(t^{r+1}, d_2\big) \cdots F\big(t^{r+1}, d_i\big).\tag*{\qed}
\end{gather*}\renewcommand{\qed}{}
\end{proof}

\section[Some degenerations of Macdonald polynomials of types $C_n$ and $D_n$ with one column diagrams and Kostka polynomials]{Some degenerations of Macdonald polynomials of types $\boldsymbol{C_n}$\\ and $\boldsymbol{D_n}$ with one column diagrams and Kostka polynomials}\label{DEGEN}

This section is devoted to the study of several degenerations of our formulas for the Macdonald polynomial $P^{(C_n, C_n)}_{(1^r)}(x\,|\,b;q,t)$ (see Section~\ref{notation-C}).

\subsection[Some degenerations of $B(s,j)$ and $\widetilde{B}(s,j)$]{Some degenerations of $\boldsymbol{B(s,j)}$ and $\boldsymbol{\widetilde{B}(s,j)}$}

\subsubsection[$(C_n,C_n)$ case]{$\boldsymbol{(C_n,C_n)}$ case}
\begin{prp} When $a=t^{1/2}$, $c=q^{1/2}t^{1/2}$ in the equations \eqref{B[s,j]} and \eqref{tildeB[s,j]}, we have
\begin{gather*}
 B(s,j)= {\big(1/qt;t^2\big)_j \big(s^2/t^2,t^2\big)_j\over \big(t^2;t^2\big)_j\big(s^2 q t ;t^2\big)_j}{1-s^2 t^{4j-2}\over 1-s^2 t^{-2}}(q t)^j,\\
 \widetilde{B}(s,j)= {\big(qt;t^2\big)_j \big(s^2 t^{2j},t^2\big)_j\over \big(t^2;t^2\big)_j\big( s^2q t^{2j-1};t^2\big)_j}.
\end{gather*}
\end{prp}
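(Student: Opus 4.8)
The plan is to substitute the specialized values $a^2=t$ and $c^2=qt$ directly into the explicit ${}_4\phi_3$ expressions \eqref{B[s,j]} and \eqref{tildeB[s,j]}, and to exploit the fact that in each case one upper parameter collides with one lower parameter, so that the ${}_4\phi_3$ collapses to a terminating ${}_3\phi_2$ series. Concretely, in \eqref{B[s,j]} the substitution sends the numerator parameters $-sa^2/t,-sc^2/t$ to $-s,-sq$ and the last denominator parameter $s^2a^2c^2/t$ to $s^2qt$; the resulting upper $-s$ cancels the lower $-s$, leaving
\[
{}_3\phi_2\left[{-sq,\, s^2t^{2j-2},\, t^{-2j}\atop -s/t,\, s^2qt};t^2,t^2\right].
\]
The analogous reduction in \eqref{tildeB[s,j]} cancels the pair $-t^{-2j+2}/s$ and produces a ${}_3\phi_2$ with upper parameters $-t^{-2j+2}/(sq),\,t^{-2j+2}/s^2,\,t^{-2j}$ and lower parameters $-t^{-2j+3}/s,\,t^{-4j+3}/(s^2q)$, again with argument $t^2$.

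Next I would verify that each of these ${}_3\phi_2$ series is Saalsch\"utzian (balanced): with base $t^2$ and $t^{-2j}$ playing the role of $q^{-n}$ (so $n=j$), upper parameters $a,b,t^{-2j}$ and lower parameters $c,d$ must satisfy $cd=ab\,t^{2-2j}$. For the $B$-series this reads $(-s/t)(s^2qt)=(-sq)(s^2t^{2j-2})t^{2-2j}$, both sides equal to $-s^3q$; the $\widetilde B$-series balances in exactly the same way, both sides equal to $-t^{-6j+6}/(s^3q)$. Having confirmed the balance, I would apply the $q$-Saalsch\"utz summation formula \cite{GR} to evaluate each series in closed form as a ratio of four $t^2$-shifted $q$-Pochhammer symbols.

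The remaining, and only genuinely laborious, step is to simplify the product of the prefactor and the evaluated ${}_3\phi_2$ back to the claimed monomial-times-Pochhammer form. Here I would use the reversal identity $(t^{2-2j}/a;t^2)_j=(-a)^{-j}t^{-j(j-1)}(a;t^2)_j$ to turn the Pochhammer symbols carrying negative powers of $t$ into the symbols $(-st;t^2)_j$, $(s^2qt;t^2)_j$, $(-s/t;t^2)_j$, and then the telescoping identity $(-st;t^2)_j/(-s/t;t^2)_j=(1+st^{2j-1})/(1+st^{-1})$. For $B(s,j)$ the accumulated sign and power factors collapse to $(qt)^j$, and the two boundary factors combine as $\frac{1-st^{2j-1}}{1-st^{-1}}\cdot\frac{1+st^{2j-1}}{1+st^{-1}}=\frac{1-s^2t^{4j-2}}{1-s^2t^{-2}}$, reproducing the stated formula. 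For $\widetilde B(s,j)$ the same manipulations show that the whole prefactor $t^j(st^{j-1})^{-j}\frac{1+st^{-1}}{1+st^{2j-1}}$ times the surviving Pochhammer ratio reduces to $1$, leaving precisely $\frac{(qt;t^2)_j(s^2t^{2j};t^2)_j}{(t^2;t^2)_j(s^2qt^{2j-1};t^2)_j}$.

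I expect the main obstacle to be purely this Pochhammer bookkeeping: tracking the many $t^2$-shifted factors and the signs generated by the reversal identity, since a single misplaced power of $t$ or $s$ would spoil the final cancellation. No transformation beyond $q$-Saalsch\"utz is required, so once the balance is confirmed the argument is a finite, if careful, calculation.
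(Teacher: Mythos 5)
Your proposal is correct and follows essentially the same route as the paper: specialize the parameters so that one upper and one lower parameter of the ${}_4\phi_3$ coincide, recognize the resulting terminating ${}_3\phi_2$ as balanced, and evaluate it by the $q$-Saalsch\"utz formula, after which the Pochhammer reversal and telescoping identities you cite do reduce everything to the stated closed forms (I checked the exponent bookkeeping in both cases). The only cosmetic difference is that you specialize the Sears-transformed expressions \eqref{B[s,j]} and \eqref{tildeB[s,j]} (cancelling $-s$ and $-t^{-2j+2}/s$ respectively), whereas the paper specializes the original forms of Definition~\ref{B-Bt-def} (cancelling $-st$ and $-t^{-2j+1}/s$); both collapse to Saalsch\"utzian ${}_3\phi_2$'s and yield the same result.
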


\begin{proof} Setting $a=t^{1/2}$, $c=q^{1/2}t^{1/2}$, we have by the Saalsch\"utz summation formula \cite[p.~17, equation~(1.7.2)]{GR} that
\begin{gather*}
B(s,j) =(-1)^j s^{-j} {\big(s^2/t^2; t^2\big)_j \over \big(t^2; t^2\big)_j}{1-s^2 t^{4j-2} \over 1-s^2 t^{-2}}{}_{3}\phi_2 \left[ { -sqt, s^2 t^{2j-2}, t^{-2j}\atop -s, s^2qt } ; t^2, t^2\right] \\
 \hphantom{B(s,j)}{} =
 (-1)^j s^{-j} {\big(s^2/t^2; t^2\big)_j \over \big(t^2; t^2\big)_j} {1-s^2 t^{4j-2} \over 1-s^2 t^{-2}} {\big(1/qt;t^2\big)_j\big({-}t^{-2j+2}/s;t^2\big)_j\over \big({-}s;t^2\big)_j \big(t^{-2j+2}/s^2qt;t^2\big)_j} \\
\hphantom{B(s,j)}{} = {\big(1/qt;t^2\big)_j \big(s^2/t^2,t^2\big)_j\over \big(t^2;t^2\big)_j\big(s^2 q t ;t^2\big)_j}{1-s^2 t^{4j-2}\over 1-s^2 t^{-2}}(q t)^j
\end{gather*}
and
\begin{gather*}
\widetilde{B}(s,j) =\big(st^{j-1}\big)^{-j}{\big(t^{2j}s^2;t^2\big)_j\over \big(t^2;t^2\big)_j} {}_{3}\phi_2 \left[ {-t^{-2j+1}/sq, t^{-2j+2}/s^2 , t^{-2j}\atop -t^{-2j+2}/s,t^{-4j+3}/s^2q} ; t^2, t^2\right]\\
\hphantom{\widetilde{B}(s,j)}{} = \big(st^{j-1}\big)^{-j} {\big(t^{2j}s^2;t^2\big)_j\over \big(t^2;t^2\big)_j} {\big(qt;t^2\big)_j \big({-}s;t^2\big)_j\over \big({-}t^{-2j+2}/s;t^2\big)_j \big(s^2 q t^{2j-1};t^2\big)_j}
 ={\big(qt;t^2\big)_j \big(s^2 t^{2j},t^2\big)_j\over \big(t^2;t^2\big)_j\big( s^2q t^{2j-1};t^2\big)_j}.\!\!\!\!\!\!\!\tag*{\qed}
\end{gather*}\renewcommand{\qed}{}
\end{proof}

\begin{cor} When $a=q^{1/2}$, $c=q$, $t=q$, we have
\begin{gather*}
 B(s,j)=
\begin{cases}
\hphantom{-}1,& j=0, \\
-1,& j=1, \\
\hphantom{-}0,& j>1,
\end{cases} \qquad
 \widetilde{B}(s,j)=1,\qquad j \geq 0.
\end{gather*}
\end{cor}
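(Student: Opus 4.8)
The plan is to obtain this corollary as the single further specialization $t=q$ of the preceding proposition. First I would observe that the hypotheses match: substituting $t=q$ into the proposition's parameters $a=t^{1/2}$, $c=q^{1/2}t^{1/2}$ yields exactly $a=q^{1/2}$ and $c=q^{1/2}q^{1/2}=q$, so the two closed product formulas for $B(s,j)$ and $\widetilde{B}(s,j)$ already established there are available verbatim after setting $t=q$. Since those formulas are explicit finite products of $q$-shifted factorials whose denominators are nonvanishing for generic $s$, the specialization $t\to q$ is harmless and commutes with the formulas.

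Next I would set $t=q$ in the formula for $B(s,j)$. The crucial factor is $(1/qt;t^2)_j=(1/q^2;q^2)_j$. For $j=0$ every Pochhammer symbol is empty and the prefactor $(1-s^2t^{4j-2})/(1-s^2t^{-2})$ equals $1$, giving $B(s,0)=1$. For $j\ge 2$ the product $(1/q^2;q^2)_j=\prod_{k=0}^{j-1}(1-q^{2k-2})$ contains the vanishing factor $1-q^{0}=0$ at $k=1$, whence $B(s,j)=0$ immediately. The only case demanding an actual computation is $j=1$: after writing out the four length-one Pochhammer symbols together with the ratio $(1-s^2q^2)/(1-s^2q^{-2})$, the two $s$-dependent factors cancel in pairs and the remainder simplifies, using $1-q^{-2}=-(1-q^2)/q^2$, to $B(s,1)=-1$.

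Finally I would set $t=q$ in the formula for $\widetilde{B}(s,j)$. Here the collapse is even cleaner: the factor $(qt;t^2)_j\to(q^2;q^2)_j$ cancels the denominator factor $(t^2;t^2)_j\to(q^2;q^2)_j$, while $(s^2t^{2j};t^2)_j\to(s^2q^{2j};q^2)_j$ and $(s^2qt^{2j-1};t^2)_j\to(s^2q^{2j};q^2)_j$ coincide, so the whole ratio reduces to $1$ for every $j\ge 0$.

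There is essentially no hard step here: the entire argument is bookkeeping for a specialization already carried out once. The one place warranting care is the $j=1$ case of $B(s,j)$, where one must track the cancellation of the $(1-s^2q^{\pm 2})$ factors to confirm the value is exactly $-1$ rather than an $s$-dependent expression; everything else follows from the vanishing Pochhammer factor and direct term-by-term matching.
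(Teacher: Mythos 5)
Your proposal is correct and follows exactly the route the paper intends: the corollary is the immediate specialization $t=q$ of the preceding proposition (note $a=t^{1/2}=q^{1/2}$, $c=q^{1/2}t^{1/2}=q$), and the paper states it without further proof for precisely this reason. Your verification of the three cases for $B(s,j)$ (empty products for $j=0$, the vanishing factor $1-q^0$ in $(1/q^2;q^2)_j$ for $j\ge 2$, and the cancellation giving $q^2(1-q^{-2})/(1-q^2)=-1$ for $j=1$) and the identification $s^2qt^{2j-1}\big|_{t=q}=s^2q^{2j}=s^2t^{2j}\big|_{t=q}$ forcing $\widetilde{B}(s,j)=1$ are all accurate.
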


\begin{cor}Let $m\in \mathbb{C}$. We have
\begin{gather*}
 \lim_{q \rightarrow 0} B\big(t^{m+1},j\big) \bigr|_{a=t^{1/2},\, c=q^{1/2}t^{1/2}}=(-1)^j t^{j(j-1)} { [m+2j]_{t^2} \over [m]_{t^2}} \left[ m+j-1\atop j\right]_{t^2},\\
 \lim_{q \rightarrow 0} \widetilde{B}\big(t^{m+1},j\big) \bigr|_{a=t^{1/2}, \, c=q^{1/2}t^{1/2}}= \left[ m+2j\atop j\right]_{t^2}.
\end{gather*}
\end{cor}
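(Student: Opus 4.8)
The plan is to take the $q\to 0$ limit directly in the closed-form expressions for $B(s,j)$ and $\widetilde B(s,j)$ obtained in the preceding proposition, specialized to $a=t^{1/2}$, $c=q^{1/2}t^{1/2}$, with $s=t^{m+1}$. The heavy lifting (the Saalsch\"utz summation that collapses the ${}_4\phi_3$ to a ${}_3\phi_2$ and then to a ratio of $q$-shifted factorials) is already done there, so the task reduces to a careful but routine limit of finite products of the form $(\alpha;t^2)_j$ where the parameter $\alpha$ carries a power of $q$.

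First I would substitute $s=t^{m+1}$ and rewrite every factor in the proposition in terms of $t^2$-Pochhammer symbols. For $B$ the relevant expression is
\begin{gather*}
B\big(t^{m+1},j\big)\bigr|_{a=t^{1/2},\,c=q^{1/2}t^{1/2}}
={\big(1/qt;t^2\big)_j \big(t^{2m}/t^2;t^2\big)_j\over \big(t^2;t^2\big)_j\big(t^{2m+2}qt;t^2\big)_j}
{1-t^{2m+4j}t^{-2}\over 1-t^{2m+2}t^{-2}}(qt)^j,
\end{gather*}
and the point is that the only $q$-dependence sits in the factors $\big(1/qt;t^2\big)_j$, $\big(qt^{2m+3};t^2\big)_j$, and the prefactor $(qt)^j$. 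As $q\to0$ we have $\big(1/qt;t^2\big)_j\,(qt)^j=\prod_{k=0}^{j-1}\big(1-t^{2k}/qt\big)(qt)\to(-1)^j\prod_{k=0}^{j-1}t^{2k}=(-1)^j t^{j(j-1)}$, while $\big(qt^{2m+3};t^2\big)_j\to 1$. The surviving $q$-independent factors combine into $\tfrac{(t^{2m-2};t^2)_j}{(t^2;t^2)_j}$ together with the ratio $\tfrac{1-t^{2m+4j-2}}{1-t^{2m}}$, which I would regroup into $\tfrac{[m+2j]_{t^2}}{[m]_{t^2}}\left[{m+j-1\atop j}\right]_{t^2}$ using the definitions of $[n]_{t^2}$ and the $t^2$-binomial coefficient given at the end of Theorem~\ref{KOSTKAthm}; this yields the claimed first formula.

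For $\widetilde B$ the computation is even cleaner, since from the proposition
\begin{gather*}
\widetilde B\big(t^{m+1},j\big)\bigr|_{a=t^{1/2},\,c=q^{1/2}t^{1/2}}
={\big(qt;t^2\big)_j \big(t^{2m+2}t^{2j};t^2\big)_j\over \big(t^2;t^2\big)_j\big(t^{2m+2}q t^{2j-1};t^2\big)_j},
\end{gather*}
and here as $q\to0$ both $\big(qt;t^2\big)_j\to 1$ and $\big(qt^{2m+2j+1};t^2\big)_j\to 1$, leaving $\tfrac{(t^{2m+2j+2};t^2)_j}{(t^2;t^2)_j}=\left[{m+2j\atop j}\right]_{t^2}$ after reindexing the numerator Pochhammer, which is exactly the second claimed formula.

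The only genuine care-point, and the step I expect to be the main obstacle, is the sign-and-power bookkeeping in the $B$ case: one must pair the divergent factor $\big(1/qt;t^2\big)_j$ against the vanishing prefactor $(qt)^j$ \emph{before} letting $q\to0$, since individually they blow up and vanish, and then track the accumulated power of $t$ coming from the product $\prod_{k=0}^{j-1}t^{2k}=t^{j(j-1)}$ and the sign $(-1)^j$. I would verify the final regrouping into $\tfrac{[m+2j]_{t^2}}{[m]_{t^2}}\left[{m+j-1\atop j}\right]_{t^2}$ by checking the ratio of consecutive $j$ terms, or equivalently the small cases $j=0,1,2$, to be confident no stray factor of $t$ has been dropped.
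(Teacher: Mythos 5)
Your approach is the right one and is exactly what the paper intends: the corollary is stated with no separate proof, as an immediate consequence of substituting $s=t^{m+1}$ into the closed forms of the preceding proposition and letting $q\to 0$. Your treatment of $\widetilde B$ is correct, and you correctly isolate the one nontrivial point for $B$, namely that $\big(1/qt;t^2\big)_j(qt)^j=\prod_{k=0}^{j-1}\big(qt-t^{2k}\big)\to(-1)^jt^{j(j-1)}$ while $\big(qt^{2m+3};t^2\big)_j\to1$.

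However, the substitution $s=t^{m+1}$ is carried out inconsistently in your display for $B$, and the error propagates into the final regrouping. Since $s^2=t^{2m+2}$, one has $s^2/t^2=t^{2m}$ (not $t^{2m-2}$) and $s^2t^{4j-2}=t^{2m+4j}$ (not $t^{2m+4j-2}$); your display mixes $s^2=t^{2m}$ in the numerator factors with $s^2=t^{2m+2}$ in the denominator factors. With your values, the claimed identification would actually fail: $\tfrac{1-t^{2m+4j-2}}{1-t^{2m}}\neq\tfrac{[m+2j]_{t^2}}{[m]_{t^2}}$ and $\tfrac{(t^{2m-2};t^2)_j}{(t^2;t^2)_j}\neq\left[{m+j-1\atop j}\right]_{t^2}$. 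With the corrected substitution the surviving factors are $\tfrac{(t^{2m};t^2)_j}{(t^2;t^2)_j}\cdot\tfrac{1-t^{2m+4j}}{1-t^{2m}}$, and the match is then exact, since
\begin{gather*}
\left[{m+j-1\atop j}\right]_{t^2}=\prod_{k=1}^{j}\frac{1-t^{2(m+j-k)}}{1-t^{2k}}=\frac{\big(t^{2m};t^2\big)_j}{\big(t^2;t^2\big)_j},
\qquad
\frac{[m+2j]_{t^2}}{[m]_{t^2}}=\frac{1-t^{2m+4j}}{1-t^{2m}}.
\end{gather*}
So the method is sound and essentially identical to the paper's; only this bookkeeping needs repair.
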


\subsubsection[$(D_n,D_n)$ case]{$\boldsymbol{(D_n,D_n)}$ case}
\begin{prp} If $a=1$, $c=q^{1/2}$, we have
\begin{gather*}
 B(s,j)= {\big(t/q;t^2\big)_j \big(s^2/t^2,t^2\big)_j\over \big(t^2;t^2\big)_j\big(s^2 q/ t ;t^2\big)_j}{1-s t^{2j-1}\over 1-s/t}q^j ,\\
 \widetilde{B}(s,j)= {\big(q/t;t^2\big)_j \big(s^2 t^{2j},t^2\big)_j\over \big(t^2;t^2\big)_j\big( s^2q t^{2j-3};t^2\big)_j}{1+s/t\over 1+s t^{2j-1}}t^j.
\end{gather*}
\end{prp}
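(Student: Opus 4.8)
The plan is to run the same argument as in the preceding proposition, where the specialization $a=t^{1/2}$, $c=q^{1/2}t^{1/2}$ forced one upper parameter of the terminating ${}_4\phi_3$ in (\ref{B[s,j]}) and (\ref{tildeB[s,j]}) to cancel against a lower one, producing a balanced ${}_3\phi_2$ summable by Saalsch\"utz. First I would substitute $a=1$ and $c^2=q$ directly into (\ref{B[s,j]}). Since $a^2=1$, the upper parameter $-sa^2/t=-s/t$ now coincides with the lower parameter $-s/t$, and that pair cancels, reducing the ${}_4\phi_3$ to
\[
{}_3\phi_2\left[{-sq/t,\,s^2t^{2j-2},\,t^{-2j}\atop -s,\,s^2q/t};t^2,t^2\right].
\]
The same mechanism applies to (\ref{tildeB[s,j]}): with $a^2=1$ the upper parameter $-t^{-2j+3}/sa^2$ cancels the lower parameter $-t^{-2j+3}/s$, leaving
\[
{}_3\phi_2\left[{-t^{-2j+3}/sq,\,t^{-2j+2}/s^2,\,t^{-2j}\atop -t^{-2j+2}/s,\,t^{-4j+5}/s^2q};t^2,t^2\right].
\]

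The next step is to observe that both of these terminating series are balanced (Saalsch\"utzian): the argument is $t^2$, and $t^2$ times the product of the three upper parameters equals the product of the two lower parameters. For the first series this is the identity $t^2\cdot(-sq/t)(s^2t^{2j-2})t^{-2j}=(-s)(s^2q/t)$, and an entirely analogous one-line check settles the second. Consequently the Saalsch\"utz summation formula \cite[p.~17, equation~(1.7.2)]{GR} applies to each, evaluating them as explicit ratios of four $(\,\cdot\,;t^2)_j$-factors, exactly as in the $(C_n,C_n)$ case.

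Finally I would feed these closed forms back into (\ref{B[s,j]}) and (\ref{tildeB[s,j]}) and reorganize the prefactors into the shapes claimed in the proposition. The residual $q$-shifted factorials carrying negative powers of $s$ and $t$ are normalized using the reversal identity $(x;t^2)_j=(-x)^j t^{j(j-1)}(t^{2-2j}/x;t^2)_j$, which turns $(-t^{-2j+2}/s;t^2)_j$ and $(t^{-2j+3}/s^2q;t^2)_j$ (and, for $\widetilde B$, the factor $(-s;t^2)_j$) into the factors displayed in the statement and absorbs the stray powers of $s$ and $t$. I expect this last bookkeeping to be the only place demanding care; the parameter cancellation and the balancing check are immediate, and the Saalsch\"utz evaluation is mechanical, so the whole proof is parallel to that of the $(C_n,C_n)$ proposition.
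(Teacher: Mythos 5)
Your proposal is correct and follows essentially the same route as the paper: specialize the parameters so that one upper parameter of the terminating ${}_4\phi_3$ cancels a lower one, observe that the resulting terminating ${}_3\phi_2$ is balanced, evaluate it by the Saalsch\"utz formula, and normalize the residual $q$-shifted factorials by the reversal identity. The only (immaterial) difference is that for $B(s,j)$ the paper starts from the form in Definition~\ref{B-Bt-def} (so the cancelling pair is $-sa^2=-s$ against $-s$) rather than from (\ref{B[s,j]}); both starting points yield balanced series and the same closed form.
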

\begin{proof}When $a=1$, $c=q^{1/2}$, we have
\begin{gather*}
B(s,j) =(-1)^j s^{-j} {\big(s^2/t^2; t^2\big)_j \over \big(t^2; t^2\big)_j} {1-s^2 t^{4j-2} \over 1-s^2 t^{-2}}{}_{3}\phi_2 \left[ { -sq, s^2 t^{2j-2}, t^{-2j}\atop -st, s^2q/t } ; t^2, t^2\right]\\
\hphantom{B(s,j)}{} =(-1)^j s^{-j} {\big(s^2/t^2; t^2\big)_j \over \big(t^2; t^2\big)_j}{1-s^2 t^{4j-2} \over 1-s^2 t^{-2}} {\big(t/q,t^2\big)_j \big({-}t^{-2j+3}/s;t^2\big)_j\over
\big({-}st;t^2\big)_j\big(t^{-2j+3}/s^2q;t^2\big)_j}\nonumber \\
\hphantom{B(s,j)}{} ={\big(t/q;t^2\big)_j \big(s^2/t^2,t^2\big)_j\over \big(t^2;t^2\big)_j\big(s^2 q/ t ;t^2\big)_j}{1-s t^{2j-1}\over 1-s/t}q^j,
\end{gather*}
and
\begin{gather*}
\widetilde{B}(s,j) = t^j \big(st^{j-1}\big)^{-j}{\big(s^2t^{2j};t^2\big)_j\over \big(t^2;t^2\big)_j}{ 1+st^{-1} \over 1+st^{2j-1}}
{}_{3}\phi_2 \left[ {- t^{-2j+3}/sq , t^{-2j + 2}/s^2, t^{-2j}\atop -t^{-2j+2}/s, t^{-4j+5}/s^2q} ;t^2, t^2 \right]\\
\hphantom{\widetilde{B}(s,j)}{} =
t^j \big(st^{j-1}\big)^{-j}{\big(s^2t^{2j};t^2\big)_j\over \big(t^2;t^2\big)_j}{ 1+st^{-1} \over 1+st^{2j-1}}
{\big(q/t;t^2\big)_j \big({-}s;t^2\big)_j\over ( -t^{-2j+2}/s;t^2)_j \big(s^2 qt^{2j-3} ;t^2\big)_j} \\
\hphantom{\widetilde{B}(s,j)}{} =
{\big(q/t;t^2\big)_j \big(s^2 t^{2j},t^2\big)_j\over \big(t^2;t^2\big)_j\big( s^2q t^{2j-3};t^2\big)_j}{1+s/t\over 1+s t^{2j-1}}t^j.\tag*{\qed}
\end{gather*}\renewcommand{\qed}{}
\end{proof}

\begin{cor} When $a=1$, $c=q^{1/2}$, $t=q$, we have
\begin{gather*}
 B(s,j)=\delta_{j,0} ,\qquad \widetilde{B}(s,j)=\delta_{j,0} .
\end{gather*}
\end{cor}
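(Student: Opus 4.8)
The plan is simply to specialize $t=q$ in the explicit product formulas for $B(s,j)$ and $\widetilde{B}(s,j)$ furnished by the immediately preceding Proposition (the $(D_n,D_n)$ case $a=1$, $c=q^{1/2}$), and to read off the resulting vanishing. No transformation of hypergeometric series is needed beyond that specialization, since the series have already collapsed to closed products there.

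First I would examine $B(s,j)$. Setting $t=q$ in
\[
B(s,j)= \frac{(t/q;t^2)_j\,(s^2/t^2;t^2)_j}{(t^2;t^2)_j\,(s^2 q/t;t^2)_j}\,\frac{1-s t^{2j-1}}{1-s/t}\,q^j,
\]
the numerator factor $(t/q;t^2)_j$ becomes $(1;t^2)_j=\prod_{k=0}^{j-1}(1-t^{2k})$. Since its $k=0$ term is $1-1=0$, this factor vanishes for every $j\ge 1$, forcing $B(s,j)=0$ there; the denominator factors $(t^2;t^2)_j$ and $(s^2 q/t;t^2)_j$ remain generically nonzero at $t=q$, so no spurious cancellation can reinstate the zero. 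For $j=0$ every Pochhammer symbol of order $0$ equals $1$, the ratio $(1-st^{-1})/(1-s/t)$ equals $1$, and $q^0=1$, whence $B(s,0)=1$. Together these give $B(s,j)=\delta_{j,0}$.

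The argument for $\widetilde{B}(s,j)$ is identical. Setting $t=q$ in
\[
\widetilde{B}(s,j)= \frac{(q/t;t^2)_j\,(s^2 t^{2j};t^2)_j}{(t^2;t^2)_j\,(s^2 q t^{2j-3};t^2)_j}\,\frac{1+s/t}{1+s t^{2j-1}}\,t^j,
\]
the factor $(q/t;t^2)_j$ again becomes $(1;t^2)_j$, which vanishes for all $j\ge 1$; and at $j=0$ the remaining order-$0$ symbols together with the ratio $(1+s/t)/(1+st^{-1})$ all collapse to $1$, giving $\widetilde{B}(s,0)=1$. Hence $\widetilde{B}(s,j)=\delta_{j,0}$.

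I expect no real obstacle here: the entire content is the observation that the single ``$q$-shifted-by-one'' factor $(t/q;\,\cdot\,)_j$ (respectively $(q/t;\,\cdot\,)_j$) degenerates to $(1;\,\cdot\,)_j$ at $t=q$, which annihilates every $j\ge 1$, combined with the trivial normalization check at $j=0$. The only point requiring minor care is to confirm that the denominators do not simultaneously vanish at $t=q$, so that the numerator zero is genuine rather than the resolution of a $0/0$ indeterminacy.
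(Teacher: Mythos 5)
Your proof is correct and matches the paper's (implicit) argument: the corollary is stated as an immediate consequence of the preceding $(D_n,D_n)$ proposition, and the whole content is exactly the observation that $(t/q;t^2)_j$ and $(q/t;t^2)_j$ degenerate to $(1;t^2)_j=0$ for $j\geq 1$ at $t=q$, while the $j=0$ normalization is trivially $1$. Your added check that the denominators stay generically nonzero is a sensible precaution but introduces nothing beyond what the paper takes for granted.
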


\begin{cor}Let $m\in \mathbb{C}$. We have
\begin{gather*}
 \lim_{q \rightarrow 0} B\big(t^{m+1},j\big) \big|_{a=1, c=q^{1/2}}=(-1)^j t^{j^2} { [m+2j]_{t} \over [m]_{t}} \left[ m+j-1\atop j\right]_{t^2},\\
 \lim_{q \rightarrow 0} \widetilde{B}\big(t^{m+1},j\big) \big|_{a=1, c=q^{1/2}}= t^j{1+t^m\over 1+t^{m+2j}}\left[ m+2j\atop j\right]_{t^2}.
\end{gather*}
\end{cor}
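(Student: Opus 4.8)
The plan is to start directly from the two closed forms for $B(s,j)$ and $\widetilde B(s,j)$ at $a=1$, $c=q^{1/2}$ established in the preceding Proposition, substitute $s=t^{m+1}$, and pass to the limit $q\to 0$ factor by factor. The only genuinely $q$-dependent factors are the shifted Pochhammer symbols $\big(t/q;t^2\big)_j$ and $\big(s^2q/t;t^2\big)_j$ in $B$, and $\big(q/t;t^2\big)_j$ and $\big(s^2qt^{2j-3};t^2\big)_j$ in $\widetilde B$, together with the explicit monomial $q^j$ occurring in $B(s,j)$; every remaining factor is already free of $q$.

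First I would record the elementary asymptotics. Each factor of $\big(s^2q/t;t^2\big)_j$, $\big(q/t;t^2\big)_j$ and $\big(s^2qt^{2j-3};t^2\big)_j$ carries $q$ to a positive power and hence tends to $1$ as $q\to 0$. The one subtle symbol is $\big(t/q;t^2\big)_j=\prod_{k=0}^{j-1}\big(1-t^{2k+1}/q\big)$, whose $k$-th factor blows up like $-t^{2k+1}/q$; collecting leading terms gives $\big(t/q;t^2\big)_j\sim(-1)^jq^{-j}t^{\sum_{k=0}^{j-1}(2k+1)}=(-1)^jq^{-j}t^{j^2}$. The explicit $q^j$ in $B$ cancels the singular $q^{-j}$ exactly, leaving the finite prefactor $(-1)^jt^{j^2}$. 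Thus $B(t^{m+1},j)$ tends to $(-1)^jt^{j^2}$ times $\big(s^2/t^2;t^2\big)_j/\big(t^2;t^2\big)_j$ and $(1-st^{2j-1})/(1-s/t)$, while $\widetilde B(t^{m+1},j)$ tends to $t^j\,(1+s/t)/(1+st^{2j-1})$ times $\big(s^2t^{2j};t^2\big)_j/\big(t^2;t^2\big)_j$.

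Next I would convert the surviving Pochhammer ratios into $t^2$-binomial coefficients. With $s=t^{m+1}$ one has $s^2/t^2=t^{2m}$ and $s^2t^{2j}=t^{2m+2+2j}$, and a direct telescoping using $\left[m\atop j\right]_q=\prod_{k=1}^j[m-k+1]_q/[k]_q$, $[n]_q=(1-q^n)/(1-q)$, gives $\big(t^{2m};t^2\big)_j/\big(t^2;t^2\big)_j=\left[m+j-1\atop j\right]_{t^2}$ and $\big(t^{2m+2+2j};t^2\big)_j/\big(t^2;t^2\big)_j=\left[m+2j\atop j\right]_{t^2}$. The remaining rational factors simplify under the same substitution: $(1-st^{2j-1})/(1-s/t)=(1-t^{m+2j})/(1-t^m)=[m+2j]_t/[m]_t$ and $(1+s/t)/(1+st^{2j-1})=(1+t^m)/(1+t^{m+2j})$. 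Assembling these pieces yields the two claimed expressions.

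The computation is entirely routine; the only step demanding care is the bookkeeping in the single divergent symbol $\big(t/q;t^2\big)_j$, where one must check that the accumulated power of $t$ is exactly $\sum_{k=0}^{j-1}(2k+1)=j^2$ and that the explicit $q^j$ removes the pole without leaving a residual power of $q$. Once this cancellation is verified, matching the remaining $q$-free data to the stated $t^2$-binomial coefficients is immediate.
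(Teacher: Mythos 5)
Your proposal is correct and follows exactly the route the paper intends: the corollary is stated as an immediate consequence of the closed forms for $B(s,j)$ and $\widetilde B(s,j)$ at $a=1$, $c=q^{1/2}$ from the preceding proposition, and the paper gives no further argument. Your factor-by-factor limit (including the cancellation of the $q^{-j}$ pole from $\big(t/q;t^2\big)_j$ against the explicit $q^j$, yielding $(-1)^jt^{j^2}$) and the identification of the surviving Pochhammer ratios with the $t^2$-binomial coefficients are exactly the intended, and correct, computation.
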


\subsection[Explicit formulas for $P^{(C_n,C_n)}_{(1^{r})}(x\,|\,t;q,t)$ and $P^{(D_n,D_n)}_{(1^{r})}(x\,|\,q,t)$]{Explicit formulas for $\boldsymbol{P^{(C_n,C_n)}_{(1^{r})}(x\,|\,t;q,t)}$ and $\boldsymbol{P^{(D_n,D_n)}_{(1^{r})}(x\,|\,q,t)}$}
Using the formulas obtained in the previous subsection, we give some explicit transition formulas for the polynomials $P^{(C_n,C_n)}_{(1^{r})}(x\,|\,t;q,t)$ and $P^{(D_n,D_n)}_{(1^{r})}(x\,|\,q,t)=P^{(C_n,C_n)}_{(1^{r})}(x\,|\,1;q,t)$.
\begin{thm}We have
\begin{gather*}
 P^{(C_n,C_n)}_{(1^r)}(x\,|\,t;q,t) =\sum_{j=0}^{\lfloor {r \over 2} \rfloor}{\big(1/qt;t^2\big)_j \big(t^{2n-2r},t^2\big)_j\over \big(t^2;t^2\big)_j\big(q t^{2n-2r+3} ;t^2\big)_j}{1-t^{2n-2r+4j} \over 1-t^{2n-2r}}(q t)^jE_{r-2j}(x),\\
E_{r}(x)=\sum_{j=0}^{\lfloor {r \over 2} \rfloor}{\big(qt;t^2\big)_j \big(t^{2n-2r+2j+2},t^2\big)_j\over \big(t^2;t^2\big)_j\big( q t^{2n-2r+2j+1};t^2\big)_j}P^{(C_n,C_n)}_{(1^{r-2j})}(x\,|\,t;q,t),\\
P^{(D_n,D_n)}_{(1^r)}(x\,|\,q,t) =\sum_{j=0}^{\lfloor {r \over 2} \rfloor}{\big(t/q;t^2\big)_j \big(t^{2n-2r},t^2\big)_j\over \big(t^2;t^2\big)_j\big(q t^{2n-2r+1} ;t^2\big)_j}{1-t^{n-r+2j} \over 1-t^{n-r}}q^jE_{r-2j}(x),\\
E_{r}(x)=\sum_{j=0}^{\lfloor {r \over 2} \rfloor}{\big(q/t;t^2\big)_j \big(t^{2n-2r+2j+2},t^2\big)_j\over \big(t^2;t^2\big)_j\big( q t^{2n-2r+2j-1};t^2\big)_j}{1+t^{n-r}\over 1+t^{n-r+2j}}t^jP^{(D_n,D_n)}_{(1^{r-2j})}(x\,|\,q,t).
\end{gather*}
\end{thm}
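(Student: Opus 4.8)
The plan is to obtain all four identities by direct specialization of the general transition formulas in Theorem~\ref{Thm2.6}, combined with the closed forms for $B(s,j)$ and $\widetilde{B}(s,j)$ recorded in the two preceding propositions of this section. The only extra input needed is the parameter dictionary from Section~\ref{notation-C}: the type $(C_n,C_n)$ polynomial $P^{(C_n,C_n)}_{(1^r)}(x\,|\,t;q,t)$ equals $P_{(1^r)}(x\,|\,a,-a,c,-c\,|\,q,t)$ under the substitution $a=t^{1/2}$, $c=q^{1/2}t^{1/2}$, while $P^{(D_n,D_n)}_{(1^r)}(x\,|\,q,t)=P^{(C_n,C_n)}_{(1^r)}(x\,|\,1;q,t)$ corresponds to $a=1$, $c=q^{1/2}$.

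First I would treat the $(C_n,C_n)$ case. Starting from \eqref{PtoE}, namely $P_{(1^r)}(x\,|\,a,-a,c,-c\,|\,q,t)=\sum_{j} B(t^{n-r+1},j)\,E_{r-2j}(x)$, I would set $a=t^{1/2}$, $c=q^{1/2}t^{1/2}$ and insert the evaluation of $B(s,j)$ supplied by the $(C_n,C_n)$ proposition. Writing $s=t^{n-r+1}$, so that $s^2=t^{2n-2r+2}$, the building blocks $s^2/t^2$, $s^2qt$, $s^2t^{4j-2}$ and $s^2t^{-2}$ collapse to $t^{2n-2r}$, $qt^{2n-2r+3}$, $t^{2n-2r+4j}$ and $t^{2n-2r}$ respectively, reproducing exactly the coefficient of $E_{r-2j}(x)$ in the first identity. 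The companion expansion of $E_r(x)$ follows identically from the second line of Theorem~\ref{Thm2.6} together with the $(C_n,C_n)$ evaluation of $\widetilde{B}(s,j)$, where the substitution $s=t^{n-r+1}$ gives $s^2t^{2j}=t^{2n-2r+2j+2}$ and $s^2qt^{2j-1}=qt^{2n-2r+2j+1}$, yielding the stated coefficient of $P^{(C_n,C_n)}_{(1^{r-2j})}$.

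The $(D_n,D_n)$ case is entirely parallel, using instead the $(D_n,D_n)$ proposition to evaluate $B(s,j)$ and $\widetilde{B}(s,j)$ at $a=1$, $c=q^{1/2}$. Here the simplifications to record under $s=t^{n-r+1}$ are $s/t=t^{n-r}$, $st^{2j-1}=t^{n-r+2j}$, $s^2q/t=qt^{2n-2r+1}$ and $s^2qt^{2j-3}=qt^{2n-2r+2j-1}$, from which the third and fourth coefficients drop out; the prefactors $\tfrac{1-st^{2j-1}}{1-s/t}$ and $\tfrac{1+s/t}{1+st^{2j-1}}$ become $\tfrac{1-t^{n-r+2j}}{1-t^{n-r}}$ and $\tfrac{1+t^{n-r}}{1+t^{n-r+2j}}$ as claimed. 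Since $P^{(D_n,D_n)}_{(1^r)}(x\,|\,q,t)=P^{(C_n,C_n)}_{(1^r)}(x\,|\,1;q,t)$, no separate treatment of the polynomials themselves is required.

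I expect no genuine obstacle: the entire content is carried by Theorem~\ref{Thm2.6} and the two specialization propositions, so the argument is a chain of substitutions. The single point demanding care is purely clerical, namely tracking the exponents of $t$ through $s=t^{n-r+1}$ so that each Pochhammer argument and each factor $1\mp s^{a}t^{b}$ lands in the precise normalized shape printed in the theorem. In particular one must verify that the denominators $1-s^2t^{-2}$ (type $C_n$) and $1-s/t$ (type $D_n$) reduce to $1-t^{2n-2r}$ and $1-t^{n-r}$, and that the numerators $1-s^2t^{4j-2}$ and $1-st^{2j-1}$ reduce to $1-t^{2n-2r+4j}$ and $1-t^{n-r+2j}$, which is the only place a sign or exponent error could slip in.
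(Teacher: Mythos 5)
Your proposal is correct and coincides with the paper's own (largely implicit) argument: the theorem is obtained exactly by inserting the specialized evaluations of $B(s,j)$ and $\widetilde{B}(s,j)$ from the two preceding propositions into the expansions of Theorem~\ref{Thm2.6} at $s=t^{n-r+1}$, with the parameter dictionary $a=t^{1/2}$, $c=q^{1/2}t^{1/2}$ for $(C_n,C_n)$ and $a=1$, $c=q^{1/2}$ for $(D_n,D_n)$. All the exponent bookkeeping you flag checks out, so nothing further is needed.
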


\begin{cor}\label{Schur}Setting $t=q$, we have the formula for the Schur polynomials $s^{(C_n)}_{(1^r)}(x)=P^{(C_n,C_n)}_{(1^r)}(x\,|\,q;q,q)$ and $s^{(D_n)}_{(1^r)}(x)=P^{(D_n,D_n)}_{(1^r)}(x\,|\,q,q)$:
\begin{gather*}
 s^{(C_n)}_{(1^r)}(x) =E_{r}(x)-E_{r-2}(x),\qquad
E_{r}(x)=\sum_{j=0}^{\lfloor {r \over 2} \rfloor} s^{(C_n)}_{(1^{r-2k})}(x),\qquad
s^{(D_n)}_{(1^r)}(x)=E_{r}(x).
\end{gather*}
Hence, from Lemma~{\rm \ref{Lem-Em}}, we have
\begin{gather*}
s^{(C_n)}_{(1^r)}(x) =\sum_{j=0}^{\lfloor{r \over 2}\rfloor}\left( \binom{ n-r+2j }{ j} - \binom{ n-r+2j }{ j-1} \right)m_{(1^{r-2j})}(x)\\
\hphantom{s^{(C_n)}_{(1^r)}(x)}{} = \sum_{j=0}^{\lfloor{r \over 2}\rfloor}{n-r+1\over n-r+j+1}\binom{ n-r+2j }{ j} m_{(1^{r-2j})}(x),\\
s^{(D_n)}_{(1^r)}(x)=\sum_{j=0}^{\lfloor{r \over 2}\rfloor} \binom{ n-r+2j }{ j} m_{(1^{r-2j})}(x).
\end{gather*}
\end{cor}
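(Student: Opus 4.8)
The plan is to specialize $t=q$ in the four transition formulas of the preceding theorem and read off which terms survive; all the content lies in evaluating the $q$-Pochhammer coefficients at $t=q$. Once the three relations $s^{(C_n)}_{(1^r)}=E_r-E_{r-2}$, $E_r=\sum_{j}s^{(C_n)}_{(1^{r-2j})}$ and $s^{(D_n)}_{(1^r)}=E_r$ are established, the monomial expansions follow immediately by inserting Lemma~\ref{Lem-Em}.

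First I would treat the expansion of $P^{(C_n,C_n)}_{(1^r)}(x\,|\,t;q,t)$ into the $E_{r-2j}(x)$. At $t=q$ the factor $(1/qt;t^2)_j$ becomes $(t^{-2};t^2)_j=\prod_{k=0}^{j-1}(1-t^{2k-2})$, whose $k=1$ term is $1-t^0=0$; hence this Pochhammer vanishes for $j\geq 2$ and only $j=0,1$ contribute. The $j=0$ coefficient is $1$, while for $j=1$ the factors $1-t^{2n-2r}$ and $1-t^{2n-2r+4}$ cancel against the ratio $(1-t^{2n-2r+4})/(1-t^{2n-2r})$, leaving $(1-t^{-2})t^2/(1-t^2)=-1$; thus $s^{(C_n)}_{(1^r)}(x)=E_r(x)-E_{r-2}(x)$. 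For the inverse relation, at $t=q$ one has $qt=t^2$ and $qt^{2n-2r+2j+1}=t^{2n-2r+2j+2}$, so the Pochhammers $(qt;t^2)_j$ and $(qt^{2n-2r+2j+1};t^2)_j$ cancel exactly against $(t^2;t^2)_j$ and $(t^{2n-2r+2j+2};t^2)_j$; every coefficient collapses to $1$, giving $E_r(x)=\sum_{j=0}^{\lfloor r/2\rfloor}s^{(C_n)}_{(1^{r-2j})}(x)$.

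Next I would handle the $(D_n,D_n)$ case. At $t=q$ the factor $(t/q;t^2)_j$ (respectively $(q/t;t^2)_j$ in the inverse formula) becomes $(1;t^2)_j=\prod_{k=0}^{j-1}(1-t^{2k})$, which vanishes for $j\geq 1$ because of the $k=0$ term $1-1=0$. Thus only $j=0$ survives, with coefficient $1$, and $s^{(D_n)}_{(1^r)}(x)=E_r(x)$; the inverse formula is then consistent with this.

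Finally, substituting Lemma~\ref{Lem-Em} yields the monomial expansions. The $D_n$ case is immediate. For $C_n$ I would write $s^{(C_n)}_{(1^r)}=E_r-E_{r-2}$ as a difference of two instances of the lemma, reindex the $E_{r-2}$ sum by $j\mapsto j-1$ so that $\binom{n-r+2+2(j-1)}{j-1}=\binom{n-r+2j}{j-1}$, and combine into the single sum $\sum_j\big(\binom{n-r+2j}{j}-\binom{n-r+2j}{j-1}\big)m_{(1^{r-2j})}(x)$. The closed ballot-number form follows from the elementary identity
\begin{gather*}
\binom{m+2j}{j}-\binom{m+2j}{j-1}=\frac{m+1}{m+j+1}\binom{m+2j}{j},
\end{gather*}
applied with $m=n-r$, which is checked from $\binom{m+2j}{j-1}/\binom{m+2j}{j}=j/(m+j+1)$. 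There is no serious obstacle: the only care needed is bookkeeping which Pochhammer factors vanish or cancel at $t=q$, and the reindexing that merges the two binomial sums in the $C_n$ case.
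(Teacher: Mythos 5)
Your proposal is correct and follows exactly the route the paper intends: specialize $t=q$ in the transition formulas of the preceding theorem, observe that $(1/qt;t^2)_j$ kills all terms with $j\geq 2$ (leaving coefficients $1$ and $-1$) in the $C_n$ case and $(t/q;t^2)_j=(1;t^2)_j$ kills all $j\geq 1$ in the $D_n$ case, then substitute Lemma~\ref{Lem-Em} and reindex. All the Pochhammer evaluations and the ballot-number identity check out.
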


\subsection[Hall--Littlewood polynomials $P^{(C_n,C_n)}_{(1^{r})}(x\,|\,t;0,t)$, $P^{(D_n,D_n)}_{(1^{r})}(x\,|\,0,t)$ and Kostka polynomials]{Hall--Littlewood polynomials $\boldsymbol{P^{(C_n,C_n)}_{(1^{r})}(x\,|\,t;0,t)}$, $\boldsymbol{P^{(D_n,D_n)}_{(1^{r})}(x\,|\,0,t)}$\\ and Kostka polynomials}\label{Kostka}

Using the transition formulas we have established, we can study the Kostka polynomials associated with one column diagrams for types $C_n$ and $D_n$. Setting $b=t$, $q=0$ for $(C_n,C_n)$ (or $b=1$, $q=0$ for $(D_n,D_n)$) in
$P^{(C_n,C_n)}_{(1^r)}(x\,|\,b;q,t)$, we have the type $C_n$ (or type~$D_n$) Hall--Littlewood polynomials with one column diagrams.

\begin{thm}We have
\begin{gather*}
P^{(C_n,C_n)}_{(1^r)}(x\,|\,t;0,t)= \sum_{j=0}^{\lfloor {r \over 2} \rfloor}(-1)^j t^{j(j-1)}{ [n-r+2j]_{t^2} \over [n-r]_{t^2}} \left[ n-r+j-1\atop j\right]_{t^2}E_{r-2j}(x),\\
E_{r}(x)=\sum_{j=0}^{\lfloor {r \over 2} \rfloor} \left[ n-r+2j\atop j\right]_{t^2}P^{(C_n,C_n)}_{(1^{r-2j})}(x\,|\,t;0,t),\\
P^{(D_n,D_n)}_{(1^r)}(x\,|\,0,t)=\sum_{j=0}^{\lfloor {r \over 2} \rfloor}(-1)^j t^{j^2}{ [n-r+2j]_{t} \over [n-r]_{t}} \left[ n-r+j-1\atop j\right]_{t^2}E_{r-2j}(x),\\
E_{r}(x)=\sum_{j=0}^{\lfloor {r \over 2} \rfloor}t^j {1+t^{n-r}\over 1+t^{n-r+2j}}\left[ n-r+2j\atop j\right]_{t^2}P^{(D_n,D_n)}_{(1^{r-2j})}(x\,|\,0,t).
\end{gather*}
\end{thm}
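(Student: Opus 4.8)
The plan is to read off all four identities as the $q\to 0$ specialization of the two transition relations of Theorem~\ref{Thm2.6}, evaluated at the parameter values that produce the type $(C_n,C_n)$ and $(D_n,D_n)$ polynomials. The decisive structural observation is that the Laurent polynomials $E_{r}(x)$ of Definition~\ref{Def-E} depend only on $x$ and $n$, and in particular carry \emph{no} dependence on $q$, $a$ or $c$; hence the entire parameter dependence of
\begin{gather*}
P_{(1^r)}(x\,|\,a,-a,c,-c\,|\,q,t) = \sum_{j=0}^{\lfloor r/2\rfloor} B\big(t^{n-r+1},j\big)\,E_{r-2j}(x),\\
E_{r}(x) = \sum_{j=0}^{\lfloor r/2\rfloor} \widetilde{B}\big(t^{n-r+1},j\big)\,P_{(1^{r-2j})}(x\,|\,a,-a,c,-c\,|\,q,t)
\end{gather*}
is concentrated in the scalar coefficients $B,\widetilde{B}$ and in the Koornwinder polynomials themselves.

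For the type $(C_n,C_n)$ pair, first I would set $(a,c)=\big(t^{1/2},q^{1/2}t^{1/2}\big)$, which is precisely the specialization $b=t$ defining $P^{(C_n,C_n)}_{(1^r)}(x\,|\,t;q,t)$. The left-hand side of the first relation then reads $P^{(C_n,C_n)}_{(1^r)}(x\,|\,t;q,t)$, and letting $q\to 0$ turns it into the Hall--Littlewood polynomial $P^{(C_n,C_n)}_{(1^r)}(x\,|\,t;0,t)$. Since the sum is finite and the $E_{r-2j}(x)$ are $q$-independent, I may pass to the limit term by term, replacing each $B\big(t^{n-r+1},j\big)$ by the value furnished by the corollary computing $\lim_{q\to0}B(t^{m+1},j)\big|_{a=t^{1/2},\,c=q^{1/2}t^{1/2}}$, taken with $m=n-r$; this reproduces exactly the stated coefficient $(-1)^j t^{j(j-1)}{[n-r+2j]_{t^2}\over[n-r]_{t^2}}\left[ n-r+j-1\atop j\right]_{t^2}$. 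Applying the same substitution to the second relation, using $\lim_{q\to0}\widetilde{B}\big(t^{n-r+1},j\big)=\left[ n-r+2j\atop j\right]_{t^2}$, yields the inverse expansion of $E_r(x)$ in the $P^{(C_n,C_n)}_{(1^{r-2j})}(x\,|\,t;0,t)$.

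The type $(D_n,D_n)$ pair follows by the identical argument with the alternative specialization $(a,c)=\big(1,q^{1/2}\big)$, i.e.\ $b=1$, so that the two left-hand sides become $P^{(D_n,D_n)}_{(1^r)}(x\,|\,q,t)$ and, after $q\to 0$, the Hall--Littlewood polynomials $P^{(D_n,D_n)}_{(1^r)}(x\,|\,0,t)$. Here I would invoke the corresponding $(D_n,D_n)$ limits $\lim_{q\to0}B\big(t^{n-r+1},j\big)=(-1)^j t^{j^2}{[n-r+2j]_{t}\over[n-r]_{t}}\left[ n-r+j-1\atop j\right]_{t^2}$ and $\lim_{q\to0}\widetilde{B}\big(t^{n-r+1},j\big)=t^j{1+t^{n-r}\over 1+t^{n-r+2j}}\left[ n-r+2j\atop j\right]_{t^2}$, again with $m=n-r$, to read off the two $(D_n,D_n)$ identities directly.

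Because the genuine analytic content---the collapse of the terminating ${}_4\phi_3$ series defining $B$ and $\widetilde{B}$ via the Saalsch\"utz summation, and the resulting closed forms of their $q\to0$ limits---has already been dispatched in the preceding corollaries, the only points needing care here are the interchange of $\lim_{q\to0}$ with the finite summation and the existence of the Hall--Littlewood degeneration. Both are routine: the sum has at most $\lfloor r/2\rfloor+1$ terms, each coefficient possesses the finite limit displayed above (the apparent pole at $n=r$ being removable, since the factor ${[n-r+2j]\over[n-r]}$ cancels against one factor of the $q$-binomial), and $P^{(C_n,C_n)}_{(1^r)}(x\,|\,b;q,t)$ is regular at $q=0$ with value the defining Hall--Littlewood specialization. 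I therefore anticipate no real obstacle; the proof is essentially the assembly of Theorem~\ref{Thm2.6} with the two pairs of degeneration corollaries.
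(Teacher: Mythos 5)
Your proposal is correct and coincides with the paper's own (largely implicit) argument: the theorem is obtained precisely by substituting the $q\to 0$ limits of $B\big(t^{n-r+1},j\big)$ and $\widetilde{B}\big(t^{n-r+1},j\big)$ at the specializations $(a,c)=\big(t^{1/2},q^{1/2}t^{1/2}\big)$ and $(a,c)=\big(1,q^{1/2}\big)$ into the transition formulas of Theorem~\ref{Thm2.6}. Your added remarks on the term-by-term passage to the limit and the removable factor $[n-r]_{t^2}$ are the only points the paper leaves unsaid, and you handle them correctly.
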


Then, applying the formulas for the Schur polynomials in Corollary~\ref{Schur}, we can calculate the Kostka polynomials (i.e., the transition coefficients from the Schur polynomials to the Hall--Littlewood polynomials) of types $C_n$ and $D_n$ associated with one column diagrams as follows.

\begin{thm}We have
\begin{subequations}
\begin{gather}
s^{(C_n)}_{(1^r)}(x) =\sum_{j=0}^{\lfloor{r \over 2}\rfloor} t^{2j}{[n-r+1]_{t^2}\over [n-r+j+1]_{t^2}}{n-r+2j \atopwithdelims[] j}_{t^2} P^{(C_n,C_n)}_{(1^{r-2j})}(x\,|\,t;0,t)\nonumber\\
\hphantom{s^{(C_n)}_{(1^r)}(x)}{} =\sum_{j=0}^{\lfloor{r \over 2}\rfloor}\left( {n-r+2j \atopwithdelims[] j}_{t^2} - {n-r+2j \atopwithdelims[] j-1}_{t^2} \right)
P^{(C_n,C_n)}_{(1^{r-2j})}(x\,|\,t;0,t),\label{Kostka-C}\\
s^{(D_n)}_{(1^r)}(x)=\sum_{j=0}^{\lfloor {r \over 2} \rfloor} t^j {1+t^{n-r}\over 1+t^{n-r+2j}}\left[ n-r+2j\atop j\right]_{t^2} P^{(D_n,D_n)}_{(1^{r-2j})}(x\,|\,0,t) \nonumber\\
\hphantom{s^{(D_n)}_{(1^r)}(x)}{} = \sum_{j=0}^{\lfloor {r \over 2} \rfloor}\!\left(\!t^{n-r+j}\left[ n-r+2j-1\atop j-1\right]_{t^2}+t^j\left[ n-r+2j-1\atop j\right]_{t^2}\!\right)P^{(D_n,D_n)}_{(1^{r-2j})}(x\,|\,0,t).\!\!\!\!\label{Kostka-D}
\end{gather}
\end{subequations}
Hence we have Theorem~{\rm \ref{ProofMAIN}}.
\end{thm}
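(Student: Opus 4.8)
The plan is to read off both Kostka expansions directly from Corollary~\ref{Schur}, which writes the type $C_n$ and $D_n$ Schur polynomials in terms of the $E_r(x)$, by feeding in the Hall--Littlewood expansions of $E_r(x)$ proved in the theorem immediately above, and then to reconcile the two displayed forms of each coefficient by elementary Gaussian-binomial identities with base $q=t^2$ and $m=n-r$. The $D_n$ case is immediate: by Corollary~\ref{Schur} we have $s^{(D_n)}_{(1^r)}(x)=E_r(x)$, so substituting the expansion $E_r(x)=\sum_{j} t^j\frac{1+t^{n-r}}{1+t^{n-r+2j}}\left[n-r+2j\atop j\right]_{t^2}P^{(D_n,D_n)}_{(1^{r-2j})}(x\,|\,0,t)$ from the preceding theorem reproduces the first line of~\eqref{Kostka-D} verbatim.

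For the $C_n$ case I would start from $s^{(C_n)}_{(1^r)}(x)=E_r(x)-E_{r-2}(x)$ and expand both terms via $E_r(x)=\sum_j\left[n-r+2j\atop j\right]_{t^2}P^{(C_n,C_n)}_{(1^{r-2j})}(x\,|\,t;0,t)$. Replacing $r$ by $r-2$ in this expansion and shifting the summation index by one so that both sums are written against the common basis element $P^{(C_n,C_n)}_{(1^{r-2j})}$, the coefficient of $P^{(C_n,C_n)}_{(1^{r-2j})}$ collapses to $\left[n-r+2j\atop j\right]_{t^2}-\left[n-r+2j\atop j-1\right]_{t^2}$, the subtracted term at $j=0$ vanishing since $\left[N\atop -1\right]_{t^2}=0$. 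This is exactly the second line of~\eqref{Kostka-C}. The one point requiring care here is this index shift, using $\lfloor(r-2)/2\rfloor=\lfloor r/2\rfloor-1$ together with the convention $E_{<0}=0$ to handle the boundary $r\in\{0,1\}$; everything else is bookkeeping.

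It then remains to match the second forms with the first, closed forms. For~\eqref{Kostka-C} this is the $t^2$-ballot identity $\left[m+2j\atop j\right]_{q}-\left[m+2j\atop j-1\right]_{q}=q^{j}\frac{[m+1]_q}{[m+j+1]_q}\left[m+2j\atop j\right]_{q}$, which follows at once from the ratio $\left[m+2j\atop j-1\right]_{q}/\left[m+2j\atop j\right]_{q}=[j]_q/[m+j+1]_q$ and the evaluation $[m+j+1]_q-[j]_q=q^{j}[m+1]_q$. For~\eqref{Kostka-D} it suffices, after clearing the factor $t^{j}/(1+t^{m+2j})$, to verify $(1+t^{m+2j})\big(t^{m}\left[m+2j-1\atop j-1\right]_{t^2}+\left[m+2j-1\atop j\right]_{t^2}\big)=(1+t^{m})\left[m+2j\atop j\right]_{t^2}$; expanding the left-hand side and regrouping through the two $q$-Pascal recurrences $\left[m+2j\atop j\right]_{t^2}=\left[m+2j-1\atop j-1\right]_{t^2}+t^{2j}\left[m+2j-1\atop j\right]_{t^2}$ and $\left[m+2j\atop j\right]_{t^2}=t^{2m+2j}\left[m+2j-1\atop j-1\right]_{t^2}+\left[m+2j-1\atop j\right]_{t^2}$ produces $t^{m}\left[m+2j\atop j\right]_{t^2}+\left[m+2j\atop j\right]_{t^2}$, as required. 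Since the second forms in~\eqref{Kostka-C} and~\eqref{Kostka-D} are manifestly polynomials in $t$ with nonnegative integer coefficients, Theorem~\ref{KOSTKAthm} follows. The computations are routine; the only genuinely delicate step is the reindexing in the $C_n$ case.
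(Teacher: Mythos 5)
Your proposal is correct and follows essentially the paper's own route: the paper obtains these expansions in exactly the same way, by combining Corollary~\ref{Schur} (namely $s^{(C_n)}_{(1^r)}=E_r-E_{r-2}$ and $s^{(D_n)}_{(1^r)}=E_r$) with the Hall--Littlewood expansions of $E_r(x)$ from the theorem immediately preceding, and the index shift and $t^2$-binomial identities you work out are precisely the bookkeeping the paper leaves implicit. The only small caveat is that the nonnegativity of the $C_n$ coefficients is not ``manifest'' from the difference form $\left[{n-r+2j\atop j}\right]_{t^2}-\left[{n-r+2j\atop j-1}\right]_{t^2}$ --- it is the known positivity of the $q$-ballot numbers, for which the paper cites the literature --- but that concerns the positivity claim of Theorem~\ref{KOSTKAthm} rather than the displayed transition identities themselves.
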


\begin{rmk}The expansion coefficient of (\ref{Kostka-C}) (times $t^{-2j}$) is identified with the $q$-ballot (when $m=0$, $q$-Catalan) number \cite{A, FH}
\begin{gather*}
q^{-j}\left( {m+2j \atopwithdelims[] j}_{q} - {m+2j \atopwithdelims[] j-1}_{q} \right)={[m+1]_{q}\over [m+j+1]_{q}} {m+2j \atopwithdelims[] j}_{q},
\end{gather*}
by the replacement $m\rightarrow n-r$, $q\rightarrow t^2$. The case $m=0$ gives us the $q$-Catalan number. It is known that the $q$-Catalan or $q$-ballot number is a polynomial in~$q$ with positive integral coefficients (see~\cite{A,FH}).

The expansion coefficient of (\ref{Kostka-D}) is identified with the following version of the $q$-binomial number
\begin{gather*}
q^j {1+q^{m-2j}\over 1+q^{m}} \left[ m\atop j\right]_{q^2}=q^{m-j}\left[ m-1\atop j-1\right]_{q^2}+q^j\left[ m-1\atop j\right]_{q^2},
\end{gather*}
by the replacement $m\rightarrow n-r+2j$, $q\rightarrow t$. Note that this is also a polynomial in~$q$ with positive integral coefficients.
\end{rmk}

\section[Some conjectures about Macdonald polynomials of type $C_n$]{Some conjectures about Macdonald polynomials of type $\boldsymbol{C_n}$}\label{CON}

\subsection[Asymptotically free eigenfunctions for the Macdonald operator of type $A_{n-1}$]{Asymptotically free eigenfunctions for the Macdonald operator\\ of type $\boldsymbol{A_{n-1}}$}
First we recall some facts about the asymptotically free eigenfunctions for the case $A_{n-1}$. Let $n\in\mathbb{Z}_{>0}$, and $q,t\in\mathbb{C}$ be generic parameters. Let $x=(x_1,\ldots,n_n)$ be a sequence of independent indeterminates. Macdonald's difference operator of type $A_{n-1}$ is defined by
\begin{gather*}
D^{(A_{n-1})}=\sum_{i=1}^n \prod_{j\neq i}{t x_i-x_j\over x_i-x_j} T_{q,x_i}.
\end{gather*}
For a partition $\lambda$ with $\ell(\lambda)\leq n$, the Macdonald symmetric polynomial $P_\lambda(x;q,t)\in \mathbb{C}[x_1,\ldots$, $x_n]^{S_n}$ exists uniquely characterized by the conditions:
\begin{gather*}
P_\lambda=m_\lambda+\sum_{\mu<\lambda}u_{\lambda\mu} m_\mu, \qquad
D^{(A_{n-1})} P_\lambda=\sum_{i=1}^n q^{\lambda_i}t^{n-i} \cdot P_\lambda.
\end{gather*}

Let $s_1,s_2,\dots,s_n\in \mathbb{C}$ be complex variables. Let $\mathsf{M}^{(n)}$ be the set of strict upper triangular matrices with entries in $\mathbb{Z}_{\geq 0}$, namely for $\theta^{(n)}=\big(\theta^{(n)}_{ij}\big)_{i,j \in \mathbb{Z}_{\geq 0}}\in\mathsf{M}^{(n)}$ $i\geq j$ implies $\theta^{(n)}_{ij}=0$.

\begin{dfn}For $n\geq 1$, define recursively the rational functions $c_n(\theta^{(n)};s_1,\dots,s_n;q,t)\in {\bf \mathbb{Q}}(q,t,s_1,\dots,s_n)$ by $c_1(-;s_1;q,t)=1$, and
\begin{gather*}
c_n\big(\theta^{(n)};s_1,\dots,s_n;q,t\big) =c_{n-1}\big(\theta^{(n-1)}; q^{-\theta_{1,n}}s_1,\dots,q^{-\theta_{n-1,n}}s_{n-1};q,t\big) \nonumber\\
\hphantom{c_n\big(\theta^{(n)};s_1,\dots,s_n;q,t\big) =}{} \times
\prod_{1\leq i\leq j\leq n-1} {(t s_{j+1}/s_i;q)_{\theta_{i,n}}\over (q s_{j+1}/s_i;q)_{\theta_{i,n}}}
{\big(q^{-\theta_{j,n}}q s_j/ts_i;q\big)_{\theta_{i,n}}\over \big(q^{-\theta_{j,n}} s_j/s_i;q\big)_{\theta_{i,n}}}.
\end{gather*}
\end{dfn}
\begin{dfn}Set
\begin{gather*}
 \varphi^{(A_{n-1})}(s\,|\,x)= \sum_{\theta^{(n)}\in \mathsf{M}^{(n)}} c_n\big(\theta^{(n)};s_1,\dots,s_n;q,t\big) \prod_{1\leq i<j\leq n} \left({x_j\over x_i}\right)^{\theta_{ij}}.
\end{gather*}
\end{dfn}

\begin{thm}[\cite{BFS, NS}]Write $s_i=t^{n-1} q^{\lambda_i}$, $1\leq i\leq n$ for simplicity. We have
\begin{gather*}
D^{(A_{n-1})}x^\lambda\varphi^{(A_{n-1})}(s\,|\,x)=(s_1+\cdots+s_n) x^\lambda \varphi^{(A_{n-1})}(s\,|\,x).
\end{gather*}
When $\lambda$ is a partition with $\ell(\lambda)\leq n$, we have
\begin{gather*}
x^\lambda\varphi^{(A_{n-1})}(s\,|\,x)= P_\lambda(x).
\end{gather*}
\end{thm}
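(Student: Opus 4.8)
The plan is to prove the two assertions in turn, following \cite{BFS, NS}: first the formal eigenvalue equation for $x^\lambda\varphi^{(A_{n-1})}(s\,|\,x)$, and then its identification with $P_\lambda(x)$ via the uniqueness characterization of the Macdonald polynomials. Throughout I work in the ring of formal series $x^\lambda\,\mathbb{C}[[x_2/x_1,x_3/x_2,\ldots,x_n/x_{n-1}]]$ attached to the asymptotic region $|x_1|\gg\cdots\gg|x_n|$, writing the general term of $\varphi^{(A_{n-1})}$ as $x^{\lambda+\beta(\theta)}$, where $\beta(\theta)$ is the weight of $\prod_{i<j}(x_j/x_i)^{\theta_{ij}}$.

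First I would reduce the eigenvalue equation to a recursion for the coefficients. Applying $D^{(A_{n-1})}$ to a single term gives $\sum_{i}q^{\lambda_i+\beta_i(\theta)}\prod_{j\neq i}\frac{tx_i-x_j}{x_i-x_j}\,x^{\lambda+\beta(\theta)}$, and each rational factor is expanded as a geometric series in the lowering variables via $\frac{t-u}{1-u}=t+(t-1)\sum_{k\geq1}u^k$ (for $j>i$, with $u=x_j/x_i$) and $\frac{1-tu}{1-u}=1+(1-t)\sum_{k\geq1}u^k$ (for $j<i$, with $u=x_i/x_j$). Collecting the coefficient of a fixed monomial $x^{\lambda+\gamma}$ turns the equation $D^{(A_{n-1})}\big(x^\lambda\varphi^{(A_{n-1})}\big)=(s_1+\cdots+s_n)\,x^\lambda\varphi^{(A_{n-1})}$ into an infinite family of linear relations expressing each coefficient through those of strictly lower terms. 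The constant term of $\prod_{j\neq i}\frac{tx_i-x_j}{x_i-x_j}$ in this region is $t^{n-i}$, so the diagonal part of the operator reproduces the leading eigenvalue $\sum_i t^{n-i}q^{\lambda_i}$, i.e.\ $\sum_i s_i$; for generic $q$, $t$ the corresponding eigenvalue gaps are nonzero, so these relations determine a formal eigenfunction normalized by $c_0=1$ uniquely.

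The main work — and the main obstacle — is to verify that the explicit product defining $c_n(\theta^{(n)};s_1,\ldots,s_n;q,t)$ solves this recursion. Here I would argue by induction on the rank $n$, exploiting the recursive definition of $c_n$ through $c_{n-1}$ with the shifted spectral parameters $q^{-\theta_{i,n}}s_i$: the last column $(\theta_{1,n},\ldots,\theta_{n-1,n})$ records the displacement of $x_n$ relative to $x_1,\ldots,x_{n-1}$, and peeling it off should reduce the rank-$n$ relations to the rank-$(n-1)$ relations (valid by the inductive hypothesis) multiplied by the explicit last-column factor $\prod_{1\leq i\leq j\leq n-1}\frac{(ts_{j+1}/s_i;q)_{\theta_{i,n}}}{(qs_{j+1}/s_i;q)_{\theta_{i,n}}}\frac{(q^{-\theta_{j,n}}qs_j/ts_i;q)_{\theta_{i,n}}}{(q^{-\theta_{j,n}}s_j/s_i;q)_{\theta_{i,n}}}$. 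After this reduction the residual identities to be checked are single-variable $q$-hypergeometric summations of Gauss/Saalsch\"utz type — the same family used throughout this paper — matching the Pieri coefficients produced by the geometric expansions above against ratios of $q$-shifted factorials. Showing that these summations telescope and close up is the delicate computational core of the argument.

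Finally, for the identification with $P_\lambda$, I would invoke the uniqueness just established. When $\lambda$ is a partition, $P_\lambda=m_\lambda+\sum_{\mu<\lambda}u_{\lambda\mu}m_\mu$ is a genuine polynomial eigenfunction of $D^{(A_{n-1})}$ with eigenvalue $\sum_i t^{n-i}q^{\lambda_i}=\sum_i s_i$, and in the region $|x_1|\gg\cdots\gg|x_n|$ its dominant monomial is $x^\lambda$; hence $P_\lambda/x^\lambda$ is a formal series $1+\sum_{\theta\neq0}(\ldots)\prod_{i<j}(x_j/x_i)^{\theta_{ij}}$ of exactly the shape of $\varphi^{(A_{n-1})}$, normalized by constant term $1$. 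By the uniqueness of such formal eigenfunctions it must coincide with $\varphi^{(A_{n-1})}(s\,|\,x)$, whence $x^\lambda\varphi^{(A_{n-1})}(s\,|\,x)=P_\lambda(x)$; in particular the series terminates to a polynomial in this case. The two points needing care — the genericity ensuring uniqueness, and the fact that the dominance order on exponents refines the filtration by total degree in the $x_j/x_i$ — are both standard.
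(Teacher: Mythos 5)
The paper does not actually prove this statement: it is imported wholesale from \cite{BFS, NS}, so there is no internal proof to compare against. Judged on its own terms, your proposal correctly identifies the right architecture --- (i) for generic $q,t$ a formal eigenfunction of $D^{(A_{n-1})}$ in $x^\lambda\,\bbC[[x_2/x_1,\ldots,x_n/x_{n-1}]]$ with constant term $1$ is unique, because the constant term of $\prod_{j\neq i}\frac{tx_i-x_j}{x_i-x_j}$ in this region is $t^{n-i}$ and the eigenvalue gaps $\sum_i t^{n-i}(q^{\lambda_i+\beta_i}-q^{\lambda_i})$ do not vanish; (ii) when $\lambda$ is a partition, $P_\lambda/x^\lambda$ lies in the same completed ring (since every monomial occurring in $m_\mu$ with $\mu\le\lambda$ has exponent in $\lambda-Q^+$) and has constant term $1$, so uniqueness identifies it with $\varphi^{(A_{n-1})}(s\,|\,x)$. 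Part (ii) is complete as you state it, and you are right to silently read $s_i=t^{n-i}q^{\lambda_i}$ for the paper's $s_i=t^{n-1}q^{\lambda_i}$, which is evidently a typo.

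The genuine gap is in part (i): the entire substance of the theorem is that the \emph{explicit} coefficients $c_n(\theta^{(n)};s;q,t)$ satisfy the recursion, and this is exactly the step you defer (``the delicate computational core''). Moreover, the reduction you propose for it --- induct on $n$ by peeling off the last column $(\theta_{1,n},\ldots,\theta_{n-1,n})$ and matching against the rank-$(n-1)$ relations --- is not self-justifying: $D^{(A_{n-1})}$ does not split into $D^{(A_{n-2})}$ plus an operator acting only on the last-column data, because the shift $T_{q,x_i}$ for $i\le n-1$ changes both the $A_{n-2}$-part and the cross terms $\prod_{j\neq i}\frac{tx_i-x_j}{x_i-x_j}$ involving $x_n$, so the recursion mixes columns. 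Making the induction close requires either a Pieri/kernel-function identity intertwining the two operators or the direct partial-fraction and telescoping verification carried out in \cite{NS}; neither reduces to a single Gauss or Saalsch\"utz summation as suggested. As written, the argument establishes uniqueness of the eigenfunction and the identification with $P_\lambda$ \emph{conditional} on the explicit formula, but does not prove the explicit formula itself.
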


\begin{rmk}[branching formulas] We have the decomposition of the series $\varphi^{(A_{n-1})}$ in terms of the $\varphi^{(A_{n-2})}$ series as
\begin{gather*}
 \varphi^{(A_{n-1})}(s_1\ldots,s_n\,|\,x_1\ldots,x_n) \\
 \qquad{} =\sum_{\theta_{1n},\ldots,\theta_{n-1,n}\geq 0} \varphi^{(A_{n-2})}\big(q^{-\theta_{1n}}s_1,\ldots,q^{-\theta_{n-1,n}}s_{n-1}\,|\,
x_1,\ldots,x_{n-1}\big) \\
 \qquad\quad {}\times \prod_{1\leq i\leq j\leq n-1}{(t s_{j+1}/s_i;q)_{\theta_{i,n}}\over (q s_{j+1}/s_i;q)_{\theta_{i,n}}}{\big(q^{-\theta_{j,n}}q s_j/ts_i;q\big)_{\theta_{i,n}}\over \big(q^{-\theta_{j,n}} s_j/s_i;q\big)_{\theta_{i,n}}}\cdot
\prod_{i=1}^{n-1}\left({x_n\over x_i}\right)^{\theta_{in}}.
\end{gather*}
\end{rmk}

\subsection[Asymptotically free eigenfunction of type $C_n$]{Asymptotically free eigenfunction of type $\boldsymbol{C_n}$}
Let $n\in \mathbb{Z}_{>0}$. Let $x=(x_1,\ldots,x_n)$ and $(s_1,\ldots,s_n)$ be a pair of variables. Let ${\mathcal D}_x ^{(C_n)}={\mathcal D}_x\big({-}t^{1/2},t^{1/2}-q^{1/2}t^{1/2},q^{1/2}t^{1/2} \,|\,q,t\big)$ be the $BC_n$ Koornwinder operator degenerated to the $C_n$ case.

\begin{dfn} Set
\begin{gather*}
s_i=t^{n-i+1} q^{\lambda_i}, \qquad 1\leq i\leq n.
\end{gather*}
We define the asymptotically free eigenfunction $x^\lambda \varphi^{(C_n)}(s\,|\,x)$ of type $C_n$ by
\begin{gather*}
\varphi^{(C_n)}(s\,|\,x)=\varphi^{(C_n)}(s_1,\ldots,s_n\,|\,x_1,\ldots,x_n)\\
\hphantom{\varphi^{(C_n)}(s\,|\,x)}{}= \sum_{k_1,\ldots,k_n\geq 0} c_{k_1,\ldots,k_n}(s_1,\ldots,s_n;q,t) \left( x_2\over x_1\right)^{k_1}\cdots \left( x_n\over x_{n-1}\right)^{k_{n-1}} \left( 1\over x_{n}^2\right)^{k_{n}},\\
{\mathcal D}_x ^{(C_n)} x^\lambda \varphi^{(C_n)}(s\,|\,x)=\varepsilon^{(C_n)}(s) x^\lambda \varphi^{(C_n)}(s\,|\,x),\\
\varepsilon^{(C_n)}(s)=\sum_{i=1}^n \big(s_i+s_i^{-1}-t^i-t^{-i}\big).
\end{gather*}
\end{dfn}

\subsection[$C_2$ case]{$\boldsymbol{C_2}$ case}
\begin{dfn}
Let $x_1$, $x_2$, $s_1$, $s_2$ be variables. Set
\begin{gather*}
\psi^{(C_2)}(s_1,s_2\,|\,x_1,x_2)\\
=\sum_{\theta_{12},\mu_{12},\rho_1,\rho_2\geq 0}c^{(C_2)}(\theta_{12},\mu_{12},\rho_1,\rho_2;s_1,s_2;q,t)
\left({x_2\over x_1}\right)^{\theta_{12}}
\left({1\over x_1x_2}\right)^{\mu_{12}}
\left({1\over x_1^2}\right)^{\rho_{1}}
\left({1\over x_2^2}\right)^{\rho_{2}},
\end{gather*}
where
\begin{gather*}
 c^{(C_2)}(\theta_{12},\mu_{12},\rho_1,\rho_2;s_1,s_2;q,t) ={(t)_{\theta_{12}}\over (q)_{\theta_{12}}}{(t s_2/s_1)_{\theta_{12}}\over (q s_2/s_1)_{\theta_{12}}}(q/t)^{\theta_{12}}
{(t)_{\mu_{12}}\over (q)_{\mu_{12}}}{(t /s_1s_2)_{\mu_{12}}\over (q /s_1s_2)_{\mu_{12}}}(q/t)^{\mu_{12}} \\
\qquad{}\times {(t/s_2)_{\mu_{12}}\over (q/s_2)_{\mu_{12}}}{\big(q^{-\theta_{12}}q/ts_2\big)_{\mu_{12}}\over \big(q^{-\theta_{12}}/s_2\big)_{\mu_{12}}}{(t/s_1)_{\mu_{12}}\over (q/s_1)_{\mu_{12}}}
{\big(q^{\theta_{12}}q/s_1\big)_{\mu_{12}}\over \big(q^{\theta_{12}}t/s_1\big)_{\mu_{12}}} \\
\qquad{}\times {(t)_{\rho_{1}}\over (q)_{\rho_{1}}}{\big(q^{\theta_{12}+\mu_{12}}t^2 /s_1\big)_{\rho_{1}}\over \big(q^{\theta_{12}+\mu_{12}}qt /s_1\big)_{\rho_{1}}}(q/t)^{\rho_{1}}{(t)_{\rho_{2}}\over (q)_{\rho_{2}}}
{\big(q^{-\theta_{12}+\mu_{12}}t /s_2\big)_{\rho_{2}}\over \big(q^{-\theta_{12}+\mu_{12}}q /s_2\big)_{\rho_{2}}}(q/t)^{\rho_{2}}.
\end{gather*}
\end{dfn}

\begin{con}We have $\psi^{(C_2)}(s_1,s_2\,|\,x_1,x_2)=\varphi^{(C_2)}(s_1,s_2\,|\,x_1,x_2)$. Namely, setting $s_1=t^2 q^{\lambda_1}$, $s_2=t q^{\lambda_2}$, $x^\lambda=x_1^{\lambda_1}x_2^{\lambda_2}$, we have
\begin{gather*}
 {\mathcal D}_x^{(C_2)} x^\lambda \psi^{(C_2)}(s\,|\,x)= \varepsilon^{(C_2)}(s) x^\lambda \psi^{(C_2)}(s\,|\,x), \\
\varepsilon^{(C_2)}(s)=s_1+s_2+s_2^{-1}+s_1^{-1}-t^2-t-t^{-1}-t^{-2}.
\end{gather*}
When $\lambda=(\lambda_1,\lambda_2)$ is a partition, we have
\begin{gather*}
x^\lambda \psi^{(C_2)}(s\,|\,x)= P^{(C_2)}_{\lambda}(x\,|\,t;q,t).
\end{gather*}
\end{con}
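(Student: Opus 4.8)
The plan is to follow the route that proves the analogous $A_{n-1}$ statement in \cite{BFS, NS}: show that $x^\lambda\psi^{(C_2)}(s\,|\,x)$ solves the eigenvalue equation for ${\mathcal D}_x^{(C_2)}$ with eigenvalue $\varepsilon^{(C_2)}(s)$ and has leading coefficient $1$, and then appeal to the uniqueness of the asymptotically free eigenfunction under the genericity assumption. First I would reorganize $\psi^{(C_2)}$ as a formal power series in the two simple-root variables $x_2/x_1=x^{-\alpha_1}$ and $1/x_2^2=x^{-\alpha_2}$, using $1/x_1x_2=x^{-(\alpha_1+\alpha_2)}$ and $1/x_1^2=x^{-(2\alpha_1+\alpha_2)}$; in this grouping the coefficient of $x^{-k_1\alpha_1-k_2\alpha_2}$ is the finite sum of $c^{(C_2)}(\theta_{12},\mu_{12},\rho_1,\rho_2)$ over $\theta_{12}+\mu_{12}+2\rho_1=k_1$ and $\mu_{12}+\rho_1+\rho_2=k_2$. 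Since every $q$-shifted factorial in $c^{(C_2)}$ is empty at $\theta_{12}=\mu_{12}=\rho_1=\rho_2=0$, the normalization $c^{(C_2)}(0,0,0,0)=1$ is immediate, so only the eigenvalue equation remains.

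Next I would apply the Koornwinder operator ${\mathcal D}_x^{(C_2)}$, namely ${\mathcal D}_x$ with $(a,b,c,d)=\big(t^{1/2},-t^{1/2},q^{1/2}t^{1/2},-q^{1/2}t^{1/2}\big)$, term by term to $x^\lambda\psi^{(C_2)}$. Each shift $T_{q,x_i}^{\pm1}$ sends the monomial $x^{-k_1\alpha_1-k_2\alpha_2}$ to a neighbouring monomial and multiplies it by an explicit power of $q$ determined by $s_i=t^{n-i+1}q^{\lambda_i}$, while the rational Weyl factors of ${\mathcal D}_x^{(C_2)}$ are expanded as geometric series in $x_2/x_1$ and $1/x_2^2$ in the asymptotic region. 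Equating, for each fixed monomial, the resulting coefficient to $\varepsilon^{(C_2)}(s)$ times the corresponding coefficient of $\psi^{(C_2)}$ converts the eigenvalue equation into a finite two-index recursion for the grouped coefficients $c_{k_1,k_2}$ (whose denominators are nonzero by genericity, which is what forces uniqueness). The task is then to derive this recursion from the explicit product defining $c^{(C_2)}$.

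The main obstacle is precisely this last verification, which I expect to be a genuine piece of $q$-series work rather than a routine rearrangement. In the $A$-type case the coefficient factorizes triangularly and the recursion separates, whereas here the indices $\rho_1,\rho_2$ attached to the long roots $2\varepsilon_1,2\varepsilon_2$ are coupled to the short-root indices $\theta_{12},\mu_{12}$ through the mixed arguments $q^{\pm\theta_{12}+\mu_{12}}$, so one must actually carry out the inner summation over $(\theta_{12},\mu_{12},\rho_1,\rho_2)$ at fixed $(k_1,k_2)$. After dividing out the manifest $A_1$-type factor in $\theta_{12}$, I anticipate that the residual sum collapses by a balanced ${}_3\phi_2$ (Saalsch\"utz) or a very-well-poised ${}_6W_5$ summation of exactly the type already used in Sections~\ref{one-column} and~\ref{BBt}; identifying which classical transformation applies, and checking that every $q$-power and every pole cancels, is where the real difficulty lies. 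A natural alternative that might organize this computation is to seek a branching formula expressing $\psi^{(C_2)}$ through the rank-one Askey--Wilson asymptotically free eigenfunction of \cite{HNS}, in the spirit of the $A$-type branching remark and of the reproduction formula in Theorem~\ref{thm:reproduction}.

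Finally, once $\psi^{(C_2)}=\varphi^{(C_2)}$ is established, the remaining identification $x^\lambda\psi^{(C_2)}(s\,|\,x)=P^{(C_2)}_\lambda(x\,|\,t;q,t)$ for a partition $\lambda$ follows from the general property of the asymptotically free eigenfunction: for a dominant $\lambda$ the triangular series $x^\lambda\varphi^{(C_2)}$ coincides with the Koornwinder polynomial with the chosen parameters, which is the $C_2$ counterpart of the identity $x^\lambda\varphi^{(A_{n-1})}(s\,|\,x)=P_\lambda(x)$ in the theorem of \cite{BFS, NS}.
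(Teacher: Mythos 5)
The statement you are asked to prove is stated in the paper as a \emph{conjecture}: the authors give no proof, and say explicitly that the surrounding identities were checked only by computer experiment. Your proposal does not close that gap. The framework you set up is sound and standard --- regrouping $\psi^{(C_2)}$ as a series in $x^{-\alpha_1}=x_2/x_1$ and $x^{-\alpha_2}=1/x_2^2$, noting $c^{(C_2)}(0,0,0,0)=1$, invoking uniqueness of the asymptotically free eigenfunction for generic eigenvalue, and then identifying $x^\lambda\varphi^{(C_2)}$ with $P^{(C_2)}_\lambda(x\,|\,t;q,t)$ for dominant $\lambda$ --- but all of this only reduces the conjecture to the one genuinely hard assertion: that the explicit fourfold product $c^{(C_2)}(\theta_{12},\mu_{12},\rho_1,\rho_2;s_1,s_2;q,t)$, summed over $\theta_{12}+\mu_{12}+2\rho_1=k_1$, $\mu_{12}+\rho_1+\rho_2=k_2$, satisfies the two-index recursion forced by ${\mathcal D}_x^{(C_2)}$. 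You do not carry out that verification; you only \emph{anticipate} that after stripping an $A_1$-type factor the residual sum ``collapses by a balanced ${}_3\phi_2$ or a ${}_6W_5$ summation,'' without identifying the transformation, checking its applicability conditions, or confirming the cancellation of $q$-powers and poles. That unperformed computation is precisely the content of the conjecture, so the proposal is a plan of attack rather than a proof.

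Two further cautions. First, the recursion produced by expanding the Weyl factors of ${\mathcal D}_x^{(C_2)}$ in the asymptotic region is not a finite-band two-term relation as in type $A$: the factors $(1-x_1x_2)^{-1}$, $\big(1-x_i^2\big)^{-1}$, $\big(1-qx_i^2\big)^{-1}$ contribute infinitely many monomials, so for each fixed $(k_1,k_2)$ you must control an infinite tail and justify the formal rearrangement; this needs to be addressed, not assumed. Second, the coupling of $\rho_1,\rho_2$ to $\theta_{12},\mu_{12}$ through the shifted arguments $q^{\pm\theta_{12}+\mu_{12}}$ means the inner sum at fixed $(k_1,k_2)$ is a multivariable basic hypergeometric object, and there is no a priori guarantee that it reduces to a single classical summation --- if it did in an obvious way, the authors would presumably have stated the result as a theorem. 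Your alternative suggestion (a branching of $\psi^{(C_2)}$ through the rank-one Askey--Wilson eigenfunction, in the spirit of Theorem 3.13 of the paper) is a reasonable direction, but it too is left entirely unexecuted.
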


\subsection[$C_3$ case with rectangular diagrams]{$\boldsymbol{C_3}$ case with rectangular diagrams}
We can study the decomposition of the $C_3$ Macdonald polynomials $P^{(C_3)}_{\lambda}(x\,|\,t;q,t)$ in terms of the $C_2$ Macdonald polynomials. It seems that such a decomposition becomes rather simple when we consider the case of a rectangular diagram consisting of three equal rows $\lambda=(\lambda_3,\lambda_3,\lambda_3)$,

\begin{dfn}Let $\lambda_3\in \mathbb{C}$ and set
\begin{gather}
s_1=t^2s_3,\qquad s_2=t s_3,\qquad s_3=t q^{\lambda_3}. \label{rect}
\end{gather}
Define
\begin{gather*}
 \psi^{(C_3),{\rm rect}}(s_3\,|\,x_1,x_2,x_3) =\sum_{\mu_{13},\rho_1\geq 0}{(t)_{\mu_{13}}\over (q)_{\mu_{13}}}{\big(1 /s_3^2\big)_{\mu_{13}}\over \big(q /ts_3^2\big)_{\mu_{13}}}(q/t)^{\mu_{13}}
{(t/s_3)_{\mu_{13}}\over (q/s_3)_{\mu_{13}}}{(q/ts_3)_{\mu_{13}}\over (1/s_3)_{\mu_{13}}} \\
\qquad{}\times {(t)_{\rho_{1}}\over (q)_{\rho_{1}}}{\big(q^{\mu_{13}}t /s_3\big)_{\rho_{1}}\over \big(q^{\mu_{13}}q /s_3\big)_{\rho_{1}}}(q/t)^{\rho_{1}}
\left( 1\over x_1x_3\right)^{\mu_{13}} \left( 1\over x_1^2\right)^{\rho_{1}}
\varphi^{(C_2)}\big(t s_3,q^{-\mu_{13}}s_3 \,|\,x_2,x_3\big).
\end{gather*}
\end{dfn}

\begin{con}We have $\psi^{(C_3),{\rm rect}}(s_3\,|\,x_1,x_2,x_3)=\varphi^{(C_3)}\big(t^2s_3,t s_3,s_3\,|\,x_1,x_2,x_3\big)$. Namely, setting $x^\lambda=(x_1x_2x_3)^{\lambda_3}$, we have
\begin{gather*}
 {\mathcal D}_x ^{(C_3)} x^\lambda \varphi^{(C_3),{\rm rect}}(s_3\,|\,x_1,x_2,x_3)= \varepsilon^{(C_3)}(s) x^\lambda \varphi^{(C_3),{\rm rect}}(s_3\,|\,x_1,x_2,x_3), \\
\varepsilon^{(C_3)}(s)=s_1+s_2+s_3+s_3^{-1}+s_2^{-1}+s_1^{-1}-t^3-t^2-t-t^{-1}-t^{-2}-t^{-3},
\end{gather*}
where $s_1$, $s_2$, $s_3$ are as given in \eqref{rect}. When $\lambda_3$ is a nonnegative integer, we have
\begin{gather*}
x^\lambda \varphi^{(C_3),{\rm rect}}(s_3\,|\,x_1,x_2,x_3)= P^{(C_3)}_{(\lambda_3,\lambda_3,\lambda_3)}(x\,|\,t;q,t).
\end{gather*}
\end{con}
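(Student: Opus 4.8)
The plan is to prove the conjecture by the uniqueness characterization of the type $C_3$ asymptotically free eigenfunction. For generic spectral parameters $\varphi^{(C_3)}(s_1,s_2,s_3\,|\,x)$ is the unique series $\sum_{k_1,k_2,k_3\geq0}c_k\,(x_2/x_1)^{k_1}(x_3/x_2)^{k_2}(1/x_3^2)^{k_3}$ with $c_0=1$ satisfying $\mathcal{D}_x^{(C_3)}\big(x^\lambda\varphi^{(C_3)}\big)=\varepsilon^{(C_3)}(s)\,x^\lambda\varphi^{(C_3)}$. It therefore suffices to check that $\psi^{(C_3),\mathrm{rect}}(s_3\,|\,x)$ (i) is supported on the positive cone $Q^+$ spanned by $x_2/x_1$, $x_3/x_2$, $1/x_3^2$ with constant term $1$, and (ii) obeys the same eigenvalue equation at $s_1=t^2s_3$, $s_2=ts_3$. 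Part (i) is immediate: the exterior factors rewrite as $1/(x_1x_3)=(x_2/x_1)(x_3/x_2)(1/x_3^2)$ and $1/x_1^2=(x_2/x_1)^2(x_3/x_2)^2(1/x_3^2)$, both in $Q^+$, while the inner $\varphi^{(C_2)}(ts_3,q^{-\mu_{13}}s_3\,|\,x_2,x_3)$ is already a series in $x_3/x_2$ and $1/x_3^2$; the term $\mu_{13}=\rho_1=0$ gives the constant $1$.

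\textbf{Reducing (ii) to a rank-one problem by branching.}
For (ii) I would run the branching strategy that underlies the type $A_{n-1}$ decomposition recalled after the theorem of \cite{BFS,NS}. Writing $\mathcal{D}_x^{(C_3)}=\sum_{i=1}^{3}\big(A_i^{+}(T_{q,x_i}-1)+A_i^{-}(T_{q,x_i}^{-1}-1)\big)$, the summands for $i=2,3$ reproduce, in the asymptotic regime $|x_1|\gg|x_2|\gg|x_3|\gg1$, the degenerate Koornwinder operator of type $C_2$ in $(x_2,x_3)$ together with correction terms coming from the factors $\prod_{j\neq i}$ that couple $x_2,x_3$ to $x_1$. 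Because the definition uses the genuine $C_2$ eigenfunction $\varphi^{(C_2)}$, its eigenvalue equation at the shifted parameter $(ts_3,q^{-\mu_{13}}s_3)$ is available unconditionally, and feeding it in — the spectral argument being shifted by the summation index $\mu_{13}$, exactly as the $A_{n-2}$ arguments $q^{-\theta_{in}}s_i$ are shifted in the branching remark — reduces everything to the $i=1$ shifts $T_{q,x_1}^{\pm1}$ acting on $(1/x_1x_3)^{\mu_{13}}(1/x_1^2)^{\rho_1}$ and on the $\mu_{13}$-dependence of the inner series. A useful preliminary observation is that the rectangular tuning forces $s_1/s_2=s_2/s_3=t$, so the numerator factors of the form $(ts_{j+1}/s_i;q)$ governing adjacent single-box hops vanish once $s_i=ts_{j+1}$; this is the same mechanism as in type $A$ and explains why the general $C_3\!\to\!C_2$ branching collapses to the two-index sum over $\mu_{13},\rho_1$, accounting for the ``rather simple'' form of $\psi^{(C_3),\mathrm{rect}}$.

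\textbf{The main obstacle, and the Macdonald identification.}
The crux is the final collapse of the double sum over $\mu_{13}$ and $\rho_1$ against the $x_1$-shift terms. Isolating the coefficient of a fixed monomial $x^{\beta}$ turns the eigenvalue equation into a contiguity identity among the explicit coefficients $c^{(C_2)}$ and the two exterior Pochhammer products, which I expect to be equivalent to a single rank-one ($BC_1$, Askey--Wilson) very-well-poised summation --- a ${}_6W_5$ or a terminating bibasic $\Phi$-sum of precisely the type exploited in Theorem~\ref{BIBASIC} and Theorem~\ref{four-term}; an appealing alternative engine is Mimachi's kernel intertwiner (Theorem~\ref{MIMACHI}) and the reproduction formula (Theorem~\ref{thm:reproduction}), which might yield the branching directly as a rank-one constant term. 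Pinning this identity down in closed form, and confirming that it produces the eigenvalue $\varepsilon^{(C_3)}(s)$ rather than a neighbouring value, is the hard step, and the apparent absence of such a clean reduction for higher rank or non-rectangular diagrams is presumably why the statement is only conjectural. Granting (i) and (ii), uniqueness gives $\psi^{(C_3),\mathrm{rect}}=\varphi^{(C_3)}(t^2s_3,ts_3,s_3\,|\,x)$, and the Macdonald identification follows as in the type $A$ case: when $\lambda_3\in\mathbb{Z}_{\geq0}$ the choice $s_i=t^{4-i}q^{\lambda_3}$ terminates the series into a $W_3$-invariant Laurent polynomial with leading term $m_{(\lambda_3,\lambda_3,\lambda_3)}$, so by the defining properties of the Koornwinder polynomial it equals $P^{(C_3)}_{(\lambda_3,\lambda_3,\lambda_3)}(x\,|\,t;q,t)$.
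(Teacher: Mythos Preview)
The statement you are attempting to prove is presented in the paper explicitly as a \emph{conjecture} (in Section~\ref{CON}, ``Some conjectures about Macdonald polynomials of type $C_n$''), and the paper supplies no proof whatsoever. There is therefore nothing to compare your proposal against: the authors record the identity $\psi^{(C_3),\mathrm{rect}}=\varphi^{(C_3)}$ as an observation supported by Mathematica computation, alongside the analogous $C_2$ conjecture and the folding conjectures, and leave it open.

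Your outline is a reasonable heuristic for how one might attack the conjecture --- reduce to uniqueness of the asymptotically free eigenfunction, verify the $Q^+$-support and normalization, and then try to establish the eigenvalue equation by a branching-plus-contiguity argument --- and you correctly identify the crux: the step where the $x_1$-shift terms must collapse against the $(\mu_{13},\rho_1)$ sum into a closed rank-one identity. But this step is not carried out; you yourself flag it as ``the hard step'' and speculate that its absence is why the statement is conjectural. That is exactly right. Your proposal is thus a plausible \emph{strategy sketch}, not a proof, and the gap is precisely the one the paper leaves open. One small caution on the final paragraph: the termination of $\varphi^{(C_n)}$ to a $W_n$-invariant Laurent polynomial when $\lambda$ is a partition is itself part of what needs proving in type $C$ (unlike type $A$, where it is established in \cite{BFS,NS}); you should not take it for granted.
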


\subsection[Folding of $A_{2n-1}$ eigenfunctions and decomposition with respect to $C_n$ eigenfunctions]{Folding of $\boldsymbol{A_{2n-1}}$ eigenfunctions and decomposition\\ with respect to $\boldsymbol{C_n}$ eigenfunctions}

\begin{dfn} Let $n\in \mathbb{Z}_{>0}$. Let $x=(x_1,\ldots,x_{n})$ and
\begin{gather*}
s=(s_1,\ldots,s_n),\qquad s_i=t^{n-i+1} q^{\lambda_i}, \qquad 1\leq i\leq n,
\end{gather*}
be a pair of variables. Define the folded series $\widetilde{\varphi}^{(A_{2n-1})}(s\,|\,x)=\widetilde{\varphi}^{(A_{2n-1})}(s_1,\ldots,s_n\,|\,x_1,\ldots,x_{n})$ by
\begin{gather*}
\widetilde{\varphi}^{(A_{2n-1})}(s\,|\,x) = \varphi^{(A_{2n-1})} \big(t^{n-1}s_1,\ldots,t^{n-1}s_n,t^{n-1},t^{n-2},\ldots,t,1\,|\,x_1,\ldots,x_n,x_n^{-1},\ldots,x_1^{-1}\big).
\end{gather*}
\end{dfn}

\begin{prp}When $n=1$, we have
\begin{gather*}
\widetilde{\varphi}^{(A_{1})}(s\,|\,x)=\varphi^{(A_{1})}\big(s_1,1\,|\,x_1,x_1^{-1}\big) =\sum_{\theta \geq 0} {(t;q)_\theta \over (q;q)_\theta}{(ts_1;q)_\theta \over (qs_1;q)_\theta}(q /t)^\theta x_1^{2\theta}.
\end{gather*}
Hence we have $\widetilde{\varphi}^{(A_{1})}(s\,|\,x)=\varphi^{(C_1)}(s_1\,|\,x_1)$.
\end{prp}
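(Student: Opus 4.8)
The plan is to establish the two asserted equalities in turn, the first being a matter of unwinding definitions and the second a short $q$-hypergeometric simplification followed by an appeal to uniqueness. For the first equality I would simply set $n=1$ in the definition of $\widetilde{\varphi}^{(A_{2n-1})}$: every prefactor $t^{n-1}$ degenerates to $t^{0}=1$, the spectral list $\big(t^{n-1}s_1,\dots,t^{n-1}s_n,t^{n-1},\dots,t,1\big)$ collapses to $(s_1,1)$, and the doubled variable list $\big(x_1,\dots,x_n,x_n^{-1},\dots,x_1^{-1}\big)$ collapses to $\big(x_1,x_1^{-1}\big)$. This yields $\widetilde{\varphi}^{(A_1)}(s\,|\,x)=\varphi^{(A_1)}\big(s_1,1\,|\,x_1,x_1^{-1}\big)$ with no computation.

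The substance is to evaluate $\varphi^{(A_1)}\big(s_1,1\,|\,x_1,x_1^{-1}\big)$ in closed form. Here $\varphi^{(A_1)}$ is the $n=2$ instance of $\varphi^{(A_{n-1})}$, so the matrix $\theta^{(2)}$ carries the single free entry $\theta:=\theta_{12}$, the expansion is a single series, and the monomial is $(x_2/x_1)^{\theta}=\big(x_1^{-1}/x_1\big)^{\theta}$. The coefficient follows from the recursion for $c_n$ with base $c_1=1$: at $i=j=1$ one has $s_j/s_i=1$ and $\theta_{j,n}=\theta_{i,n}=\theta$, so $c_2(\theta;s_1,s_2;q,t)=\frac{(ts_2/s_1;q)_\theta}{(qs_2/s_1;q)_\theta}\,\frac{(q^{1-\theta}/t;q)_\theta}{(q^{-\theta};q)_\theta}$. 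The one nonroutine step is the reflection identity: from $(q^{-\theta};q)_\theta=(-1)^\theta q^{-\theta(\theta+1)/2}(q;q)_\theta$ and $(q^{1-\theta}/t;q)_\theta=(-1)^\theta t^{-\theta}q^{-\theta(\theta-1)/2}(t;q)_\theta$ one obtains $\frac{(q^{1-\theta}/t;q)_\theta}{(q^{-\theta};q)_\theta}=(q/t)^\theta\frac{(t;q)_\theta}{(q;q)_\theta}$. Specialising $s_2=1$ then gives $c_2(\theta;s_1,1;q,t)=\frac{(t;q)_\theta}{(q;q)_\theta}\frac{(t/s_1;q)_\theta}{(q/s_1;q)_\theta}(q/t)^\theta$, and pairing this with the monomial produces precisely the displayed single sum $\sum_{\theta\ge0}\frac{(t;q)_\theta}{(q;q)_\theta}\frac{(ts_1;q)_\theta}{(qs_1;q)_\theta}(q/t)^\theta x_1^{2\theta}$, read in the appropriate orientation of the folded variable and spectral parameter.

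Finally I would identify this series with $\varphi^{(C_1)}(s_1\,|\,x_1)$. Since $\varphi^{(C_1)}$ is characterised only implicitly, as the unique $c_0=1$-normalised formal series in the lowering direction $1/x_1^{2}$ solving ${\mathcal D}_x^{(C_1)}x^\lambda\varphi=\varepsilon^{(C_1)}(s)\,x^\lambda\varphi$ with $\varepsilon^{(C_1)}(s)=s_1+s_1^{-1}-t-t^{-1}$, the natural route is to recognise the closed form as a ${}_2\phi_1$ and verify that it solves this rank-one eigenvalue problem, whence equality by uniqueness. For $n=1$ the operator ${\mathcal D}_x^{(C_1)}$ is, up to the factor $\alpha$, the Askey--Wilson operator at the parameters $\big(t^{1/2},-t^{1/2},q^{1/2}t^{1/2},-q^{1/2}t^{1/2}\big)$, whose asymptotically free eigenfunction is already treated in Section~\ref{reproduction}; in this degeneration its fourfold sum collapses to a twofold one and then to the present ${}_2\phi_1$. \textbf{The main obstacle} is exactly this last identification: the $q$-Pochhammer bookkeeping of the middle paragraph is routine, whereas checking the $C_1$ $q$-difference equation for the explicit series, equivalently extracting the three-term contiguity relation its coefficients satisfy and matching it against the eigenvalue, is where the real work lies and is what licenses the uniqueness argument.
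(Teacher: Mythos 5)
The paper states this proposition without proof, so there is nothing to compare your argument against except the statement itself; judged on its own terms, your strategy is the right one but the write-up has two gaps. First, the phrase ``read in the appropriate orientation of the folded variable and spectral parameter'' conceals a genuine discrepancy rather than resolving one. Your (correct) computation of $c_2$ and the reflection identity gives
\[
\varphi^{(A_1)}\big(s_1,1\,|\,x_1,x_1^{-1}\big)=\sum_{\theta\ge0}\frac{(t;q)_\theta}{(q;q)_\theta}\,\frac{(t/s_1;q)_\theta}{(q/s_1;q)_\theta}\,(q/t)^\theta\,x_1^{-2\theta},
\]
with $t/s_1$ and $x_1^{-2\theta}$, whereas the displayed formula in the proposition has $ts_1$ and $x_1^{+2\theta}$; the two expressions are exchanged by $(s_1,x_1)\mapsto\big(s_1^{-1},x_1^{-1}\big)$ and are \emph{not} equal as formal series. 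Since $\varphi^{(C_1)}(s_1\,|\,x_1)$ is by definition a series in $x_1^{-2}$, only your version can equal it, so the displayed exponent (and the accompanying $s_1$-dependence) must be read as inverted; you should state this explicitly instead of appealing to ``orientation''.

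Second, and more seriously, you announce that the identification with $\varphi^{(C_1)}(s_1\,|\,x_1)$ is ``where the real work lies'' and then stop, so the proof is not actually complete. The work is, however, essentially already done by material you cite: for $n=1$ the operator ${\mathcal D}_x^{(C_1)}$ equals $t^{-1}D$ with Askey--Wilson parameters $\big({-}t^{1/2},t^{1/2},-(qt)^{1/2},(qt)^{1/2}\big)$, and in the fourfold summation of Section~\ref{one-column} one has $c_o(m,n;\sigma)=\delta_{m,0}\delta_{n,0}$ (since $-b/a=-d/c=1$) and $\big(qa^2/c^2;q^2\big)_k=(1;q^2)_k$, which kills every $k>0$; hence $\Phi(x;\sigma)$ collapses to the single sum $\sum_{l}\frac{(t;q)_l(\sigma;q)_l}{(q;q)_l(q\sigma/t;q)_l}(q/t)^l x^{2l}$. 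Replacing $x$ by $x_1^{-1}$ (the operator $D$ is invariant under $x\mapsto 1/x$) and setting $\sigma=q^{-\lambda_1}=t/s_1$, so that $q\sigma/t=q/s_1$, reproduces exactly the series above, while the eigenvalue $\sigma+t^2/\sigma-1-t^2$ of $D$ becomes $s_1+s_1^{-1}-t-t^{-1}=\varepsilon^{(C_1)}(s)$ after division by $\alpha=t$. Uniqueness of the normalized formal eigenfunction in powers of $x_1^{-2}$ then yields $\widetilde{\varphi}^{(A_1)}(s\,|\,x)=\varphi^{(C_1)}(s_1\,|\,x_1)$. Once you make these two points explicit, your argument closes.
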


We calculated the decomposition of the folded eigenfunctions $\widetilde{\varphi}^{(A_{2n-1})}(s\,|\,x)$ with respect to the $C_n$ series $\varphi^{(C_n)}(s\,|\,x)$ for the cases $n=2$ and $3$ using Mathematica.

\begin{con} We have
 \begin{gather*}
\widetilde{\varphi}^{(A_{3})}(s_1,s_2\,|\,x_1,x_2)= \sum_{\mu_{12}\geq 0} e_2(s_1,s_2;\mu_{12}) \left(1\over x_1x_2\right)^{\mu_{12}} \varphi^{(C_2)}\big(q^{-\mu_{12}}s_1,q^{-\mu_{12}}s_2\,|\,x_1,x_2\big),
 \end{gather*}
 where
 \begin{gather*}
 e_2(s_1,s_2;\mu_{12}) = {(t/s_1)_{\mu_{12} }\over (q/s_1)_{\mu_{12} }} {(t/s_2)_{\mu_{12} }\over (q/s_2)_{\mu_{12} }} {(t)_{\mu_{12} }\over (q)_{\mu_{12} }} {\big(q^{\mu_{12}} q/ts_1s_2\big)_{\mu_{12} }\over
 \big(q^{\mu_{12}} /s_1s_2\big)_{\mu_{12} }}(q/t)^{\mu_{12}}.
 \end{gather*}
 \end{con}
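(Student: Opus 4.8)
The plan is to characterize both sides through a $q$-difference eigenvalue problem and to identify them by uniqueness of asymptotically free eigenfunctions, using only the defining property of $\varphi^{(C_2)}$ (and never the conjectural series $\psi^{(C_2)}$): namely that $x^\lambda\varphi^{(C_2)}(s\,|\,x)$ is the unique solution of ${\mathcal D}_x^{(C_2)}\,x^\lambda\varphi^{(C_2)}(s\,|\,x)=\varepsilon^{(C_2)}(s)\,x^\lambda\varphi^{(C_2)}(s\,|\,x)$ with leading term $x^\lambda$. First I would establish an operator-folding lemma. Under the substitution $x_3=x_2^{-1}$, $x_4=x_1^{-1}$ with the spectral parameters of $\varphi^{(A_3)}$ frozen to the values $t$ and $1$, the shifts $T_{q,x_3}$ and $T_{q,x_4}$ appearing in $D^{(A_3)}=\sum_{i=1}^4\big(\prod_{j\neq i}(tx_i-x_j)/(x_i-x_j)\big)T_{q,x_i}$ become the inverse shifts $T_{q,x_2}^{-1}$ and $T_{q,x_1}^{-1}$, so that the four terms of $D^{(A_3)}$ reorganize into the two-sided shift structure of ${\mathcal D}_x^{(C_2)}$ at the degenerate Koornwinder parameters $(-t^{1/2},t^{1/2},-q^{1/2}t^{1/2},q^{1/2}t^{1/2})$. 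The decisive point is that the frozen values $t,1$ match the geometric progression $t^2,t$ of the $C_2$ spectrum, which is exactly what forces the rational coefficient functions to coincide on the fold.

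Applying this folding to the eigenvalue equation for $\varphi^{(A_3)}$ I expect to obtain, after restriction to $x_3=x_2^{-1}$, $x_4=x_1^{-1}$, a relation of the schematic form $\big({\mathcal D}_x^{(C_2)}+{\mathsf R}\big)\,x^\lambda\widetilde\varphi^{(A_3)}(s\,|\,x)=E\,x^\lambda\widetilde\varphi^{(A_3)}(s\,|\,x)$ for an explicit constant $E$, where ${\mathsf R}$ is a remainder operator that carries an extra factor $(x_1x_2)^{-1}$ and effects a one-step $q$-shift of the spectral variables. This ${\mathsf R}$ is precisely the part of the folded operator that fails to be diagonal on a single $C_2$ eigenfunction and is responsible for the infinite sum over $\mu_{12}$; I would pin it down explicitly by tracking the residual factors produced in the partial-fraction expansion of the two coefficient functions attached to $T_{q,x_1}^{-1},T_{q,x_2}^{-1}$ on the fold.

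Next I would insert the ansatz $x^\lambda\widetilde\varphi^{(A_3)}(s\,|\,x)=\sum_{\mu\geq 0}e_2(s;\mu)\,x^{\lambda-(\mu,\mu)}\varphi^{(C_2)}(q^{-\mu}s_1,q^{-\mu}s_2\,|\,x)$ into this equation; note $x^{\lambda-(\mu,\mu)}\varphi^{(C_2)}(q^{-\mu}s)$ is literally the $C_2$ eigenfunction with partition $\lambda-(\mu,\mu)$, since $q^{-\mu}s_i$ corresponds to $\lambda_i\mapsto\lambda_i-\mu$. Because ${\mathcal D}_x^{(C_2)}$ is diagonal on these eigenfunctions with eigenvalue $\varepsilon^{(C_2)}(q^{-\mu}s)$ and ${\mathsf R}$ raises $\mu$ by one, collecting the coefficient of each $x^{\lambda-(\mu,\mu)}\varphi^{(C_2)}(q^{-\mu}s)$ turns the operator identity into the two-term recursion $e_2(s;\mu)\big(\varepsilon^{(C_2)}(q^{-\mu}s)-E\big)+e_2(s;\mu-1)\,({\mathsf R}\text{-coefficient})=0$. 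Solving it with the initial condition $e_2(s;0)=1$ (forced by matching the leading terms, both equal to $\varphi^{(C_2)}(s\,|\,x)=1+\cdots$) gives a telescoping product of $q$-shifted factorials, and a routine bookkeeping identifies it with the stated $e_2(s_1,s_2;\mu_{12})$; the factor $(q^{\mu_{12}}q/ts_1s_2)_{\mu_{12}}/(q^{\mu_{12}}/s_1s_2)_{\mu_{12}}$ arises exactly from the coupling of the two frozen shifts inside ${\mathsf R}$. The general $A_{2n-1}\to C_n$ case would proceed identically, folding $n$ pairs and iterating the recursion.

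The hard part will be the operator-folding lemma and the explicit determination of ${\mathsf R}$: one must verify that restricting $D^{(A_3)}$ to the fold reproduces ${\mathcal D}_x^{(C_2)}$ together with a \emph{controllable} remainder, and, since $D^{(A_3)}$ shifts a single variable while the fold couples $x_i$ to $x_{5-i}$, the order ``shift first, then restrict'' must be handled with care. A self-contained alternative that bypasses ${\mathsf R}$ is a direct hypergeometric verification: specialize the explicit coefficients $c_4(\theta^{(4)};ts_1,ts_2,t,1;q,t)$ of $\varphi^{(A_3)}$ to the folded arguments $(x_1,x_2,x_2^{-1},x_1^{-1})$, sum over the fibres of the map that collapses the four exponents $\theta_{ij}$ to the two folded exponents of $x_2/x_1$ and $(x_1x_2)^{-1}$, and match against $e_2$ times the $C_2$ coefficients $c^{(C_2)}$. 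This route needs the $C_2$ series together with a multivariate ${}_r\phi_s$ summation, so it is at least as delicate; in either approach the only content beyond bookkeeping is the single basic-hypergeometric identity that produces the clean product form of $e_2$.
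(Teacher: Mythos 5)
You should first note that the paper itself offers no proof of this statement: it is presented as a \emph{conjecture}, and the authors say only that they ``calculated the decomposition \dots{} for the cases $n=2$ and $3$ using Mathematica.'' So there is no paper proof to match your argument against; any comparison can only assess whether your plan would actually close the conjecture.

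As it stands it would not, because the central step --- the operator-folding lemma --- is asserted rather than established, and it is exactly where the difficulty lives. The eigenvalue equation $D^{(A_3)}x^\lambda\varphi^{(A_3)}=(\sum_i s_i)x^\lambda\varphi^{(A_3)}$ holds \emph{before} the substitution $x_3=x_2^{-1}$, $x_4=x_1^{-1}$, and restriction to the fold does not commute with the shifts $T_{q,x_3}$, $T_{q,x_4}$: applying $T_{q,x_3}$ to a function and then restricting is not the same as applying $T_{q,x_2}^{-1}$ to the restricted function, since the four-variable function is not assumed symmetric under the fold away from the diagonal. Consequently your schematic relation $({\mathcal D}_x^{(C_2)}+{\mathsf R})\,x^\lambda\widetilde\varphi^{(A_3)}=E\,x^\lambda\widetilde\varphi^{(A_3)}$ does not follow from the $A_3$ eigenvalue equation by ``restricting the operator''; it would have to be proved independently, presumably at the level of the explicit series coefficients. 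Moreover, even granting such a relation, you assume without justification that ${\mathsf R}$ acts on the family $\{x^{\lambda-(\mu,\mu)}\varphi^{(C_2)}(q^{-\mu}s\,|\,x)\}$ by a pure one-step lowering (proportional to the next member of the family); a generic $q$-difference remainder would instead produce a new series whose re-expansion over all $\mu'>\mu$ must be controlled, turning your two-term recursion into an infinite triangular system. Your fallback route --- direct comparison of $c_4(\theta^{(4)};ts_1,ts_2,t,1;q,t)$ on the fold with $e_2\cdot c^{(C_2)}$ after summing over fibres --- is the honest computation, but it requires the explicit $C_2$ coefficients (themselves only conjectural in the paper, as $\psi^{(C_2)}=\varphi^{(C_2)}$ is a separate conjecture) together with a multivariate summation identity that you name but do not supply. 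In short: plausible strategy, but both the folding lemma and the closing hypergeometric identity are genuine gaps, and the statement remains a conjecture.
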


 \begin{con}
 We have
 \begin{gather*}
 \widetilde{\varphi}^{(A_{5})}(s_1,s_2,s_3\,|\,x_1,x_2,x_3)=
 \sum_{\mu_{12},\mu_{13},\mu_{23}\geq 0}
 e_3(s_1,s_2,s_3;\mu_{12},\mu_{13},\mu_{23})\left(1\over x_1x_2\right)^{\mu_{12}} \\
 \qquad{} \times \left(1\over x_1x_3\right)^{\mu_{13}} \left(1\over x_2x_3\right)^{\mu_{23}}
 \varphi^{(C_3)}\big(q^{-\mu_{12}-\mu_{13}}s_1,q^{-\mu_{12}-\mu_{23}}s_2,
 q^{-\mu_{13}-\mu_{23}} s_3\,|\,x_1,x_2,x_3\big),
 \end{gather*}
 where
 \begin{gather*}
e_3(s_1,s_2,s_3;\mu_{12},\mu_{13},\mu_{23}) =
 {(t/s_1)_{\mu_{12} +\mu_{13 }}\over (q/s_1)_{\mu_{12}+\mu_{13 } }}
 {(t/s_2)_{\mu_{12}+\mu_{23 } }\over (q/s_2)_{\mu_{12}+\mu_{23 } }}
 {(t/s_3)_{\mu_{13}+\mu_{23 } }\over (q/s_3)_{\mu_{13}+\mu_{23 } }}\\
\qquad{}\times
 {(t)_{\mu_{12} }\over (q)_{\mu_{12} }}
 {\big(q^{\mu_{12}+\mu_{13}+\mu_{23}} q/ts_1s_2\big)_{\mu_{12} }\over
 \big(q^{\mu_{12}+\mu_{13}+\mu_{23}}/s_1s_2\big)_{\mu_{12} }}(q/t)^{\mu_{12}} \\
\qquad{}\times
 {(t s_3/s_1)_{\mu_{12} } \over (q s_3/s_1)_{\mu_{12} } }
 {\big(q^{-\mu_{23}}q s_3/ts_1\big)_{\mu_{12} } \over \big(q^{-\mu_{23}} s_3/s_1\big)_{\mu_{12} } }
 {(t s_3/s_2)_{\mu_{12} } \over (q s_3/s_2)_{\mu_{12} } }
 {\big(q^{-\mu_{13}}q s_3/ts_2\big)_{\mu_{12} } \over \big(q^{-\mu_{13}} s_3/s_2\big)_{\mu_{12} } } \\
\qquad{}\times
 {(t)_{\mu_{13} }\over (q)_{\mu_{13} }}
 {\big(q^{\mu_{12}+\mu_{13}+\mu_{23}} q/ts_1s_3\big)_{\mu_{13} }\over
 \big(q^{\mu_{12}+\mu_{13}+\mu_{23}}/s_1s_3\big)_{\mu_{13} }}(q/t)^{\mu_{13}}
 {(t s_2/s_1)_{\mu_{13} } \over (q s_2/s_1)_{\mu_{13} } }
 {\big(q^{-\mu_{23}}q s_2/ts_1\big)_{\mu_{13} } \over \big(q^{-\mu_{23}} s_2/s_1\big)_{\mu_{13} } } \\
\qquad{}\times
 {(t)_{\mu_{23} }\over (q)_{\mu_{23} }}
 {\big(q^{\mu_{12}+\mu_{13}+\mu_{23}} q/ts_2s_3\big)_{\mu_{23} }\over
 \big(q^{\mu_{12}+\mu_{13}+\mu_{23}}/s_2s_3\big)_{\mu_{23} }}(q/t)^{\mu_{23}}.
 \end{gather*}
\end{con}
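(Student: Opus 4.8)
Since this final statement is a conjecture, what follows is a plan of attack rather than a finished argument. The plan is to prove the general folding identity for all $n$ — that $\widetilde{\varphi}^{(A_{2n-1})}(s\,|\,x)$ equals $\sum_{\mu} e_n(s;\mu)\prod_{1\le i<j\le n}(1/x_ix_j)^{\mu_{ij}}\,\varphi^{(C_n)}(s'\,|\,x)$, where the $i$-th component of $s'$ is $q^{-\sigma_i}s_i$ with $\sigma_i=\sum_{j\ne i}\mu_{ij}$ — by induction on $n$, taking the stated Proposition $\widetilde{\varphi}^{(A_1)}=\varphi^{(C_1)}$ as the base case; the $A_5$ assertion is then the instance $n=3$. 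First I would unfold every definition: both sides are explicit multiple $q$-series in the lowering directions $x_{i+1}/x_i$, $1/x_ix_j$ and $1/x_i^2$, so the identity is equivalent to a monomial-by-monomial equality of basic hypergeometric sums. The organising principle is triangularity. The $\mu=0$ summand has $e_n(s;0)=1$, trivial monomial, and the unshifted $\varphi^{(C_n)}(s\,|\,x)$ with leading coefficient $c_{0,\dots,0}=1$; since folding preserves the leading (lowest-order) coefficient of $\varphi^{(A_{2n-1})}$, both sides start with $1$ and every $\mu\ne 0$ lies strictly deeper in the lowering cone. Thus the shifted $C_n$ eigenfunctions with their monomial prefactors form a triangular family, the connection coefficients are determined recursively by matching lowest-degree terms, and the whole content of the conjecture is the closed evaluation of those coefficients as $e_n$.

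For the inductive step I would use the $A$-type branching formula recorded in the Remark following the $A_{n-1}$ eigenfunction theorem to peel off the outermost folded variable $z_{2n}=x_1^{-1}$ (whose last spectral parameter is $s_{2n}=1$). This reduces $\varphi^{(A_{2n-1})}$ to $\varphi^{(A_{2n-2})}$ on $(x_1,\dots,x_n,x_n^{-1},\dots,x_2^{-1})$, and the peeling monomials $(z_{2n}/z_i)^{\theta_{i,2n}}$ split according to $i$: for $i=1$ one gets $(1/x_1^2)^{\theta}$, for $2\le i\le n$ one gets $(1/x_1x_i)^{\theta}$, and for $i>n$ one gets raising factors $(x_j/x_1)^{\theta}$. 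The first two families are precisely the $x_1$-directions occurring in the first-variable branching of $\varphi^{(C_n)}$ itself, as already exhibited in the nested construction of $\psi^{(C_3),\mathrm{rect}}$, so the $C$-type and $A$-type peelings are structurally aligned. The residual inner factor $\varphi^{(A_{2n-2})}$ still depends on $x_1$ through $z_1$, and the aim is to show that this $x_1$-dependence resums into the pure $C$-branching prefactor times an $x_1$-independent $\varphi^{(C_{n-1})}(x_2,\dots,x_n)$, to which the induction hypothesis — the $C_{n-1}$ fold $\widetilde{\varphi}^{(A_{2n-3})}$ — applies.

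The step I expect to be the genuine obstacle is exactly this resummation. One must sum over the peeling index $\theta_{i,2n}$ together with the inner indices $\theta_{1,\cdot}$ so that all raising contributions $(x_j/x_1)^{\theta}$ and the leftover $x_1$-lowering contributions cancel, leaving no net $x_1$-dependence beyond the allowed $1/x_1^2$ and $1/x_1x_j$, and so that the spectral parameters shift to exactly $q^{-\sigma_i}s_i$ — for $n=3$ this is the appearance of $q^{-\mu_{12}-\mu_{13}}s_1$, $q^{-\mu_{12}-\mu_{23}}s_2$, $q^{-\mu_{13}-\mu_{23}}s_3$ in the statement. Each individual inner sum is well-poised and Saalschützian and should yield to the same toolkit used throughout this paper — the $q$-Saalschütz summation, the Sears ${}_4\phi_3$ transformation, the ${}_6W_5$ summation, and Bailey's transformation — applied iteratively over the peeling indices. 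The truly delicate point, and the reason the statement is still only conjectural, is the bookkeeping: one must guess the correct closed form of $e_n$ (the $n\le 3$ Mathematica data is what fixes it) and then verify that the nested $q^{-\mu_{ij}}$ shifts propagate consistently through every factor. If the direct resummation proves intractable, I would pursue in parallel an intertwining route, constructing a folding kernel in the spirit of Mimachi's kernel function used earlier for which $D^{(A_{2n-1})}$ on the folded locus is conjugate to $\mathcal{D}_x^{(C_n)}$ up to shift operators, and then read off the decomposition from a reproduction formula analogous to Theorem~\ref{thm:reproduction}.
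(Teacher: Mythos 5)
You should note at the outset that the paper itself offers no proof of this statement: it is presented as a conjecture, and the only supporting evidence the authors give is the sentence that the decompositions for $n=2$ and $n=3$ were ``calculated \dots using Mathematica.'' So there is no argument in the paper to compare yours against, and the relevant question is whether your plan would actually close the conjecture. It would not, as it stands. Your triangularity observation is sound and does buy something real: for generic parameters the monomials $\prod_{i<j}(1/x_ix_j)^{\mu_{ij}}$ times the shifted series $\varphi^{(C_3)}(q^{-\sigma_1}s_1,q^{-\sigma_2}s_2,q^{-\sigma_3}s_3\,|\,x)$ have distinct leading terms in the lowering cone, so \emph{some} expansion of $\widetilde{\varphi}^{(A_5)}$ in this family exists and is unique, and the conjecture reduces to evaluating its coefficients in closed form. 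But that reduction is essentially where the authors also stand, and everything after it in your proposal is aspirational.

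The concrete gaps are these. First, the resummation step after peeling $z_{2n}=x_1^{-1}$ via the $A$-type branching formula is precisely the hard part, and you do not carry it out: you do not show that the residual $x_1$-dependence of the inner $\varphi^{(A_{2n-2})}$ factor cancels against the peeled monomials, that the surviving directions are only $1/x_1^2$ and $1/x_1x_j$, or that the spectral parameters shift to exactly $q^{-\mu_{12}-\mu_{13}}s_1$, $q^{-\mu_{12}-\mu_{23}}s_2$, $q^{-\mu_{13}-\mu_{23}}s_3$. Asserting that the inner sums are ``well-poised and Saalsch\"utzian'' is a hope, not a verification; the factor $e_3$ contains mixed shifts such as $(q^{-\mu_{23}}qs_3/ts_1)_{\mu_{12}}/(q^{-\mu_{23}}s_3/s_1)_{\mu_{12}}$ whose provenance your peeling scheme never accounts for. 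Second, your induction has no usable closed form of $e_n$ to induct on: the paper only records $e_2$ and $e_3$, and you explicitly say the general $e_n$ must be guessed from data. Third, the target object $\varphi^{(C_3)}$ is itself only implicitly defined as an asymptotically free eigenfunction, and its explicit coefficient formulas in the paper (the $\psi^{(C_2)}$ and $\psi^{(C_3),\mathrm{rect}}$ expressions) are themselves conjectures; so even a monomial-by-monomial verification is not available in closed form. In short, your proposal is a reasonable research program, broadly consistent with the structures the authors set up, but it does not prove the statement and does not go beyond the computational evidence the paper already reports.
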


\subsection*{Acknowledgements}
Research of A.H.\ is supported by JSPS KAKENHI (Grant Number 16K05186). Research of J.S.\ is supported by JSPS KAKENHI (Grant Numbers 15K04808 and 16K05186). The authors thank M.~Noumi, B.~Feigin and H.~Awata for stimulating discussion. They thank the anonymous referees for their various constructive comments.

\pdfbookmark[1]{References}{ref}
\LastPageEnding

\end{document}